\theoremstyle{plain}
\newtheorem{lemma}{Lemma}[section]
  \newtheorem{proposition}[lemma]{Proposition}
  \newtheorem{corollary}[lemma]{Corollary}
  \newtheorem{theorem}[lemma]{Theorem}
  \newtheorem*{theorem*}{Theorem}
  \newtheorem*{fact*}{Fact}
  \newtheorem*{claim*}{Claim}
\theoremstyle{definition}
  \newtheorem{definition}[lemma]{Definition}
    \newtheorem{remark}[lemma]{Remark}
  \newtheorem{example}[lemma]{Example}
\theoremstyle{remark}
\newsavebox{\@brx}
\newcommand{\llangle}[1][]{\savebox{\@brx}{\(\m@th{#1\langle}\)}%
  \mathopen{\copy\@brx\kern-0.5\wd\@brx\usebox{\@brx}}}
\newcommand{\rrangle}[1][]{\savebox{\@brx}{\(\m@th{#1\rangle}\)}%
  \mathclose{\copy\@brx\kern-0.5\wd\@brx\usebox{\@brx}}}
\let\pol\l
\DeclareMathOperator{\Homeo}{Homeo}
\DeclareMathOperator{\RP}{RP}
\title[Representation as nilmanifolds]{The structure theory of Nilspaces II: Representation as nilmanifolds}
\author{Yonatan Gutman, Freddie Manners and P\'{e}ter P. Varj\'{u}}
\keywords{cubespace, nilspace, nilmanifold, nilsystem, Lie groups, fibration, translation, cocycle}
\subjclass[2010]{Primary 37B05; Secondary 11B30, 54H20.}
\address{Yonatan Gutman, Institute of Mathematics, Polish Academy of Sciences,
ul. \'{S}niadeckich~8, 00-656 Warszawa, Poland.}
\email{y.gutman@impan.pl}
\address{Freddie Manners, Mathematical Institute, Radcliffe Observatory Quarter, Woodstock Road, Oxford OX2 6GG}
\email{Frederick.Manners@maths.ox.ac.uk}
\address{P\'{e}ter P. Varj\'{u}, Centre for Mathematical Sciences,
Wilberforce Road, Cambridge CB3 0WA,
UK}
\email{pv270@dpmms.cam.ac.uk}
\date{\today}
\thanks{YG was partially supported by the ERC Grant \emph{Approximate Algebraic Structures and Applications} and the NCN (National Science Center, Poland) grant 2016/22/E/ST1/00448.
PPV was supported by the Royal Society.}
\begin{document}

\newcommand*{\singlesquare}[4]{
  \draw[thin,black] (0,0) node[below left] {#1} -- (1, 0) node[below right] {#2} -- (1, 1) node[above right] {#4} -- (0, 1) node[above left] {#3} -- (0,0);
}

\newcommand*{\doublesquare}[7]{
  \begin{scope}[xscale=#7]
    \draw[thin,black] (0,0) node[below left] {#4} -- (1, 0) node[below] {#5} -- (2, 0) node[below right] {#6} -- (2, 1) node[above right] {#3} -- (1, 1) node[above] {#2} -- (0, 1) node[above left] {#1} -- (0,0);
    \draw[thin,black] (1, 0) -- (1, 1);
  \end{scope}
}

\newcommand*{\threecube}[9]{
  \begin{scope}[x={(#9, 0)}, y={(0, 1)}, z={(0.352, 0.317)}, scale=2]
    \draw (0,0,0) node[below left] {#1} -- (0,0,1) node[below right] {#3} -- (0,1,1) node[above] {#7} -- (0,1,0) node[above left] {#5} -- cycle;
    \draw (0,0,0) -- (1,0,0) node[below right] {#2} -- (1,0,1) node[right] {#4} -- (0,0,1) -- cycle;
    \draw (1,1,1) node[above right] {#8} -- (0,1,1) -- (0,0,1) -- (1,0,1) -- cycle;

    \draw (0,0,0) -- (1,0,0) -- (1,1,0) node [above left] {#6} -- (0,1,0) -- cycle;
    \draw (1,1,1) -- (1,1,0) -- (1,0,0) -- (1,0,1) -- cycle;
    \draw (1,1,1) -- (0,1,1) -- (0,1,0) -- (1,1,0) -- cycle;
  \end{scope}
}

\newcommand*{\inlinetikz}[1]{
  \begin{figure}[H]
    \centering
    \begin{tikzpicture}
      #1
    \end{tikzpicture}
  \end{figure} \noindent
}
\usetikzlibrary{patterns}

\newcommand{\eps}[0]{\varepsilon}

\newcommand{\AAA}[0]{\mathbb{A}}
\newcommand{\CC}[0]{\mathbb{C}}
\newcommand{\EE}[0]{\mathbb{E}}
\newcommand{\FF}[0]{\mathbb{F}}
\newcommand{\NN}[0]{\mathbb{N}}
\newcommand{\PP}[0]{\mathbb{P}}
\newcommand{\QQ}[0]{\mathbb{Q}}
\newcommand{\RR}[0]{\mathbb{R}}
\newcommand{\TT}[0]{\mathbb{T}}
\newcommand{\ZZ}[0]{\mathbb{Z}}

\newcommand{\cA}[0]{\mathcal{A}}
\newcommand{\cB}[0]{\mathcal{B}}
\newcommand{\cC}[0]{\mathcal{C}}
\newcommand{\cD}[0]{\mathcal{D}}
\newcommand{\cE}[0]{\mathcal{E}}
\newcommand{\cF}[0]{\mathcal{F}}
\newcommand{\cH}[0]{\mathcal{H}}
\newcommand{\cG}[0]{\mathcal{G}}
\newcommand{\cK}[0]{\mathcal{K}}
\newcommand{\cM}[0]{\mathcal{M}}
\newcommand{\cN}[0]{\mathcal{N}}
\newcommand{\cP}[0]{\mathcal{P}}
\newcommand{\cR}[0]{\mathcal{S}}
\newcommand{\cS}[0]{\mathcal{S}}
\newcommand{\cT}[0]{\mathcal{S}}
\newcommand{\cU}[0]{\mathcal{U}}
\newcommand{\cW}[0]{\mathcal{W}}
\newcommand{\cX}[0]{\mathcal{X}}
\newcommand{\cY}[0]{\mathcal{Y}}
\newcommand{\cZ}[0]{\mathcal{Z}}

\newcommand{\fg}[0]{\mathfrak{g}}
\newcommand{\fk}[0]{\mathfrak{k}}
\newcommand{\fZ}[0]{\mathfrak{Z}}

\newcommand{\bmu}[0]{\boldsymbol\mu}

\newcommand{\AUT}[0]{\mathbf{Aut}}
\newcommand{\Aut}[0]{\operatorname{Aut}}
\newcommand{\Frob}[0]{\operatorname{Frob}}
\newcommand{\GI}[0]{\operatorname{GI}}
\newcommand{\HK}[0]{\operatorname{HK}}
\newcommand{\HOM}[0]{\mathbf{Hom}}
\newcommand{\Hom}[0]{\operatorname{Hom}}
\newcommand{\Ind}[0]{\operatorname{Ind}}
\newcommand{\Lip}[0]{\operatorname{Lip}}
\newcommand{\LHS}[0]{\operatorname{LHS}}
\newcommand{\RHS}[0]{\operatorname{RHS}}
\newcommand{\Sub}[0]{\operatorname{Sub}}
\newcommand{\id}[0]{\operatorname{id}}
\newcommand{\image}[0]{\operatorname{Im}}
\newcommand{\poly}[0]{\operatorname{poly}}
\newcommand{\trace}[0]{\operatorname{Tr}}
\newcommand{\sig}[0]{\ensuremath{\tilde{\cS}}}
\newcommand{\psig}[0]{\ensuremath{\cP\tilde{\cS}}}
\newcommand{\metap}[0]{\operatorname{Mp}}
\newcommand{\symp}[0]{\operatorname{Sp}}
\newcommand{\dist}[0]{\operatorname{dist}}
\newcommand{\stab}[0]{\operatorname{Stab}}
\newcommand{\HCF}[0]{\operatorname{hcf}}
\newcommand{\LCM}[0]{\operatorname{lcm}}
\newcommand{\SL}[0]{\operatorname{SL}}
\newcommand{\GL}[0]{\operatorname{GL}}
\newcommand{\rk}[0]{\operatorname{rk}}
\newcommand{\sgn}[0]{\operatorname{sgn}}
\newcommand{\uag}[0]{\operatorname{UAG}}
\newcommand{\freiman}[0]{Fre\u{\i}man}
\newcommand{\tf}[0]{\operatorname{tf}}
\newcommand{\ev}[0]{\operatorname{ev}}

\newcommand{\Conv}[0]{\mathop{\scalebox{1.5}{\raisebox{-0.2ex}{$\ast$}}}}
\newcommand{\bs}[0]{\backslash}

\newcommand{\heis}[3]{ \left(\begin{smallmatrix} 1 & \hfill #1 & \hfill #3 \\ 0 & \hfill 1 & \hfill #2 \\ 0 & \hfill 0 & \hfill 1 \end{smallmatrix}\right)  }

\newcommand{\uppar}[1]{\textup{(}#1\textup{)}}

\begin{abstract}
This paper forms the second part of a series by the authors \cites{GMV1,GMV3} concerning the structure theory of \emph{nilspaces} of Antol\'\i n Camarena and Szegedy. A nilspace is a compact space $X$ together with closed collections of \emph{cubes} $C_n(X)\subseteq X^{2^n}$, $n=1,2,\ldots$ satisfying some natural axioms. From these axioms it follows that a nilspace can be built as a finite tower of extensions where each of the successive fibers is a compact abelian group.

Our main result is a new proof of a result due to Antol\'\i n Camarena and Szegedy \cite{CS12}, stating that if each of these groups is a torus then $X$ is isomorphic (in a strong sense) to a nilmanifold $G/\Gamma$. We also extend the theorem to a setting where the nilspace arises from a dynamical system $(X,T)$. These theorems are a key stepping stone towards the general structure theorem in \cite{GMV3} (which again closely resembles the main theorem of \cite{CS12}).

The main technical tool, enabling us to  deduce algebraic information from topological data, consists of existence and uniqueness results for solutions of certain natural functional equations, again modelled on the theory in \cite{CS12}.
\end{abstract}

\maketitle{}

\tableofcontents{}

%%%%%%%%%%%%%%%%%%%%%%%%%%%%%%%%%%%%%%%%%%%%%%%%%%
\section{Introduction}
%%%%%%%%%%%%%%%%%%%%%%%%%%%%%%%%%%%%%%%%%%%%%%%%%%

This paper forms part of a series by the authors \cites{GMV1,GMV3} concerning the structure theory of \emph{nilspaces}.  Much of this is concerned with the approach of Szegedy \cite{S12} and Antol\'\i n Camarena and Szegedy \cite{CS12} to the inverse theorem for the Gowers norms, as well as with relations to dynamics and in particular work of Host and Kra \cite{HK05} and Host, Kra and Maass \cite{HKM10}.

The paper \cite{GMV1} contains an extensive introduction to this project from the viewpoint of higher order Fourier analysis and the inverse theorem for the Gowers norms.  Similarly, \cite{GMV3} introduces the project from a dynamical perspective.  We will not repeat the bulk of these introductions here, nor offer much motivation for the definition or study of nilspaces and related constructs, but instead refer the reader to these companion works.

 Nilspaces originate in work of Host and Kra \cite{HK08}, where these objects appeared under the name of  ``parallelepiped structures''.  The study of these objects was furthered by Antol\'\i n Camarena and Szegedy \cite{CS12}, who in the same work formulated a strong structure theorem for nilspaces, subject to certain further hypotheses.

The papers of Candela \cites{Can1,Can2} expand on \cite{CS12}, providing more detailed proofs. He also includes
several additional results implicit in \cite{CS12}, particularly about continuous systems of measures.

The goals of this work are as follows.

\begin{itemize}
  \item We prove a structure theorem for nilspaces with certain additional topological assumptions, which allow us to deduce that they are isomorphic (in a suitable sense) to nilmanifolds $G/\Gamma$.  This is a key stepping stone towards the general structure theorem.

  \item Along the way, we prove some rather technical results concerning ``cocycles'' on cubespaces (closely related to \cite{CS12}*{Section 3.6}).  The resulting primitive ``cohomology'' theory is a powerful tool for deducing algebraic information from topological data, and will be invaluable both here and elsewhere in the project \cite{GMV3}.
\end{itemize}

The structural result we will prove here roughly has the following flavour: if a (compact, ergodic) nilspace $X = (X, C^n(X))$, satisfying some connectivity hypotheses, has any chance of being a nilmanifold topologically -- for instance, it had better be a topological manifold, i.e.~locally homeomorphic to a subset of $\RR^n$ -- then it is a nilmanifold.

One possible formulation of the statement is the following theorem.
(For the definition of nilspaces we refer the reader to \cite{GMV1}*{Section 3.1} %REF
or \cite{GMV3}*{Section 1.3}, and for that of Host--Kra cubes, %REF
see \cite{GMV1}*{Section 2 and Appendix A} or \cite{GMV3}*{Section 1.4}). %REF

\begin{theorem}
  \label{main-thm-simple}
  Let $X = (X, C^n(X))$ be a \uppar{compact, ergodic}\footnote{The assumption that a nilspace be compact and ergodic is in force universally, and so it usually appears in parentheses to downplay its particular significance in any given context. However, these conditions are certainly not optional.} nilspace of degree $s$.  Suppose $X$ is locally connected and of finite Lebesgue covering dimension, and further that all the spaces $C^n(X)$ are connected.

  Then $X$ is isomorphic to a nilmanifold $G/\Gamma$.  That is, there exists a filtered connected Lie group $G_\bullet$, a discrete co-compact subgroup $\Gamma$ of $G$, and a homeomorphism $\phi \colon X \leftrightarrow G/\Gamma$ that identifies the cubes $C^k(X)$ with the Host--Kra cubes $\HK^k(G_\bullet)/\Gamma$.
\end{theorem}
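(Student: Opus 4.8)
The plan is to invoke the topological hypotheses only once, at the very beginning, to identify the structure groups, and then to prove a purely algebraic statement by induction on the degree $s$ using the cocycle ``cohomology'' machinery. Recall first from the structure theory that $X$ sits at the top of a tower of nilspace fibrations $X = X_s \to X_{s-1} \to \cdots \to X_0 = \{*\}$, in which $X_i$ is a nilspace of degree $i$ and $X_i \to X_{i-1}$ is an extension by a compact abelian structure group $Z_i$. I would first check that the hypotheses pass to each factor: each $C^n(X_i)$ is a continuous image of $C^n(X)$, hence connected; $X_i$ is a closed, hence quotient, image of the locally connected space $X$, hence locally connected; and $\dim X_i < \infty$, since the fibres of $X \to X_i$ are closed in $X$ and one may apply the dimension inequality for closed surjections. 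Connectedness of all the $C^n(X_i)$, together with the local structure of nilspace fibrations, then forces each $Z_i$ to be connected, locally connected and finite-dimensional (e.g.\ projecting a cube onto a vertex shows $Z_i$ is connected; local triviality gives local connectedness and finite dimension). But a finite-dimensional, connected, locally connected compact abelian group is a torus: its Pontryagin dual is torsion-free, of finite rank, and finitely generated, hence $\cong \ZZ^{d_i}$. So we may assume $Z_i \cong \TT^{d_i}$ for every $i$, and the theorem is reduced to the assertion that a degree-$s$ nilspace all of whose structure groups are tori is isomorphic to some $G/\Gamma$ as in the statement.

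I would prove this reduced statement by induction on $s$. The base case $s = 1$ is immediate: a degree-$1$ nilspace is a compact abelian group with its canonical cube structure, here $\TT^{d_1} = \RR^{d_1}/\ZZ^{d_1}$, with $G = \RR^{d_1}$, $\Gamma = \ZZ^{d_1}$, and the two-step filtration. For the inductive step, set $Y = X_{s-1}$, which by induction is a nilmanifold $G'/\Gamma'$ with $G'_\bullet$ a connected filtered Lie group; then $X \to Y$ is a degree-$s$ extension by $Z = \TT^{d_s}$, and such extensions are classified up to isomorphism by the cohomology class of a continuous cocycle $\sigma$ on the Host--Kra cubes $\HK^s(G'_\bullet)$ (pulled back from $C^s(Y)$) valued in $Z$. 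The heart of the argument is to show that $\sigma$ is cohomologous to a \emph{polynomial} cocycle: using connectedness of the cube spaces $\HK^n(G'_\bullet)$ one lifts $\sigma$ through $\RR^{d_s} \to \TT^{d_s}$ to an $\RR^{d_s}$-valued cocycle $\tilde\sigma$, the lift existing and being essentially unique by the existence-and-uniqueness results for the relevant functional equations; the cube identities satisfied by $\tilde\sigma$, fed back into those same theorems, then force $\tilde\sigma$ to coincide with a polynomial map on $G'_\bullet$. Equivalently, $\sigma$ comes from a polynomial cocycle in the filtered-group sense, which in turn determines an extension $1 \to \RR^{d_s} \to G \to G' \to 1$ of filtered Lie groups, with $\RR^{d_s}$ placed in degree $s$.

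Given this, the remaining steps are comparatively formal. One defines $G$ to be $G' \times \RR^{d_s}$ with the multiplication twisted by the polynomial cocycle, with the filtration extending $G'_\bullet$, and checks that $G$ is a connected Lie group with this filtration (connectedness being inherited from $G'$ and $\RR^{d_s}$). After adjusting $\sigma$ by a coboundary so that the polynomial cocycle is integral on $\Gamma'$, one checks that $\Gamma := \Gamma' \times \ZZ^{d_s}$, with the same twisting, is a discrete cocompact subgroup of $G$. Finally --- the point of the whole construction --- one verifies that the natural homeomorphism $X \cong (G'/\Gamma') \times \TT^{d_s} = G/\Gamma$ carries $C^k(X)$ onto $\HK^k(G_\bullet)/\Gamma$; this is a direct comparison of the two cube structures, both being extensions of $\HK^k(Y)$ by the same cocycle class, and uses the uniqueness part of the theory that a nilspace extension is determined by its classifying class.

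The main obstacle is the middle step: showing the classifying cocycle is cohomologous to an algebraic one. This is precisely where topological input is converted into algebra, and it rests entirely on the existence-and-uniqueness theorems for the functional equations --- existence of the $\RR$-valued lift and of the polynomial corrector uses connectedness of the cube spaces (and, to keep all the Lie groups finite-dimensional, the finite covering dimension), while the uniqueness statements are what allow the resulting local algebraic data to be patched into a globally defined polynomial cocycle. Subsidiary technical points that will need care are: the associativity and filtered-Lie-group structure of the twisted product on $G' \times \RR^{d_s}$; arranging integrality on the lattice so that $\Gamma$ is genuinely a subgroup and is cocompact; and the concluding identification of $\HK^\bullet(G_\bullet)$ with $C^\bullet(X)$.
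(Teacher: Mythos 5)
First, a framing remark: the paper does not prove this theorem directly. It proves Theorem~\ref{thm:main-sc}, which replaces the topological hypotheses (local connectedness, finite covering dimension, connected $C^n(X)$) by the assumption that the structure groups $A_k(X)$ are Lie and $X$ is strongly connected, and explicitly defers the deduction of Theorem~\ref{main-thm-simple} to \cite{GMV3}*{Theorem A.1}. Your initial reduction to toral structure groups is therefore not a triviality the paper treats in passing; and as written it has soft spots. That each $X_i$ is locally connected is fine (it is a Hausdorff quotient of a compact locally connected space), but finite covering dimension does not in general pass to quotients, and the cited ``dimension inequality for closed surjections'' runs the wrong way (it gives lower, not upper, bounds on $\dim X_i$). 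Likewise, the structure group $Z_i$ is homeomorphic to an \emph{orbit} sitting inside $X_i$; subspaces of locally connected spaces are not locally connected in general, so concluding that $Z_i$ is locally connected requires an additional argument (local triviality of the fibration --- which is Gleason's lemma --- but that presupposes $Z_i$ is already Lie). Each of these points can be repaired, but each is a genuine piece of the argument rather than a remark.

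More importantly, the core of your inductive step is a genuinely different construction from the paper's and its key claim is not supported by the cocycle theory the paper develops. The paper builds $G$ synthetically as $\Aut_1^\circ(X)$, the identity component of the group of $1$-translations; the only role the cocycle theory plays there is the \emph{translation-lifting} statement (Proposition~\ref{lem:surj}), which is proved by correcting a small bundle map by a small coboundary using Theorem~\ref{thm:cocycle-special}. You instead construct $G$ as a twisted product $G'\times\RR^{d_s}$ classified by a cocycle $\sigma$ on $\HK^s(G'_\bullet)$. The step you label as ``the heart of the argument'' --- that $\sigma$ is cohomologous to a polynomial cocycle, so that the nilspace extension $X\to Y$ arises from a genuine group extension $1\to\RR^{d_s}\to G\to G'\to 1$ --- is asserted to follow from ``the existence-and-uniqueness results for the relevant functional equations.'' But those results (Theorem~\ref{thm:cocycle-special} and Theorem~\ref{thm:cocycle-uniqueness}) are statements about cocycles whose image lies in a ball of radius $\eps(s,\ell,A)$ and about functions with small oscillation; the classifying cocycle of the extension $X\to Y$ has no reason to be small, and neither does its $\RR^{d_s}$-valued lift. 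Showing that an arbitrary continuous nilspace cocycle on a nilmanifold is cohomologous to an algebraic one is a substantive structural theorem in its own right, not an immediate corollary of the small-cocycle lemmas; the sentence ``the cube identities satisfied by $\tilde\sigma$, fed back into those same theorems, then force $\tilde\sigma$ to coincide with a polynomial map'' hides essentially all of the work. As it stands, this is the step that would need to be filled in for the proposal to constitute a proof, and it is also the place where your argument diverges most sharply from the route actually taken in the paper.
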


 We note that the conditions on local connectedness and finite dimensionality hold in particular if $X$ is a topological manifold.

 The topological conditions in the above theorem can be replaced by other sets of assumptions involving the so-called structure groups of the nilspace $X$. Such variants will be formulated below, including the one proved by Antol\'\i n Camarena and Szegedy \cite{CS12}*{Theorem 7}. In fact, we only prove one of these variants in this paper and the proof of Theorem \ref{main-thm-simple} is completed in \cite{GMV3}*{Theorem A.1} %REF
 using the general structure theory of nilspaces.

Both the overall structure of these arguments, and a good part of the fine detail, are modelled closely on the work of Antol\'\i n Camarena and Szegedy, and we will attempt to make this dependence explicit wherever possible.  However, in some aspects we have deviated from their approach.  Our reasons for doing so are some mixture of the following.
\begin{itemize}
  \item In places, we obtain slightly stronger results, which will be useful especially in applications to dynamics.  Obtaining these can require a modified approach.
  \item Our arguments stay entirely within the topological category, avoiding reference to objects that are merely measurable rather than continuous.  This eliminates some subtleties encountered in \cite{CS12} concerning the replacement of measurable objects by continuous ones; albeit arguably at the expense of introducing other subtleties in different places.  In particular this strategy requires a different treatment of the part of the argument covered in Section \ref{sec-main-details}.  Similarly, for some results in Section \ref{sec:cocycle}, we will have to prove strengthened versions, which are not required here but are needed to keep the arguments in \cite{GMV3} in the topological category.
  \item In some cases, we find our alternatives simpler, easier to understand
  or to yield a fuller understanding of the methods and structures involved.
\end{itemize}

%%%%%%%%%%%%%%%%%%%%%%%%%%%%%%%%%%%%%%%%%%%%%%%%%%%%
\subsection{Structure of the paper}
%%%%%%%%%%%%%%%%%%%%%%%%%%%%%%%%%%%%%%%%%%%%%%%%%%%%

As we have said, we will not repeat in detail the relevant definitions of cubespaces, nilspaces, ergodicity and so on, that appear in \cite{GMV1}*{Section 3.1}, %REF
referring the reader to that paper.  Similarly, we will assume some familiarity with the crucial subject of the \emph{structure groups} $A_k(X)$ of a nilspace, and with the ``weak structure theorem'' of \cite{GMV1}*{Section 5.1}. %REF

Alternatively, a reader mainly interested in dynamics may read \cite{GMV3}*{Sections 1.3--1.6}. %REF
The paper  \cite{GMV3}  introduces the same notions and their properties motivated from a dynamical viewpoint.
However, the proofs are available only in \cite{GMV1}.

We will recall only some of the more specialized definitions relating to Theorem \ref{main-thm-simple} and its generalizations, in Section \ref{sec-statements} below.  The same section will introduce formally the group $\Aut_k(X)$ of \emph{$k$-translations} of a nilspace, which will play a key role in the arguments.  Section \ref{sec-statements} concludes with a discussion of the various variations on the statement of Theorem \ref{main-thm-simple}.

Section \ref{sec:main-outline} explains the high-level steps of the proof of the structural results.
This reduces the structural result to a statement (see Proposition \ref{lem:surj})
that a nilmanifold $X$ has ``enough automorphisms'' in a certain technical sense.

We establish this technical statement in Section \ref{sec-main-details}, conditional on a
``cohomological'' theorem (see Theorem \ref{thm:cocycle-special}) that will arise fairly naturally in the course of the proof.

We introduce the cocycle theory, and give a proof of this remaining theorem, in Section \ref{sec:cocycle}.

%%%%%%%%%%%%%%%%%%%%%%%%%%%%%%%%%%%%%%%%%%%%%%%%%%%%%
\subsection{Acknowledgments}
%%%%%%%%%%%%%%%%%%%%%%%%%%%%%%%%%%%%%%%%%%%%%%%%%%%%%

First and foremost we owe gratitude to Bernard Host who introduced us to the subject and to Omar Antol\'\i n Camarena and Bal\'azs Szegedy
whose groundbreaking work \cite{CS12} was a constant inspiration
for us.

We would like to thank Emmanuel Breuillard, J\'er\^ome Buzzi,
Yves de Cornulier, Sylvain Crovisier, Eli Glasner, Ben Green, Bernard Host, Micha\pol{} Rams, Bal\'azs Szegedy, Anatoly Vershik and Benjamin
Weiss for helpful discussions. We are grateful to Pablo Candela and Bryna Kra for a careful reading of a preliminary version.  We are grateful to Jacob Rasmussen for suggesting the reference \cite{S51}.

We are grateful to the referee for her or his careful reading of our paper and for
her or his many helpful comments, which greatly improved the presentation of the paper.

%%%%%%%%%%%%%%%%%%%%%%%%%%%%%%%%%%%%%%%%%%%%%%%%%%%%%%
\section{Definitions and statements}
\label{sec-statements}
%%%%%%%%%%%%%%%%%%%%%%%%%%%%%%%%%%%%%%%%%%%%%%%%%%%%%%

The flavour of all our structural statements is to find conditions one can impose on a cubespace $X = (X, C^n(X))$ that are sufficient to ensure that it is actually a nilmanifold $G / \Gamma$.

The various ``algebraic'' constraints we impose are discussed at length in \cite{GMV1}.  In short, it is fairly hopeless to ask for a rigid structure theorem unless we insist that $X$ is an ergodic nilspace.

However, we also require some topological input (beyond the standard assumption that $X$ is compact).  For instance, a nilmanifold is certainly a smooth real manifold, so we need to rule out examples such as the solenoid\footnote{Here $\ZZ_2$ denotes the $2$-adic integers and $\ZZ$ is embedded diagonally in the product.} $(\RR \times \ZZ_2) / \ZZ$, which is a compact abelian group and hence a nilspace of degree $1$, but not a manifold.

In the previous section, we formulated Theorem \ref{main-thm-simple} under topological conditions on the space $X$ and the space of cubes $C^n(X)$. We will now state a similar result in which these conditions are replaced by conditions on the \emph{structure groups} of $X$.  We recall that these are a sequence of compact abelian groups $A_k(X)$, defined canonically in terms of the cubestructure on $X$.  The key topological hypothesis we can impose is that these are Lie groups.  \footnote{Note we do not assume a Lie group is connected.  So, a compact abelian Lie group is precisely of the form $(\RR/\ZZ)^d \times K$ where $K$ is a (discrete) finite group.}

These formulations turn out to fit more naturally in the theory, and we prove only these statements in this paper.

Unfortunately the condition that $A_k(X)$ are Lie still does not suffice.  Our method also requires some fairly strong assumptions on connectivity, to which we now turn.

%%%%%%%%%%%%%%%%%%%%%%%%%%%%%%%%%%%%%%%%%%%%%%%%%%%%%%
\subsection{Connectivity hypotheses}
%%%%%%%%%%%%%%%%%%%%%%%%%%%%%%%%%%%%%%%%%%%%%%%%%%%%%%

The simplest connectivity hypothesis is just that $X$ itself is connected as a topological space. It turns out that this condition becomes inadequate fairly quickly, because it fails to establish connectedness ``on all levels'' that is required to explore the structure of $X$ inductively.

Sticking with the idea of imposing conditions on the structure groups $A_k(X)$, one can insist that these groups are connected for all $k$.  Since we have already assumed that they are Lie, this is equivalent to asking that they be tori $(\RR/\ZZ)^d$ for some integer $d$.

\begin{definition}
  We say a (compact, ergodic) nilspace $X$ is \emph{toral} if for each $k \ge 1$ the structure group\footnote{For a definition see \cite{GMV1}*{Theorem 5.4}.} $A_k(X)$ is isomorphic to a torus $(\RR/\ZZ)^d$ for some integer $d$ (which depends on $X$ and $k$).
\end{definition}

This is the approach taken in \cite{CS12}.

\begin{theorem}[\cite{CS12}*{Theorem 7}]\label{thm:main-toral}
  Let $X = (X, C^n(X))$ be a \uppar{compact, ergodic} toral nilspace of degree $s$. Then $X$ is isomorphic to a nilmanifold $G/\Gamma$ in the sense of Theorem \ref{main-thm-simple}.
\end{theorem}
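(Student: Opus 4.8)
The plan is to induct on the degree $s$. The base case $s = 1$ is immediate: a degree-one nilspace is a compact abelian group, and the toral hypothesis forces it to be a torus $(\RR/\ZZ)^d$, which is the nilmanifold $\RR^d/\ZZ^d$ with filtration $G_1 = G$, $G_2 = 0$. For the inductive step I would pass to the maximal factor $X_{s-1}$ of $X$ of degree $s-1$. Its structure groups coincide with $A_k(X)$ for $k \le s-1$, so $X_{s-1}$ is again compact, ergodic and toral, hence by the inductive hypothesis isomorphic to a nilmanifold $G'/\Gamma'$. What remains is to promote this to a nilmanifold structure on $X$ itself, using that $X$ is an extension of $X_{s-1}$ by the torus $T := A_s(X)$ classified by a degree-$s$ cocycle.

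The engine for this is the group of translations. Recall from Section \ref{sec-statements} the groups $\Aut_k(X)$ of $k$-translations; write $\Aut(X)$ for the full translation group. A nilmanifold $G/\Gamma$ carries the transitive left-translation action of $G$, and conversely, if one can exhibit $\Aut(X)$ (suitably topologised) as a Lie group acting transitively on $X$, with discrete cocompact point-stabiliser $\Gamma$ and a filtration $G_\bullet$ under which the nilspace cubes $C^k(X)$ become the Host--Kra cubes $\HK^k(G_\bullet)/\Gamma$, then $X \cong G/\Gamma$ is exactly the presentation we want. So the content of the inductive step is to produce enough translations of $X$: enough translations of $X_{s-1}$ should lift to translations of $X$ that, together with the fibrewise action of $T$ on $X$ (which consists of $s$-translations), act transitively on $X$.

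Lifting a translation $\alpha$ of $X_{s-1}$ to $X$ is a functional equation: one needs a continuous $\tilde\alpha \colon X \to X$ over $\alpha$ respecting the cube structure, and the obstruction to its existence is the class of a certain degree-$s$ cocycle on $X_{s-1}$ valued in $T$ --- essentially the discrepancy between the extension cocycle and its $\alpha$-pullback. This is where the cocycle machinery enters: Theorem \ref{thm:cocycle-special} provides existence (and the expected uniqueness) of solutions to precisely such equations over a nilmanifold base, using that $T$ is a torus --- so one may divide, take logarithms and average against a continuous system of Haar measures on cubes --- and that $X_{s-1}$ already has many translations by induction. The statement that a nilmanifold has ``enough automorphisms'' (Proposition \ref{lem:surj}) packages exactly this feedback loop --- abundance of translations of the factor plus the cocycle theorem yields abundance of translations of the total space --- which is what makes the induction self-sustaining.

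I expect the main obstacle to be the cocycle/functional-equation step, i.e.\ the proof of Theorem \ref{thm:cocycle-special}: showing that the relevant cocycles on the cube sets $C^n$ over a nilmanifold are coboundaries, in the topological rather than merely measurable category. That is where the real work lies --- setting up the appropriate cohomology of functions on cubespaces, verifying that the obstruction class is well defined and natural, and then killing it by an explicit integration over continuous systems of measures with full control of continuity (the point at which \cite{CS12} could appeal to measurable shortcuts that we must avoid). The remaining steps are more bureaucratic: checking that the lifted translations generate a Lie group with the correct filtration, that the stabiliser of a point is discrete and cocompact (using compactness of $X$ and discreteness of $\Gamma'$), and that the cube identification is preserved; this is where the toral and connectedness hypotheses are used, to rule out pathologies coming from disconnected structure groups.
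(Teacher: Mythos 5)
Your high-level architecture matches the paper: the core is a translation-lifting statement (Proposition \ref{lem:surj}), driven by a cocycle-triviality theorem (Theorem \ref{thm:cocycle-special}), with induction on $s$ and connectivity supplying the final transitivity. Two remarks on the route. First, the paper does not attack Theorem \ref{thm:main-toral} head-on: it proves the easy Proposition \ref{prop-toral-sc} (toral implies strongly connected) and then invokes Theorem \ref{thm:main-sc}; your sketch in effect re-derives Theorem \ref{thm:main-sc}, which is fine but more than is needed once that reduction is noted.

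The substantive issue is how the lifting obstruction is made continuous. You frame it as the classifying cocycle $\sigma$ on $X_{s-1}$ minus its $\alpha$-pullback --- essentially the \cite{CS12} framing --- and you rightly insist on staying in the topological category, but you do not explain how to produce a \emph{continuous} cocycle to feed into Theorem \ref{thm:cocycle-special}: there is in general no globally continuous representative of $\sigma$, since the principal bundle $\pi_{s-1}\colon X\to\pi_{s-1}(X)$ need not be trivial. This is exactly where the paper departs from \cite{CS12}. Using Gleason's local-section theorem (Theorem \ref{gleason-lemma}) and a partition of unity, it first constructs a continuous $A_s$-equivariant bundle map $\psi$ over $\phi$ (Lemma \ref{lem-bundle-lift}), and only then measures the failure of $\psi$ to be a $k$-translation by a discrepancy cocycle $\rho_\psi$ defined on $C^{s+1-k}(X)$ (cubes of $X$, not of $X_{s-1}$), which is automatically continuous and small when $\psi$ is. Without this ingredient, or an explicit alternative, your plan has a genuine gap. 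A secondary divergence: you propose proving the cocycle theorem by averaging against a continuous system of Haar measures on cube spaces, which is the \cite{CS12} method; the paper deliberately avoids such systems, running a double induction on $(s,\ell)$ in which each step integrates only against the Haar measure of a fixed compact group $T_1^\ell$ of $A_s$-valued configurations, with the relative case handled by the fibration machinery of Theorem \ref{thm:cocycle-main}.
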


For the purposes of our core structural result, Theorem \ref{thm:main-sc} below, we will work with a slightly different topological condition, this time relating to the spaces of cubes $C^k(X)$.  This is needed for compatibility with our statement of the final structure theorem for general nilspaces  (\cite{GMV1}*{Theorem 4.1}).

\begin{definition}
  We say a nilspace $X$ is \emph{strongly connected} if the topological space $C^k(X)$ is connected for all $k \ge 0$. (In particular, $X = C^0(X)$ itself is connected.)
\end{definition}

It turns out that these two connectivity conditions are equivalent.
While it is easy to see that a toral nilspace is strongly connected
(see Proposition \ref{prop-toral-sc}), the converse appears much more difficult.
In fact, the only proof we are aware of is based on the full force of our
structure theorem.
Indeed, Theorem \ref{thm:main-sc} below implies that a strongly connected nilspace with Lie structure
groups is isomorphic to a Host--Kra nilspace of a connected nilpotent Lie group endowed with a filtration
of connected subgroups, and hence is a toral nilspace \emph{a fortiori}.

\begin{proposition}
  \label{prop-toral-sc}
  A toral \uppar{compact, ergodic} nilspace is strongly connected.
\end{proposition}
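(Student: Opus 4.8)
The plan is to prove strong connectedness by induction on the degree $s$, peeling off the top structure group and using the fact that the cube spaces of a toral nilspace are built as iterated torus bundles over the cube spaces of a lower-degree factor.

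Recall that a (compact, ergodic) nilspace $X$ of degree $s$ sits in a tower of factors $X = X_s \to X_{s-1} \to \cdots \to X_0 = \{*\}$, where each $X_k \to X_{k-1}$ is a fibration whose fibers are torsors over the structure group $A_k(X)$; under the toral hypothesis, $A_k(X) \cong (\RR/\ZZ)^{d_k}$. The key observation is that this structure propagates to the level of cubes: for each $n$, the space $C^n(X_k)$ is obtained from $C^n(X_{k-1})$ as (the total space of) a bundle whose fibers are cosets of the group $C^n(\mathcal{D}_k(A_k(X)))$ of cubes on the degree-$k$ abelian nilspace associated to the torus $A_k(X)$. (This is exactly the content of the weak structure theorem of \cite{GMV1}*{Section 5.1}, applied to cube spaces.) Concretely, one has a continuous surjection $C^n(X_k) \to C^n(X_{k-1})$ with an action of the abelian group $C^n(\mathcal{D}_k((\RR/\ZZ)^{d_k}))$ that is simply transitive on fibers.

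**Key steps.** First, I would check the base case: for $s = 1$, a toral nilspace is just a torus $(\RR/\ZZ)^d$ (with its standard cube structure), and $C^n((\RR/\ZZ)^d) = (\RR/\ZZ)^d \otimes C^n(\mathcal{D}_1(\ZZ))$ is a torus, hence connected; more generally for each fixed degree $k$, $C^n(\mathcal{D}_k((\RR/\ZZ)^{d}))$ is a closed subgroup of a torus $((\RR/\ZZ)^{d})^{2^n}$ cut out by the linear Host--Kra (cube-group) conditions, and these conditions are $\ZZ$-linear with no torsion obstruction, so this group is itself a (connected) torus — in particular connected. Second, for the inductive step, assume $C^n(X_{s-1})$ is connected for all $n$. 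Since $C^n(X_s) \to C^n(X_{s-1})$ is a continuous surjection whose fibers are each homeomorphic to the connected group $C^n(\mathcal{D}_s((\RR/\ZZ)^{d_s}))$, and the base is connected, the total space $C^n(X_s)$ is connected: a space that fibers (with continuous surjection, locally trivial or at least with a transitive action of a connected group on fibers) over a connected base with connected fibers is connected. Applying this at every level $k = 1, \dots, s$ gives that $C^n(X) = C^n(X_s)$ is connected for all $n$, which is precisely strong connectedness.

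**Main obstacle.** The delicate point is verifying that $C^n(\mathcal{D}_k((\RR/\ZZ)^d))$ is connected — i.e., that imposing the Host--Kra cube conditions on $((\RR/\ZZ)^d)^{2^n}$ produces a connected subgroup rather than one with extra components. This amounts to showing the quotient of $\ZZ^{2^n}$ (or rather the relevant integer cube-condition lattice) by the span of the defining relations is torsion-free, which should follow from an explicit description of Host--Kra cubes on a degree-$k$ abelian group as those maps $\{0,1\}^n \to A$ whose "top-order" finite differences vanish — a condition with a clean integer-matrix form known to be saturated. A secondary technical point is making sure the fibration $C^n(X_k) \to C^n(X_{k-1})$ really does have the stated fiber structure uniformly in $n$; this is where one must cite the cube-level version of the weak structure theorem from \cite{GMV1} rather than just its statement for $X$ itself, but this is exactly the kind of bookkeeping the companion paper sets up. No genuinely hard analysis is required: the whole argument is "connected base + connected fiber $\Rightarrow$ connected total space," iterated.
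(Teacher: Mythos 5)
Your argument is correct and follows essentially the same route as the paper: apply the weak structure theorem at the level of $C^n(X)$ to exhibit a tower of principal bundles with fibers $C^n(\cD_k(A_k(X)))$, observe these fibers are connected since $A_k(X)$ is a torus, and conclude that a tower of connected-fiber extensions over a point is connected. The only cosmetic difference is that the paper dispatches connectedness of $C^n(\cD_k(A_k(X)))$ by simply noting it is isomorphic to a power $A_k(X)^d$ (a continuous homomorphic image of a torus is a torus), whereas your ``main obstacle'' discussion of saturation of the $\ZZ$-linear Host--Kra conditions is a slightly more laborious path to the same fact.
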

\begin{proof}
  By the weak structure theorem (\cite{GMV1}*{Theorem 5.4}) %REF
  the space $C^n(X)$ is expressible as a tower of extensions (more correctly, principal bundles)
  \[
    C^n(X) = C^n(X_s) \to C^n(X_{s-1}) \to \dots \to C^n(X_1) \to \{\ast\}
  \]
  where each fiber is the compact abelian group $C^n(\cD_k(A_k(X)))$, which in turn is isomorphic to $A_k(X)^d$ for some $d \ge 0$, and in particular is connected.
  Since a tower of extensions by connected fibers is connected, we deduce that $C^n(X)$ is connected.
\end{proof}

In fact our techniques allow us to say something even in the absence of any such connectivity hypothesis, and we will make statements in this setting as well (see Theorem \ref{weaker-main-thm} and Corollary \ref{cor-weaker-main-thm} in the sequel). These statements are especially interesting from the viewpoint of topological dynamics.

%%%%%%%%%%%%%%%%%%%%%%%%%%%%%%%%%%%%%%%%%%%%%%%%
\subsection{Some notation}
%%%%%%%%%%%%%%%%%%%%%%%%%%%%%%%%%%%%%%%%%%%%%%%%

We recall some miscellaneous notation from \cite{GMV1}.  Given two configurations $c, c' \colon \{0,1\}^n \to X$, we denote by $[c, c']$ the ``concatenated'' configuration
\begin{align*}
  [c,c'] \colon \{0,1\}^{n+1} &\to X  \\
\omega &\mapsto \begin{cases} c(\omega_1,\dots,\omega_n) &\colon \omega_{n+1} = 0 \\  c'(\omega_1,\dots,\omega_n) &\colon \omega_{n+1} = 1 \end{cases} \ .
\end{align*}
We will not be too concerned about which coordinate in $\{1, \dots, n+1\}$ is the preferred one along which the concatenation occurs; i.e.~some other coordinate may play the role of $(n+1)$ in this expression, and we will still denote the final configuration by $[c, c']$.

We also use $\square^n(x)$ to denote the constant configuration $\{0,1\}^n \to X$ sending every coordinate to $x$.  Similarly, the notation $\llcorner^n(x; y)$ denotes the configuration
\begin{align*}
  \llcorner^n(x; y) \colon \{0,1\}^n &\to X \\
  \omega &\mapsto \begin{cases} y &\colon \omega = \vec{1} \\ x &\colon \omega \ne \vec{1} \end{cases} \ .
\end{align*}

More generally, we use the notation $\square^n(c)$ where $c\in X^{\{0,1\}^k}$ to denote the $(n+k)$-configuration $(\omega_1,\ldots,\omega_{n+k})\mapsto c(\omega_1,\ldots,\omega_k)$. We use the notation $\llcorner^n(c_1; c_2)$ in a similar manner for $c_1,c_2\in X^{\{0,1\}^k}$.

We reserve the right to mix and abuse notation freely, as in $\llcorner^2(\square^3(x); \square^3(y))$.  Hopefully the meaning will always be clear.

Finally, we use the standard notation $X \lesssim_{a,b,\dots} Y$ to denote that $X \le C(a,b,\dots) Y$ for some constant $C$ depending only on the variables $(a,b,\dots)$.

%%%%%%%%%%%%%%%%%%%%%%%%%%%%%%%%%%%%%%%%%%%%%%%%%%%%%%
\subsection{Automorphisms and translations}
%%%%%%%%%%%%%%%%%%%%%%%%%%%%%%%%%%%%%%%%%%%%%%%%%%%%%%

The heart of the problem of identifying a suitable nilspace $X$ with a nilmanifold $G / \Gamma$, is in recovering the group $G$ and its group law.

The approach to doing this taken in \cite{CS12}, which we follow, is to consider a suitable group of automorphisms of $X$.  Since automorphism groups are clearly groups, we will be in good shape if we can take $G$ to be this group, and then identify $X$ with a suitable quotient of $G$.

However, we want to describe not just $X$ as a topological space, but also its cube structure.  The cubes on a nilmanifold $G / \Gamma$ are given by the Host--Kra construction, which requires the additional data of a \emph{filtration} on the group $G$.  (For a detailed account of this theory, again see \cite{GMV1}*{Section 2 and Appendix A}.) %REF
So, we need to find not just a suitable group of automorphisms of $X$, but also a filtration on that group.

Fortunately, there are natural definitions of all of these objects, which we now describe.  The automorphism group itself is straightforward.

\begin{definition}
  Let $X$ be a compact cubespace.  We write $\Aut(X)$ for the group of all automorphisms of $X$ in the category of cubespaces; that is, the collection of homeomorphisms $\phi \colon X \to X$ such that for any configuration $c \colon \{0,1\}^k \to X$, $c \in C^k(X)$ if and only if $\phi(c) \in C^k(X)$.

  We endow $\Aut(X)$ with the usual supremum metric
 \[
 d(\phi,\psi) = \sup_{x \in X} d(\phi(x),\psi(x))=\|\psi\circ\phi^{-1}\|,
 \]
 where we used the notation:
  \[
    \|\phi\| = \sup_{x \in X} d(x, \phi(x)),
  \]
  which generate the usual compact-open topology.
\end{definition}
\begin{remark}
  \label{rem:second-countable-auts}
  It is straightforward to verify that $\Aut(X)$ is a closed subgroup of the group of homeomorphisms of $X$, and hence completely metrizable.\footnote{Note that we do not claim that the supremum metric itself on $\Aut(X)$ is complete; merely that $\Aut(X)$ is complete with respect to some metric which generates the same topology, e.g., $d'(\phi,\psi) = d(\phi,\psi) +d(\phi^{-1},\psi^{-1})$ (\cite{BK96}*{Corollary 1.2.2}).
The choice of the metric is not important, but we stick with the supremum metric for convenience of notation.}
It follows from a standard theorem (see \cite{BK96}*{Example 1.3(v)}), that $\Aut(X)$ is also separable, or equivalently second countable.
\end{remark}

We now consider the filtration.

\begin{definition}
  \label{def:k-translation}
  Again let $X$ be a compact cubespace, and fix $k \ge 0$. Given $\phi \in \Aut(X)$ and a face $F$ of $\{0,1\}^n$, let $[\phi]_{F}$ denote the element of $\Aut(X)^{\{0,1\}^n}$ given by
  \[
    \omega \mapsto \begin{cases} \phi &\colon \omega \in F \\ \id & \colon \omega \notin F \end{cases}
  \]
  We write $\Aut_k(X)$ for the collection of $\phi \in \Aut(X)$ with the following additional property.  For any integer $n \ge k$, any face $F$ of $\{0,1\}^n$ of codimension $k$, and any $c \in C^n(X)$, the configuration $[\phi]_{F}. c$
  \begin{align*}
    \{0,1\}^n &\to X \\
    \omega &\mapsto \begin{cases} \phi(c(\omega)) &\colon \omega \in F \\ c(\omega) &\colon \omega \notin F \end{cases}
  \end{align*}
  is in $C^n(X)$.

  We refer to the elements of $\Aut_k(X)$ as \emph{$k$-translations}.  Clearly $\Aut_0(X) = \Aut(X)$.
\end{definition}

The notion of translations originate from the work of Host and Kra \cite{HK08}*{Definition 6}
and they play a pivotal role in the programme of
Antol\'\i n Camarena and Szegedy \cite{CS12}, which we discuss now.

The motivation for this definition is perhaps not entirely clear.%
\footnote{
  Below is one way to arrive at this definition.  We stress that the following discussion is purely for motivation, and is not logically necessary for the argument (although some ideas discussed now will be relevant later; see for instance Proposition \ref{prop:evaluation-is-morphism}).

  We define a \emph{cubegroup} to be an object $G$ that is both a topological group and a cubespace, with the added requirement that $C^k(G)$ is a (closed) subgroup of $G^{\{0,1\}^k}$ under pointwise operations, for all $k$.

  It turns out that $\Aut(X)$ is a natural example of a cubegroup: that is, there is a canonical categorial notion of when $2^k$ automorphisms of $X$ form a $k$-cube, and this notion is closed under pointwise composition.  The cubespace structure can be described informally as the largest possible one such that $\omega \mapsto \phi_\omega(c(\omega))$ is a cube of $X$ for every $(\omega \mapsto \phi_\omega) \in C^k(\Aut(X))$ and every $c \in C^k(X)$, i.e.~such that pointwise action of a cube on $X$ sends cubes to cubes.  More precisely, one should define an element $\psi \colon \{0,1\}^k \to \Aut(X)$ to be in $C^k(\Aut(X))$ if and only if $\square^\ell(\psi)(c) \in C^{k+\ell}(X)$ for every $\ell \ge 0$ and $c \in C^{k+\ell}(X)$, and then one can check that this defines a cubegroup structure on $\Aut(X)$.

  It is a fact (which we will not prove, because we do not need it anywhere in the paper) that -- in complete generality -- all cubegroups arise from the Host--Kra construction applied to a filtered group.  That is, given a cubegroup $G$, there is a unique filtration $G_\bullet$ on $G$  such that $C^k(G) = \HK^k(G_\bullet)$ for all $k$, and so specifying a cubegroup structure on $G$ is equivalent to specifying a filtration.  Under this correspondence, the filtration $\Aut_k(X)$ from Definition \ref{def:k-translation} is precisely the one giving rise to the cubegroup structure on $\Aut(X)$ discussed above.

  In the interests of concreteness and simplicity we will suppress explicit discussion of the cube structure on $\Aut(X)$ in what follows (referring equivalently instead to $\HK^k(\Aut(X)_\bullet)$) and phrase everything in terms of filtrations, taking Definition \ref{def:k-translation} as the logical starting point.
}
We will illustrate it somewhat with some facts and examples.

\begin{proposition}
  \label{translation-prefiltration}
  Let $X$ be a compact cubespace.  The groups $(\Aut_k(X))_{k \ge 0}$ form a \emph{filtration}. That is, $\Aut_k(X)$ are a decreasing sequence of closed subgroups of $\Aut(X)$, and for any $i, j$ we have have the commutator inclusion $[\Aut_i(X), \Aut_j(X)] \subseteq \Aut_{i+j}(X)$.
\end{proposition}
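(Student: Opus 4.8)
The plan is to deduce all three assertions directly from Definition~\ref{def:k-translation}, using nothing beyond the functoriality of cubespaces --- closure of $C^n(X)$ under face maps, degeneracies, coordinate symmetries and the diagonal action of $\Aut(X)$ --- together with two trivial identities for the ``freezing'' operation: $[\phi\psi]_F=[\phi]_F\circ[\psi]_F$ for every face $F$, and $[\phi]_A\circ[\phi]_B=[\phi]_{A\cup B}$ whenever $A,B\subseteq\{0,1\}^n$ are disjoint. The one ingredient that is not pure formalism will be closure under inverses; for that I would exploit the combinatorial observation that the complement of a codimension-$k$ face of the discrete cube is a disjoint union of codimension-$k$ faces.

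\textbf{Subgroups and closedness.} Each $\Aut_k(X)$ contains the identity, and is closed under composition by the first identity above applied twice. For closure under inverses, let $\phi\in\Aut_k(X)$, let $F\subseteq\{0,1\}^n$ be a codimension-$k$ face (cut out by fixing the coordinates $\omega_{l_1},\dots,\omega_{l_k}$ to some values), and let $c\in C^n(X)$. Then $\{0,1\}^n\setminus F$ is the disjoint union of the $2^k-1$ further codimension-$k$ faces $F_v$ obtained by fixing those same coordinates to other values $v$, and the pointwise image $\phi^{-1}(c)$ lies in $C^n(X)$ since $\phi\in\Aut(X)$; a short check gives
\[
  [\phi^{-1}]_F.c \;=\; [\phi]_{\{0,1\}^n\setminus F}.\,\phi^{-1}(c),
\]
and the right-hand side, by the second identity above and the disjointness of the $F_v$, is obtained from the cube $\phi^{-1}(c)$ by performing the codimension-$k$ moves $[\phi]_{F_v}$ one after another. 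Since $\phi\in\Aut_k(X)$ each such move preserves $C^n(X)$, so $[\phi^{-1}]_F.c\in C^n(X)$; as $n$, $F$, $c$ were arbitrary, $\phi^{-1}\in\Aut_k(X)$. Closedness is then automatic: for fixed $n$, $F$, $c$ the map $\phi\mapsto[\phi]_F.c$ is continuous, so $\Aut_k(X)$ is an intersection of preimages of the closed set $C^n(X)$.

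\textbf{Nesting.} To prove $\Aut_{k+1}(X)\subseteq\Aut_k(X)$, take $\phi\in\Aut_{k+1}(X)$, a codimension-$k$ face $F$ of $\{0,1\}^n$ with $n\ge k$, and $c\in C^n(X)$. Adjoin a dummy coordinate, forming $\square^1(c)\in C^{n+1}(X)$, and consider $F\times\{1\}\subseteq\{0,1\}^{n+1}$, a face of codimension $k+1$. Then $[\phi]_{F\times\{1\}}.\square^1(c)\in C^{n+1}(X)$, and restricting this configuration to the codimension-$1$ face $\{0,1\}^n\times\{1\}$ recovers precisely $[\phi]_F.c$, which is therefore in $C^n(X)$.

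\textbf{Commutators.} As $\Aut_{i+j}(X)$ is a subgroup, it suffices to show $[\phi,\psi]:=\phi\psi\phi^{-1}\psi^{-1}\in\Aut_{i+j}(X)$ for $\phi\in\Aut_i(X)$ and $\psi\in\Aut_j(X)$. Given a codimension-$(i+j)$ face $F$ of $\{0,1\}^n$, split its fixed coordinates into two blocks to obtain a codimension-$i$ face $A$ and a codimension-$j$ face $B$ with $A\cap B=F$. A vertex-by-vertex computation --- the point being that $\phi\phi^{-1}$ and $\psi\psi^{-1}$ cancel at every vertex outside $F$ --- shows
\[
  [[\phi,\psi]]_F \;=\; [\phi]_A\circ[\psi]_B\circ[\phi^{-1}]_A\circ[\psi^{-1}]_B.
\]
Since $\phi^{-1}\in\Aut_i(X)$ and $\psi^{-1}\in\Aut_j(X)$ by the subgroup property just established, each of the four moves on the right sends $C^n(X)$ into itself, whence $[[\phi,\psi]]_F.c\in C^n(X)$ for all $c\in C^n(X)$; as $n$ and $F$ were arbitrary, $[\phi,\psi]\in\Aut_{i+j}(X)$. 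Thus the whole proposition reduces to functoriality of cubespaces plus the elementary tiling fact highlighted above, which I expect to be the only genuinely non-routine step.
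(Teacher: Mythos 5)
Your argument is correct, and for the heart of the proposition --- the commutator identity $[\phi]_{F_1}[\psi]_{F_2}[\phi^{-1}]_{F_1}[\psi^{-1}]_{F_2} = [[\phi,\psi]]_F$ with $F_1\cap F_2 = F$ --- it is exactly the paper's argument. Where you diverge is in the parts the paper dismisses with ``clear'' or ``easy to see,'' and your fill-ins are worth noting. For closure under inverses, the paper offers no argument; this is not completely automatic (since $[\phi]_F$ mapping $C^n(X)$ into $C^n(X)$ does not \emph{a priori} mean it maps onto $C^n(X)$), and your tiling observation --- $[\phi^{-1}]_F.c = [\phi]_{\{0,1\}^n\setminus F}.\phi^{-1}(c)$, together with the fact that the complement of a codimension-$k$ face is a disjoint union of $2^k-1$ further codimension-$k$ faces --- is a clean and genuinely informative way to supply it. For nesting, the paper's hint is to split a codimension-$k$ face $F$ into two adjacent codimension-$(k+1)$ faces, which tacitly requires $n\ge k+1$; your dummy-coordinate trick ($\square^1(c)$, then restrict to the face $\{\omega_{n+1}=1\}$) handles $n=k$ at the same time and so avoids a small case split. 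In short: same skeleton, but you have made the routine steps airtight and caught a corner case the paper glosses over.
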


\begin{proof}
  That $\Aut_k(X)$ forms a subgroup is clear, and it is similarly straightforward to argue that they are closed in $\Aut(X)$ (since $C^n(X)$ is closed in $X^{\{0,1\}^n}$ for all $n$).  Similarly, by modifying the same cube twice on an adjacent pair of faces, it is easy to see that a $(k+1)$-translation is also a $k$-translation for all $k$.

  For the commutator result, take any $\phi \in \Aut_i(X)$, $\psi \in \Aut_j(X)$ and $c \in \{0,1\}^n$ for some $n \ge i + j$, and let $F$ be a face of $\{0,1\}^n$ of codimension $(i+j)$.  Pick faces $F_1, F_2 \subseteq \{0,1\}^n$ of codimensions $i, j$ respectively such that $F_1 \cap F_2 = F$ and note that
  \[
    [\phi]_{F_1} [\psi]_{F_2}  [\phi]_{F_1}^{-1} [\psi]_{F_2}^{-1} = [\phi \psi \phi^{-1} \psi^{-1}]_F
  \]
  (with operations applied pointwise).  But clearly the configuration
  \[
    [\phi]_{F_1} [\psi]_{F_2}  [\phi]_{F_1}^{-1} [\psi]_{F_2}^{-1} . c
  \]
  is in $C^n(X)$ by hypothesis on $\phi$ and $\psi$, as required.
\end{proof}

\begin{proposition}
  \label{prop-translations-terminate}
  Suppose $X$ is a compact cubespace with $k$-uniqueness.  Then $\Aut_k(X) = \{\id\}$.
\end{proposition}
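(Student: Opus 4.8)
The plan is to prove the (a priori stronger) pointwise statement: every $\phi \in \Aut_k(X)$ satisfies $\phi(x) = x$ for all $x \in X$. So fix $\phi \in \Aut_k(X)$ and an arbitrary point $x \in X$. The strategy is to feed the definition of a $k$-translation the most degenerate cube available, namely the constant one. Since constant configurations always lie in $C^n(X)$ (a cubespace axiom), we have $\square^k(x) \in C^k(X)$.

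Now apply Definition \ref{def:k-translation} with $n = k$. A face $F$ of $\{0,1\}^k$ of codimension $k$ is a single vertex, so we may take $F = \{\vec{1}\}$. By the defining property of $\Aut_k(X)$, the configuration $[\phi]_F . \square^k(x) \colon \{0,1\}^k \to X$ --- which sends $\vec{1} \mapsto \phi(x)$ and every other $\omega \mapsto x$ --- lies in $C^k(X)$. But $\square^k(x)$ also lies in $C^k(X)$, and the two configurations agree on all of $\{0,1\}^k \setminus \{\vec{1}\}$, since both are constantly equal to $x$ there. By $k$-uniqueness they must therefore coincide; evaluating at $\vec{1}$ gives $\phi(x) = x$. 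As $x$ was arbitrary, $\phi = \id$.

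There is no real obstacle here; the argument is essentially a one-line consequence of unwinding the two definitions (compactness is not even used). The only points worth a moment's care are recalling that constant configurations are cubes, and checking that the codimension-$k$-face clause of Definition \ref{def:k-translation} taken with $n = k$ lines up exactly with the ``agreement away from the top vertex of $\{0,1\}^k$'' formulation of $k$-uniqueness, via the choice $F = \{\vec{1}\}$. (If one prefers a formulation of $k$-uniqueness stated at some other vertex, the same conclusion follows using invariance of $C^k(X)$ under the symmetries of the discrete cube $\{0,1\}^k$.)
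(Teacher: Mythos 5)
Your argument is correct and is essentially identical to the paper's own proof: both apply $[\phi]_F$ with $F=\{\vec 1\}$ to the constant cube $\square^k(x)$ and invoke $k$-uniqueness to conclude $\phi(x)=x$.
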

\begin{proof}
  For any $x \in X$ and $\phi \in \Aut_k(X)$, let $F = \{\vec{1}\} \subseteq \{0,1\}^k$, and apply $[\phi]_F$ to the constant cube $c = \square^k(x)$ in $C^k(X)$.  Then $[\phi]_F . c$ is a cube, agreeing with $c$ on all but the topmost vertex of $\{0,1\}^k$, and so is equal to $c$ by $k$-uniqueness.  Hence $\phi(x) = x$.
\end{proof}

Note that in general we may have that $\Aut_0(X) \ne \Aut_1(X)$, and hence even if $X$ is a nilspace of degree $s$, this filtration is not proper, hence it does not guarantee that $\Aut_0(X)$ is a nilpotent group: indeed, we will see below that it usually is not.  However, it \emph{does} guarantee that $\Aut_1(X)$ is nilpotent (of nilpotency class at most $s$) under these conditions.  Hence, the group of $1$-translations, equipped with the filtration
\begin{equation}\label{eq:trans-filtration}
  \Aut_1(X) = \Aut_1(X) \supseteq \Aut_2(X) \supseteq \dots \supseteq \Aut_k(X) \supseteq \dots
\end{equation}
is a much better candidate for our group $G$ used to construct the nilmanifold $G / \Gamma$.  We will never have much need to consider the full automorphism group $\Aut(X)$ again, restricting our attention to $\Aut_1(X)$.
In what follows, we denote by $\Aut_\bullet(X)$ the group $\Aut_1(X)$ endowed with the
filtration \eqref{eq:trans-filtration}.

\begin{example}
  Suppose $X = (\RR/\ZZ)^2$, with cubespace structure coming from the usual degree $1$ filtration.  Then $\Aut(X) \cong \GL_2(\ZZ) \ltimes (\RR/\ZZ)^2$, the group of \emph{affine automorphisms} of $(\RR/\ZZ)^2$, acting by
  \[
    (M,a) . x = M x + a \ .
  \]
  Indeed, since the $k$-cubes of $X$ are the $k$-dimensional parallelepipeds, i.e.~configurations of the form
  \begin{align*}
    \{0,1\}^k &\to (\RR/\ZZ)^2 \\
    \omega &\mapsto x + \sum_{i=1}^k \omega_i h_i
  \end{align*}
  for some coefficients $x, h_i \in (\RR/\ZZ)^2$, it is easy to see that such maps send cubes to cubes. Conversely, since the $2$-cubes of $X$ are the configurations $[[x, y],[z,w]]$ such that $x+w=y+z$, we have that for any $\phi \in \Aut(X)$ and $x,y,z,w \in X$ such that $x+w=y+z$,
  \[
    \phi(x) + \phi(w) = \phi(y) + \phi(z)
  \]
  and so in particular taking $z=0$ we have
  \[
    (\phi(x+w) - \phi(0)) = (\phi(x) - \phi(0)) + (\phi(w) - \phi(0))
  \]
  so we deduce that $(\phi - \phi(0))$ is necessarily linear and hence $\phi$ is affine-linear.

    The subgroup of $1$-translations is precisely $\Aut_1(X) = \{\id\} \ltimes (\RR/\ZZ)^2$.  To see this, we observe that if $\phi \in \Aut^1(X)$ and $[[x,y],[z,w]] \in C^2(X)$ then so is $[[x,y],[\phi(z), \phi(w)]]$; equivalently, if $x,y,z,w \in (\RR/\ZZ)^2$ satisfy $x-y=z+w$ then $x-y=\phi(z)-\phi(w)$.  So, $\phi(z)-\phi(w) = z - w$ for all $z,w \in (\RR/\ZZ)^2$ and hence $\phi$ is just a translation $x \mapsto x+t$.

    It is easy to check that such maps obey the $1$-translation property for higher-dimensional cubes.

    For $k \ge 2$, $\Aut_k(X)$ is the trivial group.
\end{example}

\begin{example}
  Let $X = G / \Gamma$ where $G$ is the Heisenberg group
  \[
    G = \left\{ \heis{x}{y}{z} \colon x, y, z \in \RR \right\}
  \]
  with its central series filtration, and $\Gamma$ is
  \[
    \Gamma = \left\{ \heis{x}{y}{z} \colon x, y, z \in \ZZ \right\}  \ ,
  \]
  the usual discrete co-compact subgroup.

  Then for any $g \in G$, the map
  \[
    x \Gamma \mapsto g x \Gamma
  \]
  is a $1$-translation on $G / \Gamma$.  If $g$ lies in $G_2$ (the center of $G$) then this map is moreover a $2$-translation. It turns out in this case that these are the only elements of $\Aut_1(X)$ and $\Aut_2(X)$.

However, if we replace $\Gamma$ by
  \[
    \Gamma' = \left\{ \heis{x}{y}{z} \colon x, y \in 2 \ZZ, \, z \in \ZZ \right\}
  \]
  then the map
  \[
    \heis{x}{y}{z} \Gamma' \mapsto \heis{x}{y+1}{z+x}\Gamma'=\heis{x}{y}{z}\heis{0}{1}{0}\Gamma'
  \]
  is a $1$-translation not of the form $x \Gamma' \mapsto g x \Gamma'$.  However, the \emph{connected component of the identity} in $\Aut_1(G/\Gamma')$ still consists entirely of maps $x \Gamma' \mapsto g x \Gamma'$.

  The full automorphism group $\Aut(X)$ is rather complicated (again, one can argue that it contains a copy of $\SL_2(\ZZ)$) and will not concern us.
\end{example}

\begin{remark}
  \label{rem:generic-nilmanifold-auts}
  Generalizing the previous example, for any filtered nilmanifold $G /\Gamma$ it turns out that maps $x \Gamma \mapsto g x \Gamma$ are always in $\Aut_k(X)$ provided $g \in G_k$.  As we saw above, these need not be all the elements of $\Aut_k(G/\Gamma)$, but they will yield all of the connected component of the identity in $\Aut_k(G/\Gamma)$.

  This is of course encouraging for the approach of using $\Aut_1(X)$ as a proxy for $G$.
\end{remark}

The definition of $k$-translation we have given is in some sense the most natural, but can be cumbersome in practice, because of the need to consider rather general configurations, and in particular cubes of arbitrarily large dimension.

The following proposition establishes a convenient alternative definition that is equivalent under certain circumstances.

\begin{proposition}
  \label{prop:k-translation-equiv}
  Suppose $X$ is a \uppar{compact, ergodic} nilspace of degree $s$, and fix $k$, $0 \le k \le s+1$.  Then $\phi \in \Homeo(X)$ is a $k$-translation if and only if for any $c \in C^{s+1-k}(X)$ the configuration $\llcorner^k(c; \phi(c))$ is an $(s+1)$-cube.
\end{proposition}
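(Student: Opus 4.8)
The plan is to prove the two implications separately, using the equivalence between $k$-translations and the auxiliary ``corner'' condition, and to exploit the structure theory of degree-$s$ nilspaces — in particular the fact that in a degree-$s$ nilspace, cubes of dimension $\le s+1$ detect everything, via the gluing/extension axioms and $(s+1)$-uniqueness in the appropriate sense. The easy direction is ``$k$-translation $\Rightarrow$ corner condition'': given $\phi \in \Aut_k(X)$ and $c \in C^{s+1-k}(X)$, the configuration $\square^k(c) = c'$ is an $(s+1)$-cube (a ``degenerate'' cube obtained by replication along $k$ extra coordinates), and the set $\{\vec 1\} \subseteq \{0,1\}^k$ is a face of codimension $k$ inside the copy of $\{0,1\}^{k}$ coordinates, so $[\phi]_{\{\vec 1\}}$ acting on the appropriate face of $\square^k(c)$ produces exactly $\llcorner^k(c;\phi(c))$. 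More carefully: the face $F$ of $\{0,1\}^{s+1}$ that is ``$\{\vec 1\}$ in the first $k$ coordinates, free in the last $s+1-k$'' has codimension $k$, and applying $[\phi]_F$ to $\square^k(c)$ yields $\llcorner^k(c;\phi(c))$; since $\square^k(c) \in C^{s+1}(X)$ and $\phi$ is a $k$-translation, the result lies in $C^{s+1}(X)$.

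For the converse — the substantive direction — suppose $\phi$ satisfies the corner condition, and we must show $[\phi]_F . c \in C^n(X)$ for all $n \ge k$, all codimension-$k$ faces $F$, and all $c \in C^n(X)$. First I would reduce to the case $F = \{\vec 1\text{ in first }k\text{ coords}\} \times \{0,1\}^{n-k}$ by symmetry of cubes under the automorphisms of $\{0,1\}^n$. Then I would handle arbitrary $n$ by a double reduction: (a) reduce $n$ down to $s+1$ using the fact that in a degree-$s$ nilspace, a configuration $\{0,1\}^n \to X$ is a cube iff all its $(s+1)$-dimensional ``sub-restrictions'' are cubes (this is a standard consequence of the nilspace axioms — cubes are determined by restriction to faces of dimension $s+1$, combined with ergodicity/completion); (b) once $n = s+1$, peel off the ``$\{0,1\}^{n-k}$'' factor one coordinate at a time, each time using the $(s+1-k)$-cube corner hypothesis together with the composition/concatenation and gluing axioms for cubes. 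Concretely, for $n = s+1$ one writes $c = [c_0, c_1]$ with $c_0, c_1 \in C^{s}(X)$ and $[c_0,c_1]$ a cube; one wants $[[\phi]_{F'}.c_0, [\phi]_{F'}.c_1]$ to be a cube, where $F' \subseteq \{0,1\}^s$ has codimension $k-?$; this is arranged by interpolating through configurations obtained by applying $\phi$ on progressively larger faces and invoking the hypothesis at each step, using that each intermediate configuration is obtained by gluing an $(s+1)$-cube witnessing the corner condition onto the previous one.

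The main obstacle I expect is step (b): carefully organizing the induction that upgrades the single corner-type condition (only faces of the special form $\llcorner^k$) to arbitrary codimension-$k$ faces of $(s+1)$-cubes, while staying inside $C^{s+1}(X)$ at every intermediate stage. The issue is that $[\phi]_F . c$ for a ``thick'' face $F$ of codimension $k$ is not literally of the form $\llcorner^k(\text{something}; \phi(\text{something}))$, so one must build it up by a sequence of elementary moves — each move replacing $\phi$-action on one more sub-face — and verify that each elementary move corresponds to gluing a genuine $(s+1)$-cube (furnished by the hypothesis) along a face, using the cube-gluing axiom. Ensuring the bookkeeping of faces and dimensions is consistent, and that the condition $0 \le k \le s+1$ is exactly what makes the relevant faces have nonnegative dimension, is where the real care is needed; the nilspace axioms (especially the gluing axiom and the characterization of cubes by $(s+1)$-dimensional data in a degree-$s$ nilspace) do all the heavy lifting once the combinatorial scheme is set up correctly.
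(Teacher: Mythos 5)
Your outline for the easy direction and the reduction to $n = s+1$ (via the fact that in a degree-$s$ nilspace, configurations of dimension $\geq s+1$ are cubes iff their $(s+1)$-faces are) matches the paper's proof; you omit but could easily add the case $n < s+1$, handled by duplicating $c$ up to dimension $s+1$, applying the $n = s+1$ result, and restricting back. The substantive issue is in your step (b), and the unresolved ``$k-?$'' is the symptom of a concrete wrong turn. You split $c = [c_0,c_1]$ and want $[[\phi]_{F'}.c_0, [\phi]_{F'}.c_1]$ to be a cube; since \emph{both} halves get $\phi$ applied, this is a split along a coordinate on which $F$ is \emph{free}, so $F'$ still has codimension $k$ in $\{0,1\}^s$. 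You are now left with two $s$-dimensional configurations and the same codimension-$k$ obstruction, with no way to reach the $(s+1)$-dimensional corner hypothesis, so this induction does not close.

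The decomposition that works splits along a coordinate on which $F$ is \emph{fixed}. Write $F = \{1\}\times F'$ with $F'$ of codimension $k-1$ in $\{0,1\}^s$ and $c = [c_0,c_1]$; then $[\phi]_F.c = [c_0,\ [\phi]_{F'}.c_1]$, and gluing with $c = [c_0,c_1]$ reduces the claim to showing $[c_1,\ [\phi]_{F'}.c_1]$ is an $(s+1)$-cube. This is again a codimension-$k$ modification, but now the underlying configuration $\square^1(c_1)$ is degenerate in one of the $k$ fixed directions of $F$. Iterating this over all $k$ fixed coordinates (gluing a degenerate cube furnished by the cubespace axioms at each step) terminates at $\llcorner^k(c|_F;\,\phi(c|_F))$ with $c|_F \in C^{s+1-k}(X)$, which is exactly the hypothesis. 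This is precisely the ``universal replacement'' gluing that the paper invokes from the companion paper (Propositions 6.3 and 7.12 there), applied with $(s+1-k)$-cubes playing the role that individual vertices play in that lemma.
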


Note this condition is a special case of that from Definition \ref{def:k-translation}, specializing to $n=s+1$ and $c = \square^k(c')$ for some $c' \in C^{s+1-k}(X)$.  Hence, the ``only if'' direction is clear; the content is that it suffices to check the definition on configurations of this form.

Before we give the proof we recall some facts and terminology from \cite{GMV1}.
We say that a cubespace obeys the {\em glueing axiom} if the following holds:
for any triple $c_1,c_2,c_3$ of cubes, $[c_1,c_2]$ and $[c_2,c_3]$
being cubes imply that $[c_1,c_3]$ is a cube, also.
This can be visualized as the cubes $[c_1,c_2]$ and $[c_2,c_3]$ glued
along the common face $c_2$.
We recall from \cite{GMV1}*{Proposition 6.2} %REF
that nilspaces obey the glueing axiom.
Readers unfamiliar with these ideas are advised to consult
\cite{GMV1}*{Section 6.1}. %REF

\begin{proof}
  We first recall \cite{GMV1}*{Proposition 3.11} %REF
  that in a nilspace of degree $s$, a configuration $\{0,1\}^n \to X$ for $n \ge s+1$ is a cube, if and only if every face of dimension $(s+1)$ is a cube.

 Our first objective is to prove that it is enough to prove the condition from Definition \ref{def:k-translation} for cubes $c$ of dimension $s+1$.

  Let $F$  be a face of $\{0,1\}^n$ of codimension $k$ and $c \in C^n(X)$ for some $n \ge s+1$. Every face of $[\phi]_F(c)$ of dimension $(s+1)$ has the form $[\phi]_{F'}(c')$ where $c'$ is a face of $c$ and $F'$ is a face of $\{0,1\}^{s+1}$ of codimension at most $k$.  So, the condition from Definition \ref{def:k-translation} for $n=s+1$ implies all cases $n \ge s+1$.

  If $n < s+1$ and $c \in C^n(X)$, we note that the duplicated configuration $\square^{s+1-n}([\phi]_F(c))$ has the form
  $[\phi]_{F'}(\square^{s+1-n}(c))$ required by Definition \ref{def:k-translation}, where $F' = \square^{s+1-n}(F)$ (abusing notation somewhat) is a corresponding face of $\{0,1\}^{s+1}$ of codimension $k$, and so this configuration lies in $C^{s+1}(X)$.
  Restricting to an appropriate face, we recover that $[\phi]_F(c) \in C^n(X)$.  So, the condition for $n=s+1$ implies the condition for all $n$.

  We now have to show that it suffices to consider cubes of the form $\square^k(c')$ for $c' \in C^{s+1-k}(X)$.  This is by a ``glueing argument'', which is very much related to the ``universal replacement property'' of the canonical equivalence relation in \cite{GMV1}*{Proposition 6.3}. %REF
There we prove that in a fibrant cubespace $Y$ the fact that $\llcorner^k(y;y')$ is a cube implies the following: if $c\in C^k(Y)$ is such that $c(\vec 1)=y$, then the configuration $c'$ defined by $c'(\omega)=c(\omega)$ for $\omega\neq\vec1$ and $c'(\vec1)=y'$, is also a cube. A full proof of this is given in the second claim in the proof of Proposition 7.12 in \cite{GMV1}. %REF
One can turn that into a proof of our claim by replacing vertices by cubes of dimension $s+1-k$ in a straightforward manner.

  Here we only give some diagrams for the case $s=1$ and $k=2$ for the reader's convenience. The general case is very similar but notationally awkward.

  Let $c \in C^{s+1}(X) = C^2(X)$.  We draw this as
  \inlinetikz{
    \singlesquare{$c(00)$}{$c(10)$}{$c(01)$}{$c(11)$}
  }
  and we wish to show that
  \inlinetikz{
    \singlesquare{$c(00)$}{$c(10)$}{$c(01)$}{$\phi(c(11))$}
  }
  is a cube.  Our hypothesis states that for every $a\in C^{s+1-k}(X) = C^0(X)=X$ the following configuration is an $(s+1)$-cube, i.e.~a $2$-cube:
  \inlinetikz{
    \singlesquare{$a$}{$a$}{$a$}{$\phi(a)$}
  }

    We consider
  \inlinetikz{
    \begin{scope}[scale=1.5]
      \draw[thin,black] (0,0) grid (2, 2);
      \node [below left] at (0, 0) {$c(00)$};
      \node [above left] at (0, 2) {$c(01)$};
      \node [below right] at (2, 0) {$c(10)$};
      \node [above right] at (2, 2) {$\phi(c(11))$};
      \node [left] at (0, 1) {$c(01)$};
      \node [right] at (2, 1) {$c(11)$};
      \node [below] at (1, 0) {$c(10)$};
      \node [above] at (1, 2) {$c(11)$};
      \node [above right] at (1, 1) {$c(11)$};
    \end{scope}
   }%
and note that the upper right square is a cube by this hypothesis, the remaining small squares are cubes by the cubespace axioms, and hence the outer square is a cube by glueing (twice) as required.
\end{proof}

Recalling Remark \ref{rem:generic-nilmanifold-auts}, the connected component of the identity in $\Aut_k(X)$ may be a better object to work with than $\Aut_k(X)$ itself.  We consider this object briefly now.

\begin{definition}
  For any topological group $G$, we let $G^\circ \le G$ denote the connected component of the identity in $G$.  In particular, $\Aut_k^\circ(X)$ denotes the connected component of the identity in the group of $k$-translations.
\end{definition}

\begin{remark}
  Note that the connected component is taken in $\Aut_k(X)$; \emph{not} in the larger group $\Aut(X)$ or $\Aut_1(X)$.  Indeed, these need not agree in general, i.e.~we may have $\Aut_k^\circ(X) \subsetneq \Aut_k(X) \cap \Aut_1^\circ(X)$.
\end{remark}

We should really check that this revised sequence of groups $\Aut_1^\circ(X) \supseteq \Aut_2^\circ(X) \supseteq \dots$ is still well-behaved.

\begin{proposition}
  Let $X$ be a compact cubespace.  Then the sequence $(\Aut_k^\circ(X))_{k \ge 0}$ is still a filtration of closed groups, in the sense of Proposition \ref{translation-prefiltration}.
\end{proposition}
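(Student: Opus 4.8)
The plan is to bootstrap directly from Proposition \ref{translation-prefiltration}, using only standard facts about the connected component of the identity in a topological group. First recall that for any topological group $G$, the set $G^\circ$ is a closed normal subgroup: it is closed because connected components are always closed; it is a subgroup because $G^\circ \cdot G^\circ$ and $(G^\circ)^{-1}$ are connected sets containing $\id$, hence lie in $G^\circ$; and normality follows the same way, conjugation by a fixed element being a homeomorphism fixing $\id$. Applying this with $G = \Aut_k(X)$ — which is a closed subgroup of $\Aut(X)$, hence a topological group in its own right, by Proposition \ref{translation-prefiltration} and Remark \ref{rem:second-countable-auts} — shows that each $\Aut_k^\circ(X)$ is a closed subgroup of $\Aut_k(X)$, and therefore (being closed in a closed subgroup) a closed subgroup of $\Aut(X)$.

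For the decreasing property, note that $\Aut_{k+1}^\circ(X)$ is a connected subset of $\Aut_{k+1}(X) \subseteq \Aut_k(X)$ containing the identity; since $\Aut_k^\circ(X)$ is by definition the maximal such subset, we get $\Aut_{k+1}^\circ(X) \subseteq \Aut_k^\circ(X)$.

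The only step carrying any content is the commutator inclusion $[\Aut_i^\circ(X), \Aut_j^\circ(X)] \subseteq \Aut_{i+j}^\circ(X)$. Consider the continuous map
\[
  \Aut_i^\circ(X) \times \Aut_j^\circ(X) \to \Aut(X), \qquad (\phi,\psi) \mapsto \phi\psi\phi^{-1}\psi^{-1},
\]
continuous because $\Aut(X)$ is a topological group. Its domain is connected, being a product of connected spaces, so its image is a connected subset of $\Aut(X)$; by Proposition \ref{translation-prefiltration} this image lies in $\Aut_{i+j}(X)$, and it contains $\id = [\id,\id]$. Hence the image is contained in the connected component of $\id$ in $\Aut_{i+j}(X)$, which is exactly $\Aut_{i+j}^\circ(X)$. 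Finally, since $\Aut_{i+j}^\circ(X)$ is a subgroup containing every commutator $[\phi,\psi]$ with $\phi \in \Aut_i^\circ(X)$ and $\psi \in \Aut_j^\circ(X)$, it contains the subgroup generated by these commutators, namely $[\Aut_i^\circ(X),\Aut_j^\circ(X)]$.

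I do not anticipate a real obstacle: the proof is a routine refinement of Proposition \ref{translation-prefiltration}. The one subtlety worth flagging is the distinction between the \emph{set} of commutators and the commutator \emph{subgroup} they generate, but the passage from one to the other is automatic here precisely because $\Aut_{i+j}^\circ(X)$ is already known to be a subgroup.
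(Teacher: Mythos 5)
Your proof is correct and follows essentially the same route as the paper's: both show the decreasing property by noting that $\Aut_{k+1}^\circ(X)$ is a connected subset of $\Aut_k(X)$ containing the identity, and both obtain the commutator inclusion by observing that the image of the continuous commutator map on the connected domain $\Aut_i^\circ(X) \times \Aut_j^\circ(X)$ is a connected subset of $\Aut_{i+j}(X)$ containing $\id$, hence lies in $\Aut_{i+j}^\circ(X)$. The one small discrepancy is that the paper closes the argument by saying $\Aut_{i+j}^\circ(X)$ contains the \emph{closed} subgroup generated by the commutators, which matters if $[\cdot,\cdot]$ is read as a closed subgroup; your version handles this implicitly since you have already established that $\Aut_{i+j}^\circ(X)$ is closed.
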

\begin{proof}
  Since $\Aut_{k+1}^\circ(X)$ is a connected subset of $\Aut_k(X)$ containing the identity, it is contained in $\Aut_k^\circ(X)$.  Certainly all these groups are closed.  Since the commutator map is continuous, the set
  \[
    \{ [g,h] \colon g \in \Aut_i^\circ(X), g \in \Aut_j^\circ(X) \}
  \]
  is a connected subset of $\Aut_{i+j}(X)$, so is contained in $\Aut_{i+j}^\circ(X)$, and hence so is the closed subgroup it generates.
\end{proof}

%%%%%%%%%%%%%%%%%%%%%%%%%%%%%%%%%%%%%%%%%%%%%%%%%%%%%
\subsection{A discrete subgroup and the evaluation map}
%%%%%%%%%%%%%%%%%%%%%%%%%%%%%%%%%%%%%%%%%%%%%%%%%%%%

Having defined the group $\Aut_1^\circ(X)$ that we hope will take the role of $G$ in the construction of the nilmanifold $G / \Gamma$, we now consider the question of how $\Gamma$, and the isomorphism $X \leftrightarrow G / \Gamma$, will arise.

Suppose we fix an element $x_0 \in X$.  There is a natural map
\begin{align*}
  \ev_{x_0} \colon \Aut_1(X) &\to X \\
  \phi &\mapsto \phi(x_0)
\end{align*}
and this gives rise to an identification of the orbit of $x$ under $\Aut_1(X)$ with $\Aut_1(X) / \stab(x_0)$ where $\stab(x) = \{ \phi \in \Aut_1(X) \colon \phi(x) = x \}$ denotes the stabilizer.  Clearly the same story makes sense restricting to $\Aut_1^\circ(X)$. For notational convenience we denote by $\stab(x)$ also $\{ \phi \in \Aut_1^\circ(X) \colon \phi(x) = x \}$. The meaning of $\stab(x)$ is always clear from the context.

Suppose we knew that $\stab(x_0)$ were a discrete and co-compact subgroup of $\Aut_1^\circ(X)$, and also that the action of $\Aut_1^\circ(X)$ on $X$ were transitive.  Then we would have a homeomorphism $X \cong \Aut_1^\circ(X) / \stab(x_0)$, which is already enough to identify $X$ -- as a topological space -- with a nilmanifold.

However, this transitivity assumption is a very significant one.  So far, we have made no progress towards even showing that $\Aut_1(X)$ is non-trivial.  Showing that $1$-translations (and more generally, $k$-translations) are fairly abundant, is both the core and the hardest aspect of the whole argument.

If the identification of $X$ with $\Aut_1^\circ(X) / \stab(x_0)$ should hold true in the category of cubespaces, we would need to know that $C^n(X)$ were identified under this bijection with the Host--Kra cubes $\HK^n(\Aut^\circ_\bullet(X)) / \stab(x_0)$ where here $\stab(x_0)=\{ \phi \in \HK^n(\Aut^\circ_\bullet(X)) \colon \phi(\square^n(x_0)) = \square^n(x_0) \}$.  Unwrapping the definitions, this says that a configuration $c \colon \{0,1\}^n \to X$ is a cube if and only if it has the form $\omega \mapsto \phi_\omega . x_0$ for some $(\phi_\omega) \in \HK^k(\Aut_\bullet^\circ(X))$.

It is certainly not hard to show the ``if'' direction (see also footnote 6).

\begin{proposition}
  \label{prop:evaluation-is-morphism}
  Suppose $(\phi_\omega)_{\omega \in \{0,1\}^n} \in \HK^n(\Aut_\bullet^\circ(X))$ is an element of the Host--Kra cube group, and $c \in C^n(X)$ is a cube.  Then the configuration
  \[
    \omega \mapsto (\phi_\omega(c(\omega)))
  \]
  is a cube in $C^n(X)$.
\end{proposition}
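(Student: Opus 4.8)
The plan is to reduce to the case where $(\phi_\omega)_{\omega\in\{0,1\}^n}$ is a single generator of the Host--Kra cube group $\HK^n(\Aut_\bullet^\circ(X))$, where the assertion is exactly the defining property of a $k$-translation, and then to deduce the general case by composing the corresponding actions on $X$.

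Recall that for a filtered group $G_\bullet$ the cube group $\HK^n(G_\bullet) \le G^{\{0,1\}^n}$ is generated by the face elements $[g]_F$, where $F$ ranges over the faces of $\{0,1\}^n$, say of codimension $k$, and $g \in G_k$ (see \cite{GMV1}*{Section 2 and Appendix A}; for the codimension-$0$ face one uses the convention $G_0 = G_1$). Applying this with $G_\bullet = \Aut_\bullet^\circ(X)$, and noting $\Aut_k^\circ(X) \subseteq \Aut_k(X)$, I would write
\[
  (\phi_\omega) = \gamma_1 \gamma_2 \cdots \gamma_m , \qquad \gamma_i = [\phi^{(i)}]_{F_i} ,
\]
the product being taken pointwise in $\Aut(X)$, where each $F_i$ is a face of $\{0,1\}^n$ of some codimension $k_i \ge 0$ and $\phi^{(i)} \in \Aut_{k_i}(X)$.

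Next, set $c^{(m)} := c$ and, for $i = m, m-1, \dots, 1$, let $c^{(i-1)} := [\phi^{(i)}]_{F_i} . c^{(i)}$, i.e.\ the configuration agreeing with $\phi^{(i)} \circ c^{(i)}$ on $F_i$ and with $c^{(i)}$ off $F_i$. Because composition is associative, unwinding these definitions shows that $c^{(0)}$ is precisely the configuration $\omega \mapsto \phi_\omega(c(\omega))$ whose cube-ness is at stake. One then proves, by downward induction on $i$, that $c^{(i)} \in C^n(X)$ for all $i$: the case $i = m$ is the hypothesis, and if $c^{(i)} \in C^n(X)$ then Definition \ref{def:k-translation}, applied to $\phi^{(i)} \in \Aut_{k_i}(X)$ and the codimension-$k_i$ face $F_i$, gives $c^{(i-1)} = [\phi^{(i)}]_{F_i} . c^{(i)} \in C^n(X)$ (for $k_i = 0$ this is just the fact that a cubespace automorphism preserves $C^n(X)$, i.e.\ $\Aut_0(X) = \Aut(X)$). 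Taking $i = 0$ finishes the argument.

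I do not expect any genuine obstacle here. The only point that needs care is keeping track of the order of composition: the pointwise product $\gamma_1 \cdots \gamma_m$ acts on a configuration by applying $\gamma_m$ first, then $\gamma_{m-1}$, and so on, so that it is each intermediate configuration $c^{(i)}$ --- which must first be checked to be a genuine cube, not merely a configuration --- that Definition \ref{def:k-translation} is applied to at the next step. Everything else is routine bookkeeping.
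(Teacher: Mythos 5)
Your argument is correct and is essentially the paper's own proof: decompose $(\phi_\omega)$ into face generators $[\phi^{(i)}]_{F_i}$ of the Host--Kra cube group, then apply Definition \ref{def:k-translation} one generator at a time. The extra bookkeeping about the order of composition and the $k_i=0$ case is fine but not needed beyond what the paper's one-line ``repeated application'' already conveys.
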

In particular, the map $\Aut_1^\circ(X) / \stab(x_0) \to X$ is a cubespace morphism.
\begin{proof}
  The Host--Kra cube group is generated (by definition) by elements $[\phi]_F$ where $F \subseteq \{0,1\}^n$ has codimension $k$ and $\phi \in \Aut_k^\circ(X)$.  By definition of $\Aut_k$, applying any such configuration pointwise to a cube $c \in C^n(X)$ yields another cube in $C^n(X)$.  The result follows from repeated application of this fact.
\end{proof}

However, the ``only if'' direction is not clear.  Indeed, the case $n = 0$ corresponds to the statement that the action of $\Aut_1^\circ(X)$ on $X$ is transitive; for higher $n$, this statement is some kind of assertion of ``higher transitivity'' or ``transitivity on all levels''.  Proving this is strongly analogous to proving transitivity, although one needs an analogous strengthening of the hypotheses.

\subsection{Structural results}
\label{statements-subsec}

We are now in a position to state more precise versions of the main structural result (Theorem \ref{main-thm-simple}) whose proof will occupy us for the rest of this article.

The first is a very slight strengthening of Theorem \ref{thm:main-toral}, using the different connectivity hypothesis explained above.  Part of our proof was reported in \cite{Gut_Oberwolfach}.

\begin{theorem}
  \label{thm:main-sc}
  Let $X = (X, C^n(X))$ be a \uppar{compact, ergodic} nilspace of degree $s$.  Suppose that all the structure groups $A_k(X)$ are Lie groups.  Finally, suppose $X$ is strongly connected.

  Fix an element $x_0 \in X$.  Then the following hold.
  \begin{enumerate}
    \item The group $\Aut_1^\circ(X)$ is Lie, and the filtration $(\Aut_k^\circ(X))_{k \ge 1}$ is Lie and
    has degree at most $s$.
    \item The subgroup $\stab(x_0) \le \Aut_1^\circ(X)$ is discrete and co-compact in $\Aut_1^\circ(X)$.  Moreover, $\stab(x_0) \cap \Aut_k^\circ(X)$ is co-compact in $\Aut_k^\circ(X)$ for all $k\geq 1$.
    \item The natural map
      \begin{align*}
        \Aut_1^\circ(X) &\to X \\
        \phi &\mapsto \phi(x_0)
      \end{align*}
      gives rise to an isomorphism of cubespaces $\Aut_1^\circ(X) / \stab(x_0) \to X$.  That is, this map is a homeomorphism, and $C^n(X)$ is identified with the Host--Kra cubes $\HK^n(\Aut_\bullet^\circ(X)) / \stab(x_0)$ on the nilmanifold $\Aut_1^\circ(X) / \stab(x_0)$.
  \end{enumerate}
  One can summarize these conclusions more briefly by saying that $X$ is a nilmanifold of degree at most $s$.
\end{theorem}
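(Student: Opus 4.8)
The plan is to prove Theorem \ref{thm:main-sc} by induction on the degree $s$ of the nilspace, using the canonical fibration $X \to X_{s-1}$ whose fiber is a torus $T = A_s(X)$. At each stage I need to show three things in tandem: that $\Aut_1^\circ(X)$ is a Lie group with a Lie filtration of degree $\le s$, that $\stab(x_0)$ is discrete and co-compact (and compatibly so on each filtration level), and that the evaluation map $\Aut_1^\circ(X)/\stab(x_0) \to X$ is a cubespace isomorphism. The key point is that conclusion (iii) — the ``higher transitivity'' that the action of $\HK^n(\Aut_\bullet^\circ(X))$ on $C^n(X)$ is transitive modulo stabilizers — follows once one knows $\Aut_k^\circ(X)$ is ``large enough'', and this largeness statement is exactly the content of Proposition \ref{lem:surj}, which says a nilmanifold has enough automorphisms; I would invoke that as the hard external input (proved in Sections \ref{sec-main-details}--\ref{sec:cocycle}).

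\textbf{Step 1 (base case and the fibration).} For $s = 1$, $X$ is a connected compact abelian Lie group, hence a torus $T$; the $1$-translations are exactly the translations of $T$, $\Aut_1^\circ(X) = T$, $\stab(x_0) = \{\id\}$, and all three conclusions are immediate. For the inductive step, I use the weak structure theorem (\cite{GMV1}*{Theorem 5.4}) to write $\pi \colon X \to X_{s-1}$ as a principal $T$-bundle, where $T = A_s(X)$ is a torus by the Lie hypothesis plus strong connectedness (Proposition \ref{prop-toral-sc} shows strongly connected $\Rightarrow$ $A_k$ connected, hence tori). The inductive hypothesis applies to $X_{s-1}$, which is again strongly connected with Lie structure groups, giving $X_{s-1} \cong H/\Lambda$ with $H = \Aut_1^\circ(X_{s-1})$.

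\textbf{Step 2 (lifting translations and building $G$).} Every $\phi \in \Aut_k^\circ(X)$ descends to an element of $\Aut_k^\circ(X_{s-1})$ (translations respect the canonical factor), giving a homomorphism $p \colon \Aut_k^\circ(X) \to \Aut_k^\circ(X_{s-1})$ whose kernel, for $k = 1$, consists of translations fixing $X_{s-1}$ pointwise — these are exactly the fiberwise translations by $T$, so $\ker p \cong T$ (using Proposition \ref{prop-translations-terminate} to see $\Aut_{s+1}^\circ(X)$ is trivial, which bounds the filtration degree). This exhibits $\Aut_1^\circ(X)$ as an extension of a subgroup of $H$ by the torus $T$; to see it is all of $H$ — i.e.\ that $p$ is surjective — is precisely where Proposition \ref{lem:surj} (enough automorphisms) is used: every translation of $X_{s-1}$ lifts. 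Surjectivity plus the torus kernel, together with standard facts on extensions of Lie groups by tori, shows $\Aut_1^\circ(X)$ is Lie; the filtration is Lie because each $\Aut_k^\circ(X)$ is a closed connected subgroup and the commutator inclusions hold by the proposition following Definition \ref{def:k-translation}.

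\textbf{Step 3 (discreteness, co-compactness, and the isomorphism).} Discreteness of $\stab(x_0)$ in $\Aut_1^\circ(X)$ follows from discreteness downstairs (inductive hypothesis) plus the fact that a translation fixing $x_0$ and inducing the identity on $X_{s-1}$ must act trivially on the whole fiber over $\pi(x_0)$, hence — by the bundle being principal and $X$ connected — is the identity; so $\stab(x_0)$ injects into the discrete $\stab(\pi(x_0)) \le H$. Co-compactness on each level is obtained by the short exact sequence $1 \to T \to \Aut_1^\circ(X) \to H \to 1$: the image of $\stab(x_0)$ is co-compact in $H$ and the kernel direction is handled by compactness of $T$. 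Finally, for conclusion (iii): the evaluation map is a cubespace morphism by Proposition \ref{prop:evaluation-is-morphism}, it is a homeomorphism because the orbit map $\Aut_1^\circ(X)/\stab(x_0) \to X$ is a continuous bijection of compact Hausdorff spaces once transitivity is known (transitivity again from Proposition \ref{lem:surj}), and the identification of $C^n(X)$ with $\HK^n(\Aut_\bullet^\circ(X))/\stab(x_0)$ — the ``only if'' direction, i.e.\ every cube arises from a Host--Kra cube of translations — is the higher-transitivity statement, which I reduce level-by-level over $X \to X_{s-1}$ using the inductive hypothesis on the base and Proposition \ref{lem:surj} to fill in the torus fibers of $C^n(X) \to C^n(X_{s-1})$.

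\textbf{The main obstacle} is Step 2 and the cube-level part of Step 3: showing that $\Aut_k^\circ(X)$ is genuinely large — that translations lift through the fibration and that the Host--Kra cube group acts transitively on $C^n(X)$ modulo stabilizers. This is where the entire weight of the argument sits, and it is precisely what is isolated as Proposition \ref{lem:surj} and established, via the cocycle/cohomology machinery of Theorem \ref{thm:cocycle-special}, in the later sections; everything else here is bookkeeping about extensions of Lie groups by tori and principal bundles.
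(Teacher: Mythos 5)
Your high-level plan is recognizable and close in spirit to the paper's: lift translations through the canonical fibration (Proposition~\ref{lem:surj}), build $\Aut_1^\circ(X)$ as an extension of $\Aut_1^\circ(X_{s-1})$ by a subgroup of $A_s(X)$, and use openness of evaluation maps together with strong connectedness to get transitivity. However, there are two concrete gaps that need to be fixed.

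First, you quote Proposition~\ref{prop-toral-sc} backwards. That proposition says toral $\Rightarrow$ strongly connected; you use it to conclude strongly connected $\Rightarrow$ $A_k$ connected (hence tori). The paper explicitly cautions that this converse is much harder and is only known as a consequence of the full structure theorem, so it cannot be used as an ingredient in its own proof. Fortunately the torus claim is not actually needed anywhere in your argument: $A_s(X)$ being a compact abelian Lie group suffices everywhere you wrote ``torus'' (the Gleason--Kuranishi extension theorem and Gleason's local-section theorem work for general compact Lie groups, not just tori), but the circularity as stated must be removed.

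Second, and more seriously, your discreteness argument for $\stab(x_0)$ has a gap, and it is precisely the gap that the cocycle-rigidity machinery (Theorem~\ref{thm:cocycle-uniqueness}) exists to fill. You claim that a $1$-translation $\phi$ fixing $x_0$ and inducing the identity on $X_{s-1}$ must be the identity ``by the bundle being principal and $X$ connected.'' This is false: such a $\phi$ is determined by a continuous $\tau\colon X_{s-1}\to A_s(X)$ via $\phi(y)=\tau(\pi_{s-1}(y)).y$, and $\tau$ vanishing at one point does not force it to vanish everywhere (e.g.\ non-constant affine maps $\RR/\ZZ\to\RR/\ZZ$ can vanish at a point), so $\stab(x_0)$ does \emph{not} in general inject into $\stab(\pi_{s-1}(x_0))$. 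What is true is that the $1$-translation property forces $\partial^s\tau\equiv 0$, and then for \emph{small} $\tau$ (equivalently small $\phi$), Theorem~\ref{thm:cocycle-uniqueness} forces $\tau$ to be constant, hence zero since $\tau(\pi(x_0))=0$; this yields discreteness but not injectivity. This rigidity result is also needed upstream of where you place Lie-ness: the paper's proof that $\Aut_1(X)$ is Lie hinges on showing $A_s(X)$ is open in $\ker(\pi_\ast)$, which in turn uses the discreteness of $\stab(x_0)$ (Lemma~\ref{stab-discrete}). Your Step~2 asserts $\ker p\cong T$ outright and derives Lie-ness from it, but establishing that identification of the kernel (even up to an open subgroup) requires the same rigidity input, so the order of deduction should be: cocycle uniqueness $\Rightarrow$ discreteness of stabilizer $\Rightarrow$ $A_s$ open in $\ker(\pi_\ast)$ $\Rightarrow$ $\Aut_1(X)$ Lie.

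Finally, a minor remark on structure: the paper does not run one global induction carrying the full conclusion from $X_{s-1}$ to $X$. Instead it establishes the individual facts (discreteness, Lie-ness, openness of evaluation) by separate inductions on $s$ \emph{without} assuming strong connectedness, and invokes strong connectedness only once, at the very end, to upgrade openness of the orbits of $\HK^n(\Aut_\bullet^\circ(X))$ to transitivity on the connected space $C^n(X)$. Your global-induction scheme can be made to work (note $C^n(X_{s-1})=\pi_{s-1}(C^n(X))$ is connected, so the hypothesis does persist downstairs), but it is worth isolating exactly where the connectedness hypothesis enters — namely in passing from ``orbits are open'' to ``action is transitive'' — rather than folding it into ``transitivity again from Proposition~\ref{lem:surj},'' since Proposition~\ref{lem:surj} by itself only gives local/openness information.
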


By Proposition \ref{prop-toral-sc}, this implies Theorem \ref{thm:main-toral}.

As mentioned above, it is instructive to record some of what our method provides when the connectivity hypothesis is not assumed.

\begin{theorem}
  \label{weaker-main-thm}
  Let $X = (X, C^n(X))$ be a compact ergodic nilspace of degree $s$, whose structure groups are Lie groups.  Then $X$ has finitely many connected components, each of which is open in $X$, and each of which is homeomorphic to a nilmanifold.

  Furthermore, for any $x_0 \in X$, the natural evaluation map
  \begin{align*}
    \ev_{x_0} \colon \Aut_1^\circ(X) &\to X \\
    \phi &\mapsto \phi(x_0)
  \end{align*}
  induces an homeomorphism $S \cong \Aut_1^\circ(X) / \stab(x_0)$, where $\Aut_1^\circ(X)$ is a nilpotent Lie group of degree at most $s$, $\stab(x_0)$ is a discrete and co-compact subgroup and $S$ denotes the connected component of $x_0$ in $X$.

  Finally, if $\Aut_1(X)$ is known \emph{a priori} to act transitively on the set of connected components of $X$, then there is a stronger identification \uppar{of topological spaces} $X \cong \Aut_1(X) / \stab(x_0)$ for any $x_0 \in X$, where again $\Aut_1(X)$ is a nilpotent Lie group of degree at most $s$ and $\stab(x_0)$ is a discrete co-compact subgroup.
\end{theorem}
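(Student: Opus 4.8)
The plan is to deduce Theorem~\ref{weaker-main-thm} from Theorem~\ref{thm:main-sc} by restricting attention to the connected component of $x_0$. The finiteness of the set of components comes for free from the weak structure theorem (\cite{GMV1}*{Theorem~5.4}): $X$ is a finite tower of principal bundles whose fibres are the compact abelian Lie groups $A_k(X)$, and each of these has only finitely many connected components, so $X$ does too; and in any space with finitely many connected components each component is clopen, its complement being a finite union of closed components. Write $S$ for the component of $x_0$.

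The next step is to check that $S$, with the induced cubespace structure $C^n(S)=C^n(X)\cap S^{\{0,1\}^n}$, is again a \uppar{compact, ergodic} nilspace of degree at most $s$, that it is strongly connected, and that its structure groups are Lie. Compactness is immediate since $S$ is clopen, and $C^1(S)=S\times S$ follows from $C^1(X)=X\times X$; for the rest one may either verify the completion axiom directly or observe that $S$ is the fibre over the class of $x_0$ in the (finite) nilspace of connected components of $X$, and that fibres of fibrations are nilspaces. Unwinding the weak structure theorem along the relevant components exhibits $S$ as a tower of principal bundles with connected fibres (subtori of the $A_k(X)$), so $S$ is toral, hence strongly connected by Proposition~\ref{prop-toral-sc}. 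Now Theorem~\ref{thm:main-sc} applies to $S$: $\Aut_1^\circ(S)$ is a nilpotent Lie group of degree at most $s$, its subgroup $\stab_S(x_0)$ is discrete and co-compact, and $\ev_{x_0}$ induces a cubespace isomorphism $\Aut_1^\circ(S)/\stab_S(x_0)\cong S$. In particular $S$ is a nilmanifold; running the same argument with a basepoint chosen in each component shows every component of $X$ is homeomorphic to a nilmanifold, which is the remaining part of the first assertion.

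To transfer the structural conclusion to $X$ itself, note first that each component of $X$ being a nilmanifold, $\Aut_1$ of it is a Lie group, and restriction to the components identifies the (finite-index) component-preserving subgroup of $\Aut_1(X)$ isometrically with a closed subgroup of the Lie group $\prod_i\Aut_1(S_i)$ (restriction is injective there since the $S_i$ cover $X$); so $\Aut_1(X)$, and its identity component $\Aut_1^\circ(X)$, are Lie groups. The filtration $(\Aut_k^\circ(X))_{k\ge1}$ consists of closed subgroups with $\Aut_{s+1}^\circ(X)=\{\id\}$, because $X$ has degree $s$ and so $\Aut_{s+1}(X)=\{\id\}$ by Proposition~\ref{prop-translations-terminate}; the commutator inclusions (Proposition~\ref{translation-prefiltration}) then make $\Aut_1^\circ(X)$ nilpotent of class and degree at most $s$. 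It remains to understand the orbit of $x_0$. Restriction to $S$ gives a continuous homomorphism $\rho\colon\Aut_1^\circ(X)\to\Aut_1^\circ(S)$ (well defined since a connected group of automorphisms fixes each component setwise, and landing in the identity component by connectedness), whose kernel is discrete (being contained in $\stab_X(x_0)$), and the main point — which I expect to be the principal obstacle — is that $\rho$ is \emph{surjective}: every identity-component $1$-translation of $S$ extends to a $1$-translation of $X$. Granting this, $\Aut_1^\circ(X)$ acts transitively on $S$ (since $\Aut_1^\circ(S)$ does), $\stab_X(x_0)=\rho^{-1}(\stab_S(x_0))$ is carried isomorphically onto $\stab_S(x_0)$ and so is discrete and co-compact, and $\ev_{x_0}$ induces a continuous bijection $\Aut_1^\circ(X)/\stab_X(x_0)\to S$, which is a homeomorphism by microtransitivity (a transitive continuous action of a second countable — by Remark~\ref{rem:second-countable-auts} — locally compact group is open). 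This establishes the main assertion.

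The surjectivity of $\rho$ is an ``abundance of translations'' statement of exactly the kind that is the technical heart of the paper (Proposition~\ref{lem:surj} and the cohomological Theorem~\ref{thm:cocycle-special}), and I would prove it by running that machinery for $X$ directly, the point being that none of it uses connectivity of the ambient nilspace, only that the structure groups are Lie — strong connectivity entering the proof of Theorem~\ref{thm:main-sc} only at the final step, to conclude that the (clopen) orbit of $x_0$ is all of $X$ rather than a single component; concretely, one extends a given $1$-translation of $S$ up the factor tower of $X$ one level at a time, solving the relevant functional/cocycle equation over the already-built extension, and keeps the extension in the identity component using that $\Aut_1^\circ(S)$ is generated by one-parameter subgroups. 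Finally, for the last assertion, suppose $\Aut_1(X)$ acts transitively on the set of components. Since $\Aut_1^\circ(X)$ is normal in $\Aut_1(X)$ and acts transitively on $S$, conjugating by an element carrying $S$ to another component $S'$ shows it also acts transitively on $S'$; combined with transitivity on the components, $\Aut_1(X)$ acts transitively on $X$. As $\Aut_1^\circ(X)$ is open in the Lie group $\Aut_1(X)$, the latter is a nilpotent Lie group, of degree at most $s$ again by Proposition~\ref{prop-translations-terminate}, and the Effros microtransitivity theorem — applicable since $\Aut_1(X)$ is second countable and acts transitively on the compact space $X$ with discrete stabilizer — yields the homeomorphism $X\cong\Aut_1(X)/\stab_X(x_0)$.
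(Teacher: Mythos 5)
Your proposal takes a genuinely different route from the paper, and while the broad shape is salvageable, it contains two real gaps that the paper's more direct argument avoids.

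The paper does not go through $S$ at all. It first proves Lemma~\ref{stab-discrete} ($\stab(x_0)$ discrete), Lemma~\ref{lemma-auts-lie} ($\Aut_1(X)$ Lie) and Lemma~\ref{lem:open-evaluation-map} ($\ev_{x_0}\colon\Aut_1(X)\to X$ open) \emph{for $X$ itself}, using only that the structure groups are Lie and no connectivity. Since $\Aut_1(X)$ is Lie its identity component is open, so by Lemma~\ref{lem:open-evaluation-map} the orbits of $\Aut_1^\circ(X)$ are open, hence closed, hence (being connected images of a connected set) exactly the connected components; finiteness and openness of the components, the homeomorphism $S\cong\Aut_1^\circ(X)/\stab(x_0)$, and co-compactness all drop out from this. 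There is no need to establish that $S$, with its induced cubespace structure, satisfies the hypotheses of Theorem~\ref{thm:main-sc}, nor to compare $\Aut_1^\circ(X)$ with $\Aut_1^\circ(S)$.

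The first gap in your version is the assertion that $S$ is toral (and hence strongly connected). You say one ``unwinds the weak structure theorem along the relevant components,'' but this requires knowing that the fibres of $\pi_{k-1}|_S$ are single orbits of $A_k^\circ(X)$ for every $k$, and that the restricted tower computes the structure groups of $S$; neither is obvious, and the paper explicitly remarks that the closely related implication (strongly connected $\Rightarrow$ toral) is only known as a consequence of the full structure theorem. The second gap is the surjectivity of $\rho\colon\Aut_1^\circ(X)\to\Aut_1^\circ(S)$. You concede this needs the cocycle machinery, but the machinery of Proposition~\ref{lem:surj} lifts translations along the quotient maps $X\to\pi_{s-1}(X)\to\cdots$; it does not extend a translation of a \emph{subspace} $S$ to one of $X$, and the sketch you give (extending ``up the factor tower of $X$'' and using one-parameter subgroups of $\Aut_1^\circ(S)$) does not match what that machinery produces. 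Moreover surjectivity of $\rho$ is stronger than what is needed: transitivity of $\Aut_1^\circ(X)$ on $S$ follows directly from openness of $\ev_{x_0}$. Your claim that $\rho$ carries $\stab_X(x_0)$ isomorphically onto $\stab_S(x_0)$ also tacitly assumes $\ker\rho$ is trivial rather than merely discrete, which is unjustified. The conjugation argument for the final statement and the use of Effros in place of the explicit openness lemma are both fine; the finiteness-of-components argument via the weak structure tower is correct in substance but needs the observation that, over a connected compact base, a principal $A$-bundle with $A$ compact Lie has at most $|A/A^\circ|$ components because $A$ permutes them transitively.
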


Note this result provides information only about the topological structure of the nilspace $X$.
We are able to say something about the cubespace structure, as well, but we postpone this
discussion.
See Lemma \ref{lem:cube-eval-open} below.

The primary interest for the last statement of the above theorem comes from the case of dynamical systems, where this condition arises naturally.  In particular we record the following corollary.

\begin{corollary}\label{cor-weaker-main-thm}
  Let $(G,X)$ be a minimal topological dynamical system, where $X$ is a \uppar{compact, ergodic} nilspace of degree $s$, whose structure groups $A_k(X)$ are Lie groups; and suppose $G$ acts on $X$ through a continuous group homomorphism $\sigma \colon G \to \Aut_1(X)$.

  Then $X$ is homeomorphic to a nilmanifold as in the previous theorem: that is, $X \cong \Aut_1(X) / \stab(x_0)$ for any $x_0 \in X$.  Under this identification, $G$ acts by left translations $x \Gamma \mapsto \sigma(g) x \Gamma$.
\end{corollary}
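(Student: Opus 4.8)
The plan is to deduce Corollary~\ref{cor-weaker-main-thm} directly from Theorem~\ref{weaker-main-thm} by verifying its two hypotheses: that the structure groups are Lie (given), and that $\Aut_1(X)$ acts transitively on the set $\pi_0(X)$ of connected components of $X$. The work is therefore entirely in checking the transitivity-on-components hypothesis from the minimality of the dynamical system $(G,X)$.

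First I would recall that by Theorem~\ref{weaker-main-thm}, $X$ has finitely many connected components, each open (hence also closed) in $X$, and the connected component $S$ of $x_0$ satisfies $S \cong \Aut_1^\circ(X)/\stab(x_0)$. Since the components are open and closed and finite in number, $\pi_0(X)$ is a finite discrete set, and $\Aut_1(X)$ acts on it by permutations; moreover $\Aut_1^\circ(X)$, being connected, acts trivially on $\pi_0(X)$, so the action factors through the finite group $\Aut_1(X)/\Aut_1^\circ(X)$ (a fact we do not strictly need, but which clarifies the picture). Now I would bring in the dynamics: $G$ acts on $X$ via $\sigma\colon G \to \Aut_1(X)$, so $G$ permutes the connected components of $X$ through $\sigma$. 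Because the system is minimal, the $G$-orbit of any point is dense; in particular the $G$-orbit of $x_0$ meets every component (each component being open and nonempty). Hence for every component $S'$ there is $g \in G$ with $\sigma(g)(x_0) \in S'$, which shows $\{\sigma(g) : g \in G\} \subseteq \Aut_1(X)$ already acts transitively on $\pi_0(X)$. This verifies the hypothesis of the last part of Theorem~\ref{weaker-main-thm}, yielding the homeomorphism $X \cong \Aut_1(X)/\stab(x_0)$ with $\Aut_1(X)$ nilpotent Lie of degree at most $s$ and $\stab(x_0)$ discrete co-compact.

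It remains to identify the $G$-action under this homeomorphism. By construction the homeomorphism $X \cong \Aut_1(X)/\stab(x_0)$ sends $\phi(x_0)$ to the coset $\phi\,\stab(x_0)$; since $\sigma(g)$ acts on $X$ and commutes appropriately with the evaluation map --- explicitly, $\sigma(g)\big(\phi(x_0)\big) = \big(\sigma(g)\phi\big)(x_0)$ because $\sigma(g), \phi \in \Aut_1(X)$ are honest maps $X \to X$ composed in the group --- the point $\phi(x_0)$ corresponding to the coset $\phi\,\stab(x_0)$ is carried to the point corresponding to $\sigma(g)\phi\,\stab(x_0)$. Thus $G$ acts by left translations $x\Gamma \mapsto \sigma(g) x\Gamma$ as claimed, writing $\Gamma = \stab(x_0)$.

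The only genuinely delicate point is the first one: one must be slightly careful that the $G$-orbit of $x_0$ really does meet every connected component. This is immediate from minimality once one knows the components are \emph{open} (so a dense set meets each of them), which is exactly the content supplied by Theorem~\ref{weaker-main-thm}; so there is no real obstacle, and the corollary is essentially a bookkeeping consequence of the theorem together with the definition of minimality. A minor secondary subtlety is checking that $\sigma$ being merely a continuous homomorphism (not assumed injective or surjective) suffices --- but the argument above uses only that $\sigma(G)$ is \emph{some} subset of $\Aut_1(X)$ acting transitively on $\pi_0(X)$, which is all the final clause of Theorem~\ref{weaker-main-thm} requires.
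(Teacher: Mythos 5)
Your proposal is correct and takes essentially the same route as the paper: use Theorem~\ref{weaker-main-thm} to get that the connected components are open, then observe that minimality of $(G,X)$ forces the dense $G$-orbit of $x_0$ to meet every open component, giving transitivity of $\Aut_1(X)$ on $\pi_0(X)$, whence the last clause of Theorem~\ref{weaker-main-thm} applies. The paper's proof is just a terser version of this; your additional check that the homeomorphism intertwines the $\sigma$-action with left translation on cosets is correct and is a small detail the paper leaves implicit.
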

\begin{proof}
  Since $(G,X)$ is minimal, one may map any element of $X$ into any open set by an element of $G$.  But the connected components of $X$ are open by the theorem, so we conclude that the group of translations $\Aut_1(X)$ acts transitively on the set of connected components of $X$.
\end{proof}

To motivate the above result further, we mention that for any minimal group action
$(G,X)$, one can associate a natural cubespace structure $\{C_G^{n}\}_{n\in\ZZ_{\ge 0}}$
such that the acting group $G$ immerses into $\Aut_1(X,C_G)$.
This construction is due to Host, Kra and Maass \cite{HKM10}.
Given a minimal group action $(G,X)$, Gutman, Glasner and Ye \cite{GGY} defined the regional proximal
relation $\RP_G^n(X)$, which is a closed equivalence relation on $X$ for each $n\in\ZZ_{\ge 0}$, and extends the definition of
Host, Kra and Maass \cite{HKM10} for actions of $\ZZ$ to general group actions.
It is proved in \cite{GGY} that the cubespace structure $(X,C_G)$ is a nilspace
of degree at most $s$  for some $s\in\ZZ_{\ge 0}$ if and only if $\RP_G^s(X)$ is trivial. 
For a detailed exposition and further results we refer to the companion paper \cite{GMV3} and to \cite{GGY}.

Based on these results, the following is a special case of Corollary \ref{cor-weaker-main-thm}.

\begin{corollary}\label{cor:even-more-weaker}
  Let $(G,X)$ be a minimal topological dynamical system.
  Suppose that the regional proximal relation $\RP_G^s(X)$ is trivial for some $s\in\ZZ_{\ge 0}$
  and the structure groups of $(X,C_G)$ are Lie groups.
 
  Then $(G,X)$ is a nilsystem, that is, $X$ can be identified with a nilmanifold $N/\Gamma$, where $N$
  is a nilpotent Lie group of step at most $s$ and the action of $G$ can be realized through a continuous homomorphism
  $G\to N$.
\end{corollary}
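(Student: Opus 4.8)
The plan is to deduce Corollary~\ref{cor:even-more-weaker} directly from Corollary~\ref{cor-weaker-main-thm}, the only work being to check that the two inputs that corollary requires are supplied by the results of Host--Kra--Maass \cite{HKM10} and of Glasner--Gutman--Ye \cite{GGY} recalled above.

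First I would observe that, because $(G,X)$ is minimal and $\RP_G^s(X)$ is trivial, the theorem of \cite{GGY} quoted above shows that the associated cubespace $(X, C_G)$ is a nilspace of degree at most $s$; it is compact since $X$ is, and it is ergodic as a consequence of minimality of $(G,X)$. By hypothesis its structure groups $A_k(X, C_G)$ are Lie groups, so $(X,C_G)$ meets all the standing and topological assumptions of Corollary~\ref{cor-weaker-main-thm}. Next, the Host--Kra--Maass construction \cite{HKM10}, in the form recalled above, supplies a continuous group homomorphism $\sigma \colon G \to \Aut_1(X, C_G)$ through which the original action of $G$ on $X$ factors, i.e.\ $g \cdot x = \sigma(g)(x)$ for all $g \in G$ and $x \in X$. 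These are exactly the hypotheses of Corollary~\ref{cor-weaker-main-thm}.

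Applying that corollary to the nilspace $(X,C_G)$ and the homomorphism $\sigma$, and fixing any basepoint $x_0 \in X$, we obtain a homeomorphism $X \cong \Aut_1(X,C_G)/\stab(x_0)$ under which $G$ acts by the left translations $x\Gamma \mapsto \sigma(g)\, x\Gamma$. By Theorem~\ref{weaker-main-thm} the group $N := \Aut_1(X, C_G)$ is a nilpotent Lie group of step at most $s$ and $\Gamma := \stab(x_0)$ is a discrete, co-compact subgroup, so $X$ is identified with the nilmanifold $N/\Gamma$; taking the continuous homomorphism $G \to N$ to be $\sigma$ then realizes the $G$-action, exhibiting $(G,X)$ as a nilsystem as claimed.

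Since essentially all of the substantive content is already contained in Corollary~\ref{cor-weaker-main-thm} and in the cited external results, there is no serious obstacle here: the only points that need genuine attention are confirming that $(X, C_G)$ is an \emph{ergodic} nilspace (ergodicity being a non-optional standing hypothesis) and that $\sigma$ takes values in $\Aut_1$ rather than merely in $\Aut$ --- both of which are furnished by \cite{GGY} and \cite{HKM10} respectively.
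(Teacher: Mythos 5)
Your argument is correct and coincides with the paper's (very brief) indication that Corollary~\ref{cor:even-more-weaker} follows as a special case of Corollary~\ref{cor-weaker-main-thm} via the results of \cite{HKM10} and \cite{GGY}. You have simply spelled out the hypothesis-checking that the paper leaves implicit, which is exactly the intended route.
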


We note that the hypothesis on the structure groups can be replaced by some assumptions on the topology of $X$
similarly to  Theorem \ref{main-thm-simple}, and therefore, the corollary can be stated without any reference to the cubespace structure.
However, the proof of that result requires the structure theory for general nilspaces developed in \cite{GMV3}, and we refer
the interested reader to the Appendix of that paper.

\section{The main steps in the structural result}
\label{sec:main-outline}

We continue our outline of the proof of the main structure theorems of this paper
following \cite{CS12}.
From the discussion above, it is clear that our main task is to show that $X$ has ``enough $k$-translations'' in some sense.  At this stage, it is not particularly obvious that even a single non-trivial $k$-translation exists for any $k$, and so we first need to find a source of these automorphisms.

The source is ultimately based on the weak structure theory expounded in \cite{GMV1}.  We recall that if $X$ is a (compact, ergodic) nilspace of degree $s$, then there is a \emph{canonical factor}
\[
  \pi_{s-1} \colon X \to \pi_{s-1}(X)
\]
where $\pi_{s-1}(X)$ is a (compact, ergodic) nilspace of degree at most $(s-1)$, and the fibers of $\pi_{s-1}$ are identified with the $s$-th structure group $A_s(X)$ of $X$, which is some compact abelian group.  More precisely, there is an action of $A_s(X)$ on the whole space $X$, whose orbits are precisely these fibers.  For full statements, see \cite{GMV1}*{Theorem 5.4}. %REF

The key observation is the following.
\begin{proposition}
  \label{prop:structure-group-translations}
  Let $X$ be a compact ergodic nilspace of degree $s$, and let $a \in A_s(X)$.  Write $t_a \colon X \to X$ for the action $x \mapsto a . x$ of $A_s$ on $X$ described above.

  Then $t_a \in \Aut_s(X)$.
\end{proposition}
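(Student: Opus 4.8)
The plan is to verify directly the characterization of $s$-translations provided by Proposition~\ref{prop:k-translation-equiv}. Since $X$ has degree $s$, a homeomorphism $\phi$ is an $s$-translation precisely when, for every $c \in C^{1}(X)$ (i.e.\ every edge $c = [x,y] \in C^1(X)$), the configuration $\llcorner^s(c; \phi(c))$ is an $(s+1)$-cube. Here $\llcorner^s(c;\phi(c))$ means: fill the $2^{s+1}$ vertices of $\{0,1\}^{s+1}$ with copies of $c$ along the first $s$ coordinate-directions, except on the unique ``corner'' sub-edge parallel to the last coordinate, where we put $\phi(c)$. So the first step is simply to unwind this: I must show that if $[x,y] \in C^1(X)$ and $t_a(x) = a.x$, $t_a(y) = a.y$, then the resulting $(s+1)$-configuration lies in $C^{s+1}(X)$.

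The key tool is the description of the $A_s(X)$-action from the weak structure theorem (\cite{GMV1}*{Theorem 5.4}): $A_s(X)$ acts on $X$ with orbits equal to the fibers of $\pi_{s-1}$, and this action is compatible with the cube structure in the sense that $C^n(X)$ is a principal bundle over $C^n(\pi_{s-1}(X))$ with structure group $C^n(\cD_s(A_s(X)))$ — that is, one modifies a cube $c \in C^n(X)$ to another cube by adding any ``abelian $s$-cube'' $\alpha \in C^n(\cD_s(A_s(X)))$ of elements of $A_s(X)$, acting fiberwise. Recall $C^n(\cD_s(A))$ consists of the maps $\{0,1\}^n \to A$ all of whose ``$(s+1)$-st differences'' (alternating sums over $(s+1)$-dimensional subcubes) vanish. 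So the strategy is: start from the genuine $(s+1)$-cube $\square^1(\square^s([x,y])) = $ the constant-in-direction-$(s+1)$ extension of $\square^s([x,y])$ (this is a cube because $[x,y]$ is, by the cubespace axioms — duplication of coordinates preserves cubes), and then exhibit $\llcorner^s([x,y];t_a([x,y]))$ as the result of acting on it by an element of $C^{s+1}(\cD_s(A_s(X)))$ supported on the appropriate corner edge.

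Concretely, the modifying configuration $\alpha \colon \{0,1\}^{s+1} \to A_s(X)$ should be $a$ on the two vertices of the ``last-coordinate'' edge sitting over $\vec 1 \in \{0,1\}^s$ (i.e.\ $\omega$ with $\omega_1 = \cdots = \omega_s = 1$) and the identity $0 \in A_s(X)$ elsewhere. The main obstacle — and really the whole content — is checking that this $\alpha$ lies in $C^{s+1}(\cD_s(A_s(X)))$, i.e.\ that every $(s+1)$-dimensional alternating sum of its values over $\{0,1\}^{s+1}$ vanishes. But $\{0,1\}^{s+1}$ has exactly one face of full dimension $s+1$, namely itself, and the alternating sum of $\alpha$ over all of $\{0,1\}^{s+1}$ is $(\pm a) + (\mp a) = 0$, since $\alpha$ is supported on exactly two vertices which differ in a single coordinate (hence carry opposite signs in the alternating sum) and carry the same value $a$. (One should double-check the sign bookkeeping, but two adjacent vertices always appear with opposite parity.) Having verified $\alpha \in C^{s+1}(\cD_s(A_s(X)))$, the principal-bundle statement of the weak structure theorem immediately gives that acting by $\alpha$ sends the constant cube $\square^1(\square^s([x,y])) \in C^{s+1}(X)$ to another element of $C^{s+1}(X)$; and by construction that element is exactly $\llcorner^s([x,y]; t_a([x,y]))$. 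Hence $t_a \in \Aut_s(X)$ by Proposition~\ref{prop:k-translation-equiv}. One should also note $t_a$ is a homeomorphism (it is in the topological group $A_s(X)$ acting continuously, with inverse $t_{-a}$) — though in fact membership in $\Aut_s(X) \subseteq \Aut(X)$ already presupposes this, and it follows from the $A_s$-action being by cubespace automorphisms, which is part of \cite{GMV1}*{Theorem 5.4}.
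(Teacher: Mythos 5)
Your argument is correct, and at bottom it rests on the same structural fact the paper uses: the weak structure theorem \cite{GMV1}*{Theorem 5.4(ii)} identifies the fibers of $C^n(X) \to C^n(\pi_{s-1}(X))$ with $C^n(\cD_s(A_s(X)))$, so acting on a cube of $X$ by a Host--Kra configuration in $\cD_s(A_s)$ preserves cubehood. The paper, however, verifies Definition~\ref{def:k-translation} directly -- it observes that applying a fixed $a \in A_s$ to a codimension-$s$ face $F$ of an arbitrary $c \in C^n(X)$ gives another cube, treating this as an immediate instance of Theorem 5.4(ii) -- whereas you first route through Proposition~\ref{prop:k-translation-equiv} to reduce to the single configuration $\llcorner^s(c; t_a(c))$ with $c \in C^1(X)$, and then spell out the alternating-sum computation showing the corresponding $\alpha$ lies in $C^{s+1}(\cD_s(A_s(X)))$. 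Both are one-line invocations of the same theorem in the end; your detour makes the alternating-sum check maximally concrete (only one $(s+1)$-face to examine), while the paper's direct route avoids leaning on Proposition~\ref{prop:k-translation-equiv} entirely. One small notational slip to fix: the genuine $(s+1)$-cube you modify should be $\square^s(c)$ for $c \in C^1(X)$, not $\square^1(\square^s(c))$, which in the paper's conventions has dimension $s+2$.
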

\begin{proof}
  After unwrapping the definitions, this is immediate from the weak structure theory.  Indeed, what we need -- that applying a fixed element $a \in A_s$ to a face of $c \in C^n(X)$ of codimension $s$ yields another cube -- is a special case of
  \cite{GMV1}*{Theorem 5.4(ii)}. %REF
\end{proof}

So, although we still cannot prove that $\Aut_1(X)$ acts transitively, we at least know that it acts transitively on each fiber of $\pi_{s-1}$ (and in fact this is true even of $\Aut_s(X)$).

The second idea is then to induct on the degree.  Suppose we had already established the relevant transitivity statement for nilspaces of degree $(s-1)$; so in particular we knew that $\Aut_1^\circ(\pi_{s-1}(X))$ acts transitively on $\pi_{s-1}(X)$ (given suitable assumptions).  Then we might hope to achieve transitivity on $X$ by first moving to the correct fiber of $\pi_{s-1}$ (by inductive assumption), and then moving to the correct point in that fiber using an element of $A_s$.

Of course, the challenge for this plan is that there is no obvious way to relate $\Aut_1(\pi_{s-1}(X))$ to $\Aut_1(X)$.  Specifically, to make this approach work we will need to be able to \emph{lift} a $1$-translation of $\pi_{s-1}(X)$ to one of $X$, under suitable hypotheses; i.e.~given $\phi \in \Aut_1(\pi_{s-1}(X))$, to find $\tilde{\phi} \in \Aut_1(X)$ such that the diagram
\[
  \begin{CD}
    X @>\tilde{\phi}>> X \\
    @VV\pi_{s-1}V @VV\pi_{s-1}V \\
    \pi_{s-1}(X) @>\phi>> \pi_{s-1}(X)
  \end{CD}
\]
commutes.

Going in the other direction -- i.e.~that given $\tilde{\phi} \in \Aut_k(X)$ we can push it down to $\phi \in \Aut_k(\pi_{s-1}(X))$ -- is straightforward, and we verify this now.

\begin{proposition}
  Let $X$ be a compact ergodic nilspace of degree $s$, and fix $k \ge 0$.  Then there is a canonical \uppar{continuous} group homomorphism $\pi_\ast \colon \Aut_k(X) \to \Aut_k(\pi_{s-1}(X))$ such that
  \[
    \begin{CD}
      X @>\phi>> X \\
      @VV\pi_{s-1}V @VV\pi_{s-1}V \\
      \pi_{s-1}(X) @>\pi_\ast(\phi)>> \pi_{s-1}(X)
    \end{CD}
  \]
  commutes for every $\phi \in \Aut_k(X)$.
\end{proposition}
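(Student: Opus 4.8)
The plan is to obtain $\pi_*(\phi)$ by pushing $\phi$ through the quotient map $\pi_{s-1}$, and then to verify each required property essentially formally. The only delicate point is well-definedness. Recall that the fibres of $\pi_{s-1}$ are exactly the orbits of the $A_s(X)$-action on $X$, equivalently the classes of the canonical equivalence relation attached to $\pi_{s-1}$; and, crucially, this equivalence relation is determined purely by the cube structure $C^\bullet(X)$ (this is part of the weak structure theory, \cite{GMV1}*{Theorem 5.4}). Since $\phi \in \Aut(X)$ — so that $c \mapsto \phi(c)$ preserves $C^n(X)$ in \emph{both} directions — it follows that $\phi$ carries each fibre of $\pi_{s-1}$ onto a fibre of $\pi_{s-1}$. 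Hence $x \mapsto \pi_{s-1}(\phi(x))$ is constant on fibres, and there is a unique map $\pi_*(\phi) \colon \pi_{s-1}(X) \to \pi_{s-1}(X)$ with $\pi_*(\phi) \circ \pi_{s-1} = \pi_{s-1} \circ \phi$; this uniqueness is exactly what makes $\pi_*$ ``canonical'', and it immediately yields the commuting square as well as, applied to products, the homomorphism property $\pi_*(\phi\psi) = \pi_*(\phi)\pi_*(\psi)$ together with $\pi_*(\phi)^{-1} = \pi_*(\phi^{-1})$.

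Next I would check $\pi_*(\phi) \in \Aut_k(\pi_{s-1}(X))$. Continuity of $\pi_*(\phi)$ follows since $\pi_{s-1}$ is a quotient map (a continuous surjection of compact Hausdorff spaces, hence closed) while $\pi_{s-1} \circ \phi$ is continuous; together with $\pi_*(\phi)^{-1} = \pi_*(\phi^{-1})$ this shows $\pi_*(\phi)$ is a homeomorphism. That $\pi_*(\phi)$ preserves cubes in both directions uses that $\pi_{s-1}$ is a fibration, so every cube of $\pi_{s-1}(X)$ lifts to a cube of $X$: if $\bar c \in C^n(\pi_{s-1}(X))$ lifts to $c \in C^n(X)$ then $\phi(c) \in C^n(X)$ and $\pi_{s-1}(\phi(c)) = \pi_*(\phi)(\bar c)$, so $\pi_*(\phi)(\bar c) \in C^n(\pi_{s-1}(X))$; applying this to $\pi_*(\phi^{-1})$ gives the other direction, so $\pi_*(\phi) \in \Aut(\pi_{s-1}(X))$. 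Finally, for the $k$-translation property of Definition \ref{def:k-translation}, fix $n \ge k$, a codimension-$k$ face $F \subseteq \{0,1\}^n$ and $\bar c \in C^n(\pi_{s-1}(X))$; lift $\bar c$ to $c \in C^n(X)$, note $[\phi]_F.c \in C^n(X)$ because $\phi \in \Aut_k(X)$, and apply $\pi_{s-1}$ vertexwise to obtain $\pi_{s-1}([\phi]_F.c) = [\pi_*(\phi)]_F.\bar c$, which is therefore a cube.

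It remains to see that $\pi_* \colon \Aut_k(X) \to \Aut_k(\pi_{s-1}(X))$ is continuous, which I would do directly with the supremum metrics. As a continuous map between compact metric spaces, $\pi_{s-1}$ is uniformly continuous: given $\varepsilon > 0$ choose $\delta > 0$ with $d_X(x,x') < \delta \Rightarrow d(\pi_{s-1}(x), \pi_{s-1}(x')) < \varepsilon$. If $d(\phi,\psi) < \delta$ then $d_X(\phi(x),\psi(x)) < \delta$ for all $x$, so for every $\bar x = \pi_{s-1}(x)$ we get $d(\pi_*(\phi)(\bar x), \pi_*(\psi)(\bar x)) = d(\pi_{s-1}(\phi(x)), \pi_{s-1}(\psi(x))) < \varepsilon$, whence $d(\pi_*(\phi),\pi_*(\psi)) \le \varepsilon$. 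Thus $\pi_*$ is (uniformly) continuous, completing the proof.

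The only step I expect to require genuine care is the well-definedness in the first paragraph — that a cubespace automorphism of $X$ must preserve the canonical factor $\pi_{s-1}$, i.e.\ permute its fibres. This is precisely the assertion that $\pi_{s-1}$ (and the $A_s(X)$-action underlying it) is canonical, so it is already contained in the weak structure theory; if desired one could make it fully explicit by exhibiting a continuous automorphism $\alpha$ of $A_s(X)$ with $\phi(a.x) = \alpha(a).\phi(x)$. Everything else is a routine unwinding of the definitions and of the quotient topology.
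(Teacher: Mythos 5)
Your proof is correct, but the well-definedness argument is genuinely different from the one in the paper, and it is worth noting what each buys.

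The paper verifies well-definedness of $\pi_\ast(\phi)$ as follows: if $x' = a.x$ with $a \in A_s(X)$, then since $t_a \in \Aut_s(X)$ commutes with $\phi$ \uppar{via $[\Aut_k(X), \Aut_s(X)] \subseteq \Aut_{k+s}(X) = \{\id\}$}, one gets $\phi(x') = a.\phi(x)$ and hence $\pi_{s-1}(\phi(x')) = \pi_{s-1}(\phi(x))$. As written, that commutator argument requires $k \ge 1$ \uppar{$k + s \ge s+1$}; for $k = 0$ it gives only normality of $\Aut_s(X)$, not literal commutation. You instead appeal to the fact that the canonical equivalence relation defining $\pi_{s-1}$ \uppar{namely $x \sim y \iff \llcorner^s(x;y) \in C^s(X)$} is intrinsic to the cube structure, so any $\phi \in \Aut(X)$ preserves it in both directions. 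This is more elementary, applies uniformly for all $k \ge 0$ as the Proposition states, and sidesteps the filtration machinery entirely. The paper's route has the virtue of exhibiting the commutation $\phi \circ t_a = t_a \circ \phi$ explicitly, which is reused elsewhere \uppar{e.g.\ in Lemma \ref{stab-discrete}}, but for this Proposition your argument is cleaner. The rest of your proof — lifting cubes to verify $\pi_\ast(\phi) \in \Aut_k(\pi_{s-1}(X))$, the homomorphism property via surjectivity of $\pi_{s-1}$, and continuity of $\pi_\ast$ via uniform continuity of $\pi_{s-1}$ — matches the paper's approach, just with more of the ``straightforward to check'' details written out. One small nit: the intrinsic description of the canonical equivalence relation sits in \cite{GMV1}*{Section 6} rather than in Theorem 5.4 itself, but the substance of your appeal is correct.
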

The proof is completely mechanical.
\begin{proof}
  Note that if $k > s$ the result is trivial (Proposition \ref{prop-translations-terminate}), so we assume $k \le s$.  We do the only thing possible: given $y \in \pi_{s-1}(X)$, we pick $x \in \pi_{s-1}^{-1}(y)$ arbitrarily, and define $(\pi_\ast(\phi))(y) := \pi_{s-1}(\phi(x))$.  It suffices to check that this is well-defined and a $k$-translation; continuity is then straightforward to check.

  If $x' \in \pi_{s-1}^{-1}(y)$ is another lift of $y$ then there is some $a \in A_s$ such that $x' = a . x$ (see \cite{GMV1}*{Theorem 5.4(i)}). %REF
  But $t_a$ is an element of $\Aut_s(X)$ (Proposition \ref{prop:structure-group-translations}), and this commutes with $\phi$ (Proposition \ref{prop-translations-terminate} again, and Proposition \ref{translation-prefiltration}).  So, $\phi(x') = a . \phi(x)$, and so $\pi_{s-1}(\phi(x')) = \pi_{s-1}(a . \phi(x)) = \pi_{s-1}(\phi(x))$, since $t_a$ preserves fibers of $\pi_{s-1}$.  This shows well-definedness.

  Now suppose $c \in C^n(\pi_{s-1}(X))$ and $F \subseteq \{0,1\}^n$ a face of codimension $k$ are given.  By definition, there is a cube $\tilde{c} \in C^n(X)$ such that $\pi_{s-1}(\tilde{c}) = c$.  So,
  \[
    [\pi_\ast(\phi)]_F . c = \pi_{s-1}([\phi]_F . \tilde{c})
  \]
  and the right hand side is a cube as required, since $\phi$ is a $k$-translation and $\pi_{s-1}$ is a cubespace morphism.
\end{proof}

The statement for identity components follows trivially from this, i.e.~$\pi_\ast(\Aut_k^\circ(X)) \subseteq \Aut_k^\circ(\pi_{s-1}(X))$.

The ``translation lifting'' statement we require is precisely that this group homomorphism is surjective.  This is not quite true in general.  However, it is the case that any \emph{sufficiently small} element of $\Aut_k(\pi_{s-1}(X))$ can be lifted to a $k$-translation on $X$, i.e.~lies in the image of $\pi_\ast$.  In fact we will show the following.

\begin{proposition}
  \label{lem:surj}
  Let $X$ be a \uppar{compact, ergodic} nilspace of degree $s$ whose structure groups $A_r(X)$ are all Lie. For any $k \ge 1$, the homomorphism $\pi_\ast \colon \Aut_k(X) \to \Aut_k(\pi_{s-1}(X))$ is an open map.
\end{proposition}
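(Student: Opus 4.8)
The plan is to verify openness directly at the identity. Since the supremum metric on each of these automorphism groups is right-invariant, $\pi_\ast$ is an open map as soon as the following \emph{quantitative lifting} statement holds: for every $\eps>0$ there is $\delta>0$ such that every $\phi\in\Aut_k(\pi_{s-1}(X))$ with $\|\phi\|<\delta$ admits a lift $\tilde\phi\in\Aut_k(X)$ --- meaning $\pi_{s-1}\circ\tilde\phi=\phi\circ\pi_{s-1}$ --- with $\|\tilde\phi\|<\eps$. Indeed, granting this: given an open $W\ni\tilde\psi$ in $\Aut_k(X)$, pick $\eps$ so that $\{\chi\tilde\psi\colon\|\chi\|<\eps\}\subseteq W$; then a point $\phi'$ with $d(\phi',\pi_\ast(\tilde\psi))<\delta$ can be written $\phi'=\phi\,\pi_\ast(\tilde\psi)$ with $\|\phi\|<\delta$, lift $\phi$ to a small $\tilde\phi$, and note $\pi_\ast(\tilde\phi\tilde\psi)=\phi'$ with $\tilde\phi\tilde\psi\in W$, so $\pi_\ast(W)$ is a neighbourhood of $\pi_\ast(\tilde\psi)$. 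Thus the entire content is this quantitative lifting statement, and the remaining steps are devoted to it.

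First I would record a rigidity remark that cuts down the space of candidate lifts. If $k\ge 1$ and $\tilde\phi\in\Aut_k(X)$ is any lift of $\phi$, then $\tilde\phi$ automatically commutes with the action of $A_s(X)$ on $X$: indeed $t_a\in\Aut_s(X)$ by Proposition \ref{prop:structure-group-translations}, so $[\tilde\phi,t_a]\in\Aut_{k+s}(X)$ by Proposition \ref{translation-prefiltration}, and $\Aut_{k+s}(X)=\{\id\}$ by Proposition \ref{prop-translations-terminate} since a nilspace of degree $s$ has $(k+s)$-uniqueness. Hence every lift is a morphism of the principal $A_s(X)$-bundle $\pi_{s-1}\colon X\to\pi_{s-1}(X)$ lying over $\phi$, and the set of such lifts, once non-empty, is a torsor over the group $C(\pi_{s-1}(X),A_s(X))$ of continuous maps. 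Next, using that $A_s(X)$ is a compact abelian Lie group --- so that $\pi_{s-1}(X)$ is a compact ANR --- a $\phi$ with $\|\phi\|$ small is joined to $\id_{\pi_{s-1}(X)}$ by a homotopy of self-maps of small diameter; lifting this homotopy $A_s(X)$-equivariantly through the bundle $\pi_{s-1}$ produces a bundle-morphism lift $\tilde\phi_0\colon X\to X$ of $\phi$ with $\|\tilde\phi_0\|$ controlled by $\|\phi\|$. It remains to correct $\tilde\phi_0$ to a genuine $k$-translation by multiplying by a suitable $\rho\in C(\pi_{s-1}(X),A_s(X))$.

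This correction is the crux, and it is where the cocycle machinery of Section \ref{sec:cocycle} enters. By Proposition \ref{prop:k-translation-equiv} it suffices to arrange that $\llcorner^k(\tilde c;\tilde\phi(\tilde c))\in C^{s+1}(X)$ for every $\tilde c\in C^{s+1-k}(X)$. Because $\phi\in\Aut_k(\pi_{s-1}(X))$, the projection of this configuration to $\pi_{s-1}(X)$ is already a cube (apply Definition \ref{def:k-translation} with $n=s+1$ to $\square^k(\pi_{s-1}\tilde c)$), so, in view of the description of $X$ as a degree-$s$ extension of $\pi_{s-1}(X)$ with fibre group $A_s(X)$, whether $\llcorner^k(\tilde c;\tilde\phi(\tilde c))$ lies in $C^{s+1}(X)$ is governed by the vanishing of a single $A_s(X)$-valued expression. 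Writing $\tilde\phi=\rho\cdot\tilde\phi_0$, this expression has the form $\eta+\partial\rho$, where $\eta$ is an explicit $A_s(X)$-valued function on $C^{s+1-k}(X)$ determined by $\tilde\phi_0$ and the structure cocycle of the extension, and $\partial$ is the relevant coboundary operator. One checks that $\eta$ is a cocycle --- this is precisely where $\phi$ being a $k$-translation of $\pi_{s-1}(X)$ and $X$ being a nilspace are used --- and that $\eta$ is small because $\tilde\phi_0$ is close to $\id$. Theorem \ref{thm:cocycle-special} then furnishes a continuous solution $\rho$ of $\partial\rho=-\eta$ and, this being the point of proving its quantitative form, one may take $\|\rho\|$ small when $\eta$ is small. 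Then $\tilde\phi:=\rho\cdot\tilde\phi_0$ is the required small lift.

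I expect the main obstacle to be this last step: matching the $k$-translation condition to a functional equation of exactly the shape that Theorem \ref{thm:cocycle-special} can solve, verifying the closedness identity ($\eta$ is a cocycle), and --- crucially for openness rather than mere surjectivity onto some subgroup --- extracting the solution together with quantitative control of its size. The topological-lifting step also needs some care to keep $\tilde\phi_0$ small, but should be routine once $A_s(X)$ is known to be Lie; and the rigidity remark, while convenient, is elementary.
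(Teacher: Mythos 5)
Your proposal is correct and follows the paper's strategy essentially verbatim: reduce openness (via right-invariance of the metric) to a quantitative lifting statement at the identity, construct a bundle-morphism lift $\tilde\phi_0$ of a small translation $\phi$, then repair it to an honest $k$-translation by composing with the action of a continuous map into $A_s(X)$ whose existence and smallness come from Theorem \ref{thm:cocycle-special}. The one place you diverge is in the production of the bundle-map lift $\tilde\phi_0$. You argue via the ANR property of $\pi_{s-1}(X)$, a small homotopy $\phi\simeq\id$, and equivariant homotopy lifting through the principal $A_s(X)$-bundle --- that is, the first covering homotopy theorem. The paper proves a self-contained statement (Lemma \ref{lem-bundle-lift}) by a different route: Gleason's local-section theorem gives local sections $\sigma_i$ over an open cover, these yield local equivariant lifts $f_i'$, and a partition of unity combined with the local ``integration in a small ball of $A_s(X)$'' technology of Section \ref{subsec:integration} glues them into a global bundle map with explicit quantitative control. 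The paper explicitly notes that its Lemma \ref{lem-bundle-lift} ``can be extracted from the proof of the first covering homotopy theorem,'' which is exactly your approach; so this is not a genuinely different argument, but the paper's direct route makes the quantitative control of $\|\tilde\phi_0\|$ transparent, whereas yours leaves it as something ``to be tracked'' through the covering-homotopy machinery.

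Two small inaccuracies worth flagging. First, your claim that ``$A_s(X)$ is a compact abelian Lie group --- so that $\pi_{s-1}(X)$ is a compact ANR'' misattributes the reason: what makes $\pi_{s-1}(X)$ an ANR is that the \emph{lower} structure groups $A_1(X),\dots,A_{s-1}(X)$ are Lie, so that, via Gleason's lemma applied inductively, $\pi_{s-1}(X)$ is an iterated locally trivial bundle over a point with Lie fibres. Second, your ``rigidity remark'' (every $k$-translation lift with $k\ge 1$ commutes with the $A_s(X)$-action) is correct and indeed appears implicitly in the paper (cf.\ the proof of Lemma \ref{stab-discrete}), but the paper sidesteps the need for it at this stage by arranging $\tilde\phi_0$ to be an equivariant bundle map from the start, in Lemma \ref{lem-bundle-lift}(iii); the commutation property is then built in rather than deduced. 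None of these affect the correctness of your plan, and the cocycle-repair step is stated exactly as the paper carries it out.
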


In particular, the image of $\pi_\ast$ is an open subgroup of $\Aut_k(\pi_{s-1}(X))$, meaning exactly that all sufficiently small translations on $\pi_{s-1}(X)$ lift to $X$.  More precisely: there exists $\delta > 0$ (depending only on $X$ and $k$) such that if $\phi \in \Aut_k(\pi_{s-1}(X))$ is a ``perturbation of the identity'' in the sense that $\|\phi\| \le \delta$, then there is a lift $\tilde{\phi} \in \Aut_k(X)$ such that $\pi_\ast(\tilde{\phi}) = \phi$, i.e.~$\pi_{s-1} \circ \tilde{\phi} = \phi \circ \pi_{s-1}$.

Proposition \ref{lem:surj} is the heart of the argument, and will occupy us for the majority of the rest of the paper.  We will return to its proof in Section \ref{sec-main-details}.

\bigskip

For the remainder of this section, we will fill in the outstanding gaps in the deduction of Theorem \ref{thm:main-sc}, and the other results stated in Section \ref{statements-subsec}, from Proposition \ref{lem:surj}.

Some of this work is concerned with making the previous discussion rigorous, and also extending all mention of transitivity of the action of $\Aut_1(X)$ to the ``higher transitivity'' required to get the corresponding statement about cubes.

A large part of the rest is of a strongly topological nature: we will have to check that certain groups are Lie groups, certain maps are open maps and so forth.  To some extent, this material is technical and could be skipped on first reading.  However, we caution that it cannot easily be separated from the rest of the argument: without this topological input, we could not prove even a much weakened version of Theorem \ref{thm:main-sc}.

Here and later in the paper we will have to draw on a couple of fairly powerful results about topological groups: Gleason's lemma (Theorem \ref{gleason-lemma}) concerning principal bundles, and the Gleason--Kuranishi extension theorem (Theorem \ref{thm:gleason-extension}) that asserts that an extension of a Lie group by a Lie group is Lie.

However, we remark that we will not use the full power of either of these.  Specifically, Gleason's lemma is only required in the compact abelian Lie setting, where the proof is essentially an application of Fourier analysis.  For the Gleason--Kuranishi extension theorem, we require only the case of central extensions.  Both of these facts are given a self-contained treatment in \cite{T14} which is substantially easier than the original references. See Remark \ref{remark-tao} for more details.

%%%%%%%%%%%%%%%%%%%%%%%%%%%%%%%%%%%%%%%%%%%%%%%
\subsection{The stabilizer is discrete}
\label{subsec-stab-discrete}
%%%%%%%%%%%%%%%%%%%%%%%%%%%%%%%%%%%%%%%%%%%%%%%

To construct our nilmanifold $G/\Gamma$, recall we plan to use $G = \Aut_1^\circ(X)$ and $\Gamma = \stab(x_0)$ for some $x_0 \in X$.  So, we will need to know that this $\Gamma$ is discrete.

\begin{lemma}
  \label{stab-discrete}
  Let $X$ be a \uppar{compact, ergodic} nilspace of degree $s$ whose structure groups $A_r(X)$ are Lie, and fix any $x_0 \in X$.  Then $\stab(x_0) \subseteq \Aut_1(X)$ is discrete.
\end{lemma}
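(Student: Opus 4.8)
The plan is to induct on the degree $s$ of the nilspace, using the canonical factor $\pi_{s-1}\colon X\to \pi_{s-1}(X)$ and the push-down homomorphism $\pi_\ast\colon\Aut_1(X)\to\Aut_1(\pi_{s-1}(X))$ established above. The base case $s=0$ is trivial since then $X$ is a point. For the inductive step, suppose $\phi\in\stab(x_0)$ is sufficiently close to the identity; I want to conclude $\phi=\id$. First apply $\pi_\ast$: the image $\pi_\ast(\phi)\in\Aut_1(\pi_{s-1}(X))$ fixes $\pi_{s-1}(x_0)$ (since $\phi$ fixes $x_0$), and is close to the identity because $\pi_\ast$ is continuous. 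By the inductive hypothesis applied to $\pi_{s-1}(X)$ — whose structure groups $A_r(\pi_{s-1}(X))=A_r(X)$ for $r\le s-1$ are Lie — the stabilizer of $\pi_{s-1}(x_0)$ in $\Aut_1(\pi_{s-1}(X))$ is discrete, so $\pi_\ast(\phi)=\id$. This means $\phi$ projects to the identity, i.e.\ $\pi_{s-1}\circ\phi=\pi_{s-1}$, so $\phi$ preserves each fiber of $\pi_{s-1}$.

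The second part of the argument handles such fiber-preserving $\phi$. Since the fibers of $\pi_{s-1}$ are the orbits of the $A_s(X)$-action (by \cite{GMV1}*{Theorem 5.4}), for each $x\in X$ there is a unique $\alpha(x)\in A_s(X)$ with $\phi(x)=\alpha(x).x$; one checks $\alpha\colon X\to A_s(X)$ is continuous, and it is constrained by the $1$-translation property of $\phi$. Here is where one exploits that $A_s(X)$ is a \emph{Lie} group: a compact abelian Lie group has an open neighborhood of the identity containing no nontrivial subgroup, and more usefully its continuous functions into a small neighborhood of $0$ that arise this way are rigid. The cube condition for $\phi$ being a $1$-translation — apply Proposition \ref{prop:k-translation-equiv} with $k=1$, so $\llcorner^1(c;\phi(c))\in C^{s+1}(X)$ for every $c\in C^s(X)$ — together with the fact that $\phi$ fixes $x_0$, should force $\alpha$ to be a cocycle-type map that, being continuous and valued in a small neighborhood of $0\in A_s(X)$ while vanishing at one point, must be identically $0$. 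Translating: $\phi=\id$.

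The cleanest way to see the rigidity is: because $\alpha(x_0)=0$ and $\alpha$ takes values near $0$, and because the $1$-translation condition relates $\alpha$ on the vertices of any $c\in C^s(X)$ to a cube condition in $A_s(X)$, one gets that $\alpha$ respects the structure of $s$-cubes in a way that (using ergodicity and connectivity of the space of cubes through $x_0$) propagates the value $0$ from $x_0$ to a neighborhood, hence everywhere by a clopen argument; then $\phi=\id$ on all of $X$. Alternatively, one observes $t_{\alpha(x)}^{-1}\phi$ need not itself be a translation, so instead one argues directly: pick $x$ near $x_0$, build a short cube-path from $x_0$ to $x$ inside a fiber-by-fiber neighborhood, and use that each cube condition pins down $\alpha$ at the new vertex from its values at the old ones plus a small correction valued in $A_s(X)$, which a connectedness/continuity argument shows must be $0$.

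The main obstacle I expect is this second step — extracting genuine rigidity of the map $\alpha\colon X\to A_s(X)$ from the purely local hypothesis ``$\phi$ is close to $\id$.'' Discreteness of $A_s(X)$ alone is too weak; one really needs the combinatorial cube constraints to force $\alpha$ to be (locally) a homomorphism-like object that vanishes once it vanishes at a point. The key leverage is that $A_s(X)$ is Lie (so there is a no-small-subgroups neighborhood), that $X$ is connected and ergodic (so cubes through $x_0$ spread everywhere), and Proposition \ref{prop:k-translation-equiv} to keep the cube conditions manageable; the induction on $s$ reduces everything to the top structure group, which is the only place the hypotheses get used nontrivially.
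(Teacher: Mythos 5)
Your overall skeleton matches the paper's proof exactly: induct on $s$, push $\phi$ down via $\pi_\ast$ to reduce (by the inductive hypothesis applied to $\pi_{s-1}(X)$ and discreteness of its stabilizer) to the case $\pi_\ast(\phi)=\id$, then introduce the continuous $\alpha\colon X\to A_s(X)$ with $\phi(x)=\alpha(x).x$ and unwrap the $1$-translation condition via Proposition~\ref{prop:k-translation-equiv} into the vanishing of the alternating sum $\partial^s\alpha$ on $s$-cubes. Up to this point you are on exactly the paper's track. (A minor thing you omit: the paper notes $\alpha$ factors through $\pi_{s-1}$ because $\phi$ commutes with the central $A_s$-action, but this is a convenience, not the crux.)

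The genuine gap is the rigidity step you flag yourself as ``the main obstacle'': showing that a continuous $\alpha\colon X\to A_s$ with values in a small ball, $\alpha(x_0)=0$ and $\partial^s\alpha\equiv 0$ must vanish identically. Your proposed ``clopen propagation'' argument does not work. First, $X$ is \emph{not} assumed connected in Lemma~\ref{stab-discrete} (no strong-connectivity hypothesis appears there), so a clopen argument has nothing to bite on. Second, even granting connectedness, the propagation step is not justified: $\partial^s\alpha(c)=0$ lets you deduce $\alpha(c(\omega))=0$ at one vertex only when you already know $\alpha$ vanishes at the other $2^s-1$ vertices, and there is no reason cubes through a nearby point $y$ should have all their other vertices already in $\alpha^{-1}(0)$. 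The paper resolves this via the separately-proved cocycle uniqueness theorem (Theorem~\ref{thm:cocycle-uniqueness}), whose mechanism is entirely different: one fixes a fiber of $\pi_{s-1}$, identifies it with $\cD_s(A_s)$, reduces to the polynomial setting $\cD_1(G)$ via Lemma~\ref{lem:uniqueness-d1-case}, and there argues by induction on $\ell$ that the $t$-derivative $\alpha_t$ is a small-valued group homomorphism $G\to A$, hence trivial by the no-small-subgroups property of compact abelian Lie groups (Lemma~\ref{lem:homs-discrete}); this then chains up the canonical tower $X\to\pi_{s-1}(X)\to\cdots\to\{\ast\}$. That argument never uses topological connectedness. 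So the right fix for your proposal is to cite (or reprove) Theorem~\ref{thm:cocycle-uniqueness} in place of the clopen sketch.
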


Equivalently (since $\Aut_1(X)$ carries the supremum metric) we wish to show that if $\phi$ is a $1$-translation other than the identity and $\phi(x_0) = x_0$ then $\|\phi\| = \sup_{x \in X} d(x, \phi(x)) \ge \delta$ for some constant $\delta$ independent of $\phi$ (and, in fact, $x_0$).

\begin{proof}[Proof of Lemma \ref{stab-discrete}]
  We proceed by induction on $s$.  The case $s=0$ is a triviality (since a $0$-step compact ergodic nilspace is just the one-point space $\{\ast\}$).

  Again we consider $\pi_\ast \colon \Aut_1(X) \to \Aut_1(\pi_{s-1}(X))$.  It is clear from the definition that $\pi_\ast$ maps $\stab(x_0)$ into $\stab(\pi_{s-1}(x_0))$.  By inductive hypotheses, $\stab(\pi_{s-1}(x_0))$ is discrete; so we conclude that if $\phi \in \Aut_1(X)$ and $\|\phi\|$ is sufficiently small, then $\phi \in \ker(\pi_\ast)$.  Equivalently, $y$ and $\phi(y)$ lie in the same fiber for all $y \in X$.

  By the weak structure theory, for each $y \in X$ there is an unique $a \in A_s$ such that $\phi(y) = a . y$.  Hence there is a function $\tau \colon X \to A_s$ such that $\phi(y) = \tau(y) . y$.  It is clear $\tau$ is continuous.  Since $\phi$ commutes with the action of $A_s \subseteq \Aut_s(X)$ (by Proposition \ref{prop:structure-group-translations} and Proposition \ref{translation-prefiltration}), in fact $\tau$ factors through $\pi_{s-1}$, i.e.~as a map $X \to \pi_{s-1}(X) \to A_s$.  By abuse of notation we write $\tau$ for this map $\pi_{s-1}(X) \to A_s$.

  We now want to unwrap the fact that $\phi$ is a $1$-translation to a condition on $\tau$.  By Proposition \ref{prop:k-translation-equiv}, this is equivalent to saying that $[c, \tau(c) . c] \in C^{s+1}(X)$ for every $c = C^s(X)$ (where $\tau(c)$ denotes pointwise application).  By the weak structure theorem \cite{GMV1}*{Theorem 5.4}, %REF
  this is equivalent to saying that
  \[
    [\square^s(0),\tau(c)] \in C^{s+1}(\cD_s(A_s(X)))
  \]
  for every $c \in C^n(X)$.

  We recall that $\cD_s(A_s)$ is the cubespace whose base space is the group $A_s$, and whose cubes are $\HK^n(A_s)$ where $A_s$ is given the degree $s$ filtration $A_s = \dots = A_s \supseteq \{0\}$.  We also recall (\cite{GMV1}*{Proposition 5.1}) %REF
  that a configuration $a \colon \{0,1\}^{s+1} \to A_s$ lies in $C^{s+1}(\cD_s(A_s))$ if and only if
  \[
    \sum_{\omega \in \{0,1\}^{s+1}} (-1)^{|\omega|} a(\omega) = 0 \ .
  \]
  It follows that $\phi$ is a $1$-translation if and only if the function
  \begin{align*}
    \rho \colon C^s(\pi_{s-1}(X)) &\to A_s \\
    c &\mapsto \sum_{\omega \in \{0,1\}^s} (-1)^{|\omega|} \tau(c(\omega))
  \end{align*}
  is identically zero.

  We will see in Section \ref{sec:cocycle} that this is a natural statement in terms of the rudimentary cohomology theory expounded there.  Moreover, if $\|\phi\|$ is sufficiently small (i.e.~less than some absolute constant), then $\tau$ also takes values close to $0$ in $A_s$ (i.e.~$d(0, \tau(y)) \le \eps$ for any fixed small $\eps$ and all $y$).  Under these conditions, a rigidity result, Theorem \ref{thm:cocycle-uniqueness}, states that $\tau$ must in fact be constant.

  Since $\phi(x_0) = x_0$ and hence $\tau(x_0) = 0$ by assumption, we must therefore have that $\tau$ is identically zero.  But then $\phi = \id$, as required.
\end{proof}

%%%%%%%%%%%%%%%%%%%%%%%%%%%%%%%%%%%%%%%%%%%%%%%
\subsection{The group \texorpdfstring{$\Aut_1(X)$}{Aut1(X)} is Lie}
%%%%%%%%%%%%%%%%%%%%%%%%%%%%%%%%%%%%%%%%%%%%%%%

Recall that we wish to use $\Aut_1^{\circ}(X)$ as the group $G$ in the definition of a nilmanifold $G / \Gamma$. 
Hence it will be important to verify that this is a Lie group.
In fact, we will prove that $\Aut_1(X)$ is a Lie group, which implies that $\Aut_1^{\circ}(X)$ is also a Lie group.

Once again we stress that our definitions do not require a Lie group to be connected.

\begin{lemma}
  \label{lemma-auts-lie}
  Let $X = (X, C^n(X))$ be a \uppar{compact, ergodic} nilspace of degree $s$, whose structure groups $A_k(X)$ are Lie.  Then $\Aut_1(X)$ is a Lie group.
\end{lemma}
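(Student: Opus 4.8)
The plan is to proceed by induction on the degree $s$, exploiting the canonical factor $\pi_{s-1} \colon X \to \pi_{s-1}(X)$ together with the group homomorphism $\pi_\ast \colon \Aut_1(X) \to \Aut_1(\pi_{s-1}(X))$. The base case $s = 0$ is trivial since $X = \{\ast\}$ and $\Aut_1(X)$ is trivial. For the inductive step, we assume $\Aut_1(\pi_{s-1}(X))$ is a Lie group (note that $\pi_{s-1}(X)$ is a compact ergodic nilspace of degree at most $s-1$ whose structure groups are $A_k(X)$ for $k \le s-1$, hence Lie). The strategy is then to realize $\Aut_1(X)$ as an extension of a Lie group by a Lie group, and invoke the Gleason--Kuranishi extension theorem (Theorem \ref{thm:gleason-extension}) -- in fact only its central-extension case, as flagged in the text.

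First I would analyze the kernel $K = \ker(\pi_\ast)$. As in the proof of Lemma \ref{stab-discrete}, an element $\phi \in K$ is precisely one that preserves every fiber of $\pi_{s-1}$, and by the weak structure theory there is a continuous function $\tau \colon X \to A_s$ with $\phi(y) = \tau(y).y$; moreover $\tau$ factors through $\pi_{s-1}$ (because $\phi$ commutes with the $A_s$-action by Propositions \ref{prop:structure-group-translations} and \ref{translation-prefiltration}), and the $1$-translation condition forces the ``coboundary'' $\rho(c) = \sum_{\omega}(-1)^{|\omega|}\tau(c(\omega))$ to vanish on $C^s(\pi_{s-1}(X))$. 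So $K$ is identified with a closed subgroup of the space of continuous maps $\pi_{s-1}(X) \to A_s$ satisfying this cocycle-type constraint, with group law given by pointwise addition in $A_s$. Since $A_s$ is a compact abelian Lie group, one should be able to show this space of maps is itself a (compact abelian) Lie group -- concretely, the constraint is linear, and one expects the solution space to be finite-dimensional; this is exactly the kind of statement the cocycle theory of Section \ref{sec:cocycle} is built to deliver (cf.\ the rigidity result Theorem \ref{thm:cocycle-uniqueness} used in Lemma \ref{stab-discrete}, and presumably a companion finite-dimensionality statement). In particular $K$ is a compact abelian Lie group, and it is central in $\Aut_1(X)$ because it lies in $\Aut_s(X)$ (being built from the $A_s$-action) and $[\Aut_1(X),\Aut_s(X)] \subseteq \Aut_{s+1}(X) = \{\id\}$ by Proposition \ref{translation-prefiltration} and the degree-$s$ hypothesis.

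Next I would observe that $\Aut_1(X)/K$ embeds as a subgroup of $\Aut_1(\pi_{s-1}(X))$ via $\pi_\ast$, and since the latter is Lie by induction, and $\Aut_1(X)/K$ is a closed subgroup (the image of $\pi_\ast$ is closed by a standard properness/compactness argument, or one can note $\Aut_1(X)$ is completely metrizable and separable by Remark \ref{rem:second-countable-auts} and use that $\pi_\ast$ is continuous with closed image), it too is a Lie group. Thus $\Aut_1(X)$ sits in a central extension
\[
  1 \to K \to \Aut_1(X) \to \pi_\ast(\Aut_1(X)) \to 1
\]
with both outer terms Lie, so $\Aut_1(X)$ is Lie by Gleason--Kuranishi. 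Finally, $\Aut_1^\circ(X)$ is Lie as the identity component of a Lie group.

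The main obstacle, I expect, is establishing that the kernel $K$ is actually a Lie group -- equivalently, that the space of continuous solutions $\tau$ to the linear functional equation (factoring through $\pi_{s-1}$, with vanishing coboundary) is finite-dimensional and carries a compatible Lie structure. This is precisely where the connectivity/Lie hypotheses on the structure groups must be used in an essential way, and it is where the cocycle machinery of Section \ref{sec:cocycle} (existence and uniqueness for the relevant functional equations) does the real work; without it one cannot rule out an infinite-dimensional solution space. A secondary technical point is verifying that $\pi_\ast$ is a closed map (so that its image is a closed, hence Lie, subgroup of $\Aut_1(\pi_{s-1}(X))$); this should follow from compactness of $X$ together with the Arzel\`a--Ascoli-type equicontinuity that the cube structure forces on translations, but it needs to be checked carefully.
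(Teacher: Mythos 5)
Your overall strategy --- induction on $s$, the homomorphism $\pi_\ast \colon \Aut_1(X) \to \Aut_1(\pi_{s-1}(X))$, and Gleason--Kuranishi applied to the extension by $\ker(\pi_\ast)$ --- matches the paper exactly. However, the two sub-claims you flag as ``main obstacles'' are precisely where your argument is incomplete, and the way you suggest resolving them does not work.

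For the kernel $K = \ker(\pi_\ast)$: your idea that $K$ is a compact abelian Lie group because it is cut out by a ``linear constraint'' that ought to be finite-dimensional is not justified, and compactness of $K$ is not actually needed or claimed anywhere in the paper. The paper's resolution is cleaner and different in structure: it shows that $A_s(X)$ (which sits in $K$ as the translations $t_a$) is an \emph{open} subgroup of $K$. This follows from Lemma~\ref{stab-discrete}: given $\phi \in K \setminus A_s(X)$, compose with the unique $t_a$ fixing a chosen point $x$ to get $\phi' = t_a \circ \phi \in \stab(x)$; discreteness of the stabilizer gives $\|\phi'\| \ge \delta$, and a case split on $\|t_a\|$ (using Proposition~\ref{prop:robust-free-action}) forces $\|\phi\|$ to be bounded below. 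Once $A_s(X)$ is open in $K$, the Lie structure on $A_s(X)$ extends to all of $K$ by second countability (Remark~\ref{rem:second-countable-auts}), with no compactness or finite-dimensionality claim required. Note in particular that $K/A_s(X)$ need not be finite, so $K$ need not be compact. Your secondary claim that $K$ is central in $\Aut_1(X)$ because $K \subseteq \Aut_s(X)$ is also unjustified: only the subgroup $A_s(X)$ is known to lie in $\Aut_s(X)$ (Proposition~\ref{prop:structure-group-translations}); a general element of $K$ is of the form $x \mapsto \tau(\pi_{s-1}(x)).x$ with non-constant $\tau$, and there is no reason such a map should be an $s$-translation. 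The paper does not rely on centrality of $K$ --- only of $A_s(X)$, as noted in Remark~\ref{remark-tao}.

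For the image of $\pi_\ast$: your suggestion that it is a \emph{closed} subgroup ``by a standard properness/compactness argument'' or because ``$\pi_\ast$ is continuous with closed image'' is circular and does not stand on its own --- $\Aut_1(X)$ is not compact, so none of the usual closed-image arguments apply automatically. The paper instead uses the much stronger Proposition~\ref{lem:surj} (the translation-lifting/open-mapping result, which is the technical heart of the whole argument) to conclude that the image of $\pi_\ast$ is an \emph{open} subgroup of the Lie group $\Aut_1(\pi_{s-1}(X))$, hence Lie. Openness gives closedness for free, but the converse direction you gesture at is not available without that input. So the real content of this lemma is not visible in your sketch: it is Proposition~\ref{lem:surj} (for the image) and Lemma~\ref{stab-discrete} (for the kernel), both of which ultimately rest on the cocycle rigidity you correctly sense is lurking, but deployed in a specific and essential way.
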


Our approach will be to apply induction on $s$, with the case $s=0$ a trivial base case.  We again consider the map
\[
  \pi_{\ast} \colon \Aut_1(X) \to \Aut_1(\pi_{s-1}(X)) \ .
\]
Since $\pi_{s-1}(X)$ is a compact ergodic nilspace of degree at most $(s-1)$, by inductive hypothesis $\Aut_1(\pi_{s-1}(X))$ is Lie. 
By Proposition \ref{lem:surj}, the image of $\pi_\ast$ is an open subgroup, and so is also Lie.

We will use the following result.
\begin{theorem}[{Gleason--Kuranishi extension theorem}]
\footnote{This is Theorem 3.1 of \cite{G51}. The Kuranishi extension theorem
usually refers to the same statement where it is assumed in addition
that $G$ is locally compact (see \cite{I_MR}). As pointed out by
Gleason, Kuranishi actually proved a weaker statement in \cite{K50}.}
  \label{thm:gleason-extension}
  Let $G$ be a topological group, and suppose there exists a closed normal subgroup $N$ of $G$ such that $N$ and $G/N$ are Lie groups.  Then $G$ is also Lie.
\end{theorem}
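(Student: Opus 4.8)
The plan is to reduce this to the resolution of Hilbert's fifth problem, using the characterisation of Lie groups among locally compact groups as exactly those with no small subgroups (the NSS property). Concretely I would proceed in three steps: first check that $G$ is locally compact and Hausdorff; then check that $G$ is NSS; then quote the Gleason--Yamabe / Montgomery--Zippin theorem that a locally compact Hausdorff group with no small subgroups is a Lie group.

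Hausdorffness is automatic: $N$ is Lie, hence Hausdorff, so $\{1\}$ is closed in $N$; since $N$ is closed in $G$, $\{1\}$ is closed in $G$, and a $T_1$ topological group is Hausdorff. Local compactness is the standard fact that an extension of a locally compact group by a closed normal locally compact subgroup is locally compact: one patches together a compact neighbourhood of $1$ in $N$ with the preimage under the open quotient map $q \colon G \to G/N$ of a compact neighbourhood of the identity in $G/N$ — this is the one step that is not purely formal, but it is entirely classical. For the NSS property I would pick an open neighbourhood $V$ of $1$ in $N$ containing no nontrivial subgroup (possible since $N$ is Lie), write $V = \widetilde{V} \cap N$ with $\widetilde{V}$ open in $G$, and pick an open neighbourhood $\bar U$ of the identity in $G/N$ containing no nontrivial subgroup. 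Setting $U := \widetilde{V} \cap q^{-1}(\bar U)$, an open neighbourhood of $1$ in $G$: if $H \le G$ with $H \subseteq U$, then $q(H) \le G/N$ lies in $\bar U$, so $q(H)$ is trivial, whence $H \subseteq \ker q = N$, and then $H \subseteq U \cap N \subseteq V$, so $H = \{1\}$. Thus $U$ contains no nontrivial subgroup and $G$ is NSS, and the third step concludes.

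The main obstacle is that this third step is a deep theorem used essentially as a black box — it is the heart of the solution of Hilbert's fifth problem (Gleason, Montgomery--Zippin, Yamabe); the paper's footnote directs the reader to the self-contained treatment in \cite{T14}. Since only central extensions are needed here, one could instead argue more explicitly: reduce to the cases $N$ discrete central (where $q$ is a covering map, so the manifold structure of $G/N$ lifts to $G$) and $N \cong \RR$ or $\TT$ central (where one produces a continuous local section of $q$, extracts a continuous $2$-cocycle $\bar U \times \bar U \to N$, smooths it by averaging using amenability of $N$, and thereby realises $G$ locally as $\bar U \times N$ with a smooth group law). In that alternative the hard points are the existence of a continuous local section and the smoothing step; in the NSS route the hard point is the imported NSS $\Rightarrow$ Lie theorem. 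Either way the substantive content is imported from the Hilbert's-fifth-problem literature, and the only genuinely new verifications are local compactness and the NSS pullback above.
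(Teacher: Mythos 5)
The paper does not prove this statement; it is imported verbatim as Theorem~3.1 of Gleason's 1951 paper \cite{G51}, and Remark~\ref{remark-tao} notes that for the application only central (indeed abelian-by-Lie) extensions are needed, for which the reader can consult the self-contained treatment in \cite{T14}. So there is no in-paper proof to compare against; I can only assess your argument on its own.

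Your route --- show $G$ is Hausdorff, locally compact, and has no small subgroups, then invoke the Gleason--Yamabe characterisation of Lie groups as the NSS locally compact groups --- is correct as a derivation from known theorems. The Hausdorff step is fine (closed point, hence $T_1$, hence $T_2$ in a topological group). The NSS pullback is clean and correct: with $U = \widetilde{V}\cap q^{-1}(\bar U)$ you get $U\cap N = \widetilde V\cap N = V$ since $N\subseteq q^{-1}(\bar U)$, and the rest follows. Local compactness of an extension of a locally compact group by a closed locally compact normal subgroup is indeed classical (Hewitt--Ross (5.25)); your ``patching'' gloss undersells the argument slightly --- the preimage of a compact set under $q$ is not compact, and one must juggle a symmetric neighbourhood $V$ with $\overline{q(V)}$ compact against a neighbourhood $U$ with $\overline{U\cap N}$ compact --- but you correctly flag this as the non-formal step, and it does work.

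The one thing worth being explicit about is that this is a sledgehammer, and in a development-from-scratch sense a slightly awkward one: in the standard proofs of Gleason--Yamabe (including the one in \cite{T14} to which the footnote points), special cases of precisely this extension theorem (central extension by $\mathbb R$ or by a discrete group) appear as lemmas \emph{en route} to establishing NSS~$\Rightarrow$~Lie. So while your argument is logically valid as a consequence of established theorems, it inverts the usual order of proof and would be circular if one tried to use it to bootstrap the theory. Gleason's own 1951 proof is direct (build a local section, cook up a smooth local group law), as is your sketched alternative for the central case; that more elementary route is closer in spirit to what the paper actually relies on.
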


Given this, it suffices to verify that $\ker(\pi_\ast) \le \Aut_1(X)$ is a Lie group.  We know that the copy of $A_s(X)$ in $\Aut_1(X)$ (acting by $t_a \colon x \mapsto a . x$) is contained in $\ker(\pi_\ast)$, since it acts on each fiber of $\pi_{s-1}$.  Hence it will suffice to show:

\begin{lemma}
  The subgroup $A_s(X) \le \ker(\pi_\ast)$ is open.
\end{lemma}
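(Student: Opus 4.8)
The plan is to show that $A_s(X)$ contains an open neighbourhood of the identity in $\ker(\pi_\ast)$; since $A_s(X)$ is already known to be a subgroup of $\ker(\pi_\ast)$, this is equivalent to the asserted openness. The argument is essentially a re-reading of the proof of Lemma \ref{stab-discrete}, so I will keep it brief.

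First I would note that $\ker(\pi_\ast)$ consists exactly of those $\phi \in \Aut_1(X)$ that preserve every fibre of $\pi_{s-1}$. For such a $\phi$, the weak structure theory (the $A_s$-action on each fibre being free) yields, just as in the proof of Lemma \ref{stab-discrete}, a continuous map $\tau \colon \pi_{s-1}(X) \to A_s(X)$ with $\phi(y) = \tau(\pi_{s-1}(y)).y$ for all $y \in X$; it factors through $\pi_{s-1}$ because $\phi$ commutes with the $A_s$-action (by Propositions \ref{prop:structure-group-translations} and \ref{translation-prefiltration}), exactly as there. The one new bookkeeping observation — the only point that differs from Lemma \ref{stab-discrete} — is that $\phi$ lies in $A_s(X)$ precisely when $\tau$ is constant: if $\tau \equiv a$ then $\phi = t_a \in A_s(X)$, and conversely $t_a$ gives $\tau \equiv a$.

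Next, still following the proof of Lemma \ref{stab-discrete}, the $1$-translation property for $\phi$ is equivalent (via Proposition \ref{prop:k-translation-equiv} and the weak structure theorem) to the cocycle identity $\rho(c) := \sum_{\omega \in \{0,1\}^s}(-1)^{|\omega|}\tau(c(\omega)) = 0$ for all $c \in C^s(\pi_{s-1}(X))$; and by the same compactness argument used there (the free $A_s$-actions on the fibres vary continuously over the compact base $\pi_{s-1}(X)$), there is an absolute $\delta>0$ such that $\|\phi\| < \delta$ forces $\tau$ to take values in an arbitrarily small neighbourhood of $0 \in A_s(X)$. Under these hypotheses the rigidity statement Theorem \ref{thm:cocycle-uniqueness} applies and forces $\tau$ to be constant, whence $\phi \in A_s(X)$. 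Thus the $\delta$-ball about the identity in $\ker(\pi_\ast)$ is contained in $A_s(X)$, which is therefore open. There is no genuine obstacle here: all of the analytic content is already present in the proof of Lemma \ref{stab-discrete}, and the only subtlety is recognising that, without the normalisation $\tau(x_0)=0$ imposed there, the conclusion of the rigidity argument weakens from ``$\tau \equiv 0$'' (i.e.\ $\phi = \id$) to ``$\tau$ constant'' (i.e.\ $\phi \in A_s(X)$), which is exactly what is needed.
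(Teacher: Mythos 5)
Your argument is correct, and it reaches the conclusion by a genuinely different route from the paper's. The paper treats Lemma \ref{stab-discrete} as a black box: given $\phi \in \ker(\pi_\ast) \setminus A_s(X)$, it picks a point $y_0$, chooses $a$ with $a.\phi(y_0)=y_0$, notes $\phi'=t_a\circ\phi \in \stab(y_0)\setminus\{\id\}$, applies Lemma \ref{stab-discrete} to get $\|\phi'\|\ge\delta$, and then does a short case analysis on $\|t_a\|$ (using compactness and freeness of the $A_s$-action) to bound $\|\phi\|$ below. You instead re-run the \emph{interior} of the proof of Lemma \ref{stab-discrete}: form $\tau\colon\pi_{s-1}(X)\to A_s(X)$, use Proposition \ref{prop:k-translation-equiv} and the weak structure theorem to get $\partial^s\tau\equiv 0$, use Proposition \ref{prop:robust-free-action} to make $\tau$ small when $\|\phi\|$ is small, and invoke Theorem \ref{thm:cocycle-uniqueness} directly to conclude $\tau$ is constant, hence $\phi\in A_s(X)$. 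Your key observation is exactly right: Lemma \ref{stab-discrete}'s argument already establishes ``$\tau$ constant'', and the normalisation $\tau(x_0)=0$ is the only thing promoting this to ``$\tau\equiv 0$''; dropping the normalisation is precisely what this lemma needs. The paper's version is shorter and more modular (no need to re-open the cocycle machinery), while yours avoids the $\|t_a\|$ case split and makes transparent that both lemmas are instances of a single rigidity phenomenon. One could reasonably refactor the paper so that a single statement (``for $\phi\in\ker(\pi_\ast)$ with $\|\phi\|$ small, $\tau_\phi$ is constant'') implies both Lemma \ref{stab-discrete} and the present lemma as one-line corollaries.
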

\begin{proof}
  Suppose $\phi \in \ker(\pi_\ast) \setminus A_s$.  Fix any point $y \in X$.  Since $y$ and $\phi(y)$ lie in the same fiber of $\pi_{s-1}$, there is an unique $a \in A_s(X)$ such that $y = a . \phi(y)$.  As before write $t_a$ for the translation $x \mapsto a . x$ on $X$, and define $\phi'(y) = t_a \circ \phi(y)$; hence, $\phi'(x) = x$ and so $\phi' \in \stab(x) \subseteq \Aut_1(X)$.

  By Lemma \ref{stab-discrete}, $\|\phi'\| \ge \delta$ for some constant $\delta$ independent of $\phi$.  If $\|t_a\| \le \delta / 2$, then $\|\phi\| \ge \delta - \delta / 2 = \delta / 2$ and we are done.  If not, i.e.~$\|t_a\| > \delta/2$, then since $A_s(X)$ is compact and acts freely on $X$, we have that $d(x, t_a(x))$ is bounded \emph{below} by a constant depending only on $\delta/2$, and hence $\|\phi\|\ge d(x,t_a(x))$ is bounded below as required.
\end{proof}

We noted above (Remark \ref{rem:second-countable-auts}) that $\Aut(X)$ is second countable, and hence clearly so is the closed subgroup $\ker(\pi_\ast)$.
Moreover, $A_s(X)$ is an open subgroup of $\ker(\pi_\ast)$ that is Lie, and so by standard results in Lie theory we may extend the differentiable structure on $A_s(X)$ to one on all of $\ker(\pi_\ast)$, making the latter into a Lie group.
This completes the proof of Lemma \ref{lemma-auts-lie}.

\begin{remark}\label{remark-tao}
  Noting that the copy of $A_s(X)$ in $\Aut_1(X)$ is central, we have only really used the fact that a central extension of a Lie group by a Lie group is Lie, and that a \emph{discrete} extension of a Lie group is Lie.  The latter fact is straightforward given elementary Lie theory (e.g., assuming something of equivalent strength to Cartan's closed subgroup theorem).  Hence we may rely on the proof in  \cite{T14}*{Theorem 2.6.1} rather than \cite{G51}.

  In fact we know rather more, namely that the middle group $\Aut_1(X)$ in the exact sequence $0\to \ker(\pi_\ast) \to \Aut_1(X) \to \Aut_1(\pi_{s-1}(X))\to 0$ is nilpotent.  It is likely one can obtain a yet more direct and elementary proof of Theorem \ref{thm:gleason-extension} in this special case, but we have not pursued this.
\end{remark}

%%%%%%%%%%%%%%%%%%%%%%%%%%%%%%%%%%%%%%%%%%%%%%%
\subsection{Completing the proof of Theorem \ref{thm:main-sc}}
%%%%%%%%%%%%%%%%%%%%%%%%%%%%%%%%%%%%%%%%%%%%%%%

We now have all the ingredients in place to deduce Theorem \ref{thm:main-sc}, as well as the other results stated in Section \ref{statements-subsec}.

Recall that we wished to know that $\Aut_1(X)$ acts transitively on $X$, under suitable hypotheses.  Again it turns out the most natural statement of this is in terms of openness of some map.

\begin{lemma}
  \label{lem:open-evaluation-map}
  Let $X$ be a \uppar{compact, ergodic} nilspace of degree $s$ whose structure groups $A_r(X)$ are Lie, and let $x_0 \in X$ be fixed.  Then the evaluation map
  \begin{align*}
    \ev_{x_0} \colon \Aut_1(X) &\to X \\
    \phi &\mapsto  \phi(x_0)
  \end{align*}
  is continuous and open.
\end{lemma}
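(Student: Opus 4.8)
The natural strategy is the same inductive-on-degree scheme used throughout this section, combined with Proposition~\ref{lem:surj} and a local surjectivity statement on fibers. Continuity of $\ev_{x_0}$ is immediate from the definition of the supremum metric on $\Aut_1(X)$. For openness it suffices, since $\Aut_1(X)$ is a topological group, to show that $\ev_{x_0}$ maps a neighbourhood of the identity to a neighbourhood of $x_0$; equivalently, that there is some $\delta>0$ such that every $x\in X$ with $d(x,x_0)\le\delta$ is of the form $\phi(x_0)$ for some $\phi\in\Aut_1(X)$ small in the supremum norm. The base case $s=0$ is trivial, as the only nilspace is the point.

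\textbf{Inductive step.} Assume the result for degree $s-1$, so that $\ev_{\pi_{s-1}(x_0)}\colon\Aut_1(\pi_{s-1}(X))\to\pi_{s-1}(X)$ is open. Given $x$ close to $x_0$, first apply the inductive hypothesis to the image $\pi_{s-1}(x)\in\pi_{s-1}(X)$, which is close to $\pi_{s-1}(x_0)$: we obtain a small $1$-translation $\psi\in\Aut_1(\pi_{s-1}(X))$ with $\psi(\pi_{s-1}(x_0))=\pi_{s-1}(x)$. By Proposition~\ref{lem:surj} the homomorphism $\pi_\ast\colon\Aut_1(X)\to\Aut_1(\pi_{s-1}(X))$ is open, so if $\psi$ is small enough it lifts to a small $\tilde\psi\in\Aut_1(X)$ with $\pi_\ast(\tilde\psi)=\psi$. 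Then $\tilde\psi(x_0)$ and $x$ lie in the same fiber of $\pi_{s-1}$, so by the weak structure theory (\cite{GMV1}*{Theorem 5.4}) there is a unique $a\in A_s(X)$ with $a.\tilde\psi(x_0)=x$; moreover $a$ is close to $0$ because $\tilde\psi(x_0)$ is close to $x_0$ (continuity of $\ev_{x_0}$ for $\tilde\psi$) which is close to $x$, and the action of the compact Lie group $A_s$ on $X$ is free and proper. By Proposition~\ref{prop:structure-group-translations} the translation $t_a$ lies in $\Aut_s(X)\subseteq\Aut_1(X)$, and $\|t_a\|$ is small since $a$ is small. Setting $\phi=t_a\circ\tilde\psi\in\Aut_1(X)$ we get $\phi(x_0)=x$, and $\|\phi\|\le\|t_a\|+\|\tilde\psi\|$ is small. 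Tracking the quantitative dependencies through the chain (each ``small enough'' threshold depending only on $X$, $s$, and the previous constants) shows that there is a single $\delta>0$ for which this works for all $x$ with $d(x,x_0)\le\delta$, giving openness at $x_0$; translating by group elements gives openness everywhere.

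\textbf{Main obstacle.} The technical crux is controlling the norms quantitatively along the chain: one needs that a small $\psi$ on $\pi_{s-1}(X)$ admits a small lift $\tilde\psi$ (this is exactly the content of Proposition~\ref{lem:surj} giving an \emph{open} map rather than merely surjective onto an open subgroup), and that the resulting fiber-correction element $a\in A_s$ is correspondingly small --- for which one uses that $A_s(X)$ is a compact Lie group acting freely, so the orbit map is a homeomorphism near the identity and hence $d(0,a)$ is controlled by $d(x,\tilde\psi(x_0))$. None of these steps is deep given the machinery already assembled, but assembling the $\delta$'s in the right order (and making sure the inductive constant for degree $s-1$ feeds correctly into the threshold at which Proposition~\ref{lem:surj} applies) is where the care is needed.
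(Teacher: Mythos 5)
Your proposal is correct and follows essentially the same route as the paper: continuity is immediate from the metric, and openness is proved by induction on the degree $s$, pushing down to $\pi_{s-1}(X)$, lifting a small translation via Proposition~\ref{lem:surj}, and correcting within the fiber using the free $A_s(X)$-action together with Proposition~\ref{prop:robust-free-action}. The paper merely packages the induction through the slightly stronger auxiliary Lemma~\ref{lem:inductive-open-evaluation-map} (which it also needs elsewhere); your argument is in effect the $k=0$ case of that lemma, made self-contained by the left-translation reduction to openness at the identity.
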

Equivalently, this states that if two points $x, y \in X$ are very close together, then there is a small $1$-translation $\phi$ such that $\phi(x) = y$.

For inductive reasons we will verify the following slight strengthening.

\begin{lemma}
  \label{lem:inductive-open-evaluation-map}
  Let $X$ be as in Lemma \ref{lem:open-evaluation-map}.  Let $0 \le k < s$ be fixed.  For all $\eps > 0$ there exists $\delta > 0$ such that the following holds: if $x, y \in X$, $d(x,y) \le \delta$ and $\pi_k(x) = \pi_k(y)$, then there exists $\phi \in \Aut_{k+1}(X)$, $\|\phi\| \le \eps$ such that $\phi(x) = y$.
\end{lemma}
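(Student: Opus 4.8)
The plan is to induct on $s$, with the case $s=0$ trivial (the space is a point). For the inductive step, fix $k$ with $0 \le k < s$ and $\eps > 0$, and consider two points $x,y \in X$ with $\pi_k(x) = \pi_k(y)$ and $d(x,y)$ small. The strategy is a two-step construction mirroring the informal outline: first use the inductive hypothesis (applied to the degree-$(s-1)$ nilspace $\pi_{s-1}(X)$, noting its structure groups $A_r(\pi_{s-1}(X)) \cong A_r(X)$ for $r \le s-1$ are Lie) to produce a small $(k+1)$-translation $\bar\phi$ of $\pi_{s-1}(X)$ with $\bar\phi(\pi_{s-1}(x)) = \pi_{s-1}(y)$; then lift $\bar\phi$ to a $(k+1)$-translation $\phi_1$ of $X$ via Proposition \ref{lem:surj} (the image of $\pi_\ast$ is open, so sufficiently small $\bar\phi$ lift, and one can arrange the lift to be small too); finally, after replacing $x$ by $\phi_1(x)$ we are reduced to the case $\pi_{s-1}(x) = \pi_{s-1}(y)$, where $x$ and $y$ lie in the same $A_s$-fiber, so $y = a.x$ for a unique small $a \in A_s(X)$, and $\phi_2 := t_a \in \Aut_s(X) \subseteq \Aut_{k+1}(X)$ (using $k+1 \le s$) does the job; take $\phi = \phi_2 \circ \phi_1$.

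In more detail, I would organize the quantifiers as follows. Given $\eps$, first invoke Proposition \ref{lem:surj}: openness of $\pi_\ast \colon \Aut_{k+1}(X) \to \Aut_{k+1}(\pi_{s-1}(X))$ gives a $\delta_1 > 0$ such that any $\bar\phi$ with $\|\bar\phi\| \le \delta_1$ admits a lift $\phi_1 \in \Aut_{k+1}(X)$ with $\|\phi_1\| \le \eps/2$ (shrink the neighbourhood in the source so the preimage neighbourhood sits inside the $\eps/2$-ball; here one should be slightly careful that $\pi_\ast$ being an open homomorphism of topological groups lets us choose the lift with controlled norm — this is where one uses that $\pi_\ast$ maps a neighbourhood basis of the identity to a neighbourhood basis of the identity in its open image). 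Then apply the inductive hypothesis to $\pi_{s-1}(X)$ with $\eps$ replaced by $\delta_1$: there is $\delta_2 > 0$ such that $d_{\pi_{s-1}(X)}(\bar x,\bar y) \le \delta_2$ and $\pi_k(\bar x) = \pi_k(\bar y)$ imply the existence of $\bar\phi \in \Aut_{k+1}(\pi_{s-1}(X))$, $\|\bar\phi\| \le \delta_1$, with $\bar\phi(\bar x) = \bar y$. Since $\pi_{s-1}$ is $1$-Lipschitz (or at least uniformly continuous), choosing $\delta$ small enough that $d(x,y) \le \delta$ forces $d_{\pi_{s-1}(X)}(\pi_{s-1}(x),\pi_{s-1}(y)) \le \delta_2$, we obtain $\bar\phi$ and hence $\phi_1$. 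Replacing $x$ by $\phi_1(x)$ (which moves $x$ by at most $\eps/2$, and which now satisfies $\pi_{s-1}(\phi_1(x)) = \bar\phi(\pi_{s-1}(x)) = \pi_{s-1}(y)$), we are left to close the $A_s$-fiber gap. Because $A_s(X)$ acts freely and continuously on $X$, there is a modulus-of-continuity bound: if $d(\phi_1(x), y)$ is small then the unique $a \in A_s$ with $a.\phi_1(x) = y$ satisfies $\|t_a\| \le \eps/2$ (using compactness of $A_s$ and freeness, as in the proof of the previous lemma that $A_s$ is open in $\ker \pi_\ast$). Then $\phi := t_a \circ \phi_1 \in \Aut_{k+1}(X)$ has $\|\phi\| \le \eps$ and $\phi(x) = y$.

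The main obstacle, and the only genuinely nontrivial input, is Proposition \ref{lem:surj} — that $\pi_\ast$ is open, equivalently that small translations downstairs lift — which the paper has deferred and which occupies the bulk of the subsequent sections; everything else in this lemma is bookkeeping with moduli of continuity, the free action of the compact group $A_s$, and the inductive hypothesis. A secondary point requiring mild care is ensuring the lift $\phi_1$ can be chosen with small norm rather than merely existing: this follows because an open continuous homomorphism onto its (open) image is, near the identity, a quotient map of topological groups, so preimages of small neighbourhoods contain small neighbourhoods — but one must make sure not to conflate the possibly-incomplete supremum metric with the genuine topology (cf. Remark \ref{rem:second-countable-auts}), phrasing the argument in terms of neighbourhood bases rather than metric balls where necessary. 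Finally, one should double-check the edge case $k+1 = s$, where $\phi_2 = t_a \in \Aut_s(X)$ is exactly a $(k+1)$-translation and no further argument is needed, and the case where $\pi_{s-1}(x)$ already equals $\pi_{s-1}(y)$, in which $\phi_1$ may be taken trivial.
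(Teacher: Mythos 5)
Your proof is correct and follows essentially the same route as the paper's: induct on $s$, use the inductive hypothesis on $\pi_{s-1}(X)$ to move to the right fiber, lift via Proposition~\ref{lem:surj} (quantitatively, via openness of $\pi_\ast$), and then close the remaining gap within the $A_s$-fiber using the free action and Proposition~\ref{prop:robust-free-action}. The paper splits explicitly into the cases $k=s-1$ and $k<s-1$ where you treat them uniformly and flag the edge case at the end, and you spell out the quantifier bookkeeping ($\eps\to\delta_1\to\delta_2\to\delta$) in more detail than the paper does, but these are presentational differences only.
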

\begin{proof}
  Essentially this just combines Proposition \ref{prop:structure-group-translations} with Proposition \ref{lem:surj}.  We proceed by induction on $s$, and note that the case $s=0$ is trivial.

  If $k = s-1$, then the result is immediate from Proposition \ref{prop:structure-group-translations}: we know $\pi_{s-1}(x) = \pi_{s-1}(y)$, so we can set $\phi = t_a$ for the unique $a \in A_s$ such that $a.x = y$, and so $\phi \in \Aut_s(X)$ as required.  The bound on $\|t_a\|$ follows from Proposition \ref{prop:robust-free-action}.

  If $k < s-1$, we apply induction on $s$ to obtain $\tilde{\phi} \in \Aut_{k+1}(\pi_{s-1}(X))$ such that $\tilde{\phi}(\pi_{s-1}(x)) = \pi_{s-1}(y)$; and $\|\tilde{\phi}\|$ can be made arbitrarily small if $d(x,y)$ is small enough.

  By Proposition \ref{lem:surj}, we may obtain a lift $\phi' \in \Aut_{k+1}(X)$ of $\tilde{\phi}$ (i.e.~such that $\pi_\ast(\phi') = \tilde{\phi}$) where again $\|\phi'\|$ can made as small as we like.

  We now know that $\pi_{s-1}(\phi'(x)) = \pi_{s-1}(y)$.  Finally, we note that $d(\phi'(x), y) \le d(x,y) + \|\phi'\|$ which is still as small as we like, so we again apply the case $k=s-1$ to obtain $\psi \in \Aut_{s}(X)$ such that $\psi(\phi'(x)) = y$.  So, $\phi := \psi \circ \phi'$ is in $\Aut_{k+1}(X)$ and is still arbitrarily small.
\end{proof}

\begin{proof}[{Proof of Lemma \ref{lem:open-evaluation-map}}]
  Continuity is immediate by the choice of metric on $\Aut_1(X)$.  Since $\pi_0(X) = \{\ast\}$, openness follows immediately from the case $k=0$ of Lemma \ref{lem:inductive-open-evaluation-map}.
\end{proof}

Our final conclusions are phrased in terms of the identity component $\Aut_1^\circ(X)$.  It is fairly painless to deduce useful facts about this from what we have already shown about $\Aut_1(X)$.

\begin{corollary}
  Let $X$ be as in the statement of Lemma \ref{lem:open-evaluation-map}.  Then the identity component $\Aut_1^\circ(X)$ acts transitively on each connected component of $X$.

  Moreover, there are only finitely many connected components of $X$, and each one is open and closed in $X$.
\end{corollary}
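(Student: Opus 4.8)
The plan is to deduce everything from the openness of the evaluation map (Lemma \ref{lem:open-evaluation-map}) together with the fact that $\Aut_1(X)$ is a Lie group (Lemma \ref{lemma-auts-lie}). First I would record that, being Lie, $\Aut_1(X)$ is locally connected, so its identity component $\Aut_1^\circ(X)$ is an open (hence also closed) subgroup of $\Aut_1(X)$. Next, Lemma \ref{lem:open-evaluation-map} applies with an arbitrary base point, so for every $x \in X$ the map $\ev_x \colon \Aut_1(X) \to X$ is continuous and open. Restricting this open map to the open subset $\Aut_1^\circ(X) \subseteq \Aut_1(X)$, we see that the orbit $O_x := \Aut_1^\circ(X)\cdot x = \ev_x(\Aut_1^\circ(X))$ is open in $X$; it is also connected, being the continuous image of the connected set $\Aut_1^\circ(X)$.

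Now the orbits $\{O_x\}_{x\in X}$ form a partition of $X$ into open sets, so each $O_x$ is also closed, being the complement of the union of the remaining orbits. A clopen connected subset of $X$ coincides with the connected component of each of its points: if $O$ is clopen and $x\in O$, then the connected component $C$ of $x$ satisfies $C\subseteq O$ (else $C\cap O$ and $C\setminus O$ would disconnect $C$), while $O\subseteq C$ since $O$ is connected and contains $x$. Hence each connected component of $X$ is exactly one orbit $O_x$, which is precisely the assertion that $\Aut_1^\circ(X)$ acts transitively on each connected component of $X$.

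Finally, since $X$ is compact and the connected components form a cover of $X$ by pairwise disjoint nonempty open sets, a finite subcover already exhausts the whole (disjoint) collection, so there are only finitely many components, and we have already observed that each is open and closed.

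The argument is essentially topological bookkeeping once Lemmas \ref{lem:open-evaluation-map} and \ref{lemma-auts-lie} are in hand; the only place requiring a little care is the passage from the full group $\Aut_1(X)$ to its identity component, which works precisely because $\Aut_1^\circ(X)$ is \emph{open} in $\Aut_1(X)$ (a consequence of $\Aut_1(X)$ being Lie), so that restricting the open map $\ev_x$ to it preserves openness. I do not expect any serious obstacle beyond this.
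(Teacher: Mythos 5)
Your proof is correct and follows essentially the same route as the paper: you use the Lie property of $\Aut_1(X)$ to get that $\Aut_1^\circ(X)$ is open, then the openness of the evaluation map to get open orbits, hence clopen orbits that are connected, and conclude by compactness. Your write-up is a bit more explicit than the paper's (spelling out that a clopen connected set equals the component of each of its points), but the key ingredients and order of deductions are the same.
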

\begin{proof}
  Since $\Aut_1(X)$ is Lie (Lemma \ref{lemma-auts-lie}) its identity component $\Aut_1^\circ(X)$ is open.  Therefore by Lemma \ref{lem:open-evaluation-map}, the orbits of $\Aut_1^\circ(X)$ in $X$ are open, and hence closed (since the orbits partition the space).  It follows that any connected subset of $X$ is contained in a single orbit.

  Since each orbit of $\Aut_1^\circ(X)$ is the image of a connected set under a continuous map and hence connected, we further deduce that every point of $X$ has a connected open neighbourhood, and so all connected components are open.  Since they partition the space, they are therefore closed; and since $X$ is compact, it follows there are only finitely many.
\end{proof}

Combined with Lemma \ref{lemma-auts-lie} and Lemma \ref{stab-discrete}, this is enough to complete the proof of Theorem \ref{weaker-main-thm}.

\bigskip

We now turn to the outstanding statements from Theorem \ref{thm:main-sc}.  Essentially, we still have said nothing about the cubes $C^n(X)$ in terms of the filtration $\Aut_k(X)$.  To do so, we need to generalize several of the arguments above that concerned $1$-translations acting on $X$, to statements about the Host--Kra cube group $\HK^n(\Aut_\bullet(X))$ acting on $C^n(X)$.

For instance, the following generalizes Lemma \ref{lem:open-evaluation-map}.

\begin{lemma}
  \label{lem:cube-eval-open}
  Let $X$ be as in Lemma \ref{lem:open-evaluation-map}, and fix $c \in C^n(X)$.  Then the evaluation map
  \begin{align*}
    \ev_c \colon \HK^n(\Aut_\bullet(X)) &\to C^n(X) \\
    (\phi_\omega)_{\omega \in \{0,1\}^n} &\mapsto \left(\omega \mapsto \phi_\omega(c(\omega)) \right)
  \end{align*}
  is open.
\end{lemma}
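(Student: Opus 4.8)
The plan is to prove this by induction on the degree $s$, running the argument in parallel with the proof of Lemma \ref{lem:inductive-open-evaluation-map} but with the translation group $\Aut_1(X)$ acting on $X$ replaced by the Host--Kra cube group $\HK^n(\Aut_\bullet(X))$ acting on $C^n(X)$. First I would record an equivariance reduction: each $(\phi_\omega)_{\omega} \in \HK^n(\Aut_\bullet(X))$ acts on $C^n(X)$ by a homeomorphism (it is the restriction of the homeomorphism $\prod_\omega \phi_\omega$ of $X^{\{0,1\}^n}$, which preserves $C^n(X)$ by Proposition \ref{prop:evaluation-is-morphism}), and one checks directly that $\ev_c(gh) = g . \ev_c(h)$. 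Hence it suffices to show that for every neighbourhood $U$ of the identity in $\HK^n(\Aut_\bullet(X))$ the set $\ev_c(U)$ is a neighbourhood of $c$ in $C^n(X)$; equivalently, given $c' \in C^n(X)$ sufficiently close to $c$, we must produce a small $(\phi_\omega)_{\omega} \in \HK^n(\Aut_\bullet(X))$ with $\phi_\omega(c(\omega)) = c'(\omega)$ for all $\omega$. The base case $s = 0$ is trivial. For the inductive step, write $\bar{X} = \pi_{s-1}(X)$, a compact ergodic nilspace of degree $\le s-1$ with Lie structure groups, and $\bar{c} = \pi_{s-1}(c) \in C^n(\bar{X})$.

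The main line of the argument then has two layers, exactly as in Lemma \ref{lem:inductive-open-evaluation-map}. For the first layer, apply the inductive hypothesis to the evaluation map $\ev_{\bar c}$ on $C^n(\bar X)$: since $\pi_{s-1}(c')$ is close to $\bar c$, there is a small $(\bar\phi_\omega)_{\omega} \in \HK^n(\Aut_\bullet(\bar X))$ with $\bar\phi_\omega(\bar c(\omega)) = \pi_{s-1}(c'(\omega))$ for all $\omega$. The new ingredient is the need to lift this configuration to $X$: I claim the homomorphism $\Pi \colon \HK^n(\Aut_\bullet(X)) \to \HK^n(\Aut_\bullet(\bar X))$ obtained by applying $\pi_\ast$ coordinatewise is an open map. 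Granting this, for $c'$ close enough to $c$ we obtain a small $(\phi'_\omega)_{\omega} \in \HK^n(\Aut_\bullet(X))$ with $\pi_\ast(\phi'_\omega) = \bar\phi_\omega$, so that $\pi_{s-1}(\phi'_\omega(c(\omega))) = \bar\phi_\omega(\bar c(\omega)) = \pi_{s-1}(c'(\omega))$. Now the second layer: the cube $(\phi'_\omega(c(\omega)))_{\omega}$ and the cube $c'$ lie over the same element of $C^n(\bar X)$, and are close (since $\|\phi'\|$ and $d(c,c')$ are both small). By the weak structure theorem $C^n(X) \to C^n(\bar X)$ is a principal $C^n(\cD_s(A_s(X)))$-bundle, so there is a unique $a \in C^n(\cD_s(A_s(X)))$ with $c'(\omega) = a(\omega).\phi'_\omega(c(\omega))$, and $a$ is small by Proposition \ref{prop:robust-free-action}. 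Since $a \mapsto t_a$ is a filtered homomorphism from $A_s(X)$, carrying its degree-$s$ filtration, to $\Aut_\bullet(X)$ (because $t_a \in \Aut_s(X)$ for every $a$, by Proposition \ref{prop:structure-group-translations}), it maps Host--Kra cubes to Host--Kra cubes, so $(t_{a(\omega)})_{\omega} \in \HK^n(\Aut_\bullet(X))$, and it is small. Then $(t_{a(\omega)}\phi'_\omega)_{\omega} \in \HK^n(\Aut_\bullet(X))$ is small and carries $c$ to $c'$, completing the induction.

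The step I expect to be the main obstacle is the openness of $\Pi$. The image of $\Pi$ is a subgroup $H \le \HK^n(\Aut_\bullet(\bar X))$, and $H$ contains every face-translation $[\bar\psi]_F$ for which $\bar\psi$ lies in the image of $\pi_\ast \colon \Aut_k(X) \to \Aut_k(\bar X)$, $k$ being the codimension of $F$; by Proposition \ref{lem:surj} this image is an open subgroup of $\Aut_k(\bar X)$, hence a neighbourhood of the identity. So $H$ contains $[\bar\psi]_F$ for all faces $F$ and all sufficiently small $\bar\psi \in \Aut_k(\bar X)$. The key point to verify is then that these small face-translations generate an open subgroup of $\HK^n(\Aut_\bullet(\bar X))$; this is a Lie-theoretic fact about the Host--Kra cube group of the filtered Lie group $\Aut_\bullet(\bar X)$, which is itself a Lie group (a closed subgroup of $\Aut_1(\bar X)^{\{0,1\}^n}$, the latter being Lie by Lemma \ref{lemma-auts-lie}). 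One checks that the Lie algebra of $\HK^n(\Aut_\bullet(\bar X))$ is spanned by the Lie algebras of the face subgroups $[\Aut_k^\circ(\bar X)]_F$ — the model computation being the affine case, where the codimension-$0$ and codimension-$1$ face directions already span — so that a suitable bounded product of small face-translations is a submersion onto a neighbourhood of the identity. Hence $H$ is open. Finally, $\Pi$ is a continuous surjective homomorphism from the second countable, locally compact group $\HK^n(\Aut_\bullet(X))$ onto $H$, which is locally compact (being open in a Lie group), so $\Pi$ is open by the open mapping theorem; combined with the openness of $H$ in $\HK^n(\Aut_\bullet(\bar X))$, this gives exactly the lifting statement used above: sufficiently small elements of $\HK^n(\Aut_\bullet(\bar X))$ admit small lifts.
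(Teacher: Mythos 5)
Your proposal is correct and follows the same overall two--layer inductive strategy as the paper: reduce to a statement near the identity, lift a small Host--Kra configuration from $\pi_{s-1}(X)$ by the inductive hypothesis and the openness of the coordinatewise map $\Pi$, and then correct the remaining discrepancy by a small element of $C^n(\cD_s(A_s(X)))$, using Proposition~\ref{prop:robust-free-action} for the size bound.

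Where you diverge from the paper is in establishing the openness of $\Pi \colon \HK^n(\Aut_\bullet(X)) \to \HK^n(\Aut_\bullet(\pi_{s-1}(X)))$, which you correctly identified as the main new ingredient. The paper simply cites Lemma~\ref{cube-surj-lemma}, whose content is that if $\tau \colon G_\bullet \to H_\bullet$ is a filtered homomorphism between filtered topological groups which is open on each filtration level, then the induced map on Host--Kra cube groups is open (this is Lemma~\ref{lm:hk-homomorphisms}, proved in \cite{GMV1}*{Appendix A} via the canonical ordered factorization of elements of $\HK^n(G_\bullet)$ into face elements); combined with Proposition~\ref{lem:surj}, this gives the openness of $\Pi$ in one line. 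Your alternative is to invoke the Lie structure: show that the Lie algebras of the face subgroups $[\Aut_k^\circ(\bar X)]_F$ span the Lie algebra of $\HK^n(\Aut_\bullet(\bar X))$, conclude that products of small face elements form a neighbourhood of the identity, and then apply the open mapping theorem. This works, but it is heavier machinery than needed and is specific to the Lie setting; moreover the Lie-algebra-span claim, while true, rests on essentially the same ordered factorization of Host--Kra cube groups that underlies the cited lemma, so one is better off using that algebraic fact directly --- it is both simpler and applies to general filtered topological groups. In short: correct, same skeleton, but you reprove a general Host--Kra fact by a Lie-theoretic detour where the paper can just cite it.
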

\begin{remark}
  Recall that this definition makes sense by virtue of Proposition \ref{prop:evaluation-is-morphism}.
\end{remark}

Note this lemma (as well as the previous ones) requires no connectivity assumptions.
Hence this lemma provides a lot of information about cubes in not necessarily strongly connected nilspaces.
In particular it implies that all sufficiently small cubes can be obtained from a constant cube using the action
of the Host Kra cubegroups of $\Aut_\bullet(X)$.
On the other hand, we have no information about large cubes, this is why we need the connectivity
hypotheses in the main results.

The missing ingredients that prevent us from simply repeating the proof of Lemma \ref{lem:open-evaluation-map} are all concerned with topological facts about Host--Kra cube groups.  Everything we will need follows cleanly in turn from the algebraic theory of $\HK^n(G_\bullet)$ expounded in the appendix of \cite{GMV1}.
We recall these results now.

\begin{lemma}[\cite{GMV1}*{Lemma A.12}]\label{lm:hk-homomorphisms} %REF
Let $G_\bullet$, $H_\bullet$ be two filtered topological groups.
Let $\tau:G\to H$ be a homomorphism such that
$\tau(G_i)\subseteq\tau(H_i)$.
Then $\tau$ induces a homomorphism $\tau: \HK^n(G_\bullet)\to\HK^n(H_\bullet)$
for each $n$ by pointwise application on the vertices.

If $\tau:G_i\to H_i$ is open for each $i$, then so is the induced homomorphism
 $\tau: \HK^n(G_\bullet)\to\HK^n(H_\bullet)$
for each $n$.
\end{lemma}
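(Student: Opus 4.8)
The plan is to prove the first claim by a direct generators argument, and then to derive the openness statement from the standard normal-form description of Host--Kra cube groups. For the first claim, note that pointwise application of $\tau$ is a continuous group homomorphism $\tau^{\{0,1\}^n}\colon G^{\{0,1\}^n}\to H^{\{0,1\}^n}$; for a face $F\subseteq\{0,1\}^n$ of codimension $i$ and an element $g\in G_i$ it sends the generator $[g]_F$ of $\HK^n(G_\bullet)$ to $[\tau(g)]_F$, and $\tau(g)\in H_i$ by the hypothesis $\tau(G_i)\subseteq H_i$, so this is a generator of $\HK^n(H_\bullet)$. Since $\HK^n(G_\bullet)$ is by definition the subgroup of $G^{\{0,1\}^n}$ generated by all such $[g]_F$, the homomorphism $\tau^{\{0,1\}^n}$ restricts to a continuous homomorphism $\tau_*\colon\HK^n(G_\bullet)\to\HK^n(H_\bullet)$.

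For openness I would invoke the normal-form parametrization of $\HK^n$. Fixing the vertex $\vec 0\in\{0,1\}^n$, the faces of $\{0,1\}^n$ through $\vec 0$ are indexed by their sets $S\subseteq\{1,\dots,n\}$ of free coordinates, the face $F_S$ having codimension $n-|S|$; fix an appropriate ordering $S_1,\dots,S_{2^n}$ of these sets (by decreasing cardinality, say). The fact needed --- standard in Host--Kra theory, and presumably established earlier in the appendix of \cite{GMV1} --- is that the map
\[
\Phi_{G_\bullet}\colon\ \prod_{j=1}^{2^n}G_{n-|S_j|}\ \longrightarrow\ \HK^n(G_\bullet),\qquad (g_j)_j\ \longmapsto\ [g_1]_{F_{S_1}}\cdots[g_{2^n}]_{F_{S_{2^n}}},
\]
is a homeomorphism onto $\HK^n(G_\bullet)$ with its subspace topology from $G^{\{0,1\}^n}$. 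Granting this, the relations $\tau_*([g]_F)=[\tau(g)]_F$ together with the multiplicativity of $\tau^{\{0,1\}^n}$ give, by a direct computation, $\tau_*\circ\Phi_{G_\bullet}=\Phi_{H_\bullet}\circ\bigl(\prod_{j=1}^{2^n}\tau|_{G_{n-|S_j|}}\bigr)$. Hence $\tau_*$ is conjugate, via the homeomorphisms $\Phi_{G_\bullet}$ and $\Phi_{H_\bullet}$, to the finite product map $\prod_j\tau|_{G_{n-|S_j|}}$, and a finite product of open maps is open (the image of a product of open sets is such a product), so $\tau_*$ is open as required.

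Everything here is formal once the homeomorphism $\Phi_{G_\bullet}$ is granted, so that normal-form fact is the only substantive ingredient and the main obstacle if it had to be proved in place. One would establish it by induction on $n$: that $\Phi_{G_\bullet}$ is a well-defined continuous surjection is easy, while injectivity and continuity of the inverse amount to showing that the coordinates $g_j$ can be recovered from a given cube by successively evaluating on an appropriate sub-face and dividing it out, operations built only from the group operations of $G$ and hence continuous. Choosing the ordering of the $S_j$ so that this recovery is unambiguous is the one delicate point of bookkeeping.
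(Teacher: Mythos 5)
This lemma is cited from \cite{GMV1}*{Lemma A.12} and not proved in the present paper, so there is no in-paper proof to compare against; one can only assess correctness. Your argument is correct and is the expected one. The first part is immediate from the action on generators, exactly as you describe (modulo the obvious typo in the statement, where $\tau(G_i)\subseteq\tau(H_i)$ should read $\tau(G_i)\subseteq H_i$, which you correctly interpret). For openness, the normal-form parametrization of $\HK^n(G_\bullet)$ by $\prod_j G_{n-|S_j|}$ is indeed the substantive ingredient, and it is established in the appendix of \cite{GMV1} precisely so that statements like this follow formally. Your identification of the key points is also accurate: one needs the normal-form map to be a \emph{homeomorphism} rather than merely a bijection (which requires the recovery of coordinates by successive evaluation and division to be continuous, and requires choosing the face ordering so this recovery is well defined), the intertwining identity $\tau_*\circ\Phi_{G_\bullet}=\Phi_{H_\bullet}\circ\prod_j\tau|_{G_{n-|S_j|}}$, and the elementary fact that a finite product of open maps between topological spaces is open. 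Nothing is missing; the only caveat worth flagging explicitly is that the openness of a restriction $\tau|_{G_i}\colon G_i\to H_i$ is a hypothesis about these subgroups with their subspace topologies, which your argument respects since the factors of the normal-form product carry exactly those topologies.
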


\begin{lemma}[\cite{GMV1}*{Lemma A.13}]\label{lm:hk-subgroups} %REF
Let $G_\bullet$, $H_\bullet$ be two filtered topological groups.
Suppose $G_i\subseteq H_i$ for each $i$.
If $G_i$ are open in $H_i$ (resp., connected) for each $i$, then
$\HK^n(G_\bullet)$ is also open in $\HK^n(H_\bullet)$ (resp., connected) for each $n$.
\end{lemma}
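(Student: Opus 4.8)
The plan is to handle the two claims by quite different means: openness falls out essentially for free from Lemma \ref{lm:hk-homomorphisms}, whereas connectedness needs the explicit normal-form description of Host--Kra cube groups recalled from \cite{GMV1}. For openness, I would apply Lemma \ref{lm:hk-homomorphisms} to the inclusion homomorphism $\iota\colon G\hookrightarrow H$; its hypotheses hold since $\iota(G_i)=G_i\subseteq H_i$. The point is that for a subgroup, the inclusion $G_i\hookrightarrow H_i$ is an \emph{open} map precisely when $G_i$ is open in $H_i$ (a subset of $G_i$ open in $G_i$ is then open in $H_i$; conversely, take the image of $G_i$ itself). So under our hypothesis the second part of Lemma \ref{lm:hk-homomorphisms} applies and the induced homomorphism $\HK^n(G_\bullet)\to\HK^n(H_\bullet)$, which is just the inclusion, is an open map; taking the image of the whole space $\HK^n(G_\bullet)$ shows it is open in $\HK^n(H_\bullet)$.

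For connectedness the ambient group plays no role, since a connected subspace of any space is connected; so it is enough to prove $\HK^n(G_\bullet)$ is connected whenever every $G_i$ is. I would invoke the coordinate (normal-form) decomposition of Host--Kra cube groups from the appendix of \cite{GMV1}: writing an element of $\HK^n(G_\bullet)$ as an ordered product of face generators $[g_F]_F$, with $F$ ranging over the faces of $\{0,1\}^n$ through a fixed vertex and $g_F\in G_{\operatorname{codim}F}$, exhibits $\HK^n(G_\bullet)$ as homeomorphic to the finite product $\prod_F G_{\operatorname{codim}F}$; a finite product of connected spaces is connected, and connectedness is a topological invariant. Alternatively one could induct on $n$: restriction to a facet gives a short exact sequence $1\to\HK^{n-1}(\sigma G_\bullet)\to\HK^n(G_\bullet)\to\HK^{n-1}(G_\bullet)\to1$, where $\sigma G_\bullet$ is the shifted filtration $(\sigma G_\bullet)_i=G_{i+1}$, split by the continuous homomorphism extending a facet cube constantly in the new direction; this makes $\HK^n(G_\bullet)$ homeomorphic to $\HK^{n-1}(\sigma G_\bullet)\times\HK^{n-1}(G_\bullet)$, both factors connected by induction, with base case $\HK^0(G_\bullet)=G_0$.

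The only delicate point, and hence the main (fairly mild) obstacle, is checking that the normal-form map is a genuine \emph{homeomorphism} and not merely a set bijection: its continuity is immediate, and continuity of the inverse is exactly the uniqueness half of the normal-form results in \cite{GMV1} (for the inductive alternative, the analogous point is that the facet-restriction sequence splits \emph{topologically}, i.e.\ that the section is continuous and the resulting bijection a homeomorphism). Granting that structural input, both halves of the lemma follow as above, with no genuinely new idea required.
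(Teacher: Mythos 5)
The paper states this lemma only as a citation to \cite{GMV1}*{Lemma A.13} and does not reprove it, so there is no in-paper argument to compare against; I therefore assess your proposal on its own, and it is correct. The openness half is a clean reduction to Lemma~\ref{lm:hk-homomorphisms} applied to the inclusion, using the elementary fact that the inclusion of a subgroup carrying the subspace topology is an open map if and only if the subgroup is open; taking the image of the whole of $\HK^n(G_\bullet)$ under the resulting open homomorphism gives the claim. For connectedness both of your routes are sound, and both rest on structural input from \cite{GMV1}*{Appendix A}: either the normal-form (coordinate) theorem identifying $\HK^n(G_\bullet)$ homeomorphically with $\prod_F G_{\operatorname{codim} F}$ over the upper faces $F$, or the topologically split facet-restriction sequence $1 \to \HK^{n-1}(\sigma G_\bullet) \to \HK^n(G_\bullet) \to \HK^{n-1}(G_\bullet) \to 1$ together with the standard observation that a continuous section to a surjection of topological groups gives a homeomorphism of the middle term with the product of kernel and quotient. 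You correctly flag the one genuine subtlety, namely that these decompositions must be homeomorphisms and not merely set bijections, which is exactly what the cited appendix establishes; and the inductive invariant is preserved since the shifted filtration $\sigma G_\bullet$ still has all of its terms connected, with base case $\HK^0(G_\bullet)=G_0$ covered by the hypothesis. No gap.
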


We note the following consequence.
\begin{lemma}
  \label{cube-surj-lemma}
  Let $X$ be a \uppar{compact, ergodic} nilspace of degree $s$ whose structure groups $A_r(X)$ are Lie groups.  Let $\pi_{s-1} \colon X \to \pi_{s-1}(X)$ denote the canonical projection, and $\pi_\ast \colon \Aut_k(X) \to \Aut_k(\pi_{s-1}(X))$ the canonical homomorphism described previously.

  Then
  \[
    \pi_\ast \colon \HK^n(\Aut_\bullet(X)) \to \HK^n(\Aut_\bullet(\pi_{s-1}(X)))
  \]
  is an open map.
\end{lemma}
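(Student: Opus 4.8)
The plan is to deduce this purely formally from the results already collected, namely Lemma \ref{lm:hk-homomorphisms} together with Proposition \ref{lem:surj}. Write $G_\bullet = \Aut_\bullet(X)$ and $H_\bullet = \Aut_\bullet(\pi_{s-1}(X))$, so $G_i = \Aut_i(X)$ and $H_i = \Aut_i(\pi_{s-1}(X))$ for $i \ge 1$ (recall $\Aut_\bullet$ denotes the group $\Aut_1$ with the induced filtration $\Aut_1 \supseteq \Aut_2 \supseteq \cdots$). The map $\pi_\ast \colon \Aut_1(X) \to \Aut_1(\pi_{s-1}(X))$ is a continuous group homomorphism, and it restricts to a homomorphism $\pi_\ast \colon \Aut_i(X) \to \Aut_i(\pi_{s-1}(X))$ for each $i$; this is exactly the hypothesis $\tau(G_i) \subseteq \tau(H_i)$ (indeed $\pi_\ast(G_i) \subseteq H_i$) needed to invoke Lemma \ref{lm:hk-homomorphisms}, so $\pi_\ast$ induces a homomorphism $\HK^n(\Aut_\bullet(X)) \to \HK^n(\Aut_\bullet(\pi_{s-1}(X)))$ by pointwise application.

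To get openness of the induced map, the second half of Lemma \ref{lm:hk-homomorphisms} says it suffices to know that $\pi_\ast \colon \Aut_i(X) \to \Aut_i(\pi_{s-1}(X))$ is open for each $i \ge 1$. But this is precisely Proposition \ref{lem:surj}, applied with $k = i$: for every $k \ge 1$ the homomorphism $\pi_\ast \colon \Aut_k(X) \to \Aut_k(\pi_{s-1}(X))$ is an open map, and the structure groups of $X$ are Lie by hypothesis, so the proposition applies. (One should note that the filtration only runs up to degree $s$ in the relevant sense, so only finitely many $i$ need to be checked — for $i > s$ both groups are trivial by Proposition \ref{prop-translations-terminate} and the statement is vacuous — but this is not even needed since Proposition \ref{lem:surj} covers all $k \ge 1$ uniformly.) Feeding this into Lemma \ref{lm:hk-homomorphisms} yields that the induced map $\pi_\ast \colon \HK^n(\Aut_\bullet(X)) \to \HK^n(\Aut_\bullet(\pi_{s-1}(X)))$ is open, which is the claim.

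There is essentially no obstacle here: the lemma is a bookkeeping consequence of the two cited results. The only point requiring a moment's care is the bookkeeping about filtration degrees and about which $\Aut_k$ sits in which — one must make sure that the filtration on $\Aut_\bullet(X)$ used to form $\HK^n$ is the one whose $i$-th term is $\Aut_i(X)$ (for $i \ge 1$), so that Proposition \ref{lem:surj} supplies openness at exactly the levels Lemma \ref{lm:hk-homomorphisms} asks about. Since the induced homomorphism on Host--Kra groups is also a cubespace morphism in the sense discussed earlier, one could equivalently phrase the argument via Lemma \ref{lm:hk-subgroups} applied to the image, but the direct route through Lemma \ref{lm:hk-homomorphisms} is cleanest and is what I would write up.
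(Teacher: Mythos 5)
Your argument is exactly the one the paper gives: the paper's proof is simply ``This follows from Proposition~\ref{lem:surj} and Lemma~\ref{lm:hk-homomorphisms},'' and you have spelled out precisely how those two results combine. Correct, and same approach.
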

\begin{proof}
  This follows from Proposition \ref{lem:surj} and Lemma \ref{lm:hk-homomorphisms}.
\end{proof}

\begin{proof}[Proof of Lemma \ref{lem:cube-eval-open}]
  We follow the proof of Lemma \ref{lem:open-evaluation-map} very closely; in fact, that result corresponds precisely to the case $n=0$ of this.  As ever, we induct on $s$, the degree of $X$, the case $s=0$ being trivial.

  It suffices to show that for any $\eps > 0$ there exists a $\delta > 0$ such that if $c_1, c_2 \in C^n(X)$ and $d(c_1,c_2) \le \delta$ then there exists $\phi \in \HK^n(\Aut_\bullet(X))$, $\|\phi\| \le \eps$ such that $\phi(c_1) = c_2$.

  By inductive hypothesis, the map $\ev_{\pi_{s-1}(c)} \colon \HK^n(\Aut_\bullet(\pi_{s-1}(X))) \to C^n(\pi_{s-1}(X))$ is open, and by Lemma \ref{cube-surj-lemma} so is $\pi_\ast \colon \HK^n(\Aut_\bullet(X)) \to \HK^n(\Aut_\bullet(\pi_{s-1}(X)))$.  As before, we consider the composite and deduce that, for suitably chosen $\delta$, there exists $\phi' \in \HK^n(\Aut_\bullet(X))$, $\|\phi'\| \le \eta$ such that $\pi_{s-1}(\phi'(c_1)) = \pi_{s-1}(c_2)$, where $0 < \eta \le \eps / 2$ is some parameter to be determined.

  Now, we have that $d(\phi'(c_1), c_2) \le \delta + \eta$ and moreover these lie in the same fiber of $\pi_{s-1}$.  But we recall that the fibers of $C^n(X) \to C^n(\pi_{s-1}(X))$ are completely described by the weak structure theory (\cite{GMV1}*{Theorem 5.4(ii)}): %REF
   specifically, $C^n(\cD_s(A_s(X)))$ acts simply transitively on each fiber.  We again use $t_a$ to denote this action, and crucially observe that it corresponds to an element of $\HK^n(\Aut_\bullet(X))$ (since $C^n(\cD_s(A_s(X)))$ is defined as the Host--Kra group on $A_s(X)$ with the degree $s$ filtration, which is a sub-filtration of $\Aut_\bullet(X)$).

   So, there is an unique $a \in C^n(\cD_s(A_s(X)))$ such that $t_a(\phi'(c_1)) = c_2$.  By Proposition \ref{prop:robust-free-action}, for suitably chosen $\eta$ and $\delta$ we may ensure that $\|t_a\| \le \eps / 2$.  Hence taking $\phi = t_a \circ \phi'$ we have $\phi(x) = y$ and $\|\phi\| \le \eps$ as required.
\end{proof}

Finally we can conclude the proof of Theorem \ref{thm:main-sc}.
\begin{proof}[Proof of what is left of Theorem \ref{thm:main-sc}]
What is left to prove is that $\HK^n(\Aut_\bullet^\circ(X))$ acts transitively on $C^n(X)$, using in particular that $X$ is strongly connected.

Since $\Aut_k(X)$ is a closed subgroup of $\Aut_1(X)$, it is Lie (by Cartan's closed subgroup theorem) and hence $\Aut_k^\circ(X)$ is open in $\Aut_k(X)$.  By Lemma \ref{lm:hk-subgroups}, it follows that $\HK^n(\Aut_\bullet^\circ(X))$ is an open subgroup of $\HK^n(\Aut_\bullet(X))$.  Hence, by Lemma \ref{lem:cube-eval-open}, the orbits of the action of this group on $C^n(X)$ are open, and hence closed (as they partition the space).  But since $C^n(X)$ was assumed to be connected, this means the action is transitive, as required.

  Given this, it follows that the map $\ev_{\square^n(x_0)} \colon \HK^n(\Aut_\bullet^\circ(X)) \to C^n(X)$ given by $(\phi_\omega)_{\omega \in \{0,1\}^n} \mapsto \left(\omega \mapsto \phi_\omega(x_0) \right)$ is continuous, open and surjective, and so induces an homeomorphism $\HK^n(\Aut_\bullet^\circ(X))/\stab(x_0)\cong C^n(X)$ for each $n\geq 0$. In particular when $n=0$ we have that $\Aut_1^\circ(X)/\stab(x_0)\cong X$.

  Finally, we must check that $\stab(x_0) \cap \Aut_k^\circ(X)$ is co-compact in $\Aut_k^\circ(X)$ for all $k \ge 1$.  By Lemma \ref{lem:inductive-open-evaluation-map}, the orbits of the action of $\Aut_k^\circ(X)$ on the closed equivalence class $\pi_{k-1}^{-1}(\pi_{k-1}(x_0))$ are open (noting $\Aut_k^\circ(X)$ is open in $\Aut_k(X)$ as above). Hence these orbits are also closed (as they partition the space), and therefore compact.  If $S$ is the orbit containing $x_0$, then the restricted evaluation map $\ev_{x_0} \colon \Aut_k(X)^\circ \to S$ is continuous, open and surjective and so induces an homeomorphism $\Aut_k^\circ(X) / (\stab(x_0) \cap \Aut_k^\circ(X)) \cong S$, and so the left hand side is compact as required.
\end{proof}

%%%%%%%%%%%%%%%%%%%%%%%%%%%%%%%%%%%%%%%%%%%%%%%
\section{Enough \texorpdfstring{$k$}{k}-translations}
\label{sec-main-details}
%%%%%%%%%%%%%%%%%%%%%%%%%%%%%%%%%%%%%%%%%%%%%%%

We now return to the proof of Proposition \ref{lem:surj}, the ``translation lifting'' statement.

We recall the set-up.  We have a (compact, ergodic) nilspace $X = (X, C^n(X))$ of degree $s$, and are considering the canonical factor map $\pi_{s-1} \colon X \to \pi_{s-1}(X)$.  For notational brevity, we write $\pi$ for $\pi_{s-1}$
and $\pi(X)$ for $\pi_{s-1}(X)$, for the remainder of this section.  We are assuming in particular that the top structure group $A_s(X)$ is a Lie group.

We wish to show the following: for every $\eps > 0$ there exists $\delta > 0$ such that for any $\phi \in \Aut_k(\pi(X))$ with $\|\phi\| \le \delta$, there exists $\tilde{\phi} \in \Aut_k(X)$ with $\|\tilde{\phi}\| \le \eps$ such that $\pi \circ \tilde{\phi} = \phi \circ \pi$.

The strategy is roughly as follows.
\begin{enumerate}[label=(\arabic*)]
  \item We first seek \emph{some function} $\psi \colon X \to X$ such that $\pi \circ \psi = \phi \circ \pi$, and which behaves nicely with respect to the action of $A_s(X)$.  We will also be able to choose $\psi$ to be small.  Crucially, though, it may \emph{not} be a $k$-translation on $X$.
  \item We then argue that we can ``repair'' $\psi$ to a genuine $k$-translation $\tilde{\phi}$.  Since necessarily $\pi \circ \psi = \pi \circ \tilde{\phi}$, this amounts to finding a function $f \colon X \to A_s$ from $X$ to the structure group $A_s$, and setting $\tilde{\phi}(x) = f(x) . \psi(x)$ (where as usual this denotes the action of $A_s(X)$ on $X$).
\end{enumerate}

Here we deviate from the approach taken in \cite{CS12}.
Both proofs rely on the cocycle theory developed in \cite{CS12}
and expounded in Section \ref{sec:cocycle}.
(This is step (2) in the above programme.)
However the lift of small translations are constructed
in different ways. The main difference is that in \cite{CS12} a measurable, but
not necessarily continuous lift is constructed first, while our argument stays in
the continuous category.

We recall that $A_s(X)$ acts freely on $X$, with orbits precisely the fibers of $\pi$.  The standard name for this set-up is that $X \to \pi(X)$ is a \emph{principal bundle} or $A_s$-principal bundle.  However, this will not typically mean that $X$ is homeomorphic as a topological space to $A_s \times \pi(X)$, i.e.~this bundle need not be the trivial bundle.  In our setting, it turns out that this bundle is nonetheless \emph{locally trivial} in the sense that it locally resembles such a direct product.  This result is due to Gleason:

\begin{theorem}[{\cite{G50}*{Theorem 3.3}}]
  \label{gleason-lemma}
  Suppose $A$ is a compact Lie group acting freely on a completely regular topological space $Y$.  Let $\tau \colon Y \to Y / A$ denote the quotient map.  Then for every point $x \in Y / A$ there is a neighbourhood $U$ of $x$ and a local section $\sigma \colon U \to \tau^{-1}(U)$, i.e.~a continuous map such that $\tau \circ \sigma = \id_U$.
\end{theorem}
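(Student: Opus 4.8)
The plan is to follow Gleason; since the only case actually needed in this paper is that of a compact \emph{abelian} Lie group (see the remark above), I describe that case in detail and only indicate the modifications for the general one. The whole statement reduces to the following local task: produce an $A$-invariant open neighbourhood $W$ of the compact orbit $O := \tau^{-1}(x)$ together with a continuous map $F \colon W \to A$ satisfying $F(a \cdot y) = a \cdot F(y)$ for all $y \in W$, $a \in A$ (the target carrying left translation). Granting this, I would set $U := \tau(W)$, which is open because the quotient map $\tau$ of a group action is always open, and which contains $x$; and I would define $\sigma \colon U \to W$ by $\sigma(\tau(y)) := F(y)^{-1} \cdot y$. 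A one-line computation with the equivariance of $F$ shows this is independent of the choice of $y$ in the fibre (which lies entirely in $W$, by invariance), and $\tau \circ \sigma = \id_U$ is immediate. Continuity of $\sigma$ follows since $y \mapsto F(y)^{-1} \cdot y$ is continuous on $W$ and constant on the fibres of the open continuous surjection $\tau|_W \colon W \to U$, hence descends to a continuous map on $U$.

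To construct $W$ and $F$ in the torus case $A = (\RR/\ZZ)^d$ (the presence of a finite factor causes no additional difficulty), fix $y_0 \in O$. By freeness, the orbit map $a \mapsto a \cdot y_0$ is a continuous bijection of the compact group $A$ onto $O$, hence a homeomorphism, so $O$ is a compact copy of $A$. Since $O$ is a compact subspace of the Tychonoff space $Y$ it is $C^*$-embedded in $Y$, so I can transport to $O$ the trigonometric polynomial $a \mapsto \mathrm{Re} \sum_{i=1}^d \chi_i(a)$, where $\chi_1, \dots, \chi_d$ is a fixed $\ZZ$-basis of the dual group $\widehat A \cong \ZZ^d$, and extend it to a function $g \in C(Y)$. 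For $\chi \in \widehat A$ set
\[
  P_\chi g(y) := \int_A g(a \cdot y)\, \overline{\chi(a)}\, da ,
\]
a continuous function on $Y$ with the equivariance $P_\chi g(b \cdot y) = \chi(b)\, P_\chi g(y)$. By construction $P_{\chi_i} g$ has constant modulus $\tfrac12$ along $O$, hence is non-vanishing on some open neighbourhood $W_i$ of $O$.

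On $W := \bigcap_{i=1}^d W_i$, an open neighbourhood of $O$, I would form $u_i := P_{\chi_i} g / |P_{\chi_i} g| \colon W \to S^1$, which satisfies $u_i(a \cdot y) = \chi_i(a)\, u_i(y)$. Since $\chi_1, \dots, \chi_d$ is a $\ZZ$-basis of $\widehat A$, the map $a \mapsto (\chi_1(a), \dots, \chi_d(a))$ is an isomorphism $A \xrightarrow{\ \sim\ } (S^1)^d$, and under this identification $F := (u_1, \dots, u_d) \colon W \to (S^1)^d \cong A$ is the desired equivariant map. Finally I must arrange $W$ to be $A$-invariant: applying the tube lemma to the compact set $A \times O$, which the action sends into the open set $W$, every point of $O$ has a neighbourhood whose $A$-orbit lies in $W$, and replacing $W$ by the $A$-orbit of the union of these neighbourhoods yields an $A$-invariant open neighbourhood of $O$ on which $F$ is still defined and equivariant. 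Feeding this $W$ and $F$ into the first paragraph completes the proof in the abelian case.

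I expect the construction of the \emph{group}-valued equivariant map $F$ to be the crux. The averaging operators $P_\chi$ hand over an equivariant map into a product of circles essentially for free, but one must take care — using an honest $\ZZ$-basis of $\widehat A$ rather than an arbitrary generating set — to land in $A$ itself rather than in a torus merely isogenous to it; and the point-set steps, namely $C^*$-embeddedness of the compact orbit, the tube lemma for invariance of $W$, and the open-quotient argument for continuity of $\sigma$, must be assembled carefully. For a general (non-abelian) compact Lie group $A$ the analogue of $F$ is the genuine substance of Gleason's theorem: one builds an equivariant map into a matrix algebra from the matrix coefficients of a faithful representation, shows that near $O$ it takes values close to the submanifold $\rho(A)$, and retracts onto it; that generality is not needed here, and a self-contained treatment of the case we use is given in \cite{T14}.
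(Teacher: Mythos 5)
The paper does not prove this statement: it is quoted as Theorem~3.3 of \cite{G50}, with the remark elsewhere that the proof simplifies in the compact abelian case (the only one used) and that a self-contained treatment appears in \cite{T14}. So your proposal is supplying a proof rather than reproducing one, and as a reconstruction of the abelian case it is essentially sound.

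Two corrections to the details. First, the claim that ``the presence of a finite factor causes no additional difficulty'' hides a genuine extra step. If $A = (\RR/\ZZ)^d \times K$ with $K$ finite, then $\widehat{A} \cong \ZZ^d \times \widehat{K}$ is not free, so you must also pick characters $\psi_1,\dots,\psi_m$ generating $\widehat{K}$; the joint map $a \mapsto (\chi_1(a),\dots,\chi_d(a),\psi_1(a),\dots,\psi_m(a))$ is only an \emph{embedding} of $A$ into the larger torus $(S^1)^{d+m}$, not an isomorphism. The phases $u_{\psi_j} = P_{\psi_j}g / |P_{\psi_j}g|$ take values in the finite subgroup $\psi_j(K) \subseteq S^1$ on the orbit $O$ but wander over all of $S^1$ off $O$, so to obtain a map $W \to A$ you must first shrink $W$ until each $u_{\psi_j}$ stays close to the finite set $\psi_j(K)$ and then round it to the nearest element. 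This is exactly the retraction step that you correctly identify as the content of Gleason's argument in the non-abelian case; note that it already occurs as soon as $A$ has a torsion factor. Second, the tube-lemma passage at the end is superfluous: since $|P_{\chi_i}g(a\cdot y)| = |\chi_i(a)|\,|P_{\chi_i}g(y)| = |P_{\chi_i}g(y)|$, each set $W_i = \{\,|P_{\chi_i}g| > 0\,\}$ is already $A$-invariant, hence so is $W = \bigcap_i W_i$ (and the same applies to the shrunk neighbourhood controlling the $u_{\psi_j}$, if one keeps it $A$-invariant by intersecting over $A$-translates, which is automatic as $|P_{\psi_j}g|$ is $A$-invariant as well). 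The remaining steps --- reduction to an equivariant $A$-valued map, the $C^*$-embedding of the compact orbit via the Stone--\v{C}ech compactification, the averaging operators $P_\chi$, the orthogonality computation giving $|P_{\chi_i}g| \equiv \tfrac12$ on $O$, and the open-quotient argument for continuity of $\sigma$ --- are all correct.
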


Although this is a significant result, we remark that the proof simplifies somewhat when the acting group $A$ is abelian, which is the only case we use.

The next result is the key ingredient for part (1) of our argument. It can be extracted from the proof of the first
covering homotopy theorem (\cite{S51}*{Theorem 11.3}), but we include a proof for completeness.

\begin{lemma}
  \label{lem-bundle-lift}
  Let $X$ be a \uppar{compact, ergodic} nilspace of degree $s$ whose structure group $A_s(X)$ is Lie.  For all $\eps > 0$, there exists $\delta > 0$ such that the following holds.  Given any homeomorphism $f \colon \pi(X) \to \pi(X)$ with $\|f\| \le \delta$, there exists a homeomorphism $f' \colon X \to X$ such that:
  \begin{enumerate}
    \item $\|f'\| \le \eps$,
    \item $f'$ is a lift of $f$, i.e.~$\pi \circ f' = f \circ \pi$, and
    \item $f'$ is a \emph{bundle map}, meaning $f'(a . x) = a . f'(x)$ for all $a \in A_s(X)$.
  \end{enumerate}
\end{lemma}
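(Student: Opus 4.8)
The plan is to use Gleason's lemma (Theorem~\ref{gleason-lemma}) to reduce the statement to the (standard) triviality of a principal $A_s(X)$-bundle whose structure cocycle is valued near the identity. Write $A := A_s(X)$, a compact abelian Lie group, so that $\pi\colon X\to\pi(X)$ is a principal $A$-bundle over the compact metric space $\pi(X)$. Fix once and for all --- with everything depending only on $X$ --- a finite open cover $\{U_i\}_{i=1}^{n}$ of $\pi(X)$ admitting continuous local sections $\sigma_i\colon U_i\to\pi^{-1}(U_i)$ (Theorem~\ref{gleason-lemma} together with compactness), shrinkings $\overline{V_i}\subseteq U_i$ still covering $\pi(X)$, and a subordinate partition of unity $\{\rho_i\}$. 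The sections give transition functions $g_{ij}\colon U_i\cap U_j\to A$, defined by $\sigma_i=g_{ij}\sigma_j$ (with $A$ written multiplicatively) and satisfying $g_{ij}g_{jk}=g_{ik}$, each uniformly continuous on the compact set $\overline{V_i}\cap\overline{V_j}$. Also fix a neighbourhood $W$ of $1\in A$ small enough to lie inside the identity component (a torus), on which $\log=\exp^{-1}$ is a homeomorphism onto a convex neighbourhood of $0$ in $\mathfrak a\cong\RR^d$. Every $\delta$ chosen below depends only on this data and on $\eps$.

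Given $f$ with $\|f\|\le\delta$, and assuming $\delta$ small enough that $f(\overline{V_i})\subseteq U_i$ for all $i$, define a local lift on $\pi^{-1}(\overline{V_i})$ by $f_i'(x)=a.\sigma_i(f(\pi x))$, where $a\in A$ is the unique element with $x=a.\sigma_i(\pi x)$. Each $f_i'$ is continuous, $A$-equivariant, and satisfies $\pi\circ f_i'=f\circ\pi$; moreover, by uniform continuity of the action map $A\times X\to X$ and of $\sigma_i$ on $\overline{V_i}$, one gets $\sup_{x\in\pi^{-1}(\overline{V_i})}d(x,f_i'(x))\to 0$ as $\delta\to 0$. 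A short computation using commutativity of $A$ shows that on $\pi^{-1}(V_i\cap V_j)$ the lifts satisfy $f_i'=h_{ij}(\pi\,\cdot\,).f_j'$, where $h_{ij}(y):=g_{ij}(f(y))\,g_{ij}(y)^{-1}$, and that $h_{ij}h_{jk}=h_{ik}$; so $\{h_{ij}\}$ is a continuous $A$-valued $1$-cocycle, which takes values in $W$ once $\delta$ is small (by uniform continuity of the $g_{ij}$).

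Now run the usual partition-of-unity argument in the Lie algebra. Set $\eta_{ij}=\log h_{ij}$; commutativity turns the cocycle identity into $\eta_{ij}+\eta_{jk}=\eta_{ik}$ (the finitely many relevant sums remaining in the injectivity region of $\log$ for $\delta$ small), so $\eta_i:=\sum_j\rho_j\eta_{ij}$ (extending $\rho_j\eta_{ij}$ by $0$) is continuous on $V_i$ with $\eta_i-\eta_j=\eta_{ij}$ on $V_i\cap V_j$ and $\sup_{V_i}|\eta_i|\le\max_j\|\eta_{ij}\|_\infty$. Hence $c_i:=\exp(\eta_i)\colon V_i\to A$ satisfies $c_i c_j^{-1}=h_{ij}$ and $\sup_{V_i}d_A(c_i,1)\to 0$ as $\delta\to 0$. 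Define $f'(x):=c_i(\pi x)^{-1}.f_i'(x)$ whenever $\pi x\in V_i$; the relation $c_i c_j^{-1}=h_{ij}$ makes this independent of the chosen $i$, so $f'$ is a well-defined continuous map $X\to X$. From the construction $\pi\circ f'=f\circ\pi$, and since $A$ is abelian $f'(a.x)=a.f'(x)$; being a bundle map covering the bijection $f$, $f'$ is a continuous bijection of the compact Hausdorff space $X$, hence a homeomorphism. Finally $d(x,f'(x))\le d(x,f_i'(x))+d\bigl(f_i'(x),\,c_i(\pi x)^{-1}.f_i'(x)\bigr)$, and both terms are uniformly $\le\eps/2$ for $\delta$ small, by the two estimates above and uniform continuity of the action; so $\|f'\|\le\eps$.

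The argument is entirely ``soft'': the only structural inputs are Gleason's lemma and the fact that a small neighbourhood of the identity in a compact abelian Lie group sits inside a torus (so cocycles valued there are trivialised by a partition of unity). Accordingly the real work --- and the only place where errors could creep in --- is the uniformity bookkeeping of the last paragraph: one must verify that the fixed finite data $\{\sigma_i,g_{ij}\}$ and the action $A\times X\to X$ are uniformly continuous on the relevant compact sets, so that a single choice of $\delta=\delta(\eps,X)$ simultaneously achieves $f(\overline{V_i})\subseteq U_i$, forces $h_{ij}$ and the finite $\eta$-sums into $W$, and makes $\sup_i d(x,f_i'(x))$ and $\sup_i d_A(c_i,1)$ as small as needed.
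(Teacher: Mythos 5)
Your proposal is correct, and it is essentially the same construction as the paper's, presented in the standard language of Čech cocycles. The paper also starts from Gleason's lemma, builds local equivariant lifts $f'_i$ over a finite cover, compares them pairwise, and trivializes the discrepancy by lifting to $\RR^d$ and averaging against a partition of unity; the only difference in presentation is that the paper expresses the comparison directly as $\alpha_i(x)\in A_s(X)$ with $f'_i(x)=\alpha_i(x).f'_{i_0}(x)$ and sets $f'(x)=\exp\bigl(\sum_i\nu_i(\pi x)\log\alpha_i(x)\bigr).f'_{i_0}(x)$, whereas you factor that same $\alpha_i$ through the transition functions as $h_{i,i_0}=g_{i,i_0}(f)\,g_{i,i_0}^{-1}$ and observe the trivialization is a partition-of-unity coboundary $\eta_i$ of this $1$-cocycle. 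Working through the identifications, your $c_{i_0}^{-1}$ equals the paper's $\alpha$, so the two $f'$s coincide. Your framing makes the cocycle structure and the use of abelianness more transparent; the paper's is slightly more self-contained in that it never names the $g_{ij}$. The uniformity bookkeeping you flag at the end is exactly the content the paper also has to track (via uniform continuity of the $\sigma_i$ and of the action), and your version of it is correct.
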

\begin{proof}
  Let $\eps > 0$ be fixed.
  By Theorem \ref{gleason-lemma}, we may choose an open cover $\{U_i\}_{i=1}^m$ of $\pi(X)$ and a family of local sections $\sigma_i \colon U_i \to \pi^{-1}(U_i)$.  We may assume that $\sigma_i$ are uniformly continuous, and so for any fixed $\eta > 0$ (to be specified later) we may refine the cover if necessary so that $d(\sigma_i(x), \sigma_i(x')) \le \eta$ for all $i$ and $x,x' \in U_i$.

  We now choose $\delta$ to be a Lebesgue number for the cover $U_i$; i.e.~for all $x \in \pi(X)$ there is an $i$ such that $\{y \colon d(x,y) \le \delta\} \subseteq U_i$.  Write $U'_i$ for the set of all $x$ such that $\{y \colon d(x,y) \le \delta\} \subseteq U_i$; this is another open cover of $\pi(X)$.

  We would like to define $f'$ as follows.  Given $x \in \pi^{-1}(U'_i)$, there is an unique element $a_i(x) \in A_s(X)$ such that $x = a_i(x) . \sigma_i(\pi(x))$.
(If we use the local section $\sigma_i$ to identify $\pi^{-1}(U'_i)$ with $U_i'\times A_s(X)$,
then $a_i$ is simply the projection to the $A_s(X)$ component.)
Then we set for $x\in U_i'$
  \[
    f'_i(x) = a_i(x) . \sigma_i(f(\pi(x))) \ .
  \]
Note that $d(\pi(x),f(\pi(x)))\le\delta$, hence $f(\pi(x))\in U_i$ for all $x\in U_i'$.

  Note that $d(\sigma_i(\pi(x)), \sigma_i(f(\pi(x)))) \le \eta$ by assumption, and by uniform continuity of the action of $A_s(X)$, we may choose $\eta$ so that
  \[
    d\left(a\, .\, \sigma_i(\pi(x)), a\, .\, \sigma_i(f(\pi(x)))\right) \le \eps' / 2
  \]
  for all $a \in A_s(X)$, where $\eps'$ is some constant to be determined; and so in particular when $a = a_i(x)$ we have
  \[
    d(x, f'_i(x)) \le \eps'/2 \ .
  \]
  Properties (ii) and (iii) for $f'_i$ are clear by construction.

  Unfortunately we are not done: $f'_i$ is only defined on $U'_i$, and the functions for different $i$ are not compatible.  So, we have not constructed a function on all of $X$ with the desired properties.

  To fix this, we will take an average of the values $f_i'(x)$ at each point $x$, in a sense to be made precise.  We first choose a continuous partition of unity $\nu_i \colon \pi(X) \to \RR_{\ge 0}$ adapted to $U'_i$; that is,
  \[
    \sum_{i=1}^m \nu_i(y) = 1
  \]
  for all $y \in \pi(X)$, and $\nu_i$ is supported on $U'_i$ for each $i$.

  We now define
  \[
    f'(x) = \sum_{i=1}^m \nu_i(\pi(x)) f'_i(x) \ .
  \]
  Strictly speaking, this definition makes no sense whatsoever: $f_i'(x)$ is an element of $X$, on which neither addition nor multiplication by real numbers is legitimate.  We now justify what is actually meant by it, as follows.
  \begin{itemize}
    \item For fixed $x$ all the points $\{ f'_i(x) \colon 1 \le i \le m,\, \pi(x) \in U_i \}$ lie in the same fiber of $\pi$, and hence are related to each other by the action of elements of $A_s(X)$.  To fix notation, let $i_0$ be such that $\pi(x) \in U'_{i_0}$, and write $\alpha_i(x) \in A_s(X)$ for the unique element such that $\alpha_i(x) . f'_{i_0}(x) = f'_i(x)$ for each $i$ such that $\pi(x)\in U'_i$.
    \item Since $d(f'_i(x), f'_j(x)) \le \eps'$ for all $i,j$, we find that the elements $\alpha_i(x) \in A_s(X)$ may be assumed to be small with respect to our preferred metric on $A_s(X)$ (by Proposition \ref{prop:robust-free-action}).
    \item Write $A_s(X) \cong (\RR/\ZZ)^d \times K$ for some $d \ge 0$ and some finite group $K$.  The $\alpha_i$ all lie in some small neighbourhood of the identity, which in turn is locally isomorphic to a small ball in $\RR^d \times \{0\}$, for small enough $\eps'$.  Write $\tilde{\alpha_i}(x) \in \RR^d$ for the lift of $\alpha_i(x)$.
    \item It now makes sense to define
      \[
        \tilde{\alpha}(x) = \sum_{i=1}^m \nu_i(x) \tilde{\alpha_i}(x) \in \RR^d
      \]
      which also lies in the same small neighbourhood of the identity; and then to let $\alpha(x) \in (\RR/\ZZ)^d \times \{0\}$ be the projection in $A_s(X)$.  We let
      \[
        f'(x) = \alpha(x) . f'_1(x)
      \]
      for all $x$.
    \item Finally, we remark that this process was independent of the choice of $i_0$, and hence $f'$ defines a function on all of $X$.
  \end{itemize}
  In short, we are doing a kind of integration of functions with values in a compact abelian Lie group $A_s(X)$, or more precisely on the space $\pi^{-1}(f(x))$ which is identified with $A_s(X)$, and claiming this makes sense whenever all the points being averaged are sufficiently close together.   We will need to repeat this kind of process in what follows, and a further discussion of closely related issues appears in Section \ref{subsec:integration}.

  We conclude by arguing that $f'$ as defined has the required properties.  Since $f'(x)$ lies in the same fiber as each $f'_i(x)$ by construction, it is clear that (ii) holds.  Since the elements $\alpha_i(x)$ and hence $\alpha(x)$ are all arbitrarily close to $0 \in A_s(X)$ provided $\eps'$ is sufficiently small, we can ensure that $d(f'_{i_0}(x), \alpha(x). f'_{i_0}(x)) \le \eps/2$ and hence (i) holds, given previous discussion.  It is also not hard to argue that $\alpha_i(x)$ and $\alpha(x)$ are continuous functions of $x$ for each choice of $i_0$, and hence $f'$ is locally continuous and therefore continuous.

  Property (iii) holds because this integration process commutes with translation by $A_s(X)$; indeed, it is clear from the definitions that $\alpha_i(x) = \alpha_i(a.x)$ for any $a \in A_s(X)$.  It is a consequence of (ii) and (iii) that $f'$ is a bijection and hence a homeomorphism.
\end{proof}

We have now completed part (1) of our strategy: we write $\psi \colon X \to X$ for the function $f'$ returned by applying Lemma \ref{lem-bundle-lift} to $\phi$.  To recap, we know that
\begin{itemize}
  \item $\pi \circ \psi = \phi \circ \pi$;
  \item $\|\psi\| \rightarrow 0$ as $\|\phi\| \rightarrow 0$; and
  \item $\psi$ commutes with $A_s(X)$, i.e.~$\psi \circ t_a = t_a \circ \psi$ for all $a \in A_s(X)$.
\end{itemize}

We now wish to attack part (2); that is, ``fixing'' $\psi$ to make it into a $k$-translation.

Just as in the proof of Lemma \ref{stab-discrete}, it is helpful to unwrap what it would mean for a function $\psi$ with these properties to be a $k$-translation on $X$.  More accurately, we want a condition that measures the \emph{failure} of $\psi$ to be a $k$-translation.  The hope is then to construct a correction to this failure, and thereby construct $\tilde{\phi}$.

In the interests of generality, let $\chi \colon X \to X$ denote an arbitrary homeomorphism such that $\pi \circ \chi(x) = \phi \circ \pi(x)$ for all $x \in X$.

By Proposition \ref{prop:k-translation-equiv}, we have that $\chi$ is a $k$-translation if and only if for every $c \in C^{s+1-k}(X)$ we have that $\llcorner^k(c; \chi(c)) \in C^{s+1}(X)$.

By assumption, the projection $\pi(\llcorner^k(c; \chi(c)))$ is a cube of $\pi(X)$.  Hence -- by the weak structure theory -- the obstructions to this configuration being a cube in $X$ can be expressed in terms of the structure group $A_s$.  We recall the following precise statement of this.

\begin{proposition}
  \label{structure-thm-facts}
  Let $c \in C^{s+1}(X)$ be some cube, and $c' \colon \{0,1\}^{s+1} \to X$ a configuration such that $\pi(c) = \pi(c')$.  Let $\alpha \colon \{0,1\}^{s+1} \to A_s(X)$ denote the unique configuration such that $\alpha\, .\, c = c'$.

  Then $c' \in C^{s+1}(X)$ if and only if $\alpha \in C^{s+1}(\cD_s(A_s(X)))$.  This holds if and only if the alternating sum
  \[
    \sum_{\omega \in \{0,1\}^{s+1}} (-1)^{|\omega|} \alpha(\omega)
  \]
  is zero.
\end{proposition}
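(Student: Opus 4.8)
This proposition is essentially a repackaging of the weak structure theorem, so the plan is to unwind the relevant parts of \cite{GMV1}*{Theorem 5.4} and combine them with the explicit description of cubes in $\cD_s$. First I would recall why $\alpha$ is well defined: since $\pi(c)=\pi(c')$, for each $\omega\in\{0,1\}^{s+1}$ the points $c(\omega)$ and $c'(\omega)$ lie in the same fiber of $\pi$, which by \cite{GMV1}*{Theorem 5.4(i)} is a single free orbit of $A_s(X)$; hence there is a unique $\alpha(\omega)\in A_s(X)$ with $\alpha(\omega).c(\omega)=c'(\omega)$, and these assemble into the configuration $\alpha\colon\{0,1\}^{s+1}\to A_s(X)$ with $\alpha.c=c'$ named in the statement.

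For the first equivalence I would invoke \cite{GMV1}*{Theorem 5.4(ii)}: noting that $\pi(c)$ is a cube of $\pi(X)$ (as $\pi$ is a morphism), the group $C^{s+1}(\cD_s(A_s(X)))$ acts simply transitively on the fiber of $C^{s+1}(X)\to C^{s+1}(\pi(X))$ above $\pi(c)$. Concretely, the cubes $c''\in C^{s+1}(X)$ with $\pi(c'')=\pi(c)$ are precisely the configurations $\beta.c$ with $\beta\in C^{s+1}(\cD_s(A_s(X)))$. Applying this with $c''=c'$ and using the vertex-by-vertex uniqueness above (which, by freeness of the $A_s(X)$-action, forces $\beta=\alpha$), we conclude that $c'\in C^{s+1}(X)$ if and only if $\alpha\in C^{s+1}(\cD_s(A_s(X)))$.

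The second equivalence is then exactly the description of $C^{s+1}(\cD_s(A_s(X)))$ already recalled in the proof of Lemma \ref{stab-discrete}, namely \cite{GMV1}*{Proposition 5.1}: since $\cD_s(A)$ carries the cube structure $\HK^\bullet(A_\bullet)$ for $A=A_s(X)$ with the degree-$s$ filtration $A=\dots=A\supseteq\{0\}$, a configuration $\alpha\colon\{0,1\}^{s+1}\to A$ lies in $C^{s+1}(\cD_s(A))$ if and only if $\sum_{\omega\in\{0,1\}^{s+1}}(-1)^{|\omega|}\alpha(\omega)=0$. Chaining the two equivalences finishes the argument. There is no real obstacle here: the content lives entirely in \cite{GMV1}, and the only point requiring a little care is matching the torsor formulation of the weak structure theorem to the pointwise definition of $\alpha$ — i.e.\ checking that ``lying in the fiber above $\pi(c)$ and being a cube'' coincides with ``$\alpha$ is a cube of $\cD_s(A_s(X))$'' — after which everything is routine bookkeeping.
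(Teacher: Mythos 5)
Your proposal is correct and takes essentially the same route as the paper: the paper's proof is literally a one-line citation to \cite{GMV1}*{Theorem 5.4 and Proposition 5.1}, and your argument is simply an unpacking of exactly those two ingredients (the torsor statement in Theorem 5.4(ii) for the first equivalence, and Proposition 5.1 for the alternating-sum characterization of cubes in $\cD_s(A_s(X))$).
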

\begin{proof}
  See \cite{GMV1}*{Theorem 5.4 and Proposition 5.1}. %REF
\end{proof}

We term this single element of $A_s(X)$ the \emph{discrepancy} of the configuration, which we now define formally.

\begin{definition}
  Let $c' \colon \{0,1\}^{s+1} \to X$ be a configuration such that $\pi(c') \in C^{s+1}(\pi(X))$.  Let $c$ be some element of $C^{s+1}(X)$ such that $\pi(c) = \pi(c')$ (which always exists
  by the definition of quotient cubespaces -- see \cite{GMV1}*{Definition 5.2}). %REF

  As above, define $\alpha \colon \{0,1\}^{s+1} \to A_s(X)$ to be the unique configuration such that $\alpha \,.\, c = c'$.  Then we define the \emph{discrepancy} $\Delta(c')$ of $c'$ by
  \[
    \Delta(c') = \sum_{\omega \in \{0,1\}^{s+1}} (-1)^{|\omega|} \alpha(\omega) \ .
  \]
\end{definition}

\begin{proposition}
  \label{prop:discrepancy}
  The discrepancy is well-defined, and $\Delta(c') = 0$ if and only if $c' \in C^{s+1}(X)$.
\end{proposition}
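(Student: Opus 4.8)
The plan is to reduce everything to Proposition \ref{structure-thm-facts}, so that the only real content is checking that the value $\Delta(c')$ does not depend on the auxiliary cube $c$. First I would record that, for any choice of $c\in C^{s+1}(X)$ with $\pi(c)=\pi(c')$, the configuration $\alpha$ with $\alpha\,.\,c=c'$ occurring in the definition genuinely exists and is unique: for each vertex $\omega$ the points $c(\omega)$ and $c'(\omega)$ lie in the same fiber of $\pi$, so the free and transitive action of $A_s(X)$ on that fiber (weak structure theory, \cite{GMV1}*{Theorem 5.4(i)}) supplies a unique $\alpha(\omega)\in A_s(X)$ with $\alpha(\omega)\,.\,c(\omega)=c'(\omega)$, and continuity of $\alpha$ follows from continuity of the ``division'' map on the $A_s(X)$-principal bundle $X\to\pi(X)$. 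Hence $\Delta(c')=\sum_\omega(-1)^{|\omega|}\alpha(\omega)$ is at least a well-defined element of $A_s(X)$ for each fixed $c$.

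The substantive step is independence of the choice of $c$. Suppose $c_1,c_2\in C^{s+1}(X)$ both project to $\pi(c')$, with associated configurations $\alpha_1,\alpha_2$. Since $\pi(c_1)=\pi(c_2)$, there is a unique configuration $\beta\colon\{0,1\}^{s+1}\to A_s(X)$ with $\beta\,.\,c_1=c_2$; comparing $\alpha_1\,.\,c_1=c'=\alpha_2\,.\,c_2=(\alpha_2+\beta)\,.\,c_1$ and invoking freeness of the action gives $\alpha_1=\alpha_2+\beta$ pointwise. Now, crucially, $c_1$ \emph{and} $c_2$ are both cubes, so Proposition \ref{structure-thm-facts} applied to the pair $c_1$, $c_2=\beta\,.\,c_1$ tells us $\beta\in C^{s+1}(\cD_s(A_s(X)))$, whence $\sum_\omega(-1)^{|\omega|}\beta(\omega)=0$. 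Pairing the identity $\alpha_1=\alpha_2+\beta$ against the signs $(-1)^{|\omega|}$ then yields $\sum_\omega(-1)^{|\omega|}\alpha_1(\omega)=\sum_\omega(-1)^{|\omega|}\alpha_2(\omega)$, which is exactly the asserted well-definedness.

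With well-definedness established, the equivalence $\Delta(c')=0 \iff c'\in C^{s+1}(X)$ is immediate: fix any $c\in C^{s+1}(X)$ with $\pi(c)=\pi(c')$ and apply Proposition \ref{structure-thm-facts} to $c$ and $c'$, which says $c'\in C^{s+1}(X)$ if and only if $\alpha\in C^{s+1}(\cD_s(A_s(X)))$, if and only if $\sum_\omega(-1)^{|\omega|}\alpha(\omega)=0$, i.e.\ if and only if $\Delta(c')=0$.

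I do not expect a genuine obstacle here, but two points deserve care. One is that the existence of $\alpha$ and $\beta$ as (continuous) configurations into $A_s(X)$ is precisely the principal-bundle content of the weak structure theorem and should be cited as such. The other is that the cancellation of $\beta$ in the well-definedness argument uses the ``only if'' half of Proposition \ref{structure-thm-facts} in an essential way: it is because $c_2$ is itself a cube that $\beta$ has vanishing alternating sum, and this is the one spot where the two-sided nature of that proposition is needed rather than just one implication.
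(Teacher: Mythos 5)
Your proof is correct and is essentially the same argument as the paper's: in both cases well-definedness follows by observing that the difference $\alpha_1-\alpha_2$ is the (unique) configuration carrying the cube $c_1$ to the cube $c_2$, so Proposition \ref{structure-thm-facts} forces its alternating sum to vanish, and the iff is read off directly from that proposition. The only cosmetic difference is that you name the difference $\beta$ and spell out the principal-bundle existence/uniqueness of $\alpha$, which the paper leaves implicit.
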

\begin{proof}
  The second statement is taken directly from Proposition \ref{structure-thm-facts}.  To verify well-definedness, note that if $c_1, c_2 \in C^{s+1}(X)$ are cubes with $\pi(c_1) = \pi(c_2) = \pi(c')$ and $\alpha_1, \alpha_2 \colon \{0,1\}^{s+1} \to A_s(X)$ satisfy $\alpha_1 \,.\, c_1 = \alpha_2 \,.\,c_2 = c'$, then $(\alpha_1 - \alpha_2). c_1 = c_2$ and invoking the Proposition again we find that
  \[
    \sum_{\omega \in \{0,1\}^{s+1}} (-1)^{|\omega|} (\alpha_1(\omega) - \alpha_2(\omega)) = 0
  \]
  as required.
\end{proof}

We note one more elementary fact about discrepancies.
\begin{proposition}
  \label{prop-discrepancy-additive}
  The discrepancy is \emph{additive} in the following sense: if $c' = [c_0, c_1]$ and $c'' = [c_1, c_2]$ are configurations such that $\pi(c'), \pi(c'') \in C^{s+1}(\pi(X))$ are cubes, then $\pi([c_0,c_2]) \in C^{s+1}(\pi(X))$ and $\Delta([c_0, c_2]) = \Delta(c') + \Delta(c'')$.
\end{proposition}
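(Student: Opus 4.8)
The plan is to choose, once and for all, lifts to $X$ of the relevant cubes of $\pi(X)$ in a mutually compatible way, so that the three discrepancies are computed from configurations $\{0,1\}^s \to A_s(X)$ that telescope. Throughout, for a configuration $\beta \colon \{0,1\}^s \to A_s(X)$ write $S(\beta) := \sum_{\omega \in \{0,1\}^s} (-1)^{|\omega|}\beta(\omega) \in A_s(X)$, and assume (harmlessly) that all three concatenations occur along the same coordinate.

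First I would dispose of the claim that $\pi([c_0,c_2]) \in C^{s+1}(\pi(X))$. We have $\pi(c') = [\pi(c_0),\pi(c_1)]$ and $\pi(c'') = [\pi(c_1),\pi(c_2)]$, which are cubes of $\pi(X)$ glued along the common face $\pi(c_1)$; since $\pi(X)$ is a nilspace it obeys the glueing axiom (\cite{GMV1}*{Proposition 6.2}), so $[\pi(c_0),\pi(c_2)] = \pi([c_0,c_2])$ is a cube.

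The heart of the argument is the construction of compatible lifts. Using the definition of a quotient cubespace (\cite{GMV1}*{Definition 5.2}), pick a lift $[\tilde c_0, \tilde c_1] \in C^{s+1}(X)$ of $\pi(c')$; its faces $\tilde c_0, \tilde c_1 \in C^s(X)$ then satisfy $\pi(\tilde c_i) = \pi(c_i)$. I must now lift $\pi(c'')$ to a cube of $X$ whose bottom face is this same $\tilde c_1$. Starting from any lift $e = [e_0, e_2] \in C^{s+1}(X)$ of $\pi(c'')$, the free action of $A_s(X)$ on the fibers of $\pi$ gives a unique $\beta \colon \{0,1\}^s \to A_s(X)$ with $\beta \,.\, e_0 = \tilde c_1$; the configuration $[\beta,\beta]$ has vanishing alternating sum, hence lies in $C^{s+1}(\cD_s(A_s(X)))$ by \cite{GMV1}*{Proposition 5.1}, and so by Proposition \ref{structure-thm-facts} the configuration $[\beta,\beta] \,.\, e = [\tilde c_1,\, \beta\,.\,e_2]$ is again a cube of $X$. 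Setting $\tilde c_2 := \beta\,.\,e_2 \in C^s(X)$ we obtain a lift $[\tilde c_1, \tilde c_2] \in C^{s+1}(X)$ of $\pi(c'')$; and since $[\tilde c_0,\tilde c_1]$ and $[\tilde c_1,\tilde c_2]$ glue along $\tilde c_1$, the glueing axiom in $X$ yields $[\tilde c_0,\tilde c_2] \in C^{s+1}(X)$, a lift of $\pi([c_0,c_2])$.

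Finally I would compute the discrepancies. For $i=0,1,2$ let $\alpha_i \colon \{0,1\}^s \to A_s(X)$ be the unique configuration with $\alpha_i \,.\, \tilde c_i = c_i$. Then $[\alpha_0,\alpha_1] \,.\, [\tilde c_0,\tilde c_1] = c'$, and, since the discrepancy is well defined (Proposition \ref{prop:discrepancy}) and may be computed from any such lift, $\Delta(c') = S(\alpha_0) - S(\alpha_1)$, the minus sign coming from $(-1)^{|\omega|} = -(-1)^{|\omega'|}$ on the top face of $\{0,1\}^{s+1}$. In exactly the same way $\Delta(c'') = S(\alpha_1) - S(\alpha_2)$ and $\Delta([c_0,c_2]) = S(\alpha_0) - S(\alpha_2)$, crucially using the \emph{same} $\alpha_1$ (it depends only on $\tilde c_1$ and $c_1$). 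Adding gives $\Delta(c') + \Delta(c'') = S(\alpha_0) - S(\alpha_2) = \Delta([c_0,c_2])$, as required. The one genuine difficulty is the construction of the common middle lift $\tilde c_1$ threading through both $c'$ and $c''$; after that the identity is a telescoping cancellation.
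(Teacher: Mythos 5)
Your proof is correct and follows essentially the same route the paper intends: the glueing axiom in $\pi(X)$ gives the first claim, and the discrepancy identity is the telescoping of alternating sums, which you make precise by constructing a common middle lift $\tilde c_1$ shared by the two $(s+1)$-cubes in $X$. The paper states this tersely as "clear from considering the alternating sums"; your argument supplies exactly the compatible-lift bookkeeping that makes that sentence rigorous.
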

\begin{proof}
  The first fact is immediate from glueing (see \cite{GMV1}*{Proposition 6.2}). %REF
  The discrepancy identity is clear from considering the alternating sums.
\end{proof}

We now return to considering our function $\chi$.  With the above discussion in mind, we define a function
\begin{align*}
  \rho_\chi \colon C^{s+1-k}(X) &\to A_s(X) \\
  c &\mapsto \Delta(\llcorner^k(c; \chi(c)))
\end{align*}
i.e.~the discrepancy of the configuration $\llcorner^k(c; \chi(c))$ that arose while characterizing $k$-translations.

Our observations can be summarized by the following result.
\begin{lemma}
  Given $\chi$ as above, we have that $\chi$ is a $k$-translation if and only if $\rho_\chi$ is identically zero.
\end{lemma}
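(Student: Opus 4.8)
The plan is to note that, once $\rho_\chi$ is shown to be well-defined, the lemma is a purely formal consequence of Proposition~\ref{prop:k-translation-equiv} and Proposition~\ref{prop:discrepancy}. So the only real work lies in the well-definedness, which is precisely where one must use that $\phi$ is a $k$-translation of $\pi(X)$ and not merely an arbitrary homeomorphism.

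First I would check that $\rho_\chi$ is well-defined, i.e.\ that $\Delta(\llcorner^k(c;\chi(c)))$ makes sense for every $c \in C^{s+1-k}(X)$. Applying $\pi$ pointwise and using $\pi\circ\chi = \phi\circ\pi$ gives $\pi(\llcorner^k(c;\chi(c))) = \llcorner^k(\pi(c);\phi(\pi(c)))$. Since $\pi$ is a cubespace morphism, $\pi(c) \in C^{s+1-k}(\pi(X))$, and duplicating the last $k$ coordinates yields a cube $\square^k(\pi(c)) \in C^{s+1}(\pi(X))$. Writing $F$ for the codimension-$k$ face of $\{0,1\}^{s+1}$ on which the last $k$ coordinates all equal $\vec{1}$, one has $[\phi]_F . \square^k(\pi(c)) = \llcorner^k(\pi(c);\phi(\pi(c)))$, and this lies in $C^{s+1}(\pi(X))$ exactly because $\phi \in \Aut_k(\pi(X))$ (Definition~\ref{def:k-translation}). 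Hence $\pi(\llcorner^k(c;\chi(c)))$ is a cube of $\pi(X)$, so the discrepancy of $\llcorner^k(c;\chi(c))$ is defined and $\rho_\chi \colon C^{s+1-k}(X) \to A_s(X)$ is a genuine map.

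Given this, I would finish in two lines. By Proposition~\ref{prop:k-translation-equiv}, $\chi$ is a $k$-translation if and only if $\llcorner^k(c;\chi(c)) \in C^{s+1}(X)$ for every $c \in C^{s+1-k}(X)$; and by Proposition~\ref{prop:discrepancy}, for each such $c$ this membership is equivalent to $\Delta(\llcorner^k(c;\chi(c))) = \rho_\chi(c) = 0$. Combining the two equivalences, $\chi$ is a $k$-translation precisely when $\rho_\chi$ vanishes identically. The only point worth flagging — there is no genuinely hard step — is that the hypothesis $\phi \in \Aut_k(\pi(X))$ is really needed for $\rho_\chi$ to be defined at all; everything else is unwinding the definitions of $\llcorner^k$ and of the discrepancy.
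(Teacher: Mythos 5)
Your proof is correct and follows essentially the same route as the paper, which disposes of the lemma in one line by citing Propositions~\ref{prop:k-translation-equiv}, \ref{prop:discrepancy} and \ref{structure-thm-facts}. The well-definedness paragraph you add just spells out the assertion the paper already makes in the prose preceding the lemma (``By assumption, the projection $\pi(\llcorner^k(c;\chi(c)))$ is a cube of $\pi(X)$''), so it is the same argument with one step made explicit rather than a different approach.
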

\begin{proof}
  This is immediate from Proposition \ref{prop:k-translation-equiv}, Proposition \ref{prop:discrepancy} and Proposition \ref{structure-thm-facts}.
\end{proof}

So, the failure of $\chi$ to be a $k$-translation is entirely captured by the function $\rho_\chi$.  Now suppose that we take our original function $\psi \colon X \to X$ and attempt to repair it by setting
\[
  \tilde{\phi}(x) = f(x) \,.\, \psi(x)
\]
for some function $f \colon X \to A_s(X)$, as suggested originally.  Then by the lemma, we will have succeeded if and only if $\rho_{\tilde{\phi}} \equiv 0$, which in turn holds if and only if for every $c \in C^{s+1-k}(X)$,
\[
  \rho_\psi(c) + \sum_{\omega \in \{0,1\}^{s+1-k}} (-1)^{|\omega| + k} f(c(\omega)) = 0 \ .
\]

The crucial point is now the following.  The function $\rho_\psi$ is some kind of ``cocycle'': a function on cubes with certain properties that we will describe.  The above equation states that $\rho_\psi$ is some kind of ``coboundary'', i.e.~equal to the ``derivative''
\[
  \pm \sum_{\omega \in \{0,1\}^{s+1-k}} (-1)^{|\omega|} f(c(\omega))  \ .
\]
So, what we want is to show some kind of ``cohomological triviality'': that \emph{every} cocycle $\rho_\psi$ will be a coboundary, and therefore $\psi$ can be corrected to a $k$-translation.

This will not be always true.  However, it \emph{is} true provided $\rho_\psi$ is ``sufficiently small'' in the sense of its image being contained in a small ball in $A_s(X)$.\footnote{An equivalent statement is that the ``cohomology in $\RR^d$'' is trivial, or that the ``cohomology group is discrete'' in an appropriate sense.  We will not make these statements rigorous, though one could certainly do so.}

It remains only to define what we mean by a ``cocycle'' and ``coboundary'', and to state the cohomological triviality result that we will prove in the next section.

\begin{definition}
  \label{def:cocycle}
  Let $\ell \ge 0$ be an integer, $X$ a cubespace and $A$ an abelian group.  By an \emph{$\ell$-cocycle} on $X$ with values in $A$, we mean a continuous function
  \[
    \rho \colon C^\ell(X) \to A
  \]
  satisfying the following conditions:
  \begin{enumerate}
    \item (additivity) if $c' = [c_0, c_1]$, $c'' = [c_1, c_2]$ and $c = [c_0, c_2]$ are all $\ell$-cubes then $\rho(c) = \rho(c') + \rho(c'')$;
    \item (reflections) if $c = [c_0, c_1]$ then $\rho(c) = -\rho([c_1, c_0])$;
    \item (degenerate cubes) if $c = [c_0, c_0]$ then $\rho(c) = 0$.
  \end{enumerate}

  We say $\rho$ is a \emph{coboundary} if there exists a continuous function $f \colon X \to A$ such that
  \[
    \rho(c) = \partial^\ell f(c) := \sum_{\omega \in \{0,1\}^\ell} (-1)^{|\omega|} f(c(\omega)) \ .
  \]
\end{definition}

\begin{remark}\
  \label{rem:cocycle-defs}
  \begin{itemize}
    \item A $0$-cocycle is the same thing as a continuous function $X \to A$.
    \item We stress that our rather vague notation allows the concatenation operation $[-,-]$ to occur on any coordinate $\{1, \dots, \ell\}$, not just the last one.  For instance, for a $2$-cococyle $\rho$ it is the case that
  \[
    \rho([[a,b],[c,d]]) = -\rho([[c,d],[a,b]]) = -\rho([[b,a],[d,c]]) = \rho([[d,c],[b,a]]) \ .
  \]
    \item Note that properties (i), (ii), (iii) are not logically independent, i.e.~our definition is not minimal. Indeed, item (iii) is an immediate consequence of item (i) with the substitution $c_0=c_1=c_2=c_0$. Then item (ii) can also be deduced by taking $c_2=c_0$ in (i) and using (iii).
    \item As well as taking the ``derivative'' $\partial^\ell$ of a function on $X$, one may analogously take the derivative of a function of cubes; e.g.~given $\rho \colon C^n(X) \to A$ we may write
      \[
        \partial \rho([c, c']) = \rho(c) - \rho(c') \ .
      \]
    This definition does now depend implicitly on which coordinate $\{1,\dots,n+1\}$ is used for the concatenation; we will introduce more precise conventions below when this is necessary to avoid confusion.
    \item It is trivial to verify that any coboundary is indeed a cocycle.  More generally, if $\rho$ is an $\ell$-cocycle then $\partial^k \rho$ is an $(\ell+k)$-cocycle.
  \end{itemize}
\end{remark}

\begin{lemma}
  For any $\chi$ having the properties discussed above, the function $\rho_\chi$ is indeed a cocycle.
\end{lemma}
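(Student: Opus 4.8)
The plan is to verify the three defining properties of a cocycle in Definition~\ref{def:cocycle}, together with continuity, for the function $\rho_\chi \colon C^{s+1-k}(X) \to A_s(X)$. I would first note that, as observed in Remark~\ref{rem:cocycle-defs}, the reflection property (ii) and the degeneracy property (iii) follow formally from additivity (i), so it remains to check that $\rho_\chi$ is well-defined on all of $C^{s+1-k}(X)$, that it is additive, and that it is continuous. For well-definedness one needs $\pi(\llcorner^k(c;\chi(c)))$ to be an $(s+1)$-cube of $\pi(X)$, so that $\Delta$ is defined on it by Proposition~\ref{prop:discrepancy}. Since $\chi$ intertwines $\pi$ with $\phi$ and both $\pi$ and $\llcorner^k$ act pointwise, $\pi(\llcorner^k(c;\chi(c))) = \llcorner^k(\pi(c);\phi(\pi(c)))$; and I would use the elementary identity $\llcorner^k(d;\phi(d)) = [\phi]_F.\square^k(d)$, where $F$ is the codimension-$k$ face of $\{0,1\}^{s+1}$ on which the $k$ ``corner'' coordinates all equal $1$. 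Since $\square^k(\pi(c))$ is a cube of $\pi(X)$ (a duplication of $\pi(c)$) and $\phi \in \Aut_k(\pi(X))$, Definition~\ref{def:k-translation} gives $[\phi]_F.\square^k(\pi(c)) \in C^{s+1}(\pi(X))$, as needed.

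For additivity, take $(s+1-k)$-cubes $c' = [c_0,c_1]$, $c'' = [c_1,c_2]$ and $c = [c_0,c_2]$, with the concatenations along a common cube coordinate. Because $\chi$ is applied pointwise, $\chi(c') = [\chi(c_0),\chi(c_1)]$ and likewise for $c''$ and $c$; and unwinding the notation shows that $\llcorner^k$ commutes with concatenation in cube coordinates, so $\llcorner^k(c';\chi(c')) = [\llcorner^k(c_0;\chi(c_0)),\llcorner^k(c_1;\chi(c_1))]$, and similarly for $c''$ and $c$. By the previous step the projections of $\llcorner^k(c';\chi(c'))$ and $\llcorner^k(c'';\chi(c''))$ are $(s+1)$-cubes of $\pi(X)$, so Proposition~\ref{prop-discrepancy-additive} applies directly and yields $\rho_\chi(c) = \rho_\chi(c') + \rho_\chi(c'')$.

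For continuity, the map $c \mapsto \llcorner^k(c;\chi(c))$ is continuous since $\chi$ is, so the task reduces to continuity of the discrepancy $\Delta$ on its domain. Here I would invoke the weak structure theory (\cite{GMV1}*{Theorem 5.4}), which exhibits $C^{s+1}(X) \to C^{s+1}(\pi(X))$ as a principal bundle for the compact abelian Lie group $C^{s+1}(\cD_s(A_s(X)))$, together with Gleason's lemma (Theorem~\ref{gleason-lemma}), to obtain continuous local sections $\sigma$; then near any configuration $c'$ one has $\Delta(c') = \sum_{\omega \in \{0,1\}^{s+1}} (-1)^{|\omega|} \alpha(\omega)$ where $\alpha.\sigma(\pi(c')) = c'$, and $\alpha$ depends continuously on $c'$ since the action of $A_s(X)$ is free and continuous, whence $\Delta$, and therefore $\rho_\chi$, is continuous. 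I expect the only step with real content to be this last appeal to Gleason's lemma; the well-definedness and additivity steps are purely a matter of unwinding Definition~\ref{def:k-translation} and Propositions~\ref{prop:discrepancy} and~\ref{prop-discrepancy-additive}.
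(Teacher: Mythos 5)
Your proof is correct and its essential content matches the paper's, which simply invokes Proposition~\ref{prop-discrepancy-additive} for additivity and observes that properties (ii) and (iii) follow. You additionally spell out well-definedness (which the paper establishes in the preceding paragraph) and continuity of $\rho_\chi$ (which the paper addresses in the surrounding discussion via Proposition~\ref{prop:robust-free-action} rather than Gleason's lemma, though your route through local sections is equally valid and arguably more direct for the full continuity claim).
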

\begin{proof}
  Additivity follows directly from the additivity of discrepancies, proven in Proposition \ref{prop-discrepancy-additive}.  The other two properties follow from this.
\end{proof}

In summary, at long last, it will suffice to prove the following fact, which is a special case of Theorem \ref{thm:cocycle-main}.

\begin{theorem} \cite{CS12}*{Lemma 3.19}
  \label{thm:cocycle-special}
  Suppose $X$ is a compact ergodic nilspace of degree $s$, $A$ is a compact abelian Lie group
(equipped with a metric $d_A$),
$\ell \ge 0$ is an integer and $\rho \colon C^\ell(X) \to A$ is an $\ell$-cocycle.

  Then there exists $\eps = \eps(s, \ell, A) > 0$, depending only on $s$, $\ell$ and $A$ \uppar{but not on $X$}, such that the following holds.  Suppose that $\delta \le \eps$ and $d_A(\rho(c), \rho(c')) \le \delta$ for all $c, c' \in C^\ell(X)$.  Then there exists $f \colon X \to A$ such that $\rho = \partial^\ell f$ and also $d_A(f(x), f(x')) \lesssim_{s,\ell} \delta$ for all $x, x' \in X$.
\end{theorem}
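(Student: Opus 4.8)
\emph{Overall approach.} The plan is to establish the stated special case by proving the general cohomological triviality result (Theorem~\ref{thm:cocycle-main}) of which it is an instance, via a chain of reductions followed by an induction on the degree. The first move is to reduce from an arbitrary compact abelian Lie group $A$ to $A=\RR$. Every $C^\ell(X)$ contains degenerate cubes such as $\square^\ell(x)$, on which any cocycle vanishes by Definition~\ref{def:cocycle}(iii); hence the hypothesis $d_A(\rho(c),\rho(c'))\le\delta$ forces $\rho$ to take all its values in the $\delta$-ball about $0\in A$. Writing $A\cong(\RR/\ZZ)^m\times K$ with $K$ finite, if $\delta$ is below a threshold depending on $A$ this ball is evenly covered by a small ball of $\RR^m$, so $\rho$ lifts to a continuous $\widetilde\rho\colon C^\ell(X)\to\RR^m$ with $\|\widetilde\rho\|_\infty=O(\delta)$; since all the terms occurring in the additivity and reflection identities then have norm $O(\delta)<1$, those identities, which hold a priori only modulo $\ZZ^m$, hold exactly in $\RR^m$, so $\widetilde\rho$ is a genuine $\RR^m$-valued $\ell$-cocycle. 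Conversely, producing $\widetilde f\colon X\to\RR^m$ with $\partial^\ell\widetilde f=\widetilde\rho$ and $\operatorname{osc}(\widetilde f)\lesssim_{s,\ell}\delta$ and setting $f=\exp\circ\widetilde f$ recovers $\rho=\partial^\ell f$ together with the desired oscillation bound. Treating the $m$ coordinates separately (the constants below will not depend on $m$), and noting that the case $\ell=0$ is immediate ($f=\rho$), we are reduced to the following: \emph{for a compact ergodic nilspace $X$ of degree $s$ and $\ell\ge1$, every continuous $\rho\colon C^\ell(X)\to\RR$ equals $\partial^\ell f$ for some $f\colon X\to\RR$ with $\operatorname{osc}(f)\lesssim_{s,\ell}\|\rho\|_\infty$} — a statement that is now homogeneous, since $\partial^\ell$ annihilates constants.

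\emph{The induction.} I would prove this scalar statement by induction on $s$. As a building block one first treats the nilspace $\cD_d(A')$ for a compact abelian group $A'$: every continuous $\RR$-valued $\ell$-cocycle on $\cD_d(A')$ is the coboundary of a continuous function $A'\to\RR$ whose oscillation is $\lesssim_{d,\ell}$ that of the cocycle, with an implied constant independent of $A'$. This is proved by an explicit ``integration'' formula generalising the $\ell=1$ identity $f(x):=\rho([x,x_0])$, for which $\operatorname{osc}(f)=\sup_{x,x'}|\rho([x,x'])|\le\|\rho\|_\infty$, and it uses only the explicit description of $\HK^\ell$ of a degree-$d$ filtration. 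For the inductive step on a general $X$ of degree $s$, pass to $\pi=\pi_{s-1}\colon X\to Y=\pi_{s-1}(X)$ (of degree $\le s-1$), whose fibres are the $A_s(X)$-orbits; by the weak structure theory $C^\ell(X)\to C^\ell(Y)$ is a principal bundle with compact abelian structure group $C^\ell(\cD_s(A_s(X)))$, and Proposition~\ref{structure-thm-facts} pins down exactly how cubes of $X$ lie over those of $Y$. Restricting $\rho$ to the fibres of this bundle and invoking the building-block statement, one finds a correction $g\colon X\to\RR$ — assembled from fibrewise primitives, made globally continuous using Theorem~\ref{gleason-lemma} and a partition of unity exactly as in the proof of Lemma~\ref{lem-bundle-lift} — with $\|g\|_\infty\lesssim_{s,\ell}\|\rho\|_\infty$ and such that $\rho-\partial^\ell g$ is constant on the fibres of $C^\ell(X)\to C^\ell(Y)$. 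That difference therefore descends to a continuous $\ell$-cocycle $\bar\rho$ on $Y$ with $\|\bar\rho\|_\infty\lesssim_{s,\ell}\|\rho\|_\infty$; the inductive hypothesis yields $\bar\rho=\partial^\ell\bar f$ with $\operatorname{osc}(\bar f)\lesssim_{s-1,\ell}\|\bar\rho\|_\infty$, and then $f:=g+\bar f\circ\pi$ has the required properties, the constants compounding harmlessly over the $s$ levels.

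\emph{The main obstacle.} The real work is the fibrewise step: showing that the obstruction to $\rho$ descending through $\pi_{s-1}$ is genuinely governed by a cocycle on $\cD_s(A_s(X))$ — so that it can be removed by a controlled coboundary — and carrying this out uniformly, with constants depending only on $s$ and $\ell$ and not on $X$ or on the dimensions of the structure groups (uniformity in the latter being what makes the coordinatewise reduction legitimate). This is the combinatorial heart of the theory, and is precisely what Section~\ref{sec:cocycle} is devoted to; it rests on the cube axioms — in particular on the glueing axiom and on the rigidity of high-dimensional cubes in a degree-$s$ nilspace that underlies Proposition~\ref{prop:k-translation-equiv} and Proposition~\ref{structure-thm-facts}. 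Because we insist on remaining in the continuous rather than the measurable category, several of the corresponding statements of \cite{CS12}*{Section 3.6} must be reproved in a strengthened, topological form, which accounts for the length of Section~\ref{sec:cocycle}.
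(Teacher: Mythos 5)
Your high-level outline matches the paper's proof in its broad contours: reduce through the canonical factor $\pi_{s-1}\colon X\to \pi_{s-1}(X)$, treat the model spaces $\cD_d(A')$ as a building block, and bound the oscillation of the primitive uniformly in $s$ and $\ell$. The initial reduction to $A=\RR$ via lifting a small-valued cocycle to the universal cover is a legitimate alternative to the paper's device of integrating $A$-valued functions with small image (Section~\ref{subsec:integration}); the two are essentially equivalent. Likewise, your observation that the $\ell=1$ and $\cD_d(A')$ cases are handled by explicit averaging formulae is correct (it is Proposition~\ref{prop:ds-cocycle-fixing} in the paper).

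The genuine gap is in your inductive step, and the specific mechanism you propose would not work. You suggest a \emph{single} induction on $s$: given $\rho$ on $C^\ell(X)$, produce a function $g\colon X\to\RR$ directly, ``assembled from fibrewise primitives, made globally continuous using Theorem~\ref{gleason-lemma} and a partition of unity exactly as in the proof of Lemma~\ref{lem-bundle-lift},'' so that $\rho-\partial^\ell g$ descends to $\pi_{s-1}(X)$. But the paper's Section~\ref{sec:cocycle} uses none of this apparatus: there are no local sections and no partitions of unity. The actual mechanism (Lemmas~\ref{lem:inductive-cocycle-more-special} and~\ref{lem:inductive-cocycle-special}) is a \emph{double} induction on $s$ and $\ell$. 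At each $s$-stage one averages $\rho$ against Haar measure on the compact group $T_1^\ell\le C^\ell(\cD_s(A_s(X)))$, one coordinate direction $k$ at a time; the output is not a function $g$ but an $(\ell-1)$-cocycle $\rho'_k$ on $X$ for each $k$, plus a residual $\tilde\rho$ that is invariant under the whole group $C^\ell(\cD_s(A_s(X)))$ and hence descends to $\pi_{s-1}(X)$. Turning the $\rho'_k$ into coboundaries is precisely where the $\ell$-induction enters, and this re-enters the \emph{same} nilspace $X$, not the factor. The Haar-measure averaging is automatically compatible across fibres because the group acts by translation, so no gluing of local data is required or possible in the partition-of-unity sense. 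Moreover, the restriction of $\rho$ to a single fibre of $C^\ell(X)\to C^\ell(\pi_{s-1}(X))$ is a function on a \emph{torsor}, not on $C^\ell(\cD_s(A_s))$ itself, so it is not literally a cocycle in the sense of Definition~\ref{def:cocycle}; the obstruction to descending $\rho$ is not a fibrewise object you can independently integrate away. You do flag the ``fibrewise step'' as the main obstacle, but the route you propose for it is the wrong one; the double induction and staged group-averaging are the content of the proof, not a packaging detail.
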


We now bring everything together to conclude the proof of Proposition \ref{lem:surj}. If the original map $\phi$ has $\|\phi\|$ small enough, then $\|\psi\|$ is also arbitrarily small.  Hence, the cocycle $\rho_\psi$  can be taken to be as small as we like with respect to the metric
\[
  \sup_{c, c' \in C^\ell(X)} d_{A_s(X)}(\rho_\psi(c), \rho_\psi(c')) \ ;
\]
indeed, if $\psi$ is small then $\llcorner^k(c; \psi(c))$ is close to a genuine cube $\square^k(c)$, and since the discrepancy map $\Delta$ is continuous (where here we have used Proposition \ref{prop:robust-free-action}) it follows that $\rho_\psi(c)$ is close to $0$ for every $c$.

Hence, the correction $f$ from Theorem \ref{thm:cocycle-special} can be chosen to be arbitrarily small, meaning the final $k$-translation $\tilde{\phi} \in \Aut_k(X)$ that we obtain is also arbitrarily small, as required.

%%%%%%%%%%%%%%%%%%%%%%%%%%%%%%%%%%%%%%%%%%%%%%%%%%%
\section{Cocycle theory}
\label{sec:cocycle}
%%%%%%%%%%%%%%%%%%%%%%%%%%%%%%%%%%%%%%%%%%%%%%%%%%%

The purpose of this section is to prove Theorem \ref{thm:cocycle-special} concerning triviality of small cocycles.

It will be necessary elsewhere in this project to have a version of this result in greater generality than as stated therein.  The proof of this more general version is very similar to a direct proof of the specialized version; the
only difference is that we need to draw on the \emph{relative} weak structure theory, as expounded in \cite{GMV1}*{Section 7}, %REF
which generalizes (again in a fairly routine way) the absolute version from \cite{CS12} (or \cite{GMV1}*{Section 6}).

The technical generalization is as follows.
The reader is advised to recall the notion of a fibration, and other related terminology, from
\cite{GMV1}*{Section 7}. %REF

\begin{theorem}
  \label{thm:cocycle-main}

  Let $A$ be a compact abelian Lie group (equipped with a metric $d_A$), and let $s \ge 0$, $\ell \ge 1$ be given.  Then there exists $\eps = \eps(s, \ell, A) > 0$ such that the following holds.

  Let $\beta \colon X \to Y$ be any fibration of degree $s$ between compact ergodic cubespaces $X$ and $Y$ that obey the glueing axiom.

  Now let $\rho$ be a continuous $\ell$-cocycle on $X$ with values in $A$, let $0 < \delta \le \eps$ be given and suppose that $d(\rho(c), \rho(c')) \le \delta$ whenever $\beta(c) = \beta(c')$.

  Then $\rho = \partial^\ell f + \tilde{\rho} \circ \beta$, where $f \colon X \to A$ is continuous and satisfies $d(f(x), f(y)) \lesssim_{s,\ell} \delta$ \uppar{that is, there exists a constant $c=c(s,\ell)>0$ such that $d(f(x), f(y))\leq c\delta$} whenever $\beta(x) = \beta(y)$, and $\tilde{\rho}$ is a continuous $\ell$-cocycle on $Y$.
\end{theorem}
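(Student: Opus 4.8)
The plan is to proceed by induction on the degree $s$ of the fibration $\beta \colon X \to Y$. The base case $s = 0$ should be essentially trivial: a fibration of degree $0$ means the fibers carry no extra structure, so $\beta$ is an isomorphism on cubes in the relevant sense, and one can simply push $\rho$ down, taking $f \equiv 0$ and $\tilde\rho = \rho$ (up to identifying cubes under $\beta$). For the inductive step, I would factor $\beta$ through the canonical top-degree factor: write $\beta = \beta' \circ \pi$ where $\pi \colon X \to \pi_{s-1}(X)$ is the canonical projection whose fibers are orbits of the top structure group $A_s := A_s(X)$, and $\beta' \colon \pi_{s-1}(X) \to Y$ is an induced fibration of degree $s-1$ (using the relative weak structure theory of \cite{GMV1}*{Section 7}). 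The strategy is then: first trivialize $\rho$ relative to $\pi$, reducing to a cocycle pulled back from $\pi_{s-1}(X)$; then apply the inductive hypothesis to that cocycle and the fibration $\beta'$.

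The first step — trivializing relative to $\pi$ — is the technical heart and closely parallels the discussion preceding Theorem \ref{thm:cocycle-special} in the excerpt, as well as \cite{CS12}*{Lemma 3.19}. Here I would analyze how $\rho$ varies along a single fiber of $\pi$, i.e.\ under the action of $A_s$. Using the weak structure theory (Proposition \ref{structure-thm-facts} and the description of cubes $C^{s+1}(\cD_s(A_s))$ via alternating sums), the ``derivative'' of $\rho$ in the $A_s$-direction is controlled by a function that, by the cocycle axioms (additivity, reflection, degeneracy from Definition \ref{def:cocycle}) and the smallness hypothesis $d_A(\rho(c),\rho(c')) \le \delta$ when $\beta(c) = \beta(c')$, is forced to be ``affine-linear'' in the $A_s$-coordinate up to small error — and then genuinely linear once $\delta$ is below the threshold $\eps$, because a continuous homomorphism-like object taking values in a small ball of a Lie group, matching a discrete cohomology class, must be the trivial one. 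This rigidity is exactly the phenomenon exploited in Theorem \ref{thm:cocycle-uniqueness}. Concretely, I expect to extract a continuous $f_0 \colon X \to A$ (built by the same kind of averaging/integration argument used in Lemma \ref{lem-bundle-lift}, patching local sections from Gleason's lemma (Theorem \ref{gleason-lemma}) with a partition of unity, and lifting to $\RR^d$ when everything is small) such that $\rho - \partial^\ell f_0$ is constant along $A_s$-orbits on cubes, hence descends to a continuous $\ell$-cocycle $\rho_1$ on $\pi_{s-1}(X)$ with $\rho - \partial^\ell f_0 = \rho_1 \circ \pi$. One must check $\rho_1$ is still small in the sense that $d_A(\rho_1(\bar c), \rho_1(\bar c')) \lesssim_{s,\ell} \delta$ whenever $\beta'(\bar c) = \beta'(\bar c')$ — this follows since the quantity is a bounded sum of values of $\rho$ on cubes with the same $\beta$-image.

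Once that reduction is done, the inductive hypothesis applied to $\rho_1$ and the degree $(s-1)$ fibration $\beta'$ gives $\rho_1 = \partial^\ell g + \tilde\rho \circ \beta'$ with $g \colon \pi_{s-1}(X) \to A$ satisfying the required Lipschitz-type bound and $\tilde\rho$ a continuous cocycle on $Y$; pulling back along $\pi$ and setting $f = f_0 + g \circ \pi$ yields $\rho = \partial^\ell f + \tilde\rho \circ \beta$, and the bounds on $f$ follow by the triangle inequality from those on $f_0$ and $g$ (absorbing the $\lesssim_{s,\ell}$ constants, which is why the threshold $\eps$ must be chosen depending on $s$ and $\ell$). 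The main obstacle I anticipate is making the first step fully rigorous in the topological category: one has to produce the correction $f_0$ continuously and globally on $X$, not merely measurably or locally, and verify that the averaging construction both respects the $A_s$-action and does not destroy the smallness of the residual cocycle; this is where the care with Gleason's lemma, uniform continuity, partitions of unity, and lifting to the universal cover of the torus part of $A_s$ all has to be orchestrated, exactly as foreshadowed by the parenthetical remarks in the proof of Lemma \ref{lem-bundle-lift} and the reference forward to Section \ref{subsec:integration}.
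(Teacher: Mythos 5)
Your high-level plan — induct on the degree $s$, factor $\beta$ through the canonical top-degree projection $\pi := \pi_{\beta,s-1}$, and trivialize the restriction of $\rho$ to fibers of $\pi$ before pushing the remainder down — is in the right direction and agrees with the paper's scaffolding. However, the mechanism you propose for the trivialization step is not the one that works, and you are also eliding the part of the argument that actually needs a second induction.

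First, you say the correction $f_0$ should be "built by the same kind of averaging/integration argument used in Lemma~\ref{lem-bundle-lift}, patching local sections from Gleason's lemma with a partition of unity." This is the wrong toolbox. Gleason's lemma and partitions of unity are what the paper uses to produce a \emph{bundle lift} of a small homeomorphism, where the only constraint to preserve is smallness and equivariance; they give an averaged section that is continuous but satisfies no exact identities. A cocycle correction $f_0$ must satisfy exact algebraic identities (so that $\rho - \partial^\ell f_0$ is genuinely invariant under the $C^\ell(\cD_s(A_s(\beta)))$-action and descends to a cocycle), and a partition-of-unity average will not do this. What the paper actually uses in Section~\ref{subsec:ds-cocycle-fixing} and Lemma~\ref{lem:inductive-cocycle-more-special} is Haar-measure averaging over the groups $T_1^\ell$ built from the structure group $A_s(\beta)$: the translation invariance of Haar measure is precisely what makes the additivity and descent properties (parts (ii) and (iii) of Proposition~\ref{prop:ds-cocycle-fixing}) hold on the nose. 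No local sections or partitions of unity appear anywhere in the cocycle argument, and Gleason's lemma plays no role there.

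Second, you propose to produce a single function $f_0 \colon X \to A$ in one step per reduction in $s$. This would require integrating $\rho$ against a continuous system of probability measures on the cube spaces $\{c \in C^\ell(X) : c(\vec 0) = x\}$, which is the route of Antol\'\i n Camarena and Szegedy (and which the paper explicitly chooses to avoid). The paper's inductive step (Lemma~\ref{lem:inductive-cocycle-general}) does not produce a function $X \to A$; it produces $(\ell-1)$-cocycles $\rho'_k$ on $X$ together with an $\ell$-cocycle $\tilde\rho$ on $\pi_{\beta,s-1}(X)$, via the decomposition $\rho = \sum_k \partial_k \rho'_k + \tilde\rho \circ \pi_{\beta,s-1}$. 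One then needs an induction on $\ell$ (in addition to the induction on $s$) to reduce the $\rho'_k$ further until hitting $0$-cocycles, which \emph{are} functions. Your single induction on $s$ therefore either silently appeals to the measure-theoretic machinery the paper avoids, or it is missing the second inductive parameter. Relatedly, your invocation of Theorem~\ref{thm:cocycle-uniqueness} as "exactly the phenomenon exploited" is misplaced: that rigidity/uniqueness result is used elsewhere in the paper (e.g.\ Lemma~\ref{stab-discrete}), not in the proof of the existence statement of Theorem~\ref{thm:cocycle-main}.
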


(Recall $\partial^\ell$ was defined in Definition \ref{def:cocycle}.)

We briefly pause to explain why this does in fact generalize Theorem \ref{thm:cocycle-special}.
  Letting $X$, $A$ and $\rho$ be as in the statement of Theorem \ref{thm:cocycle-special}, we may invoke Theorem \ref{thm:cocycle-main} where:
  \begin{itemize}
    \item $X$ is still the same (compact, ergodic, degree $s$) nilspace $X$;
    \item $Y$ is $\{\ast\}$, the one-point space;
    \item $\beta \colon X \to \{\ast\}$ is the trivial map; and
    \item the cocycle $\rho$ is unchanged.
  \end{itemize}
  The statement that $\beta$ is a fibration of finite degree is precisely saying that $X$ is a nilspace of finite degree.  Hence, all the hypotheses of Theorem \ref{thm:cocycle-main} in this special case follow from those of Theorem \ref{thm:cocycle-special}.  Given the conclusion of Theorem \ref{thm:cocycle-main}, note that the only $\ell$-cocycle $C^\ell(\{\ast\}) \to A$ is the zero function, and so $\tilde{\rho} = 0$ and we recover the conclusion of Theorem \ref{thm:cocycle-special}.

Even though Theorem \ref{thm:cocycle-main} does indeed generalize Theorem \ref{thm:cocycle-special}, since its proof will involve some additional notational and technical difficulties, we will continue to provide sketches of the argument in the special case of Theorem \ref{thm:cocycle-special}.  The hope is that the intuition will be easier to grasp in this model setting.

\bigskip

Note that, in Section \ref{sec:main-outline}, we needed a kind of dual to Theorem \ref{thm:cocycle-special}, stating that the function $f \colon X \to A$ that is used to construct the coboundary $\partial^\ell f$ is itself essentially uniquely determined.  The precise statement is the following.

\begin{theorem}
  \label{thm:cocycle-uniqueness}
  Let $A$ be a compact abelian Lie group, equipped with the metric $d_A$, and let $s \ge 0$, $\ell \ge 1$ be given. Then there exists $\eps = \eps(s, \ell, A) > 0$ such that the following holds.

  Suppose $X = (X, C^n(X))$ is a compact ergodic nilspace of degree $s$, and $f \colon X \to A$ is a continuous function such that $d_A(f(x), f(x')) \le \eps$ for all $x, x' \in X$, and $\partial^\ell f \colon C^\ell(X) \to A$ is the zero function.

  Then $f$ is constant.
\end{theorem}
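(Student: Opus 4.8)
The plan is to use the smallness hypothesis to reduce to the case where the target is a vector group $\RR^d$, and then to prove the resulting assertion by a double induction on the degree $s$ of $X$ and on $\ell$.

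For the reduction, write $A \cong (\RR/\ZZ)^d \times K$ with $K$ finite, let $p \colon \RR^d \to (\RR/\ZZ)^d \hookrightarrow A$ be the covering homomorphism, and note that $\partial^\ell$ annihilates constant functions for $\ell \ge 1$. Fix $x_0 \in X$ and set $g := f - f(x_0)$, a continuous map with $g(x_0) = 0$ and $\partial^\ell g = \partial^\ell f = 0$. Since $\operatorname{osc}(f) \le \eps$, the image $g(X)$ lies in a ball $B_A(0, \eps')$ with $\eps' \to 0$ as $\eps \to 0$, and for $\eps = \eps(s,\ell,A)$ small enough this ball lifts homeomorphically through $p$ onto a ball $\wt B \subseteq \RR^d$ around $0$ of Euclidean diameter $< 2^{-\ell}$; let $\wt g \colon X \to \wt B$ be the corresponding continuous lift of $g$. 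For each $c \in C^\ell(X)$, applying $p$ gives $p(\partial^\ell \wt g(c)) = \partial^\ell g(c) = 0$, so $\partial^\ell \wt g(c) \in \ZZ^d$; but all values of $\wt g$ lie in $\wt B$ and $\partial^\ell$ of a constant vanishes, so $|\partial^\ell \wt g(c)| \le 2^\ell \operatorname{diam}(\wt B) < 1$, whence $\partial^\ell \wt g(c) = 0$. Thus $\partial^\ell \wt g = 0$, and it suffices to prove that any continuous map $X \to \RR^d$ killed by $\partial^\ell$ is constant; then $g = p \circ \wt g$ and $f = g + f(x_0)$ are constant.

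It remains to prove, by induction on the pair $(s,\ell)$ and with no smallness assumption, the statement $P(s,\ell)$: \emph{every continuous $h \colon X \to \RR^d$ with $\partial^\ell h = 0$, where $X$ is a compact ergodic nilspace of degree $\le s$, is constant}. If $s = 0$ then $X$ is a point; if $\ell = 1$ then $\partial^1 h = 0$ says $h(x) = h(y)$ for every $1$-cube $[x,y]$, and ergodicity ($C^1(X) = X^2$) gives $h$ constant. Now let $s \ge 1$ and $\ell \ge 2$. Write $\pi := \pi_{s-1}$ and let $A_s = A_s(X)$ act on $X$ as in the weak structure theory, with orbits the fibres of $\pi$; for $a \in A_s$ the map $t_a \colon x \mapsto a.x$ lies in $\Aut_s(X) \subseteq \Aut_1(X)$ by Proposition \ref{prop:structure-group-translations} and the fact (Proposition \ref{translation-prefiltration}) that the filtration is decreasing. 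Put $h_a := h \circ t_a - h$. For $c \in C^{\ell-1}(X)$ the configuration $[c, t_a.c]$, which equals $[t_a]_F(\square^1(c))$ for a codimension-$1$ face $F$ of $\{0,1\}^\ell$ (with $\square^1(c) \in C^\ell(X)$), is an $\ell$-cube because $t_a \in \Aut_1(X)$; evaluating $\partial^\ell h$ on it and using $\partial^\ell h = 0$ yields $\partial^{\ell-1}h_a = 0$. By the inductive hypothesis $P(s,\ell-1)$ each $h_a$ is constant, equal to $c_a := h(a.x) - h(x) \in \RR^d$ (independent of $x$); since $A_s$ is abelian and acts on $X$, $c_{a+b} = c_a + c_b$, so $a \mapsto c_a$ is a continuous homomorphism from the compact group $A_s$ into $\RR^d$, hence zero. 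Therefore $h \circ t_a = h$ for all $a \in A_s$, so $h = \bar h \circ \pi$ for a continuous $\bar h$ on $\pi(X)$; lifting cubes through the quotient cubespace map $\pi$ shows $\partial^\ell \bar h = 0$, and since $\pi(X)$ is a compact ergodic nilspace of degree $\le s-1$, the inductive hypothesis $P(s-1,\ell)$ gives $\bar h$, and hence $h$, constant.

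The crux is the observation that differencing $h$ along a $1$-translation $t_a$ lowers the cocycle order by one: with this in hand the double induction closes uniformly, needing no case split according to how $\ell$ compares with $s$, and $A_s(X) \subseteq \Aut_1(X)$ is the only structural input. I expect this observation, together with identifying the correct induction scheme, to be the main obstacle; the $A$-valued to $\RR^d$-valued reduction --- where, and only where, the smallness hypothesis enters and forces $\eps$ to depend on $s$, $\ell$ and $A$ --- is routine, as is the additional relative bookkeeping that would be needed to treat the more general Theorem \ref{thm:cocycle-main}.
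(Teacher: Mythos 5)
Your proof is correct, and while it shares the core idea with the paper's proof (difference $f$ along the action of the top structure group $A_s$ to produce a homomorphism out of a compact group, then factor through $\pi_{s-1}$ and induct on degree), it is organized and justified quite differently. The paper restricts $\gamma$ to a single fiber $\pi_{s-1}^{-1}(y) \cong \cD_s(A_s)$, observes that $C^\ell(\cD_1(A_s)) \subseteq C^\ell(\cD_s(A_s))$, and then invokes a self-contained lemma (Lemma \ref{lem:uniqueness-d1-case}) about functions $\gamma$ on an arbitrary abelian group $G$ with the $\cD_1$ structure satisfying $\partial^\ell\gamma = 0$; the $\ell$-induction lives entirely inside that lemma, and the no-small-subgroups property of $A$ (Lemma \ref{lem:homs-discrete}) is invoked there with the smallness hypothesis. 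You instead first lift to $\RR^d$ --- which is where your $\eps$ enters and where it is used up entirely --- and then run a genuinely parameter-free double induction on $(s,\ell)$ over all compact ergodic nilspaces, with the differencing step happening globally on $X$ rather than inside a single fiber. Your key observation that $[c, t_a.c]$ is a cube for every $c \in C^{\ell-1}(X)$ and $a \in A_s$ because $t_a \in \Aut_s(X) \subseteq \Aut_1(X)$ does not appear in the paper; the paper's proof never differences the cocycle on all of $X$ in this way, relying instead on the inclusion of degree-$1$ cubes into degree-$s$ cubes on the fiber. What your approach buys: the no-small-subgroups lemma becomes unnecessary (a continuous homomorphism from a compact group into $\RR^d$ is trivial for free, since $\RR^d$ has no nontrivial compact subgroups), so continuity of $a \mapsto c_a$ replaces the smallness assumption in the inner induction, and the whole argument closes in a single well-founded double induction without a separate abelian lemma. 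What the paper's approach buys: Lemma \ref{lem:uniqueness-d1-case} is a clean standalone statement about polynomial maps on abelian groups that can be cited independently, and the argument keeps the target group $A$ throughout without any lifting step. Both are correct; the proofs are variants of the same underlying mechanism.

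One small bookkeeping remark: your reduction step needs $\eps$ small enough that the ball $B_A(0,\eps')$ avoids the nontrivial components of $K$ and lifts through the covering of the torus factor, and that its lift has Euclidean diameter $< 2^{-\ell}$; this is exactly where your $\eps$ depends on $\ell$ and on $A$ (specifically on $d$ and the injectivity radius), while the $s$-dependence in your $\eps(s,\ell,A)$ is vacuous in your version of the argument.
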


Again, we would like a version that holds in the setting of Theorem \ref{thm:cocycle-main}, concluding that the function $f \colon X \to A$ obtained there is essentially unique.  It turns out that in this case, this generalization follows easily from the original version.

\begin{corollary}
  \label{cor:general-uniqueness}
  Suppose $A$, $\beta$, $X$, $Y$ are as in the statement of Theorem \ref{thm:cocycle-main}.  Then there exists $\eps = \eps(A) > 0$ such that the following holds.

  Suppose $\ell \ge 0$ is an integer, and $f \colon X\to A$ is a continuous function such that $d_A(f(x), f(x')) \le \eps$ for all $x, x' \in X$ and furthermore $\partial^\ell f$ is constant on fibers of $\beta$ \uppar{meaning if $c, c' \in C^\ell(X)$ and $\beta(c) = \beta(c')$ then $\partial^\ell f(c) = \partial^\ell f(c')$}.

  Then $f$ is also constant on fibers of $\beta$, i.e.~$f = \tilde{f} \circ \beta$ for some continuous $\tilde{f} \colon Y \to A$.
\end{corollary}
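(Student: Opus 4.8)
The plan is to deduce this from the absolute statement, Theorem \ref{thm:cocycle-uniqueness}, by restricting $f$ to a single fibre of $\beta$. First, the case $\ell = 0$ is vacuous, since there $\partial^0 f = f$ and the hypothesis is literally the conclusion; so I would assume $\ell \ge 1$ and take $\eps$ to be the constant $\eps(s, \ell, A)$ furnished by Theorem \ref{thm:cocycle-uniqueness}.

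The heart of the reduction is the following. Fix $y \in Y$ and set $X_y := \beta^{-1}(y)$, with the induced cube structure $C^n(X_y) := \{ c \in C^n(X) : \beta \circ c = \square^n(y)\}$. The key claim — which I expect to be the only step that is not pure bookkeeping — is that $X_y$ is itself a compact ergodic nilspace of degree at most $s$. Here I would invoke the relative weak structure theory of \cite{GMV1}*{Section 7}: compactness is clear since $X_y$ is closed in the compact space $X$; the cube axioms are inherited from $X$, since precomposition of a cube $c$ with any cube morphism does not disturb the property that $\beta \circ c$ is the constant configuration at $y$; ergodicity holds because for $x, x' \in X_y$ the pair $(x, x')$ lies in $C^1(X)$ (ergodicity of $X$) and has image $\square^1(y)$, hence lies in $C^1(X_y)$; and the corner-completion axiom together with the bound on the degree is precisely what the fibration hypothesis on $\beta$ yields over the single point $y$. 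In the special case $Y = \{\ast\}$ relevant to Theorem \ref{thm:cocycle-uniqueness} this claim is of course empty.

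Granting that, the remainder is short. I would first note that $\partial^\ell f$ vanishes on $C^\ell(X_y)$: pick any $x_0 \in X_y$, so that $\square^\ell(x_0) \in C^\ell(X)$ and $\partial^\ell f(\square^\ell(x_0)) = f(x_0)\sum_{\omega \in \{0,1\}^\ell} (-1)^{|\omega|} = 0$ as $\ell \ge 1$; then for any $c \in C^\ell(X_y)$ one has $\beta \circ c = \square^\ell(y) = \beta \circ \square^\ell(x_0)$, so $c$ and $\square^\ell(x_0)$ lie in the same fibre of $\beta \colon C^\ell(X) \to C^\ell(Y)$, whence $\partial^\ell f(c) = \partial^\ell f(\square^\ell(x_0)) = 0$ by hypothesis. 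Since also $d_A(f(x), f(x')) \le \eps$ for all $x, x' \in X_y$, Theorem \ref{thm:cocycle-uniqueness} applies to $X_y$ and forces $f|_{X_y}$ to be constant. As $y$ ranges over $Y$ this shows $f$ is constant on every fibre of $\beta$, so $f = \tilde f \circ \beta$ for a unique $\tilde f \colon Y \to A$ (using surjectivity of $\beta$); and $\tilde f$ is continuous because $\beta$, being a continuous surjection from a compact space onto a Hausdorff one, is a closed and hence a quotient map. The main obstacle, as indicated, is confirming that fibres of a degree-$s$ fibration are compact ergodic nilspaces of degree at most $s$ with the natural cube structure.
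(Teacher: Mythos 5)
Your proposal is correct and takes essentially the same route as the paper: restrict to the fibre $\beta^{-1}(y)$, observe it is a nilspace of degree at most $s$ on which $\partial^\ell f$ vanishes (via comparison with $\square^\ell(x_0)$), apply Theorem \ref{thm:cocycle-uniqueness}, and vary $y$. You spell out a few points the paper leaves implicit — why the fibre is a nilspace, the vacuous $\ell=0$ case, and the continuity of $\tilde f$ — but the core argument is the same.
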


\begin{proof}[{Proof of Corollary \ref{cor:general-uniqueness} from Theorem \ref{thm:cocycle-uniqueness}}]
  Given an $y \in Y$, its preimage $Z := f^{-1}(y) \subseteq X$ is a nilspace of degree $s$.  Moreover, by hypothesis $\partial^\ell f$ is constant on $C^\ell (Z)$, and therefore zero on $C^\ell(Z)$, since clearly $\partial^\ell f(\square^\ell(z)) = 0$ for any $z \in Z$.

  Applying Theorem \ref{thm:cocycle-uniqueness}, we deduce that $f$ is constant on $Z$.  But since $y$ was arbitrary, this suffices.
\end{proof}

We will spend a while building up the proof of Theorem \ref{thm:cocycle-main} in stages.  The uniqueness result (Theorem \ref{thm:cocycle-uniqueness}) is somewhat easier, and we return to this at the end.

%%%%%%%%%%%%%%%%%%%%%%%%%%%%%%%%%%%%%%%%%%%%%%%%%%%%%
\subsection{The case \texorpdfstring{$\ell = 1$}{l=1}}
\label{subsec:1-cocycle}
%%%%%%%%%%%%%%%%%%%%%%%%%%%%%%%%%%%%%%%%%%%%%%%%%%%%%

We briefly contemplate the first even slightly non-trivial case of Theorem \ref{thm:cocycle-special}, i.e.~when $\ell = 1$.  Since $X$ is ergodic, $C^1(X) = X \times X$, and so this means we have a function
\[
  \rho \colon X \times X \to A
\]
whose image is contained in a small ball of $A$, and such that
\[
  \rho([x,y]) + \rho([y,z]) = \rho([x,z])
\]
for all $x,y,z \in X$ (this is just from the definition of a $1$-cocycle).  Also, $\rho([x,y]) = -\rho([y,x])$ and $\rho([x,x]) = 0$ for all $x,y$.

Finally, recall we wish to construct $f \colon X \to A$ such that $\rho([x,y]) = f(x) - f(y)$ for all $x,y \in X$.  It turns out this is not a terribly deep statement: one can simply fix $x_0 \in X$ and define
\[
  f(x) := \rho([x, x_0])
\]
noting that
\[
  f(x) - f(y) = \rho([x, x_0]) - \rho([y, x_0]) = \rho([x, x_0]) + \rho([x_0, y]) = \rho([x,y]) \ .
\]
Observe that we have not even really used our ``small image'' assumption on $\rho$, or that $A$ is Lie, in this argument.

\bigskip

When we come to prove more general cases of the result, it will be important for inductive reasons that we can find a \emph{canonical} function $f$ to define our coboundary: here, $f$ depends on the arbitrary choice of $x_0$. To set the scene for what follows, we note that we can achieve this if, rather than fixing one particular $x_0$, we instead \emph{average} over all choices.  That is, we define
\begin{equation}\label{eq:f-def}
  f(x) := \int_X \rho([x,y]) dy
\end{equation}
which is indeed canonical.  To show this is still a valid choice, we work through the calculation
\begin{align*}
  f(x) - f(y) &= \int_X \rho([x, z])\ dz - \int_X \rho([y, z])\ dz \\
              &= \int_X \left[ \rho([x, z]) + \rho([z, y]) \right]\ dz\\
              &= \int_X \rho([x,y])\ dz\\
              &= \rho([x,y]) \ .
\end{align*}
However, we have overlooked two important subtleties here.
\begin{itemize}
  \item We are integrating expressions with values in $A$, a compact abelian Lie group.
  \item We have not defined a probability measure on the space $X$, so the integration is currently meaningless.
\end{itemize}

The first item will be discussed in Section \ref{subsec:integration} below.
In brief, we can lift the integrand to the universal cover of (the respective connected
component of) $A$, and then project back the result.
It turns out that this can be defined canonically provided the integrand takes values in a sufficiently
small ball in $A$.

The second item could be addressed using the weak structure theory, which implies that $X$ can
be obtained by successive principal bundle extensions
\[
  X \to \pi_{s-1}(X) \to \dots \to \pi_{0}(X) = \{\ast\}.
\]
This allows the construction of a probability measure on $X$ from the Haar measures on the structure groups.
This is the approach taken in \cite{CS12}.

We found it easier to carry out the averaging in stages and prove Theorem \ref{thm:cocycle-main}
by a double induction on $s$ and $\ell$.
In the special case $\ell=1$ currently discussed, this takes the following shape.
We put
\[
f'=\int_{A_s} \rho([x,a.x])d\mu_{A_s}(a),
\]
i.e. we integrate only on the fibre of $\pi_s$ that contains $x$ using the Haar measure on the structure group $A_s$.

It is no longer reasonable to expect that $\partial f'$ agrees with $\rho$, since the definition of $f'$ depends
only on the values of $\rho$ on edges whose endpoints lie in the same fibre of $\pi_{s-1}$.
However, it turns out that $\rho-\partial f'$ is constant on the fibres of $\pi_{s-1}$ and hence it
can be pushed down to a cocycle on $\pi_{s-1}(X)$, and the argument can be completed by induction.
We omit the details, but they are given in Section \ref{subsec:general} in a greater generality.

We finish this section with a comment on the proof in the general case $\ell\ge 1$. Antol\'\i n Camarena and Szegedy define the function
\[
f(x)=\int_{c\in C^{\ell}(X):c(\vec0)=x}\rho(c) dc
\]
using a system of suitable probability measures that are defined on the space of cubes whose vertex
at $\vec 0$ is the point $x$. (Compare this formula with \eqref{eq:f-def}.)

Our proof carries out the above averaging in several steps using a double induction on $s$ and $\ell$. If one combines the averaging in the inductive steps, the resulting formula will be the same as in the approach of Antol\'\i n Camarena and Szegedy. However, we believe that the technical details of the argument become simpler in our approach. In particular, we can avoid any  discussion of continuous systems of measures, and moreover the combinatorial properties we need to verify are also easier.

%%%%%%%%%%%%%%%%%%%%%%%%%%%%%%%%%%%%%%%%%%%%%%%%%%
\subsection{A remark on integration}
\label{subsec:integration}
%%%%%%%%%%%%%%%%%%%%%%%%%%%%%%%%%%%%%%%%%%%%%%%%%%

We discuss now the issue raised above about integration of functions taking values in Lie groups.

Suppose $(Y, \mu)$ is some probability space, and $f \colon Y \to \RR / \ZZ$ is some measurable function.  It is  clear that the integral $\int f(x) d\mu(x)$ does not make sense in general, since averaging on $\RR/\ZZ$ is not well-defined.

However, suppose we know that $f$ takes values in some specified interval $(a,b) \subseteq \RR/\ZZ$ of width at most $1/10$ (say).  Then we can make sense of $\int f(x) d\mu(x)$ by identifying $(a,b) \subseteq \RR/\ZZ$ bijectively with a suitable interval $(\tilde{a}, \tilde{b}) \subseteq \RR$, lifting $f$ to a function $Y \to (\tilde{a}, \tilde{b})$, performing normal real integration, and then projecting the result (which lies in $(\tilde{a}, \tilde{b})$ by convexity) back down to $(a,b) \subseteq \RR/\ZZ$.  It is straightforward to check that this operation does not depend on the choice of $\tilde{a}, \tilde{b}$.

We will abuse notation to write $\int f(x) d\mu(x)$ for this element of $\RR/\ZZ$, whenever it makes sense to do so.  Specifically, we will only write this when $\sup \{|f(x) - f(y)| \colon x, y \in Y \} \le \delta$ for some suitably small absolute constant $\delta$ (in this case taken to be $1/10$).

More generally, the same remarks hold for any compact abelian Lie group $A = (\RR/\ZZ)^d \times K$ for $K$ a finite group.  For any reasonable metric on this space, integration is well-defined for functions whose image lies in a sufficiently small $\delta$-ball, with $\delta$ depending only on $A$; and the integral will also lie in the same $\delta$-ball.

For this to hold, we do need to make some choices about the metric we impose on $A$.  Specifically, it will be convenient to use a metric $d_A$ on $A \cong (\RR/\ZZ)^d \times K$ that is induced from the standard Euclidean metric on $\RR^d$ (and the standard discrete metric on $K$).  This choice is not canonical, because it requires us to fix an isomorphism $A \cong (\RR/\ZZ)^d \times K$; though of course, all metrics on a compact space are equivalent, so we can simply fix a choice and the ambiguity need not concern us.

\begin{definition}
  \label{def:torus-metric}
  For any compact abelian Lie group $A$ that appears, we assume an identification $A \cong (\RR/\ZZ)^d \times K$ has been fixed and define
  \[
    d_A((t, k), (t',k')) = (1 - [k=k']) + \inf \left\{ \|x - x'\|_2 \colon x, x' \in \RR^d,\ x, x' \equiv t, t' \pmod{1} \right\} \ .
  \]
  Similarly, we write $\|\cdot\|_A$ for the ``norm'' $d_A(0, -)$.
\end{definition}

(Here the expression $[k=k']$ takes the value $1$ if $k=k'$ and $0$ otherwise.)

Finally, we observe that the integral discussed above is finitely additive wherever this makes sense; i.e.~if $f$, $f'$ and $f+f'$ all have images within (possibly different) $\delta$-balls in $A$, then $\int (f+f') = \int f + \int f'$.

%%%%%%%%%%%%%%%%%%%%%%%%%%%%%%%%%%%%%%%%%%%%%%%%%%%%
\subsection{A slightly less easy case}
\label{subsec:ds-cocycle-fixing}
%%%%%%%%%%%%%%%%%%%%%%%%%%%%%%%%%%%%%%%%%%%%%%%%%%%%
As one further stepping stone towards Theorem \ref{thm:cocycle-special} and Theorem \ref{thm:cocycle-main}, we consider the case where $X = \cD_s(H)$ for some compact abelian group $H$.  Recall this means $H$ carries the degree $s$ filtration
\[
  H = H_0 = H_1 = \dots = H_s \supseteq \{0\}
\]
and $C^k(X)$ are the Host--Kra cubes $\HK^k(H_\bullet)$ with respect to this filtration.  (We use additive notation for $H$ and also for the Host--Kra groups we derive from it.)

We prove Theorem \ref{thm:cocycle-special} in the special case considered in this section by induction on $\ell$. In the inductive step, we are looking for an $(\ell-1)$-cocycle $\rho': C^{\ell-1}\to A$ such that $\partial\rho'=\rho$. Motivated by the previously discussed case of $\ell=1$, we intend to define $\rho'(c)$ by taking an average of $\rho([c,c'])$ over the space
\[
  \left\{ c' \colon [c, c'] \in C^\ell(X) \right\}.
\]

In order to compute the average, we need to specify a probability measure. Luckily, the space can be identified with a compact abelian Lie group, and we can make use of the Haar measure on this group.  We denote
\[
  T_1^\ell := \left\{ c \colon \{0,1\}^{\ell-1} \to H \ \colon \ [\vec{0}, c] \in C^\ell(X) \right\}
\]
and observe that $[\vec{0}, T_1^\ell]$ is a closed subgroup of $\HK^\ell(H_\bullet)$.  It is clear that $[c_1, c_2] \in C^\ell(\cD_s(H))$ if and only if $c_2 = c_1 + t$ for some $t \in T_1^\ell$: indeed, the Host--Kra cube group is a group, so since $[c_1, c_2]$ and $[c_1, c_1]$ are both cubes, so is $[c_1, c_2] - [c_1, c_1] = [\vec{0}, c_2 - c_1]$ and so $t=c_2 - c_1$ is in $T_1^\ell$.

In fact, it turns out that $T_1^\ell$ is just the Host--Kra cubespace of dimension $(\ell-1)$ of $H$ given the degree $(s-1)$ filtration, but we will not actually need to know this.

The special case of Theorem \ref{thm:cocycle-special} considered in this section follows from the following proposition by induction on $\ell$.

\begin{proposition}
  \label{prop:ds-cocycle-fixing}
  Suppose $X = \cD_s(H)$ as above, $A$ is a compact abelian Lie group, and $\ell \ge 1$ and $\rho \colon C^\ell(X) \to A$ is a cocycle such that $d_A(\rho(c_1), \rho(c_2)) \le \delta$ for all $c_1$, $c_2$ and some suitably small $\delta$ \uppar{i.e.~$\delta \le \eps(\ell, A)$}.

  We define
  \begin{align*}
    \rho' \colon C^{\ell-1}(X) &\to A \\
    c' &\mapsto \int_{T_1^\ell} \rho([c', c'+t]) d\mu_{T_1^\ell}(t) \ .
  \end{align*}
  Then
  \begin{enumerate}
    \item the function $\rho'$ is continuous and $d_A(\rho'(c_1'), \rho'(c_2')) \le \delta$ for all $c_1', c_2' \in C^{\ell-1}(X)$;
    \item moreover, $\rho'$ is an $(\ell-1)$-cocycle; and
    \item we have $\rho([c_1, c_2]) = \rho'(c_1) - \rho'(c_2)$ for all cubes $c = [c_1, c_2] \in C^\ell(X)$.
  \end{enumerate}
\end{proposition}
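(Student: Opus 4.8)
The plan is to establish the three assertions in turn, the first two being comparatively routine and the third (which also powers the second) being the main computation. Before anything else, observe that $\rho'$ is well defined: $T_1^\ell$ is a closed subgroup of $\HK^\ell(H_\bullet)$ under pointwise addition (if $t,t'\in T_1^\ell$ then $[\vec 0,t+t']=[\vec 0,t]+[\vec 0,t']\in\HK^\ell(H_\bullet)$, since $\HK^\ell(H_\bullet)=C^\ell(\cD_s(H))$ is a group, and likewise $-t\in T_1^\ell$), hence a compact abelian group carrying a Haar probability measure $\mu_{T_1^\ell}$; and since $\rho$ has image of diameter at most $\delta\le\eps(\ell,A)$, for each $c'$ the integrand $t\mapsto\rho([c',c'+t])$ takes values in a small ball of $A$, so the $A$-valued integral of Section~\ref{subsec:integration} is defined and lies in that same ball.

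For (i): continuity of $\rho'$ holds because $(c',t)\mapsto\rho([c',c'+t])$ is continuous, $T_1^\ell$ is compact, and on a sufficiently small ball the $A$-valued integration is just ordinary integration of a continuous $\RR^d$-valued function after passing to compatible lifts (Section~\ref{subsec:integration}; here $A\cong(\RR/\ZZ)^d\times K$ with $K$ finite as in Definition~\ref{def:torus-metric}). For the oscillation bound, all values of $\rho$ lie in a common $\delta$-ball, and as $\delta<1$ they share the same component of $K$, so their lifts lie in a set of diameter at most $\delta$ in $\RR^d$; each $\rho'(c')$ is a $\mu_{T_1^\ell}$-average of such lifts, hence lies in their convex hull, still of diameter at most $\delta$, and projecting back down gives $d_A(\rho'(c_1'),\rho'(c_2'))\le\delta$.

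For (iii): fix an $\ell$-cube $[c_1,c_2]\in C^\ell(\cD_s(H))$. Since $\HK^\ell(H_\bullet)$ is a group and $[c_1,c_1]$ is a cube, $[\vec 0,c_2-c_1]=[c_1,c_2]-[c_1,c_1]$ is a cube, so $c_2=c_1+t_0$ for a unique $t_0\in T_1^\ell$. For each $t\in T_1^\ell$ the configurations $[c_1,c_1+t]$, $[c_1+t,c_1+t_0]$ and $[c_1,c_1+t_0]$ are all $\ell$-cubes --- the middle one because $t_0-t\in T_1^\ell$, as $T_1^\ell$ is a group --- so the additivity axiom for $\rho$ gives
\[
  \rho([c_1,c_1+t_0])=\rho([c_1,c_1+t])+\rho([c_1+t,c_1+t_0]).
\]
Using the reflection axiom and the identities $[c_1+t,c_1+t_0]=[c_2+(t-t_0),c_2]$ (so $\rho([c_1+t,c_1+t_0])=-\rho([c_2,c_2+(t-t_0)])$) and $[c_1,c_1+t_0]=[c_1,c_2]$, this rearranges to
\[
  \rho([c_1,c_1+t])=\rho([c_1,c_2])+\rho([c_2,c_2+(t-t_0)]).
\]
Integrating over $t\in T_1^\ell$ --- legitimate by the finite additivity of the $A$-valued integral, since $\rho([c_1,c_2])$ is constant and the remaining term has image in a $\delta$-ball --- and substituting $s=t-t_0$ (permissible as $\mu_{T_1^\ell}$ is translation invariant) yields $\rho'(c_1)=\rho([c_1,c_2])+\rho'(c_2)$, which is (iii).

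For (ii): when $\ell=1$ there is nothing to check, an $(\ell-1)=0$-cocycle being just a continuous function $X\to A$ (Remark~\ref{rem:cocycle-defs}); and for $\ell\ge 2$, since the reflection and degenerate-cube axioms follow from additivity (Remark~\ref{rem:cocycle-defs}), it suffices to show $\rho'$ is additive, i.e.\ that whenever $c=[e_0,e_2]$, $c'=[e_0,e_1]$, $c''=[e_1,e_2]$ are $(\ell-1)$-cubes concatenated along a common coordinate $j$ one has $\rho'(c)=\rho'(c')+\rho'(c'')$. The strategy mirrors (iii): expand each $\rho'(\cdot)$ via its defining integral over $T_1^\ell$, split the fibre variable $t\in T_1^\ell$ along coordinate $j$ so that $[c,c+t]$ re-groups, along $j$, into a concatenation of lower cubes built from the corresponding $x_j$-halves of $t$, apply the additivity axiom of $\rho$ pointwise on the common fibre, and integrate back, again invoking translation invariance of Haar measure and finite additivity of the $A$-valued integral. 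Carrying out this bookkeeping is the main obstacle: one must verify that all the configurations produced by these splittings and re-groupings are genuine cubes of $\cD_s(H)$ (which amounts to tracking the relevant Host--Kra corner-sum constraints and using that the $x_j$-faces of a $t\in T_1^\ell$ again lie in the appropriate lower-degree Host--Kra cube group), and that every intermediate $A$-valued average stays inside a ball small enough for the integral to be defined. Conceptually this is no deeper than (iii), but it is notationally heavier, involving cubes of one higher dimension and an auxiliary concatenation direction; granting additivity, $\rho'$ is an $(\ell-1)$-cocycle and the proposition follows.
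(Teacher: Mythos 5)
Parts (i) and (iii) are correct and essentially match the paper's argument: for (i) you use the oscillation bound and the convex-hull-of-lifts picture; for (iii) you glue along an intermediate cube $c_1+t$, rearrange using reflection, and shift the Haar measure by $t_0$ — this is the same computation as in the paper (phrased slightly differently, but equivalent).

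Part (ii) is where the proposal has a genuine gap, and I think the difficulty is conceptual rather than merely notational. Your sketch says: split $t\in T_1^\ell$ along coordinate $j$, regroup $[c,c+t]$, apply additivity of $\rho$ ``pointwise on the common fibre'', and integrate back; you then defer the details as heavy bookkeeping. The issue is that once you regroup $[[e_0,e_2]_j,[e_0,e_2]_j+t]$ into $[[e_0,e_0+t_0]_\ell,[e_2,e_2+t_1]_\ell]_j$ and try to insert a middle face $[e_1,e_1+r]_\ell$, the admissible $r$ are not a single point determined by $t$ but an entire coset $t_0+T_2^\ell=t_1+T_2^\ell$ of the group
\[
  T_2^\ell := \bigl\{ u \colon \{0,1\}^{\ell-2}\to H \ :\ [[\vec 0,\vec 0],[\vec 0,u]]\in C^\ell(\cD_s(H)) \bigr\}.
\]
If you simply fix one choice (say $r=t_0$) and integrate over $t$, the resulting integrals do \emph{not} reproduce $\rho'([e_0,e_1])$ and $\rho'([e_1,e_2])$: the shifts $[t_0,t_0]_j$ (for the left piece) and $[t_0,t_1]_j$ (for the right piece) each range over proper subsets of $T_1^\ell$ as $t$ varies, so neither integral is an average over all of $T_1^\ell$. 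The fix, which is the heart of the paper's proof, is to introduce an \emph{auxiliary} integration variable $u'\in T_2^\ell$ for the middle face (legitimate because the glued sum is independent of $u'$), and then use the group isomorphism $T_1^\ell\cong T_1^{\ell-1}\times T_2^\ell$ to perform the changes of variables $(v,u')\mapsto [v,u'+v]_j\in T_1^\ell$ and $(v'',u'')=(v+u',u-u')$ so that each piece really is an integral over all of $T_1^\ell$. This extra averaging step and the accompanying algebra of $T_2^\ell$ are essential; they have no analogue in (iii), where the intermediate cube $c_1+w$ is already the integration variable. So ``conceptually no deeper than (iii)'' is not quite right, and ``apply additivity pointwise on the common fibre and integrate back'' as stated does not close the argument. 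You would need to make explicit both the decomposition $T_1^\ell\cong T_1^{\ell-1}\times T_2^\ell$ and the averaging over the auxiliary middle-face variable.
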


\begin{proof}
 Note we are drawing on Section \ref{subsec:integration} to make sense of the integral in the definition of $\rho'$.  To see (i), we note that
  \[
    \rho'(c_1') - \rho'(c_2') = \int_{T_1^\ell} (\rho([c_1', c_1' + t]) - \rho([c_2', c_2'+t])) d \mu_{T_1^\ell}(t)
  \]
  and since $\|\rho(c_1) - \rho(c_2)\|_A \le \delta$ for all $c_1, c_2 \in C^\ell(X)$ by assumption, the integrand is bounded in norm pointwise by $\delta$ and hence so is the integral.
  Moreover, since the maps
  \[
    c' \mapsto [c', c'+t]
  \]
  are equicontinuous for $t \in T_1^\ell$, and since $\rho$ is uniformly continuous, we deduce that the integrand becomes arbitrarily small when $d(c_1', c_2')$ is arbitrarily small. Again, an average of small values in $A$ is small, so $\rho'$ is (uniformly) continuous.

  We now consider (ii).  We need some further definitions.  Let
  \[
    T_2^\ell := \left\{ c \colon \{0,1\}^{\ell-2} \to H \ \colon \ [[\vec{0}, \vec{0}], [\vec{0},c]] \in C^\ell(H) \right\}
  \]
  and note as before that $[t_0, t_1] \in T_1^\ell$ if and only if $t_1 = t_0 + u$ for some $u \in T_2^\ell$ (again, just using the fact that the Host--Kra cube group is a group).  In other words, every $t \in T_1^\ell$ has a unique decomposition as
  \[
    t = [v, v] + [\vec{0}, u]
  \]
  for $v \in T_1^{\ell-1}$ and $u \in T_2^\ell$, and this establishes a group isomorphism $T_1^\ell \leftrightarrow T_1^{\ell-1} \times T_2^{\ell}$. (Indeed, it is easy to see that $[v,v]\in T_1^\ell$ if and only if $[[0,0],[v,v]]\in C^{\ell}(H)$ if and only if $[0,v]\in C^{\ell-1}(H)$ if and only if $v\in T_1^{\ell-1}$.)

  To prove additivity of $\rho'$, suppose $[c_0, c_1], [c_1, c_2] \in C^{\ell-1}(X)$, and write
  \begin{align*}
    \rho'([c_0, c_2]) &= \int_{T_1^\ell} \rho([[c_0, c_2], [c_0, c_2] + t]) d\mu_{T_1^\ell}(t) \\
                      &= \int_{T_1^{\ell-1} \times T_2^\ell} \rho([[c_0, c_2], [c_0 + v, c_2 + u + v]]) d\mu_{T_1^{\ell-1}}(v) d\mu_{T_2^\ell}(u) \\
                      &= \int_{T_1^{\ell-1} \times T_2^{\ell}\times T_2^\ell} (\rho([[c_0, c_1], [c_0 + v, c_1 + u' + v]]) \\
&\qquad\qquad\qquad\qquad
+ \rho([[c_1, c_2], [c_1 + u'+ v, c_2 + u + v]]) )  d\mu_{T_1^{\ell-1}}(v) d\mu_{T_2^\ell}(u) d\mu_{T_2^\ell}(u')\\
\end{align*}
In the last line we used additivity of $\rho$ and that the cubes in the last line can be glued along the common face $[c_1,c_1+u'+v]$ to obtain the cube in the penultimate line.

We integrate out the first term and continue the calculation as follows.
\begin{align*}
    \rho'([c_0, c_2]) &= \rho'([c_0, c_1]) + \int_{T_2^\ell}\Big(\int_{T_1^{\ell-1}\times T_2^\ell}\rho([[c_1, c_2], [c_1 +  (v+u'),\\
&\qquad\qquad\qquad\qquad\qquad\qquad\qquad
 c_2 + (u - u')+ (v+u')]])  d\mu_{T_1^{\ell-1}}(v) d\mu_{T_2^\ell}(u)\Big) d\mu_{T_2^\ell}(u')\\
		&= \rho'([c_0, c_1]) + \int_{T_2^\ell}\Big(\int_{T_1^{\ell-1}\times T_2^\ell}\rho([[c_1, c_2], [c_1 +  v'', c_2 + u''+ v'']])  d\mu_{T_1^{\ell-1}}(v'') d\mu_{T_2^\ell}(u'')\Big) d\mu_{T_2^\ell}(u')\\
		&= \rho'([c_0, c_1])+\rho'([c_1, c_2])
  \end{align*}
  as required.  For the penultimate equation we used the substitution $v''=v+u'$ and $u''=u-u'$ and translation invariance of the Haar measure.

  We finish the proof with (iii).  Given $[c_1, c_2] \in C^\ell(X)$ we may write $c_2 = c_1 + t$ for some $t \in T_1^\ell$ and then observe
  \begin{align*}
    \rho([c_1, c_2]) &= \int_{T_1^\ell} \left(\rho([c_1, c_1+w]) + \rho([c_1+w, c_1+t] )\right) d\mu_{T_1^\ell}(w) \\
                     &= \int_{T_1^\ell} \rho([c_1, c_1+w]) d\mu_{T_1^\ell}(w) - \int_{T_1^\ell} \rho([c_1+t, c_1+w]) d\mu_{T_1^\ell}(w)
  \end{align*}
  which once again is just $\rho'(c_1) - \rho'(c_2)$ after reparameterization of the last integral (using that $t \in T_1^\ell$ and Haar measure is translation-invariant).
\end{proof}

%%%%%%%%%%%%%%%%%%%%%%%%%%%%%%%%%%%%%%%%%%%%%%%%%%%%
\subsection{The general case}\label{subsec:general}
%%%%%%%%%%%%%%%%%%%%%%%%%%%%%%%%%%%%%%%%%%%%%%%%%%%%

The key ideas for the general case, Theorem \ref{thm:cocycle-main}, have already appeared in the special cases we have just discussed.  The very vague strategy they suggest for proving the general case is as follows.
\begin{itemize}
  \item As in Section \ref{subsec:ds-cocycle-fixing}, we will perform an averaging over a space of cubes
    \[
      \{c' \colon [c, c'] \in C^\ell(X) \}
    \]
    to obtain an $(\ell-1)$-cocycle $\rho'$ from an $\ell$-cocycle $\rho$, such that $\rho([c_1, c_2]) = \rho'(c_1) - \rho'(c_2)$.  Iterating this argument $\ell$ times will give the result.
  \item Unfortunately we do not currently have measures defined on these spaces, so we will do as is suggested in Section \ref{subsec:1-cocycle} and appeal to the weak structure theory.  This will enable us to express the averaging as a sequence of integrals over each of the structure groups $A_s(X)$ in turn, or more accurately over Host--Kra configurations built out of these groups.
\end{itemize}

With some thought, this approach is seen to be essentially equivalent to performing a \emph{double} induction on both $\ell$ (the order of the cocycle) and $s$ (the degree of the nilspace $X$ in Theorem \ref{thm:cocycle-special}, or of the fibration $\beta$ in the case of Theorem \ref{thm:cocycle-main}).  In each stage of the induction, we perform some integration over groups related to $A_s(X)$ in the spirit of the previous subsections, and are left at the end with some simpler objects on which to iterate.

We need to introduce some notation for technical reasons that will become clear later. Let $c_1,c_2:\{0,1\}^\ell\to X$ be two configurations. Until now, we defined the concatenation by the same symbol irrespective of the coordinate on which the concatenation takes place. Now we wish to designate this in our notation, and write:
\begin{align*}
[c_1,c_2]_k:\{0,1\}^{\ell+1}\to& X\\
\omega\mapsto&
\begin{cases}
c_1(\omega_1,\ldots,\omega_{k-1},\omega_{k+1},\ldots,\omega_{\ell+1}) & \text{if $\omega_k=0$}\\
c_2(\omega_1,\ldots,\omega_{k-1},\omega_{k+1},\ldots,\omega_{\ell+1}) & \text{if $\omega_k=1$}.
\end{cases}
\end{align*}

We also introduce some notation for the derivative along a specific coordinate.
If $\rho:C^{\ell}\to A$ is a function on cubes, we write
\[
\partial_k\rho([c_1,c_2]_k)=\rho(c_1)-\rho(c_2).
\]
Observe that the identity
\[
\partial^{\ell+1} f=\partial_k (\partial^{\ell} f)
\]
holds irrespective of the value of $k$.
Hence, if we are differentiating a coboundary $\partial^{\ell} f$, the derivative is independent of the direction $k$; however, this will not be the case in general.

Armed with this notation, we can now formulate the inductive step in the proof
of Theorem \ref{thm:cocycle-special}.

\begin{lemma}
  \label{lem:inductive-cocycle-special}
  Let $A$ be a compact abelian Lie group with the metric $d_A$ as above, and let $s \ge 1$, $\ell \ge 1$ be given.  Then there exists $\eps = \eps(s, \ell, A)$ such that the following holds.

  Suppose $X$ is a \uppar{compact, ergodic} nilspace of degree $s$, and $\rho \colon C^\ell(X) \to A$ is an $\ell$-cocycle such that $d_A(\rho(c), \rho(c')) \le \delta$ for all $c,c' \in C^\ell(X)$ and some $\delta \le \eps$.

  Write $\pi_{s-1} \colon X \to \pi_{s-1}(X)$ for the canonical factor.  Then we may decompose
  \[
    \rho(c) = \sum_{k=1}^{\ell}\partial_k\rho'_k(c) + \tilde{\rho}(\pi_{s-1}(c))
  \]
  where
  \begin{itemize}
    \item $\rho'_k \colon C^{\ell-1}(X) \to A$ is an $(\ell-1)$-cocycle for each $k$, and
    \item $\tilde{\rho} \colon C^\ell(\pi_{s-1}(X)) \to A$ is an $\ell$-cocycle on $\pi_{s-1}(X)$,
  \end{itemize}
  such that both $\rho'_k$ and $\tilde{\rho}$ take images in a small ball in $A$, i.e.~$d_A(\rho'_k(c_1), \rho'_k(c_2)) \lesssim_{s, \ell} \delta$ for all $c_1, c_2 \in C^{\ell-1}(X)$ and $k$, and $d_A(\tilde{\rho}(c_1), \tilde{\rho}(c_2)) \lesssim_{s,\ell} \delta$ for all $c_1, c_2 \in C^{\ell}(\pi_{s-1}(X))$.
\end{lemma}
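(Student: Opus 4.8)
The plan is to strip off the top structure group $A_s=A_s(X)$ by an averaging (integration) argument in the spirit of Proposition~\ref{prop:ds-cocycle-fixing} and of the $\ell=1$ discussion in Section~\ref{subsec:1-cocycle}, but carried out one coordinate direction at a time, so that after all $\ell$ directions have been treated the residual cochain is constant on the fibres of $\pi:=\pi_{s-1}$ and factors through $\pi_{s-1}(X)$. Concretely, recall from the weak structure theory (\cite{GMV1}*{Theorem 5.4 and Proposition 5.1}) that $\pi\colon C^\ell(X)\to C^\ell(\pi_{s-1}(X))$ is a principal bundle whose structure group is the compact abelian Lie group $C^\ell(\cD_s(A_s))$: for $c\in C^\ell(X)$ the cubes $c'$ with $\pi(c')=\pi(c)$ are exactly the $\alpha.c$, $\alpha\in C^\ell(\cD_s(A_s))$. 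For each direction $k\in\{1,\dots,\ell\}$ I would introduce the compact abelian Lie group
\[
  T_k:=\bigl\{\alpha\colon\{0,1\}^{\ell-1}\to A_s\ :\ [\vec 0,\alpha]_k\in C^\ell(\cD_s(A_s))\bigr\}\le A_s^{\{0,1\}^{\ell-1}},
\]
with its Haar measure $\mu_{T_k}$; this is precisely the group $T_1^\ell$ of Section~\ref{subsec:ds-cocycle-fixing} in the case $X=\cD_s(H)$, $k=\ell$, and one checks (again by \cite{GMV1}*{Theorem 5.4}) that $[b,\alpha.b]_k\in C^\ell(X)$ whenever $b\in C^{\ell-1}(X)$ and $\alpha\in T_k$.

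Next I would run through $k=1,\dots,\ell$ in turn: with $\rho_{(0)}:=\rho$ and $\rho_{(k)}:=\rho_{(k-1)}-\partial_k\rho'_k$, define
\[
  \rho'_k\colon C^{\ell-1}(X)\to A,\qquad \rho'_k(b):=\int_{T_k}\rho_{(k-1)}\bigl([b,\alpha.b]_k\bigr)\,d\mu_{T_k}(\alpha),
\]
the integral being meaningful in the sense of Section~\ref{subsec:integration} provided $\delta<\eps(s,\ell,A)$, since $\rho$ and hence every $\rho_{(k-1)}$ has image in a ball of radius $\lesssim_{s,\ell}\delta$. That $\rho'_k$ is continuous with $d_A(\rho'_k(b_1),\rho'_k(b_2))\lesssim_{s,\ell}\delta$, and that $\rho'_k$ is an $(\ell-1)$-cocycle, I would prove by the method of Proposition~\ref{prop:ds-cocycle-fixing}(i)--(ii): continuity from equicontinuity of $b\mapsto[b,\alpha.b]_k$ in $\alpha$ together with uniform continuity of $\rho_{(k-1)}$, and the cocycle identities from additivity of $\rho_{(k-1)}$, the glueing axiom and Fubini after decomposing $T_k$ as a product (the analogue of $T_1^\ell\cong T_1^{\ell-1}\times T_2^\ell$).

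The heart of the matter is to show that $\sigma:=\rho_{(\ell)}=\rho-\sum_{k=1}^\ell\partial_k\rho'_k$ is constant on the fibres of $\pi$. A short glue--reflect--reparameterise computation (the direction-$k$ version of the unconditional $\ell=1$ statement that $\rho-\partial f'$ is fibre-constant) gives $\rho_{(k-1)}\bigl([b,\alpha.b]_k\bigr)=\rho'_k(b)-\rho'_k(\alpha.b)$ for $b\in C^{\ell-1}(X)$, $\alpha\in T_k$, so that $\rho_{(k)}$ annihilates every configuration $[b,\alpha.b]_k$ with $\alpha\in T_k$ — the $k$-th averaging has killed the $A_s$-dependence ``in the $k$-th fibre coordinate''. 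The task is to combine the $\ell$ directions and conclude that $\sigma$ depends on $c$ only through $\pi_{s-1}(c)$, i.e.\ $\sigma(\alpha.c)=\sigma(c)$ for arbitrary $\alpha\in C^\ell(\cD_s(A_s))$; the delicacy is that the groups $T_k$ do not visibly generate $C^\ell(\cD_s(A_s))$ and that processing later directions can disturb the vanishing established for earlier ones. Pushing this reduction through with the cocycle axioms and glueing, one is left with the vanishing of certain residual terms, which turn out to be continuous homomorphic (``multilinear'') expressions in $A_s$-valued data taking values in a $\lesssim_{s,\ell}\delta$-ball of $A$; being rigid, these vanish identically once $\delta<\eps(s,\ell,A)$ — this is where the smallness hypothesis does its real work. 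Granting fibre-constancy, $\sigma$ (being continuous) factors through the closed surjection $C^\ell(X)\to C^\ell(\pi_{s-1}(X))$ of compact spaces as $\sigma=\tilde\rho\circ\pi_{s-1}$; $\tilde\rho$ inherits the cocycle axioms by lifting concatenations, reflections and degenerate cubes of $\pi_{s-1}(X)$ to $X$ and matching common faces via corner-completion and glueing, and $d_A(\tilde\rho(\pi(c_1)),\tilde\rho(\pi(c_2)))=d_A(\sigma(c_1),\sigma(c_2))\le d_A(\rho(c_1),\rho(c_2))+\sum_k d_A(\partial_k\rho'_k(c_1),\partial_k\rho'_k(c_2))\lesssim_{s,\ell}\delta$.

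I expect the main obstacle to be exactly this fibre-constancy step: organising the iterated averages so that eliminating the $A_s$-dependence in one coordinate does not resurrect it in another, and pinning down the leftover ``discrete multilinear'' obstructions precisely enough to apply the smallness hypothesis. The remaining items — continuity and the cocycle property of the $\rho'_k$ and of $\tilde\rho$, and all the $\lesssim_{s,\ell}\delta$ estimates — are routine adaptations of Proposition~\ref{prop:ds-cocycle-fixing} and of Section~\ref{subsec:1-cocycle}. (This lemma then feeds the double induction proving Theorem~\ref{thm:cocycle-special}: apply induction in $\ell$ to each $\rho'_k$, induction in $s$ to $\tilde\rho$, and recombine using $\partial_k\partial^{\ell-1}=\partial^\ell$.)
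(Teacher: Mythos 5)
Your overall strategy is the right one and essentially the paper's: iterate over the $\ell$ coordinate directions, average over the top structure group in each direction, and argue that the residual factors through $\pi_{s-1}$. The definitions of the $T_k$, the $\rho'_k$, and the residuals $\rho_{(k)}$ match the paper's construction (the paper factors the iteration through a clean single-step statement, Lemma~\ref{lem:inductive-cocycle-more-special}), and your treatment of the continuity, cocycle-property and smallness estimates for the $\rho'_k$ is sound.

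However, the step you correctly flag as the heart of the matter --- that $\sigma = \rho_{(\ell)}$ is constant on fibres of $\pi_{s-1}$ --- is not actually proved, and the resolution you sketch is the wrong one. You suggest that, after all $\ell$ averagings, one is left with some residual ``discrete multilinear'' obstructions whose vanishing requires the smallness hypothesis and a rigidity argument. In fact no such obstructions arise and no rigidity is needed at this stage: fibre-constancy of $\sigma$ holds \emph{exactly}, for purely algebraic reasons. (The smallness assumption is used only to make the integrals of Section~\ref{subsec:integration} well-defined.) The mechanism is the following three observations. First, the invariance of $\rho_{(k)}$ under the subgroup $[0,T_1^\ell]_k$, which you establish in step $k$, is \emph{preserved} under all later steps $j>k$: indeed $\rho_{(j)} = \rho_{(j-1)} - \partial_j\rho'_j$, and both terms on the right are $[0,T_1^\ell]_k$-invariant --- the first by induction, and the second because $\rho'_j$ is defined by averaging the $[0,T_1^\ell]_k$-invariant function $\rho_{(j-1)}$ over configurations $[b,t.b]_j$, and the translation of $[b,t.b]_j$ by an element of $[0,T_1^\ell]_k$ factors as a translation of $b$ by an element of the corresponding codimension-one face group (using that the duplicated configuration $[\alpha_0,\alpha_0]_j$ again lies in $C^\ell(\cD_s(A_s))$). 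Second, since reflection simply negates a cocycle, invariance under $[0,T_1^\ell]_k$ automatically yields invariance under $[T_1^\ell,0]_k$. This is the answer to your worry that ``the groups $T_k$ do not visibly generate'': indeed the one-sided groups $[0,T_1^\ell]_k$ alone need not generate, but together with their reflections $[T_1^\ell,0]_k$ they do. Third, the groups $[0,T_1^\ell]_k$ and $[T_1^\ell,0]_k$ for $k=1,\dots,\ell$ generate all of $C^\ell(\cD_s(A_s))$, since every Host--Kra generator $[a]_F$ is supported on some codimension-one face $\{\omega_k=e\}$. Combining, $\sigma$ is invariant under the full group $C^\ell(\cD_s(A_s))$, which is exactly fibre-constancy. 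You should also note that the descent of the cocycle axioms to $\pi_{s-1}(X)$, which you attribute to ``corner-completion and glueing'', actually requires the lifting statement \cite{GMV1}*{Lemma 7.5} (that compatibly glued cubes downstairs lift compatibly), since cube completion alone does not immediately furnish a lift matching on the common face.
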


We verify that this is enough to prove Theorem \ref{thm:cocycle-special}.
\begin{proof}[Proof of Theorem \ref{thm:cocycle-special} assuming Lemma \ref{lem:inductive-cocycle-special}]
  We remark again that an ergodic nilspace of degree $0$ is just the one-point space $\{\ast\}$, and so a cocycle of any positive order on this space is identically zero.   Also recall that a $0$-cocycle is just the same thing as a continuous function.

  So, we may proceed by induction on $s$ and $\ell$, where the cases $\ell = 0$ and $s=0$ are both clear.  Given $s, \ell > 0$ and a cocycle $\rho \colon C^\ell(X) \to A$ as in Theorem \ref{thm:cocycle-special}, we decompose
   \[
    \rho(c) = \sum_{k=1}^{\ell}\partial_k\rho'_k(c) + \tilde{\rho}(\pi_{s-1}(c))
  \]
as in Lemma \ref{lem:inductive-cocycle-special}.

  Temporarily we will say that a function $f \colon Y \to A$ has \emph{small image} if $d_A(f(y), f(y')) \lesssim_{s, \ell} \delta$ for all $y, y' \in X$.  By inductive hypothesis, $\rho'_k = \partial^{\ell-1} g_k$ for some continuous $g_k \colon X \to A$ with small image, and similarly $\tilde{\rho} = \partial^\ell h$ for some continuous $h \colon \pi_{s-1}(X) \to A$ with small image.

  Setting $f = \sum_{k=1}^{\ell} g_k + h \circ \pi_{s-1}$, we see that this is again a continuous function $X \to A$ with small image, and moreover
  \[
    \partial^\ell f = \sum_{k=1}^\ell \partial^\ell g_k + \partial^\ell(h \circ \pi_{s-1})
                    = \sum_{k=1}^\ell\partial_k \rho'_k + \tilde{\rho} = \rho
  \]
  which completes the proof.
\end{proof}

Lemma \ref{lem:inductive-cocycle-special} is proved by iterating the following
result. Recall from Section \ref{subsec:ds-cocycle-fixing}, that  we write $T_{1}^\ell$ for the
set of configurations $t:\{0,1\}^{\ell-1}\to A_s(X)$ that satisfy
\[
[0,t]\in C^\ell(\cD_s(A_s)).
\]
(Here we did not indicate in which coordinate the concatenation takes place, as the resulting group is independent of this choice.)

\begin{lemma}
  \label{lem:inductive-cocycle-more-special}
  Let $A$ be a compact abelian Lie group with the metric $d_A$ as above, and let $s \ge 1$, $\ell \ge 1$ be given.  Then there exists $\eps = \eps(s, \ell, A)$ such that the following holds.

  Suppose $X$ is a \uppar{compact, ergodic} nilspace of degree $s$, and $\rho \colon C^\ell(X) \to A$ is an $\ell$-cocycle such that $d_A(\rho(c), \rho(c')) \le \delta$ for all $c,c' \in C^\ell(X)$ and some $\delta \le \eps$.

  Write $\pi_{s-1} \colon X \to \pi_{s-1}(X)$ for the canonical factor. Fix a number $1\le k\le \ell$. Then we may decompose
  \[
    \rho(c) = \partial_k\rho'(c) + \tilde{\rho}(c)
  \]
  where
  \begin{itemize}
    \item $\rho' \colon C^{\ell-1}(X) \to A$ is an $(\ell-1)$-cocycle, and
    \item $\tilde{\rho} \colon C^\ell(X) \to A$ is an $\ell$-cocycle, which is invariant under the action of the group $[0,T_{1}^\ell]_k$,
  \end{itemize}
  such that both $\rho'$ and $\tilde{\rho}$ take images in a small ball in $A$, i.e.~$d_A(\rho'(c_1), \rho'(c_2)) \lesssim_{s, \ell} \delta$ for all $c_1, c_2 \in C^{\ell-1}(X)$ and $d_A(\tilde{\rho}(c_1), \tilde{\rho}(c_2)) \lesssim_{s,\ell} \delta$ for all $c_1, c_2 \in C^{\ell}(X)$.
\end{lemma}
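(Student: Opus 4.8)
The plan is to follow the construction of Proposition~\ref{prop:ds-cocycle-fixing}, but to average over the fibre directions of $\pi_{s-1}$ sitting in the $k$-th coordinate rather than over a full space of cube completions. Recall from the weak structure theory (\cite{GMV1}*{Theorem 5.4(ii)}) that $C^n(\cD_s(A_s(X)))$ acts simply transitively, by pointwise application of the $A_s(X)$-action on $X$, on each fibre of $C^n(X)\to C^n(\pi_{s-1}(X))$; in particular $[0,t]_k\in C^\ell(\cD_s(A_s))$ exactly when $t\in T_1^\ell$, and for any $c'\in C^{\ell-1}(X)$ and $t\in T_1^\ell$ the configuration $[c',t.c']_k=[0,t]_k\,.\,[c',c']_k$ lies in $C^\ell(X)$. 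I would therefore set
\[
  \rho'(c') \;:=\; \int_{T_1^\ell}\rho\bigl([c',\,t.c']_k\bigr)\,d\mu_{T_1^\ell}(t) \qquad (c'\in C^{\ell-1}(X)),
\]
where the $A$-valued integral makes sense (Section~\ref{subsec:integration}) as soon as $\delta\le\eps(s,\ell,A)$, since then every integrand lies in a fixed small ball; and then define $\tilde\rho:=\rho-\partial_k\rho'$, with $\partial_k\rho'([c_1,c_2]_k):=\rho'(c_1)-\rho'(c_2)$, which is well defined because every $\ell$-cube splits uniquely along coordinate $k$. The decomposition $\rho=\partial_k\rho'+\tilde\rho$ then holds by construction, and continuity together with the small-image bounds for $\rho'$ and $\tilde\rho$ follows exactly as in Proposition~\ref{prop:ds-cocycle-fixing}(i): the relevant differences of $\rho$-values are bounded pointwise by $\delta$ in $\|\cdot\|_A$, the maps $c'\mapsto[c',t.c']_k$ are equicontinuous in $t$ by uniform continuity of the $A_s(X)$-action, and an average of small values is small.

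It then remains to check that $\rho'$ is an $(\ell-1)$-cocycle and that $\tilde\rho$ is an $\ell$-cocycle invariant under $[0,T_1^\ell]_k$. For the former, additivity of $\rho'$ is obtained by the same glueing computation as in the proof of Proposition~\ref{prop:ds-cocycle-fixing}(ii): using the group decomposition $T_1^\ell\cong T_1^{\ell-1}\times T_2^\ell$ (which concerns only $A_s(X)$ and its degree-$s$ filtration, so goes through verbatim) and the glueing axiom for nilspaces (\cite{GMV1}*{Proposition 6.2}), one rewrites $\rho'([c_0,c_2]_j)$ as an integral over $T_1^{\ell-1}\times T_2^\ell$, introduces an auxiliary integration variable ranging over $T_2^\ell$ so that each cube may be split along the face through $c_1$ by additivity of $\rho$, reparameterises using translation-invariance of Haar measure, and integrates the auxiliary variable out to recover $\rho'([c_0,c_1]_j)+\rho'([c_1,c_2]_j)$; the reflection and degenerate-cube identities for $\rho'$ are then formal consequences. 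Since $\partial_k\rho'$ is an $\ell$-cocycle (Remark~\ref{rem:cocycle-defs}), so is $\tilde\rho=\rho-\partial_k\rho'$.

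Finally, for the $[0,T_1^\ell]_k$-invariance of $\tilde\rho$, fix $s\in T_1^\ell$ and write a given $\ell$-cube as $c=[c_1,c_2]_k$; then $[0,s]_k\,.\,c=[c_1,\,s.c_2]_k$, so additivity of $\rho$ (glueing along the face $c_2$) gives $\rho([0,s]_k.c)-\rho(c)=\rho([c_2,\,s.c_2]_k)$, while $\partial_k\rho'([0,s]_k.c)-\partial_k\rho'(c)=\rho'(c_2)-\rho'(s.c_2)$. Hence invariance of $\tilde\rho$ reduces to the identity $\rho([c_2,s.c_2]_k)=\rho'(c_2)-\rho'(s.c_2)$, and this holds because, after the substitution $t\mapsto t+s$ and using $t.(s.c_2)=(t+s).c_2$,
\[
  \rho'(c_2)-\rho'(s.c_2)
  =\int_{T_1^\ell}\bigl(\rho([c_2,t.c_2]_k)-\rho([s.c_2,t.c_2]_k)\bigr)\,d\mu_{T_1^\ell}(t)
  =\int_{T_1^\ell}\rho([c_2,s.c_2]_k)\,d\mu_{T_1^\ell}(t),
\]
the last equality by the reflection axiom and additivity of $\rho$ (glueing $[c_2,t.c_2]_k$ and $[t.c_2,s.c_2]_k$ along $t.c_2$). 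I expect the main obstacle to be the additivity of $\rho'$: transporting the combinatorial bookkeeping of Proposition~\ref{prop:ds-cocycle-fixing}(ii) to a general nilspace, while keeping careful track of the distinguished coordinate $k$ and of the faces along which the intermediate cubes are glued, is where essentially all of the work lies; by contrast the invariance of $\tilde\rho$ falls out quickly once the formula for $\rho'$ is in hand.
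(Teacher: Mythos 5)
Your proposal is correct and follows essentially the same approach as the paper: you define $\rho'$ by the fibrewise average $\int_{T_1^\ell}\rho([c',t.c']_k)\,d\mu_{T_1^\ell}(t)$, set $\tilde\rho=\rho-\partial_k\rho'$, and verify the cocycle and invariance properties by transporting the glueing computations of Proposition~\ref{prop:ds-cocycle-fixing} (replacing additive shifts by the $A_s(X)$-action) and by the same reparameterisation of the Haar integral. Your invariance check (showing $\rho'(c_2)-\rho'(s.c_2)=\rho([c_2,s.c_2]_k)$ by substituting $t\mapsto t+s$) is the same identity the paper proves in the form $\rho([c',t.c']_k)=\rho'(c')-\rho'(t.c')$, only arranged in the opposite direction.
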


The proof of Lemma \ref{lem:inductive-cocycle-more-special}  follows Section \ref{subsec:ds-cocycle-fixing} very closely, which is possible by virtue of the weak structure theory.  There will be some additional complications caused by the ``relative'' nature of Lemma \ref{lem:inductive-cocycle-more-special} as compared to Proposition \ref{prop:ds-cocycle-fixing}.

\begin{proof}[Proof of Lemma \ref{lem:inductive-cocycle-more-special}]
  The definition of $\rho'$ will be  similar to that in Proposition \ref{prop:ds-cocycle-fixing}.  Instead of integrating over the set
  \[
    \left\{ c' \colon [c, c']_k \in C^\ell(X) \right\}
  \]
  we will integrate over all such $c'$ such that additionally $c, c'$ lie in the same fiber of $\pi_{s-1}$.  By the weak structure theory, this is equivalent to saying
  \[
    \rho'(c) = \int_{T_1^\ell} \rho([c, t.c]_k) d\mu_{T_{1}^\ell}(t)
  \]
  where as always $t.c$ denotes the action of $A_s(X)$ on $X$, applied pointwise to the configurations $t \colon \{0,1\}^{\ell-1} \to A_s(X)$, $c \colon \{0,1\}^{\ell-1} \to X$.

  It suffices to check that
  \begin{enumerate}
    \item the function $\rho' \colon C^{\ell-1}(X) \to A$ is continuous and has small image (in the above sense);
    \item also, $\rho'$ is an $(\ell-1)$-cocycle on $X$; and
    \item the function
      \[
        \tilde{\rho}([c_1, c_2]_k) := \rho([c_1, c_2]_k) - (\rho'(c_1) - \rho'(c_2))
      \]
      is a continuous $\ell$-cocycle on $X$ with small image, and furthermore is invariant under the action of $[0,T_{1}^{\ell}]_k$.
  \end{enumerate}
 Part (i) follows by the same arguments as in Proposition \ref{prop:ds-cocycle-fixing}.  Furthermore, the proof of (ii) is identical to the corresponding part of Proposition \ref{prop:ds-cocycle-fixing}, where we simply replace all expressions of the form $(c + t)$ with $(t . c)$, when $c$ is a cube of $X$ and $t$ a configuration with values in $A_s(X)$.

  There is a bit more to say for (iii).  First, note that since $\rho$ and $\partial_k \rho'$ are continuous $\ell$-cocycles with small image, it is immediate that $\tilde{\rho}$ inherits these properties (see Remark \ref{rem:cocycle-defs}); the challenge is to show the invariance property.

    We have
    \[
      \tilde\rho([c, t.c']_k) = \tilde\rho([c, c']_k) + \tilde\rho([c', t.c']_k)
    \]
    and so it suffices to show that
    \[
      \rho([c', t.c']_k) = \rho'(c') - \rho'(t.c')
    \]
    for all $c'$ and $t \in T_{1}^\ell$. (Indeed, this implies that $\tilde\rho([c',t.c']_k)=0$.) Again this is very similar to Proposition \ref{prop:ds-cocycle-fixing}: we compute
    \begin{align*}
      \rho([c', t.c']_k) &= \int_{T_{1}^\ell} \left(\rho([c', w.c']_k) + \rho([w.c', t.c']_k) \right) d\mu_{T_{1}^\ell}(w) \\
                    &= \int_{T_{1}^\ell} \rho([c', w.c']_k) d\mu_{T_{1}^\ell}(w) - \int_{T_{1}^\ell} \rho([t.c', w.c']_k) d\mu_{T_{1}^\ell}(w) \\
                    &= \rho'(c') - \rho'(t.c')
    \end{align*}
    as required, once again after reparameterization of the last integral.
\end{proof}

\begin{proof}[Proof of Lemma \ref{lem:inductive-cocycle-special}]
We iterate Lemma \ref{lem:inductive-cocycle-more-special}.
We first write $\rho=\partial_1\rho'_1+\tilde \rho_1$, then in the $k$-th step we
apply Lemma \ref{lem:inductive-cocycle-more-special} to the cocycle
$\tilde \rho_{k-1}$ and write $\tilde\rho_{k-1}=\partial_k\rho'_k+\tilde\rho_k$.
Combining these equations we obtain the desired decomposition
\[
\rho=\sum_{k=1}^{\ell}\partial_k\rho'_k+\tilde\rho_\ell.
\]

It remains to show that $\tilde \rho_\ell$ descends to a cocycle on $\pi_{s-1}(X)$.
We first observe that $\tilde \rho_k$ is invariant under $[0,T_{1}^\ell]_k$ and this property
is inherited by $\tilde\rho_j$ for all $j>k$. Indeed, the cocycle $\tilde\rho_j$ is constructed
in the proof of Lemma  \ref{lem:inductive-cocycle-more-special} as the difference
of a $[0,T_{1}^\ell]_k$-invariant cocycle and an average of $[0,T_{1}^\ell]_k$-invariant cocycles.

Thus $\tilde\rho_\ell$ is invariant under $[0,T_{1}^\ell]_k$ for all $k$.
  Recalling that reflection simply negates the cocycle, i.e.~$\tilde\rho_\ell([c_0,c_1]_k) = -\tilde\rho_\ell([c_1,c_0]_k)$, it follows that $\tilde\rho_\ell$ is also invariant under $[T_1^\ell, 0]_k$.
It is clear that these groups generate $\cD_s(A_s(X))$, and hence $\tilde{\rho}_\ell$ is invariant under the
action of this group.

Now it follows that $\tilde\rho_\ell$ descends to a function on $C^{\ell}(\pi_{s-1}(X))$.  However, it is not completely obvious that this yields a cocycle. One needs to check that configurations $\tilde{c}_0, \tilde{c}_1, \tilde{c}_2$ on $\pi_{s-1}$ such that $[\tilde{c}_0, \tilde{c}_1]$ and $[\tilde{c}_1, \tilde{c}_2]$ are cubes of $\pi_{s-1}(X)$, \emph{lift} to corresponding configurations $c_0, c_1, c_2$ on $X$ in a compatible way.  Fortunately this is always true; see \cite{GMV1}*{Lemma 7.5}. %REF
\end{proof}

This completes the proof of Theorem \ref{thm:cocycle-special}.  The generalization to a proof of Theorem \ref{thm:cocycle-main} involves no new ideas at all, and only a small amount of further justification.

We will have to recall some notions from the \emph{relative} structure theory expounded in \cite{GMV1}*{Section 7}.  %REF
What we shall need is that the canonical factor $X \to \pi_{s-1}(X)$ of a nilspace $X$ of degree $s$, has an analogue for fibrations.

\begin{proposition}
  \label{prop:relative-weak-structure}
  Let $\beta \colon X \to Y$ be a fibration of degree $s$ between compact ergodic cubespaces $X$ and $Y$ that obey the glueing axiom.  Then there is a map $\pi_{\beta,s-1} \colon X \to \pi_{\beta,s-1}(X)$ where
  \begin{itemize}
       \item the fibration $\beta$ factors as $X \xrightarrow[\pi_{\beta,s-1}]{} \pi_{\beta,s-1}(X) \xrightarrow[\tilde{\beta}]{} Y$;
 \item the map $\tilde{\beta}$ is a fibration of degree $(s-1)$;
    \item there is a compact abelian group $A_s(\beta)$, the \emph{structure group of the fibration}, which acts continuously and freely on all of $X$ and whose orbits are precisely the fibers of $\pi_{\beta,s-1}$;
    \item a similar statement holds for cubes; specifically, $C^k(\cD_s(A_s(\beta)))$ acts pointwise on $C^k(X)$, and its orbits are precisely the fibers of the map $\pi_{\beta,s-1} \colon C^k(X) \to C^k(\pi_{\beta,s-1}(X))$.
  \end{itemize}
\end{proposition}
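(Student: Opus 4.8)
The plan is to derive this proposition directly from the relative weak structure theory of \cite{GMV1}*{Section 7}, the relative counterpart of the absolute weak structure theorem \cite{GMV1}*{Theorem 5.4} used throughout Section \ref{sec:main-outline}. Recall that in the absolute setting a compact ergodic nilspace $X$ of degree $s$ carries a canonical $(s-1)$-th equivalence relation whose quotient is $\pi_{s-1}(X)$, with the fibers of $\pi_{s-1}$ equal to the orbits of a free action of a compact abelian group $A_s(X)$, compatibly on cubes. The relative theory attaches the same package of data to a degree-$s$ fibration $\beta \colon X \to Y$: a relative canonical equivalence relation $\sim$ on $X$ refining the partition of $X$ into fibers of $\beta$, and a compact abelian group $A_s(\beta)$ acting freely on $X$ with orbits exactly the classes of $\sim$.

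First I would set $\pi_{\beta,s-1}(X) := X/{\sim}$, take $\pi_{\beta,s-1}$ to be the quotient map, and let $\tilde\beta \colon \pi_{\beta,s-1}(X) \to Y$ be the map induced by $\beta$, which is well-defined precisely because $\sim$ refines the fibers of $\beta$; this yields the required factorization $\beta = \tilde\beta \circ \pi_{\beta,s-1}$. That $\tilde\beta$ is again a fibration, now of degree $s-1$, is the heart of the relative structure theorem in \cite{GMV1}*{Section 7}: it is proved there by relativizing the inductive argument behind \cite{GMV1}*{Theorem 5.4} in the routine manner indicated in the text. The point that deserves a word is that $A_s(\beta)$ is a \emph{single} group acting simply transitively on every class of $\sim$, rather than a possibly varying family of groups on the individual classes; this is where ergodicity and compactness of $X$ and the fibration property of $\beta$ enter, exactly as in the proof of \cite{GMV1}*{Theorem 5.4(i)}, allowing one to transport the action along cubes.

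The final clause, concerning cubes, I would obtain by running the same argument one dimension up. For each $k$, the maps $\beta$ and $\pi_{\beta,s-1}$ induce cubespace morphisms $C^k(X) \to C^k(Y)$ and $C^k(X) \to C^k(\pi_{\beta,s-1}(X))$, and the pointwise action of $A_s(\beta)^{\{0,1\}^k}$ on $X^{\{0,1\}^k}$ restricts to an action of the subgroup $C^k(\cD_s(A_s(\beta)))$ on $C^k(X)$. That its orbits are precisely the fibers of $\pi_{\beta,s-1} \colon C^k(X) \to C^k(\pi_{\beta,s-1}(X))$ — including the two compatibilities that every cube of $\pi_{\beta,s-1}(X)$ lifts to a cube of $X$ and that any two lifts differ by such a configuration — is the cube-level conclusion of the relative structure theorem, the analogue of \cite{GMV1}*{Theorem 5.4(ii)} and \cite{GMV1}*{Proposition 5.1}.

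The main obstacle is not conceptual but one of bookkeeping: one must confirm that \cite{GMV1}*{Section 7} is set up in exactly the generality needed here — compact ergodic cubespaces obeying the glueing axiom rather than only nilspaces, fibrations rather than arbitrary morphisms, and with the cube-level statements included — and that the constructions there are canonical, so that $\pi_{\beta,s-1}$, $\tilde\beta$ and $A_s(\beta)$ depend on nothing but $\beta$. Granting this, the proposition is a direct transcription.
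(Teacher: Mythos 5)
Your proposal matches the paper's treatment: the paper gives no independent argument but simply cites the relative weak structure theory of \cite{GMV1}*{Section 7} (specifically Proposition 7.12 and Theorem 7.19 there, with an index shift), which is exactly the package of results you identify and transcribe. Your added remarks on why $A_s(\beta)$ is a single group and on the cube-level compatibilities are accurate glosses on what those cited results provide, so this is the same route with a bit more exposition.
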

For the details, see \cite{GMV1}*{Proposition 7.12 and Theorem 7.19}. %REF
(Apply both results with $(s+1)$ in place of $s$.)

We can now state the inductive step that provides a technical generalization of Lemma \ref{lem:inductive-cocycle-special}.

\begin{lemma}
  \label{lem:inductive-cocycle-general}
  Let $A$ be a compact abelian Lie group with the metric $d_A$ as above, and let $s \ge 1$, $\ell \ge 1$ be given.  Then there exists $\eps = \eps(s, \ell, A)$ such that the following holds.

  Suppose $X, Y, \beta, \rho$ are as in Theorem \ref{thm:cocycle-main}.  Then there is a decomposition
  \[
    \rho(c) = \sum_{k=1}^\ell\partial_k\rho'_k(c) + \tilde{\rho}(\pi_{\beta,s-1}(c))
  \]
  where
  \begin{itemize}
    \item $\rho'_k \colon C^{\ell-1}(X) \to A$ is an $(\ell-1)$-cocycle for each $k$, and
    \item $\tilde{\rho} \colon C^\ell(\pi_{\beta,s-1}(X)) \to A$ is an $\ell$-cocycle on $\pi_{\beta,s-1}(X)$,
  \end{itemize}
  such that both $\rho'$ and $\tilde{\rho}$ have small image in the usual sense.
\end{lemma}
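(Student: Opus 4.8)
The plan is to run the proof of Lemma \ref{lem:inductive-cocycle-special} essentially verbatim, with every appeal to the absolute weak structure theory replaced by the corresponding clause of its relative counterpart, Proposition \ref{prop:relative-weak-structure}. Write $\pi := \pi_{\beta,s-1} \colon X \to \pi_{\beta,s-1}(X)$ for the relative canonical factor, put $B := A_s(\beta)$ for its structure group, and, as in Section \ref{subsec:ds-cocycle-fixing}, let $T_1^\ell$ denote the group of configurations $t \colon \{0,1\}^{\ell-1} \to B$ with $[0,t] \in C^\ell(\cD_s(B))$, equipped with its Haar measure $\mu_{T_1^\ell}$. By Proposition \ref{prop:relative-weak-structure}, $B$ acts freely on $X$ with orbits the fibres of $\pi$; for every $k$ the group $C^k(\cD_s(B))$ acts pointwise on $C^k(X)$ with orbits precisely the fibres of the induced map $C^k(X) \to C^k(\pi_{\beta,s-1}(X))$; and $\tilde\beta \colon \pi_{\beta,s-1}(X) \to Y$ is a fibration of degree $s-1$. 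These are exactly the structural inputs used in Sections \ref{subsec:ds-cocycle-fixing} and \ref{subsec:general}, so every integral appearing there continues to make sense here. Throughout, ``small image'' is understood in the relative sense: a continuous function on $C^m(X)$ has small image if $d_A(\cdot,\cdot) \lesssim_{s,\ell} \delta$ on pairs with the same image under $\beta$, and similarly for a function on $C^m(\pi_{\beta,s-1}(X))$ relative to $\tilde\beta$.

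First I would establish the relative analogue of Lemma \ref{lem:inductive-cocycle-more-special}. Fixing a coordinate $1 \le k \le \ell$, put
\[
  \rho'(c) = \int_{T_1^\ell} \rho\bigl([c, t.c]_k\bigr)\, d\mu_{T_1^\ell}(t),
  \qquad
  \tilde\rho\bigl([c_1,c_2]_k\bigr) = \rho\bigl([c_1,c_2]_k\bigr) - \bigl(\rho'(c_1) - \rho'(c_2)\bigr),
\]
where $t.c$ denotes the pointwise action of $B$ on $X$. One then verifies, by the very same computations as in the proof of Lemma \ref{lem:inductive-cocycle-more-special} (with every occurrence of ``$A_s(X)$'' replaced by $B$), that $\rho'$ is a continuous $(\ell-1)$-cocycle with small image, that $\tilde\rho$ is a continuous $\ell$-cocycle with small image, and that $\tilde\rho$ is invariant under the pointwise action of $[0,T_1^\ell]_k$. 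The one point worth isolating is that the averaging preserves \emph{relative} smallness: if $\beta(c_1) = \beta(c_2)$, then for each fixed $t$ the cubes $[c_1,t.c_1]_k$ and $[c_2,t.c_2]_k$ share the image $[\beta(c_1),\beta(c_1)]_k$ under $\beta$ — because $t.c$ lies in the fibre of $\pi$, hence of $\beta$, through $c$ — so the integrand is pointwise bounded in norm by $\delta$; the analogous bound for $\tilde\rho$ then follows from the triangle inequality together with the relative smallness of $\rho$ and $\rho'$.

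Next I would iterate this relative step over $k = 1, \dots, \ell$, exactly as in the proof of Lemma \ref{lem:inductive-cocycle-special}: write $\rho = \partial_1 \rho'_1 + \tilde\rho_1$, then $\tilde\rho_{j-1} = \partial_j \rho'_j + \tilde\rho_j$ for $j = 2, \dots, \ell$, and combine to obtain $\rho = \sum_{k=1}^\ell \partial_k \rho'_k + \tilde\rho_\ell$. Since each $\tilde\rho_j$ is constructed as the difference of a $[0,T_1^\ell]_k$-invariant cocycle and an average of $[0,T_1^\ell]_k$-invariant cocycles, invariance under $[0,T_1^\ell]_k$ is inherited by $\tilde\rho_j$ for every $j > k$; hence $\tilde\rho_\ell$ is invariant under $[0,T_1^\ell]_k$ for all $k$, and, by the reflection identity for cocycles, also under $[T_1^\ell,0]_k$ for all $k$. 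These subgroups generate the cube group $C^\ell(\cD_s(B))$, so $\tilde\rho_\ell$ is invariant under its full pointwise action and therefore descends to a function $\tilde\rho$ on $C^\ell(\pi_{\beta,s-1}(X))$. That the descended function is again an $\ell$-cocycle follows, as in the absolute argument, from the fact that glued pairs of cubes on $\pi_{\beta,s-1}(X)$ lift compatibly to $X$; in the relative setting this is \cite{GMV1}*{Lemma 7.5}. Relative smallness of $\tilde\rho$ on fibres of $\tilde\beta$ is immediate from that of $\tilde\rho_\ell$ on fibres of $\beta$.

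I do not expect a genuinely new obstacle: the argument is structurally identical to the absolute one, and the combinatorial bookkeeping — the group isomorphisms used in the proof of Proposition \ref{prop:ds-cocycle-fixing}, the glueing used to verify additivity of $\rho'$, the reparameterisations invoking translation invariance of Haar measure — carries over unchanged. The main place a proof of this kind can slip, and the thing that therefore needs genuine care, is keeping ``small image'' consistently \emph{relative} — distances controlled only between cubes or points lying over the same base point — since this is simultaneously what each integration over the structure group $B = A_s(\beta)$ preserves and what the double induction on $s$ and $\ell$ consumes when one feeds $\rho'_k$ (an $(\ell-1)$-cocycle over the fibration $\beta$) and $\tilde\rho$ (an $\ell$-cocycle over the degree-$(s-1)$ fibration $\tilde\beta$) back into the inductive hypothesis. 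Finally, with Lemma \ref{lem:inductive-cocycle-general} in hand, Theorem \ref{thm:cocycle-main} follows by the same double induction used to deduce Theorem \ref{thm:cocycle-special} from Lemma \ref{lem:inductive-cocycle-special}, the base cases $\ell = 0$ (take $f = \rho$ and $\tilde\rho = 0$) and $s = 0$ (take $f = 0$, $\beta$ then being an isomorphism) being handled directly.
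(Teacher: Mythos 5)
Your proof is correct and follows the paper's approach exactly: the paper dispatches this lemma in two sentences, stating that the argument for Lemma \ref{lem:inductive-cocycle-special} carries over verbatim upon replacing $\pi_{s-1}$ by $\pi_{\beta,s-1}$ and $A_s(X)$ by $A_s(\beta)$, with all appeals to the weak structure theory legitimized by Proposition \ref{prop:relative-weak-structure}. You have spelled out that substitution in detail and correctly isolated the one point deserving care, namely that ``small image'' must be tracked in the relative sense and that this is precisely what the averaging over $T_1^\ell$ preserves, since the $\cD_s(A_s(\beta))$-action moves cubes within fibres of $\beta$.
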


The deduction of Theorem \ref{thm:cocycle-main} is again very similar.
\begin{proof}[Proof of Theorem \ref{thm:cocycle-main} assuming Lemma \ref{lem:inductive-cocycle-general}]
  The only thing that changes is the base case; the rest of the argument is the same as for Theorem \ref{thm:cocycle-special}.  If we have a fibration $\beta \colon X \to Y$ of degree $0$ between two ergodic cubespaces $X$ and $Y$, then $\beta$ is an isomorphism.  In particular, an $\ell$-cocycle on $X$ is the same thing (under $\beta$) as an $\ell$-cocycle on $Y$, so again Theorem \ref{thm:cocycle-main} is trivial in this case.
\end{proof}

Finally, the proof of Lemma \ref{lem:inductive-cocycle-general} is unchanged from that of Lemma \ref{lem:inductive-cocycle-special}, replacing all appearances of $\pi_{s-1}$ with $\pi_{\beta,s-1}$ and $A_s(X)$ with $A_s(\beta)$.  All appeals to the weak structure theory are legitimized by Proposition \ref{prop:relative-weak-structure}.

\subsection{The uniqueness result}
\label{sec:uniqueness}

We now prove Theorem \ref{thm:cocycle-uniqueness}.  Our proof is very closely modelled on that of \cite{CS12}*{Lemma 3.25}.

We work up to the result in several incremental stages, each of which is in fact a special case of the theorem.  The first is classical.

\begin{lemma}
  \label{lem:homs-discrete}
  Suppose $A$ is a compact abelian Lie group equipped with the metric $d_A$.  Then there exists $\eps = \eps(A) > 0$ such that the following holds.

  If $G$ is an abelian group and $\phi \colon G \to A$ is a group homomorphism such that $d_A(0, \phi(g)) \le \eps$ for all $g \in G$, then $\phi$ is the trivial homomorphism.
\end{lemma}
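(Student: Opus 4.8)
The plan is to reduce this to the classical fact that a compact abelian Lie group has \emph{no small subgroups}: a sufficiently small ball around $0$ in $A$ contains no subgroup other than $\{0\}$. Granting this, the lemma is immediate, since $\image\phi$ is a subgroup of $A$ and, by hypothesis, is contained in the closed $\eps$-ball about $0$; hence $\image\phi = \{0\}$, i.e.\ $\phi$ is the trivial homomorphism. Note this uses nothing about $G$ beyond the fact that a homomorphic image is a subgroup, and it does not even use that $\phi$ is defined on all of $G$.

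To make the ``no small subgroups'' statement explicit with the metric of Definition \ref{def:torus-metric}, I would argue as follows. Fix the identification $A \cong (\RR/\ZZ)^d \times K$ and take $\eps < 1/4$. Since $\eps < 1$, any $a \in A$ with $\|a\|_A \le \eps$ lies in the identity component $(\RR/\ZZ)^d \times \{0\}$, so we may write $a = (t,0)$ and let $\tilde t \in \RR^d$ be the representative of $t$ with $\|\tilde t\|_2 = \|a\|_A$. If $a \neq 0$ then $\tilde t \neq 0$, and since $\|n\tilde t\|_2 = n\|\tilde t\|_2 \to \infty$ there is a least $N \ge 2$ with $\|N\tilde t\|_2 > \eps$; by minimality $\|(N-1)\tilde t\|_2 \le \eps$, so $\|N\tilde t\|_2 \le 2\eps < 1/2$. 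As every nonzero element of $\ZZ^d$ has Euclidean length at least $1$, the lattice point nearest $N\tilde t$ is $0$, whence $\|Na\|_A = \dist(N\tilde t, \ZZ^d) = \|N\tilde t\|_2 > \eps$. Thus $Na$ witnesses that the subgroup generated by $a$ is not contained in the $\eps$-ball, and applying this with $a = \phi(g)$ (and $Na \in \image\phi$) gives the desired contradiction unless $\phi(g)=0$ for every $g$.

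The only thing requiring any care is the bookkeeping in the last step — verifying that for $\eps$ small the torus distance of a multiple of $a$ is computed by the Euclidean norm of the chosen lift rather than by some other representative — but this is entirely routine, and I expect no genuine obstacle: the conclusion follows directly from the displayed inequalities together with $\image\phi \subseteq \{a \in A : \|a\|_A \le \eps\}$. One could alternatively quote the standard fact that Lie groups enjoy the no-small-subgroups property, but since we have pinned down an explicit metric it is just as quick to give the computation above.
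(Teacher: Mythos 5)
Your proposal is correct and takes essentially the same route as the paper: both reduce the lemma to the no-small-subgroups property of a compact abelian Lie group, and both prove that property by fixing the identification $A \cong (\RR/\ZZ)^d \times K$, lifting a small nonzero element to $\RR^d$, and multiplying by an integer until the multiple escapes a small ball while remaining small enough that the torus distance is still computed by the Euclidean lift. The only superficial difference is bookkeeping: the paper works with an abstract $\delta$-ball on which the projection is injective and lands the multiple in the annulus $\delta/2 \le |n\tilde x| < \delta$, whereas you pin $\eps < 1/4$ and argue via the triangle inequality and the fact that nonzero points of $\ZZ^d$ have norm at least $1$; both are standard and equally rigorous.
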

\begin{proof}
  This is equivalent to saying that $A$ has ``no small subgroups'': there is some neighbourhood of the identity in $A$ containing no non-trivial subgroups.

  Since we know $A$ is isomorphic to $(\RR/\ZZ)^d \times K$ for some finite $K$, we may argue this directly.  Let
  \[
    \pi \colon \RR^d \times K \to (\RR/\ZZ)^d \times K
  \]
  denote the projection map.  Choose some $\delta > 0$ such that $\pi$ identifies $B_\delta(0) \subseteq (\RR/\ZZ)^d \times K$ bijectively with with the corresponding open ball $B_\delta(0) \times \{0\}$ in $\RR^d \times K$.

  Let $x \in B_{\delta/2}(0) \setminus \{0\} \subseteq (\RR/\ZZ)^d \times K$, and let $\tilde{x}$ be the corresponding lift to $\RR^d$.  Then there is some positive integer $n$ such that $\delta/2 \le |n \tilde{x}| < \delta$.  It follows that $n x \in B_\delta(0) \setminus B_{\delta/2}(0) \subseteq (\RR/\ZZ)^d \times K$, and therefore $x$ is not contained in any subgroup of $B_{\delta/2}(0)$.
\end{proof}

Note that if we interpret $G$ as a nilspace of degree $1$ (i.e.~considering $\cD_1(G)$) then any homomorphism $\phi \colon G \to A$ has $\partial^2 \phi = 0$, so this is a special case of Theorem \ref{thm:cocycle-uniqueness}.

Having shown that the space of \emph{linear} maps $G \to A$ is discrete, we bootstrap this to a statement about \emph{polynomial} maps.

\begin{lemma}
  \label{lem:uniqueness-d1-case}
  Suppose $A$ is a compact abelian Lie group equipped with the metric $d_A$.  Fix $\ell \ge 1$.  Then there exists $\eps = \eps(\ell, A) > 0$ such that the following holds.

  Let $G$ be any abelian group, and consider the cubespace structure $\cD_1(G)$ on $G$. Let $\gamma \colon G \to A$ be a map such that $\partial^\ell \gamma \equiv 0$ and $d_A(\gamma(g), \gamma(g')) \le \eps$ for all $g, g' \in G$.

  Then $\gamma$ is constant.
\end{lemma}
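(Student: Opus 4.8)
The plan is to induct on $\ell$, terminating the recursion with Lemma \ref{lem:homs-discrete} (no small subgroups), and to use at each step the standard fact that $\partial^\ell\gamma$ can be ``differentiated once in a fixed direction'' to produce $\partial^{\ell-1}$ of a discrete derivative of $\gamma$. The base case $\ell=1$ will be immediate: on $\cD_1(G)$ every $1$-cube is a pair $[g,g']$, so $\partial^1\gamma\equiv 0$ says exactly $\gamma(g)=\gamma(g')$ for all $g,g'$, i.e.\ $\gamma$ is constant, and no smallness hypothesis is needed here.

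For $\ell\ge 2$ I would take $\eps(\ell,A):=\min\bigl(\tfrac12\eps(\ell-1,A),\,\eps(A)\bigr)$, where $\eps(A)$ is the constant from Lemma \ref{lem:homs-discrete}. Fix $h\in G$ and consider the discrete derivative $\Delta_h\gamma\colon G\to A$, $g\mapsto\gamma(g+h)-\gamma(g)$. Unwinding the description of the Host--Kra cubes of $\cD_1(G)$ (the maps $\omega\mapsto g_0+\sum_{i=1}^{m}\omega_i g_i$), one checks directly that $\partial^\ell\gamma$ evaluated on the $\ell$-cube with base $g_0$ and edges $h,g_1,\dots,g_{\ell-1}$ equals, up to sign, $\partial^{\ell-1}(\Delta_h\gamma)$ evaluated on the $(\ell-1)$-cube with base $g_0$ and edges $g_1,\dots,g_{\ell-1}$; as the choices range over all cubes this shows $\partial^\ell\gamma\equiv 0$ is equivalent to $\partial^{\ell-1}(\Delta_h\gamma)\equiv 0$ for every $h$. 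Since $d_A$ is translation-invariant, $d_A\bigl(\Delta_h\gamma(g),\Delta_h\gamma(g')\bigr)\le d_A(\gamma(g+h),\gamma(g'+h))+d_A(\gamma(g'),\gamma(g))\le 2\eps\le\eps(\ell-1,A)$ for all $g,g'$. The inductive hypothesis applied to $\Delta_h\gamma$ at parameter $\ell-1$ then yields that $\Delta_h\gamma$ is constant, for each $h\in G$.

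Evaluating that constant at $0$ gives $\gamma(g+h)-\gamma(g)=\gamma(h)-\gamma(0)$ for all $g,h$, so $\psi:=\gamma-\gamma(0)$ is a group homomorphism $G\to A$. By translation-invariance again, $d_A(0,\psi(g))=d_A(\gamma(0),\gamma(g))\le\eps\le\eps(A)$, so Lemma \ref{lem:homs-discrete} forces $\psi\equiv 0$, i.e.\ $\gamma$ is constant. This closes the induction.

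I do not expect a genuine obstacle: the only delicate point is bookkeeping, namely confirming that passing from $\gamma$ to $\Delta_h\gamma$ simultaneously lowers the degree parameter by one and only mildly worsens the smallness constant (by a factor $2$), so the recursion converges against the fixed constant $\eps(A)$. It is worth noting that continuity of $\gamma$ plays no role, consistently with the statement, where $G$ is merely an abstract abelian group.
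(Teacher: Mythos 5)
Your proposal is correct and takes essentially the same route as the paper: induction on $\ell$, observing that every $\ell$-cube in $\cD_1(G)$ has the form $[c,\,h+c]$ so that $\partial^\ell\gamma\equiv 0$ forces $\partial^{\ell-1}$ of the directional difference (your $\Delta_h\gamma$, the paper's $\gamma_t$ up to sign) to vanish, concluding that the resulting $t\mapsto\alpha(t)$ is a small homomorphism and hence trivial by Lemma \ref{lem:homs-discrete}. Your bookkeeping of the constant via $\eps(\ell,A)=\min(\tfrac12\eps(\ell-1,A),\eps(A))$ makes explicit what the paper leaves to ``for an appropriate choice of $\eps$''.
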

\begin{proof}
  We proceed by induction on $\ell$.  The case $\ell = 1$ is trivial: since $[x,y] \in C^1(G)$ for   every $x,y \in G$ we have $\partial^1 \gamma([x,y]) = \gamma(x) - \gamma(y) = 0$ for all $x,y$, and hence $\gamma$ is constant.

  Now take $\ell > 1$.  Note that an $\ell$-cube in $\cD_1(G)$ has the form $[c, t+c]$ for some $t \in G$, where $t+c$ denotes the pointwise shift $\omega \mapsto t + c(\omega)$ of $c$.  Defining the ``derivative''
  \[
    \gamma_t(x) = \gamma(x) - \gamma(x+t)
  \]
  we note that
  \[
    \partial^\ell \gamma([c, t+c]) = \partial^{\ell-1} \gamma_t(c)  \ .
  \]
  By hypothesis, for all $t$ we have $\partial^{\ell-1} \gamma_t \equiv 0$ and hence, for an appropriate choice of $\eps$, by induction $\gamma_t \equiv \alpha(t)$ is a constant function for all $t$.

  But now observe that
  \begin{align*}
    \alpha(t) + \alpha(t') &= \gamma_t(0) + \gamma_{t'}(t) \\
                 &= \gamma(0) - \gamma(t) + \gamma(t) - \gamma(t+t') \\
                 &= \alpha(t+t')
  \end{align*}
  and trivially $\alpha(0) = 0$, $\alpha(-t) = -\alpha(t)$, so $\alpha \colon G \to A$ is a group homomorphism.  Moreover, by hypothesis $d_A(0, \alpha(t)) \le \eps$ for all $t$; so by Lemma \ref{lem:homs-discrete} $\alpha \equiv 0$.  Hence, $\gamma(x) = \gamma(0) - \alpha(x) = \gamma(0)$ is constant.
\end{proof}

Once again it is clear that this is a special case of the general result.  The point is that we can use the weak structure theory to decompose our nilspace $X$ of degree $s$ into a tower of extensions
\[
  X  \to \pi_{s-1}(X) \to \dots \to \pi_{0}(X) = \{\ast\}
\]
whose fibers are compact abelian groups $A_k(X)$ equipped with a nilspace structure $\cD_k(A_k)$.  By applying Lemma \ref{lem:uniqueness-d1-case} on the fibers one at a time, we deduce the result for $X$.

\begin{proof}[{Proof of Theorem \ref{thm:cocycle-uniqueness}}]
  We proceed by induction on $s$.  The case $s=0$ is trivial, since an ergodic nilspace of degree $0$ is just the $1$-point space $\{\ast\}$.

  Suppose $s>0$. Fix $y \in \pi_{s-1}(X)$ and consider the fiber $\pi_{s-1}^{-1}(y)$.  Clearly the restriction of $\gamma$ to $\pi_{s-1}^{-1}(y)$ still has the same properties.  By the weak structure theorem (\cite{GMV1}*{Theorem 5.4}), %REF
  $\pi_{s-1}^{-1}(y)$ is isomorphic as a cubespace to $\cD_s(A_s(X))$, and hence $\gamma|_{\pi_{s-1}^{-1}(y)}$ can be identified with a map $\gamma_y \colon A_s(X) \to A$,\footnote{On this occasion $\gamma_y$ does \emph{not} denote a derivative.} such that
  \[
    \partial^\ell \gamma_y \colon C^\ell(\cD_s(A_s(X))) \to A
  \]
  is identically zero.

  Note, however, that $C^\ell(\cD_1(A_s(X)))$ is contained in $C^\ell(\cD_s(A_s(X)))$.  Hence, \emph{a fortiori} $\gamma$ satisfies the hypotheses of Lemma \ref{lem:uniqueness-d1-case}, and so (for a suitable choice of $\eps$) is constant.

  Since $y$ was arbitrary, we deduce that $\gamma$ is constant on fibers of $\pi_{s-1}$.  Hence, $\gamma$ factors as a map $\gamma' \colon \pi_{s-1}(X) \to A$ on a nilspace of degree at most $(s-1)$, and it is clear that $\gamma'$ inherits all the properties of $\gamma$.  By inductive hypothesis, $\gamma'$ is constant and hence $\gamma$ is constant.
\end{proof}

\appendix
\section{A technicality}

We record here a fact that would distract from the discussion if recorded in-place.

The following states that a free action by a compact group is ``robustly free'', in the sense that only small elements come close to stabilizing points.
\begin{proposition}
  \label{prop:robust-free-action}
  Suppose $G$ is a compact metric group acting continuously and freely on a compact metric space $X$.  For all $\eps > 0$ there exists $\delta > 0$ such that the following holds: if $x \in X$, $g \in G$ and $d_X(x, g.x) \le \delta$ then $d_G(\id, g) \le \eps$.
\end{proposition}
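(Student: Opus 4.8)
The plan is to argue by contradiction, exploiting the compactness of both $G$ and $X$. Suppose the statement fails for some $\eps > 0$. Then for every integer $n \ge 1$ we may choose $x_n \in X$ and $g_n \in G$ with $d_X(x_n, g_n . x_n) \le 1/n$ but $d_G(\id, g_n) > \eps$. The goal is to produce from these sequences a non-identity element of $G$ fixing a point of $X$, contradicting freeness of the action.

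First I would pass to convergent subsequences: since $X$ and $G$ are compact metric spaces, after relabelling we may assume $x_n \to x_\infty \in X$ and $g_n \to g_\infty \in G$. Next, using the (joint) continuity of the action map $G \times X \to X$, we have $g_n . x_n \to g_\infty . x_\infty$. On the other hand, the triangle inequality gives $d_X(x_\infty, g_n . x_n) \le d_X(x_\infty, x_n) + d_X(x_n, g_n . x_n) \to 0$, so $g_n . x_n \to x_\infty$ as well; by uniqueness of limits, $g_\infty . x_\infty = x_\infty$. Since the action is free, this forces $g_\infty = \id$. But the metric $d_G$ is continuous, so $d_G(\id, g_\infty) = \lim_n d_G(\id, g_n) \ge \eps > 0$, contradicting $g_\infty = \id$. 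This contradiction establishes the claim.

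There is really no substantive obstacle here; the only point deserving a word of care is the interpretation of ``acting continuously'' as joint continuity of the map $G \times X \to X$ (equivalently, since $G$ is compact, continuity of $g \mapsto (x \mapsto g.x)$ into $\Homeo(X)$ with the uniform metric). This is exactly what licenses passing to the limit in the expression $g_n . x_n$ above, which is the one step where a purely formal manipulation would not suffice.
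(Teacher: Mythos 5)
Your proof is correct and follows essentially the same argument as the paper: argue by contradiction, extract convergent subsequences by compactness of $G$ and $X$, use joint continuity of the action map to conclude that the limit $g_\infty$ fixes $x_\infty$, and invoke freeness to contradict $d_G(\id, g_\infty) \ge \eps$. The remark about interpreting continuity of the action is a sensible point of care but does not constitute a departure from the paper's reasoning.
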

\begin{proof}
  Suppose for contradiction that there exists $\eps > 0$ and sequences $x_i \in X$, $g_i \in G$ such that $d_G(\id, g_i) \ge \eps$ for all $i$ but $d_X(x_i, g_i . x_i) \rightarrow 0$.  Passing to convergent subsequence, we may assume $x_i \to x$ and $g_i \to g$ for some $x \in X$ and $g \in G$.  Now, $d_G(\id, g) \ge \eps$ and in particular $g \ne \id$.  By continuity of the action map $G \times X \to X$, we have that $g_i . x_i \to g . x$, and so $d_X(x, g.x) = 0$, contradicting freeness of the action.
\end{proof}

\bibliography{paper-2}
\bibliographystyle{halpha}

\end{document}